\numberwithin{equation}{section}
\theoremstyle{definition}
\theoremstyle{plain}
\newtheorem{thm}{Theorem}[section]
\newtheorem{Prop}[thm]{Proposition}
\newtheorem{Cor}[thm]{Corollary}
\newtheorem{defi}[thm]{Definition}
 \newtheorem{Thm}{Theorem}[section]
 \newtheorem{Rmk}[thm]{Remark}
 \newtheorem{Lem}[thm]{Lemma}
  \newtheorem{Def}[thm]{Definition}
\def\bbE {\mathbb {E}}
\def\bE {\mathbb {E}}
\def\bbP {\mathbb {P}}
 \def\N {\mathbb{N}}
  \def\KK {\mathbf{K}}
  \def\NN {\mathbf{N}}
  \def\SS {\mathbf{S}}
  \def\cJ {\mathcal{J}}
  \definecolor{darkgreen}{rgb}{0.0, 0.5, 0.0}
 \def\M {\mathbf{M}}
 \def\MM {\mathbf{M}}
\def\R {\mathbb{R}}
\def\T {\mathbb{T}}
\def\Z {\mathbb{Z}}
\def\bbV {\mathbb {V}}
 \def\LL {\mathbb{L}}
\def\cA {\mathcal{A}}
\def\cB {\mathcal{B}}
\def\cC {\mathcal{C}}
\def\cE {\mathcal{E}}
\def\cD {\mathcal{D}}
\def\cE {\mathcal{E}}
\def\EE {\mathcal{E}}
\def\cH {\mathcal{H}}
\def\GG {\mathcal{G}}
\def\cG {\mathcal{G}}
\def\cI  {\mathcal{I}}
\def\cL {\mathcal{L}}
\def\cM {\mathcal{M}}
\def\cN {\mathcal{N}}
\def\cP {\mathcal{P}}
\def\cZ {\mathcal{Z}}
\def\cR {\mathcal{R}}
\def\cT {\mathcal{T}}
\def\gs {{\sigma}}
\def\gp {{\varphi}}
\def\eps {{\varepsilon}}
\def\e {{\varepsilon}}
\def\indc {{\bf 1}}
\def\d {{\partial}}
\def\Otimes{\bigotimes}
\def\Otimes{\bigostar}
\newcommand{\sgn}{\operatorname{sign}}
\newcommand{\ba}{\begin{aligned}}
\newcommand{\ea}{\end{aligned}}
\newcommand{\be}{\begin{equation}}
\newcommand{\ee}{\end{equation}}
\numberwithin{equation}{section}
\begin{document}

 \title[Long-time derivation at equilibrium of the fluctuating Boltzmann equation] {Long-time derivation at equilibrium of the fluctuating Boltzmann equation} 
\author{Thierry Bodineau, Isabelle Gallagher, Laure Saint-Raymond, Sergio Simonella}
\begin{abstract} 
 
We study a hard sphere gas at equilibrium, and prove that  in the low density limit, the fluctuations converge to a Gaussian process  governed by the fluctuating Boltzmann equation.   This result holds for arbitrarily long times. The method of proof builds upon  the weak convergence method introduced in the companion paper~\cite{BGSS2} which is improved by considering clusters of pseudo-trajectories as in \cite{BGSS3}. \end{abstract} 

\maketitle

\section{Introduction}

In this paper, we   prove that dynamical fluctuations in the empirical measure of a hard sphere gas at equilibrium are governed,  in the low density limit (Boltzmann-Grad limit), by the fluctuating Boltzmann equation, and this for arbitrarily long  kinetic times.
In particular, we show that the limiting process is Gaussian. The fluctuating equation is a stochastic equation given by a linearized Boltzmann collision operator, forced by a Gaussian noise, white in space and time, whose structure can be predicted by a fluctuation-dissipation argument~\cite{S2}.

The convergence of the covariance of the fluctuations was proved for short times in \cite{BLLS80} and extended to non-equilibrium states in \cite{S81}.  Moreover, the Gaussian character of the limiting field and the fluctuating equation were conjectured in \cite{EC81,S81,S83}.
This conjecture was reconsidered and proved to be true in \cite{BGSS1,BGSS3}, where the convergence of the full fluctuation process was obtained by using cumulant techniques, away from equilibrium, together with (much stronger) large deviation bounds.

All these results are severely limited to a small interval of time, exactly in the same way as for the validity of the nonlinear Boltzmann equation, as proved in \cite{La75}.
This short time limitation was removed first for the covariance of the fluctuation field in \cite{BGSR2}, in the case of a two-dimensional gas of hard disks at equilibrium. The limiting covariance is governed by the Boltzmann equation, linearized around the Maxwellian distribution.
In the companion paper~\cite{BGSS2}, a more robust weak convergence method was introduced: taking advantage of the invariant measure, we discarded atypical dynamics (preventing the convergence) by localizing pathological behaviors and using a time decoupling. This allows to extend  the previous result to arbitrary dimensions.
The present paper elaborates upon the same strategy:  we study the higher order moments of the fluctuation field and prove that they asymptotically factorize according to Gaussian rules. 

\subsection{The model}
We   consider here exactly the same setting as in \cite{BGSS2}, of which we recall the notations. The microscopic model consists of  identical hard spheres of unit mass and of diameter~$\eps$.
The  motion of $N$ such hard spheres is ruled by a system of ordinary differential equations, which are set in~$ ( \T ^d\times\R^d)^{N }$ where~$\mathbb T^d$ is  the unit~$d$-dimensional periodic box with~$d \geq 2$: writing~${\bf x}^{\e}_i \in  \T ^d$ for the position of the center of the particle labeled by~$i$ and~${\bf v}^{\e}_i \in  \R ^d$ for its velocity, one has
\begin{equation}
\label{hardspheres}
{d{\bf x}^{\e}_i\over dt} =  {\bf v}^{\e}_i\,,\quad {d{\bf v}^{\e}_i\over dt} =0 \quad \hbox{ as long as \ } |{\bf x}^{\e}_i(t)-{\bf x}^{\e}_j(t)|>\eps  
\quad \hbox{for \ } 1 \leq i \neq j \leq N
\, ,
\end{equation}
with specular reflection at collisions: 
\begin{equation}
\label{defZ'nij}
\begin{aligned}
\left. \begin{aligned}
 \left({\bf v}^{\e}_i\right)'& := {\bf v}^{\e}_i - \frac1{\eps^2} ({\bf v}^{\e}_i-{\bf v}^{\e}_j)\cdot ({\bf x}^{\e}_i-{\bf x}^{\e}_j) \, ({\bf x}^{\e}_i-{\bf x}^{\e}_j)   \\
\left({\bf v}^{\e}_j\right)'& := {\bf v}^{\e}_j + \frac1{\eps^2} ({\bf v}^{\e}_i-{\bf v}^{\e}_j)\cdot ({\bf x}^{\e}_i-{\bf x}^{\e}_j) \, ({\bf x}^{\e}_i-{\bf x}^{\e}_j)  
\end{aligned}\right\} 
\quad  \hbox{ if } |{\bf x}^{\e}_i(t)-{\bf x}^{\e}_j(t)|=\eps\,.
\end{aligned}
\end{equation}
 This flow does not cover all possible situations, as multiple collisions are excluded.
But one can show (see \cite{Ale75}) that for almost every admissible initial configuration~$({\bf x}^{\e 0}_i, {\bf v}^{\e 0}_i)_{1\leq i \leq N}$, there are neither multiple collisions,
nor accumulation of collision times, so that the dynamics is globally well defined. 

We will not be interested here in one specific realization of the dynamics,  but rather in a statistical description. This is achieved by introducing a measure at time 0, on the phase space we now specify.
The collections of~$N$ positions and velocities are denoted respectively by~$X_N := (x_1,\dots,x_N) $ in~$ \T^{dN}$ and~$V_N := (v_1,\dots,v_N) $ in~$ \R^{d N}$,  and we set~$Z_N:= (X_N,V_N) $, with~$Z_N =(z_1,\dots,z_N)$, $z_i = (x_i,v_i)$. A set of~$N$ particles is characterized by a random variable ${\mathbf Z}^{\eps 0}_N =  ({\mathbf z}^{\eps 0}_1,\dots , {\mathbf z}^{\eps 0}_N)$, ${\mathbf z}^{\eps 0}_i = \left({\mathbf x}^{\eps 0}_i,{\mathbf v}^{\eps 0}_i\right)$ specifying the time-zero configuration in the phase space 
\begin{equation}
\label{D-def}
{\mathcal D}^{\eps}_{N} := \big\{Z_N \in (\T ^d\times\R^d)^N \, / \,  \forall i \neq j \, ,\quad |x_i - x_j| > \eps \big\} \, ,
\end{equation}
and an evolution according to the deterministic flow \eqref{hardspheres}-\eqref{defZ'nij} (well defined with probability~1)
$$ t \mapsto {\mathbf Z}^{\eps}_N(t) = \big({\mathbf z}^{\eps}_1(t),\dots , {\mathbf z}^{\eps}_N(t)\big)\;,\qquad t>0\;,$$
with~${\mathbf z}^{\eps}_i\left(t\right) = \left({\mathbf x}^{\eps}_i\left(t\right),{\mathbf v}^{\eps}_i\left(t\right)\right)$.

To avoid spurious correlations due to a given total number of particles, we   actually consider a grand canonical state (as in \cite{Ki75,BLLS80}), living on the phase space 
$$
{\mathcal D}^{\eps} := \bigcup_{N \geq 0}{\mathcal D}^{\eps}_{N}
$$
(notice that ${\mathcal D}^{\eps}_{N} = \emptyset$ for $N$ large). This means that the total number of particles is also a random variable,  which we shall denote by $\cN$.
In the low density regime, referred to as  the Boltzmann-Grad scaling, the density (average $\cN$) is tuned by the parameter $$\mu_\e := \e^{-(d-1)}\;,$$ ensuring that the mean free path between collisions is of order one \cite{Gr49}. 

More precisely, at equilibrium the probability density of finding $N$ particles in $Z_N$ is given by
\begin{equation}
\label{eq: initial measure}
\frac{1}{N!}W^{\eps, {\rm eq} }_{N}(Z_N) 
:= \frac{1}{\cZ^ \eps} \,\frac{\mu_\eps^N}{N!} \, \indc_{
{\mathcal D}^{\eps}_{N}}(Z_N) 
 \, \cM^{\otimes N}  (V_N)\, , \qquad \hbox{ for } N= 0, 1,2,\dots \end{equation} 
 with~
 \begin{equation*}
\label{eq: max}
  \cM(v) := \frac1{(2\pi)^\frac d2} \exp\big( {-\frac{ |v|^2}2}\big)\,,
\qquad \cM^{\otimes N}(V_N)= \prod_{i=1}^N \cM(v_i)\, , 
\end{equation*}
and the partition function is given by
\begin{equation}
\label{eq: partition function}
\cZ^\eps :=  1 + \sum_{N\geq 1}\frac{\mu_\eps^N}{N!}  
\int_{{\mathcal D}^{\eps}_{N}
}  \cM^{\otimes N}(V_N)dX_N\,dV_N = 1 + \sum_{N\geq 1}\frac{\mu_\eps^N}{N!}   \int_{\T^{dN}}  \Big( \prod_{i\neq j}\indc_{ |x_i - x_j| > \eps} \Big)dX_N\,.
\end{equation}
Here and below, $\indc_A$ will be the characteristic function of the set $A$, and we will also use the symbol $\indc_*$
for the characteristic function of the set defined by condition $*$. Notice that, for notational convenience, we work with functions extended to zero outside ${\mathcal D}^{\eps}_{N}$.

In the following, the probability of an event~$A$ with respect to the equilibrium measure~(\ref{eq: initial measure}) will be denoted~${\mathbb P}^{\rm eq}_\eps(A)$, and~${\mathbb E}^{\rm eq}_\eps$ will be the expected value.
Definition \eqref{eq: initial measure} ensures that $$\mu_\e^{-1}{\mathbb E}^{\rm eq}_\eps\left(\cN\right)\to 1$$ as $\mu_\e\to \infty$, as required.

\subsection{State of the art}

Consider the  empirical density of the hard-sphere model:
\begin{equation}
\label{eq: empirical}
\pi^{\eps}_t :=\frac{1}{\mu_\e}\sum_{i=1}^\cN \delta_{{\bf z}^\e_i(t)} \;.
\end{equation} 
Under the initial grand canonical measure 
\begin{equation}
\label{initial data out of equilibrium}  
\frac{1}{\cZ^ \eps(f^0)} \,\frac{\mu_\eps^N}{N!} \, \indc_{
{\mathcal D}^{\eps}_{N}}(Z_N) 
 \, (f^0)^{\otimes N}  (Z_N)\, , \qquad \hbox{ for } N= 0, 1,2,\dots
 \end{equation}
where  $f^0$ is a smooth and fast (Gaussian) decaying density and ${\cZ^ \eps(f^0)} $ the corresponding partition function, it has been proved by Lanford in \cite{La75} that, in the Boltzmann-Grad limit $\mu_\eps \to \infty$,  $\pi^{\eps}_t$ concentrates on the 
 solution of the Boltzmann equation
\begin{equation}
\left\{ \begin{aligned}
& \d_t f +v \cdot \nabla _x f = \! \displaystyle\int_{\R^d}\int_{{\mathbb S}^{d-1}}   \! \Big(  f(t,x,w') f(t,x,v') - f(t,x,w) f(t,x,v)\Big) 
\big ((v-w)\cdot \omega\big)_+ \, d\omega \,dw \,  ,\\
&  f(0,x,v) = f^0(x,v)
 \end{aligned}
  \right. \label{eq:Beq}
  \end{equation}
   where the precollisional velocities $(v',w')$ are defined by the scattering law
\begin{equation}\label{scattlaw}
 v' := v- \big( (v-w) \cdot \omega\big)\,  \omega \,  ,\qquad
 w' :=w+\big((v-w) \cdot \omega\big)\,  \omega  \, .
\end{equation}
More precisely, there exists a short time $T_L>0$ depending only on $f^0$, such that for any test function $h: \T ^d\times\R^d\to\R$ and any $\delta>0$, $t \in [0, T_L] $,
\begin{equation}\label{LLNf0}
\bbP_\eps \left( \Big|\pi^\eps_t(h) -  
   \int_{\T ^d\times\R^d}dz f(t,z)h(z)\Big| > \delta \right) \xrightarrow[\mu_\eps \to \infty]{} 0\;,
\end{equation}
which can be interpreted as a law of large numbers; see e.g.~\cite{IP89,S2,CIP94,GSRT, PS17, denlinger, BGSRS18,GG18,GG20}.

 \medskip
Using the invariance of the equilibrium measure (\ref{eq: initial measure}), it is not hard to see that in our setting $\pi^{\eps}_t$ concentrates on~$\cM$, which is a stationary solution of the Boltzmann equation:
for any test function $h: \T ^d\times\R^d\to\R$ and any $\delta>0$, $t \in \R$,
\begin{equation}\label{LLN}
\bbP^{\rm eq}_\eps \left( \Big|\pi^\eps_t(h) -  
   \int_{\T ^d\times\R^d} dz \cM (v)h(z)\Big| > \delta \right) \xrightarrow[\mu_\eps \to \infty]{} 0\;.
\end{equation}
Our purpose here is to study the fluctuations of the empirical density $\pi^{\eps}_t$ from its equilibrium value. In this regime,  the collision operator in Eq.\,\eqref{eq:Beq} is expected to reduce to the linearized operator (according to $f = \cM+g$)
\begin{equation} 
\label{eq:LBCO}
\begin{aligned}
   & \cL g := - v \cdot \nabla_x g \\
   & \hspace{0,8cm}+\int_{\R^d \times {\mathbb S}^{d-1}}\left[\cM(w') g(v') +  g(w')\cM(v') - \cM(w) g(v) -  g(w)\cM(v)\right]  \big( (v - w) \cdot \omega \big)_+  d w\,d \omega\;.
   \end{aligned}
 \end{equation}
Moreover, such fluctuations should be of size $1 / \sqrt\mu_\e$, which leads to define the fluctuation field $\zeta^{\eps,\rm eq}$ by 
\begin{equation}
\label{eq: fluctuation field}
\zeta^{\eps, \rm eq}_t \big(  h  \big) :=  { \sqrt{\mu_\eps }} \, 
\Big( \pi^\eps_t(h) -  {\mathbb E}^{\rm eq}_\eps\big( \pi^\eps_t(h) \big)     \Big)\;,
 \end{equation}
 for any test function $h$.

 Making the analysis of \eqref{LLN} slightly more quantitative one easily proves that, for any given~$t$, $\zeta^{\eps,\rm eq}_t$ converges in law towards a Gaussian white noise $\zeta$ with zero mean and covariance
\begin{equation} \label{eq:idf}
\bbE \Big( \zeta (h^{(1)}) \zeta (h^{(2)}) \Big) = \int_{\T ^d\times\R^d} h^{(1)}(z) h^{(2)}(z) \cM(v) dz\;.
\end{equation}
Much more interesting is the analysis of time-correlations, i.e.\,of time-dependent products such as ${\mathbb E}^{\rm eq}_\eps\left(\zeta^{\eps,\rm eq}_t \big( h^{(1)}  \big) \zeta^{\eps,\rm eq}_0 \big( h^{(2)}\big) \right)$, which involves an accurate understanding of the collisional processes in the hard-sphere dynamics. A local-in-time result for the covariance of the fluctuation field was obtained in \cite{BLLS80}, by a direct application of the method of \cite{La75} (as  discussed below, this was extended in~\cite{BGSR2} and~\cite{BGSS2}  to large times close to equilibrium). The techniques of dynamical clusters introduced in \cite{BGSS3} allowed then to extend this short time convergence   to moments of arbitrary order, as well as to establish  the tightness property of the fluctuation process  (see also \cite{BGSS1} for a less technical presentation of this method).

Recall that \eqref{eq:LBCO} is well-defined in~$L^2$. We also introduce
$$
L^2_\cM:=  \Big \{
g : \T^d\times \R^d \rightarrow \R \, , \qquad \|g\|_{L^2_\cM}:= \Big( \int_{ \T^d\times \R^d} |g|^2 \, \cM dxdv\Big)^\frac12 < \infty
\Big\} 
$$
and the Hilbert space   indexed by~$k \in \Z$  
$$
{\mathcal H}^k:=  \Big \{
g : \T^d\times \R^d \rightarrow \R \, , \qquad  \, \|g\|_{ {\mathcal H}^k}:= \big\|(\mbox{Id} - \Delta_v +|v|^2- \Delta_x)^kg\big\|_{L^2_\cM}<\infty
\Big\}  \, .
$$

\begin{Thm}  [{\bf Short time convergence of the fluctuating field}, \cite{BLLS80,BGSS3}]
\label{thmTCL}
Consider a system of hard spheres at equilibrium  in a~$d$-dimensional periodic box with~$d\geq 2$.  
There exists a time $T^\star>0$ such that, in the Boltzmann-Grad limit~$\mu_\eps \to \infty$, the following properties hold true.

(a) Let~$g^0$ and~$h$ be two functions in~$ L^2_\cM$.
The covariance of the fluctuation field~$\left(\zeta^{\eps,\rm eq}_t\right)_{t \in [0,T^\star]}$ converges:
\begin{equation} 
\label{eq:convcov}
\bbE^{\rm eq}_\eps \Big[ \zeta^{\eps,\rm eq}_0 (g^0) \; \zeta_t^{\eps,\rm eq} (h) \Big] \to \int \cM\, g (t)\, h\, dxdv\;,
\end{equation} 
where $\cM g$ is the solution of the linearized Boltzmann equation~$\partial_t \cM g  =\cL \cM g $, with~$  g_{| t = 0} = g^0$.

(b) There exists~$k>0$ such that  the family of processes~$\left(\zeta^{\eps,\rm eq}_t\right)_{t \in [0,T^\star]}$ is tight in the Skorokhod space $D\left([0,T^\star], \cH^{-k} \right)$. More precisely, 
\begin{equation}
\label{eq: tightness}
\begin{aligned}
&  \lim_{\delta \to 0^+} \lim_{\mu_\eps \to \infty} 
\bbP^{\rm eq}_\varepsilon \Big[ \sup_{ |s -t| \leq \delta \atop s,t  \in [0,T^\star] }
\big\| \zeta^{\varepsilon,\rm eq}_t  - \zeta^{\varepsilon,\rm eq}_s \big \|_{-k} \geq \delta'
\Big] = 0 \,, \qquad \forall \delta'>0 \,,  \\
& 
\lim_{A \to \infty} \lim_{\mu_\eps \to \infty} \bbP^{\rm eq}_\varepsilon \Big[ \sup_{ t  \in [0,T^\star]}
\big \| \zeta^{\varepsilon,\rm eq}_t \big \|_{-k} \geq A \Big] = 0\, .
\end{aligned}
\end{equation}

(c) The fluctuation field $\left(\zeta^{\eps,\rm eq}_t\right)_{t \in [0,T^\star]}$ converges in law to the (weak) solution of the fluctuating Boltzmann equation
\begin{equation}
\label{eq: OU}
d \zeta_t   = \cL \,\zeta_t\, dt + d\eta_t\,,
\end{equation}
with initial datum \eqref{eq:idf}, where $d \eta_t(x,v)$ is a stationary Gaussian noise.
 \end{Thm}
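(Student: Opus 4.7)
The plan is to control all moments of the fluctuation field simultaneously through a cumulant/cluster expansion of the correlation functions, showing that cumulants of order three and higher vanish in the Boltzmann--Grad limit (which forces Gaussianity) while the two-point cumulant converges to the kernel of the linearized Boltzmann semigroup $e^{t\cL}$. Concretely, starting from the BBGKY hierarchy and Duhamel-iterating in the spirit of Lanford, one expresses the $k$-point correlation function as a sum over collision trees rooted at the $k$ observed particles. The $k$-th cumulant of $\zeta^{\eps,\mathrm{eq}}_t$ is then built out of those configurations of trees that form a single \emph{connected} cluster: two trees are declared connected whenever a branch of one recollides with or overlaps a branch of the other, following the formalism of \cite{BGSS3}. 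Truncation errors in the Duhamel iteration are controlled on the short interval $[0,T^\star]$ by the equilibrium a priori bounds on correlation functions.

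Next I would estimate each connected cluster. A recollision or overlap between two trees forces a geometric constraint with a gain of a factor $\eps$, equivalently $\mu_\eps^{-1}$ after integration; an $r$-connected cluster therefore has total weight $O(\mu_\eps^{1-r})$, while the $r$-th moment of $\zeta^{\eps,\mathrm{eq}}$ carries a normalization $\mu_\eps^{r/2}$. Consequently, cumulants of order $r \geq 3$ vanish as $\mu_\eps^{1-r/2}\to 0$, producing factorization of moments according to Wick's rule. For $r=2$, the surviving connected pair reduces in the limit to a single pseudo-trajectory tagged with two observables; comparing this Duhamel series with the one generated by $e^{t\cL}$ identifies the limit with $\int \cM\, g(t,\cdot)\, h\, dxdv$, where $\cM g$ solves the linearized Boltzmann equation. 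This yields~(a).

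For~(b), the same cumulant estimates furnish a Kolmogorov-type increment bound
\[
\bbE^{\rm eq}_\eps \Big[ \big| \zeta^{\eps,\mathrm{eq}}_t(h) - \zeta^{\eps,\mathrm{eq}}_s(h) \big|^{2p}\Big] \leq C_p |t-s|^p \|h\|^{2p}_{L^2_\cM}
\]
for any $p\in\N$ and $h\in L^2_\cM$, together with a uniform in $\eps$ bound on $\|\zeta^{\eps,\mathrm{eq}}_t\|_{-k}$ for $k$ large enough; summing over a Hilbert basis of $\cH^k$ gives \eqref{eq: tightness}. Once (a) and (b) hold, (c) follows from a soft argument: tightness produces subsequential limits, each having Gaussian finite-dimensional marginals (by vanishing of the higher cumulants) with covariance governed by $e^{t\cL}$ (by (a)), and this uniquely characterizes the stationary Ornstein--Uhlenbeck process \eqref{eq: OU}, whose noise covariance is pinned down by fluctuation-dissipation.

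The main obstacle is the quantitative control of connected clusters: to secure the $\mu_\eps^{-1}$ gain per connection \emph{uniformly in the cluster size} one must rule out pathological geometric configurations of recollisions and overlaps, and then sum the resulting estimates over collision trees of arbitrary depth. This is exactly what forces the restriction to $[0,T^\star]$ (an analogue of Lanford's convergence time) and constitutes the principal technical work of \cite{BGSS3}.
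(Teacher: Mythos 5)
The statement you are proving is cited in the paper (from \cite{BLLS80} for part (a) and \cite{BGSS3} for parts (b)--(c)), and your sketch does follow the route the paper attributes to \cite{BGSS3}: Duhamel/collision-tree expansion of correlation functions, dynamical-cluster cumulants with recollision/overlap constraints yielding a factor of $\eps$ per connection, hence order-$r$ cumulants of $\zeta^\eps$ scaling like $\mu_\eps^{1-r/2}$ and vanishing for $r\geq 3$, survival of the two-point cumulant as the linearized Boltzmann covariance, and identification of the Gaussian limit. So the outline is faithful.

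There is, however, a gap in part (b) that would make the argument break as written. The increment bound
\[
\bbE^{\rm eq}_\eps\Big[\big|\zeta^{\eps,\rm eq}_t(h)-\zeta^{\eps,\rm eq}_s(h)\big|^{2p}\Big]\leq C_p|t-s|^p\|h\|^{2p}_{L^2_\cM}
\]
cannot hold uniformly in $\eps$ with the $L^2_\cM$ norm on the right. Already at fixed time the fourth moment of $\zeta^{\eps,\rm eq}_t(h)$ is controlled by the cluster expansion through quantities such as $\int|h|^4\cM$, which is not bounded by $\|h\|_{L^2_\cM}^4$; more generally the moment of order $2p$ is governed by $\|h\|_{L^{2p}_\cM}$ or $\|h\|_{L^\infty}$. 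The paper itself explicitly restricts to $L^\infty$ test functions, and the Hilbert-basis argument only works because one is free to choose a smooth (hence bounded) basis of $\cH^k$, not because of $L^2_\cM$-control. Separately, going from an increment moment bound to the uniform-in-time modulus estimate of \eqref{eq: tightness} is not an application of the classical Kolmogorov--Chentsov criterion: the field $\zeta^{\eps,\rm eq}$ is a c\`adl\`ag jump process at finite $\eps$ (jumps of size $O(\mu_\eps^{-1/2})$ at each collision), so one must either control jump magnitudes separately and use a Skorokhod-adapted criterion (Billingsley/Aldous) or run a maximal-inequality/chaining argument tailored to the jump structure. That is exactly the nontrivial content of the tightness proof in \cite{BGSS3}, and your one-line ``summing over a Hilbert basis'' elides it. The rest of your proposal — vanishing of higher cumulants, identification of the two-point function, and uniqueness of the stationary Gaussian limit with the prescribed covariance (hence equality in law with the OU solution of \eqref{eq: OU}) — is correct and matches the reference.
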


The noise in the above equation is explicitly characterized (see \cite{S83}). It has zero mean and covariance
(for all $T>0$)
\begin{equation}
\label{eq: noise covariance}
\begin{aligned}
& \bbE \left( \int_0^T   dt_1  \int dz_1   h^{(1)} (z_1)  \eta_{t_1} (z_1)     \int_0^T   dt_2 \int dz_2 \, h^{(2)} (z_2) \eta_{t_2} (z_2) \right)
 \\ &\qquad \qquad= \frac{1}{2} \int_0^T   dt  \int d\mu(z_1, z_2, \omega) 
 \cM(v_1)\, \cM(v_2) \Delta h^{(1)}  \, \Delta h^{(2)} 
\end{aligned}
\end{equation}
denoting
$$
d\mu (z_1, z_2, \omega): = \delta_{x_1 - x_2}  \,  \big( (v_1 - v_2) \cdot \omega \big)_+ d \omega\, d v_1\, d v_2 dx_1 dx_2
$$
and defining  for any $h$
$$  \Delta h (z_1, z_2, \omega) := h(z_1') +  h(z_2') -  h(z_1) -  h(z_2)\,, $$
where~$z_i':=(x_i,v_i')$ with notation~(\ref{scattlaw}) for the velocities obtained upon scattering. Note that this noise is white in time and space, but correlated in  
velocities.

 Weak solutions of Eq.\,\eqref{eq: OU} have been discussed in \cite{BGSS3}.
They are martingale solutions defined according to a classical procedure \cite{holley1978generalized}.

\begin{Rmk}  Theorem~{\rm\ref{thmTCL}} has been generalized out of equilibrium (see~\cite{S81} for part (a) and~\cite{BGSS3} for parts (b)-(c)) for initial measures of type
(\ref{initial data out of equilibrium}). The fluctuation field is still defined by  Eq.\,\eqref{eq: fluctuation field}. The fluctuating Boltzmann equation is the linearized Boltzmann equation around the solution $f(t)$ of the Boltzmann equation with initial datum~$f^0$, forced by a noise 
with a time-dependent covariance of the form~\eqref{eq: noise covariance}, with $\cM$ replaced by $f(t)$.

For more discussions on physical aspects of the fluctuation theory at low density, and on related mathematical results, we refer to    
\cite{ EC81, KL76, meleard, Rez, S81, S83} (see also \cite{BGSS3} and the references therein).

\end{Rmk}

Using the invariant measure, it was shown in \cite{BGSR2} in two space dimensions, and~\cite{BGSS2}  in higher dimensions, that the convergence result~\eqref{eq:convcov}  actually holds for all times.  \begin{Thm}  [{\bf Long time convergence of the fluctuating field},  \cite{BGSS2}
]
\label{thmTCLglob}
Consider a system of hard spheres at equilibrium  in a~$d$-dimensional periodic box with~$d\geq 3$.   Let~$g^0$ and~$h$ be two functions in~$ L^2_\cM$.
Then in the Boltzmann-Grad limit~$\mu_\eps \to \infty$,   the covariance of the fluctuation field~$\left(\zeta^{\eps,\rm eq}_t\right)_{t\geq 0}$ converges for all times
$$
\forall t \geq 0 \, , \quad \bbE^{\rm eq}_\eps \Big[ \zeta^{\eps,\rm eq}_0 (g^0) \; \zeta_t^{\eps,\rm eq} (h) \Big] \to \int \cM\, g (t)\, h\, dxdv\;,
$$
where $\cM g$ is the solution of the linearized Boltzmann equation~$\partial_t \cM g  =\cL \cM g $, with~$  g_{| t = 0} = g^0$.
 \end{Thm}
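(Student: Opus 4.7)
My plan is to iterate the short-time covariance estimate of Theorem \ref{thmTCL}(a) over a sequence of short windows of length $\tau<T^\star$, exploiting the invariance of the equilibrium measure \eqref{eq: initial measure} to restart the pseudo-trajectory analysis at the endpoint of each window. Fix $t > T^\star$, pick $\tau \in (0,T^\star)$, and set $t_k := k\tau$, $N:=\lfloor t/\tau\rfloor$. Denote by $U(s):L^2_\cM\to L^2_\cM$ the linearized Boltzmann semigroup, so that $\cM\,U(s)g^0$ solves $\partial_t(\cM g)=\cL(\cM g)$ with initial datum $\cM g^0$. Writing $U(t)=U(\tau)^N\,U(r)$ with $r=t-N\tau$, the goal reduces to proving
$$
C^\eps(t):=\mathbb{E}^{\rm eq}_\eps\bigl[\zeta^{\eps,\rm eq}_0(g^0)\,\zeta^{\eps,\rm eq}_t(h)\bigr]\;\xrightarrow[\mu_\eps\to\infty]{}\;\int \cM\,(U(\tau)^N g^0)\,h\,dx\,dv
$$
for every fixed $\tau$, and then letting $\tau\to 0$.

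To achieve this, I introduce a \emph{time-decoupled} covariance $\widetilde C^\eps(t)$ in which at every $t_k$ the microscopic configuration is resampled from the equilibrium measure, keeping only the few tagged particles that belong to the current pseudo-trajectory carrying $h$ (to be paired with the particle carrying $g^0$ at time $0$). By stationarity of \eqref{eq: initial measure}, these resamplings preserve the marginal law of the bulk; by Theorem \ref{thmTCL}(a) applied on each window $[t_{k-1},t_k]$, the one-step decoupled correlation converges to $\int \cM (U(\tau)\,\cdot\,)\,\cdot\,dx\,dv$, and composing these gives that the full decoupled covariance $\widetilde C^\eps(t)$ converges to $\int \cM\,(U(\tau)^N g^0)\,h\,dx\,dv$. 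Crucially, $U(\tau)$ is a contraction on $L^2_\cM$, so this iteration is stable when $N\to\infty$.

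The main work, and the main obstacle, is to bound the decoupling error $C^\eps(t)-\widetilde C^\eps(t)$. In the pseudo-trajectory / BBGKY representation, this error is supported on configurations and collision trees that invalidate the decoupling, namely: collision trees of excessive size inside a single window, and \emph{cross-window recollisions}, where a particle produced by scattering in $[t_{k-1},t_k]$ re-interacts with a particle belonging to an earlier window. I would control these events by combining (i) a-priori moment estimates available from the grand canonical partition function \eqref{eq: partition function}, \emph{uniform in time} by invariance of $\mathbb{P}^{\rm eq}_\eps$, with (ii) a geometric recollision estimate \`a la Lanford applied within each window of length $\tau<T^\star$. The parameters must be tuned so that each single-window error is $o(1/N)$, keeping the sum over the $N$ windows vanishing as $\mu_\eps\to\infty$.

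The delicate point, and the reason the weak convergence method of \cite{BGSS2} is needed, is that classical pointwise bounds on the BBGKY series diverge beyond $T^\star$; only $L^2_\cM$-type (weak) estimates, tested against the fixed observables $g^0$ and $h$, are robust enough to survive iteration over $N\sim t/\tau$ short steps. Once $|C^\eps(t)-\widetilde C^\eps(t)|\to 0$ is established, one first takes $\mu_\eps\to\infty$ at $\tau$ fixed to reach $\int \cM\,(U(\tau)^N g^0)\,h\,dx\,dv$, and then lets $\tau\to 0$; strong continuity of $U$ on $L^2_\cM$ then delivers the announced limit $\int \cM\,(U(t)g^0)\,h\,dx\,dv$.
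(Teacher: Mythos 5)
This theorem is not proved in the present paper: it is cited from the companion work \cite{BGSS2} and serves as an input (the $P=2$ covariance) to Proposition~\ref{prop: appariement}, which extends it to higher moments. So there is no "paper's own proof" here to compare against line by line; what the paper does give, in the introduction and in Section~\ref{sec:strategy} (especially \S\ref{pullback-subsec}), is a description of the \cite{BGSS2} mechanism, and against that description your proposal has the right high-level spirit but a genuine gap in the decisive step.

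Your global strategy --- exploit invariance of the equilibrium measure to iterate a short-time pseudo-trajectory estimate over many windows, and rely on $L^2_\cM$ structure to avoid pointwise blowup of the Duhamel series --- is indeed the philosophy of \cite{BGSS2}. But the step you label as "the main work", bounding the decoupling error $C^\eps(t)-\widetilde C^\eps(t)$, is precisely where the key idea is missing, and your two proposed ingredients do not supply it. A priori moment bounds coming from the grand canonical partition function control the static exclusion, not the dynamical combinatorics: the number of Duhamel collision trees grows factorially in the number of branchings, and this is what diverges when you iterate over $N\sim t/\tau$ windows, not the recollision probability within any one window. Likewise, a Lanford-type recollision estimate "within each window of length $\tau<T^\star$" cannot be applied as stated, because the change of variables underlying the dual (pullback) operator is ill-defined when the number of recollisions in the backward pseudo-trajectory is uncontrolled --- which is the generic situation on a time interval of order $1$.

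The mechanism that \cite{BGSS2} actually uses, and which your sketch omits, is the \emph{conditioning} on the good set $\Upsilon^\eps_N$ of Definition~\ref{conditioning}: at a dense family of intermediate times one restricts to configurations in which every microscopic cluster has size $\leq \gamma=4d$ and all velocities are $\leq \mathbb V=|\log\eps|$. This conditioning is typical by \eqref{conditioning-est} because the equilibrium measure is invariant; it is what makes the pullback of dual test functions well-posed, because on a time step of size $\delta$ each conditioned cluster evolves independently and undergoes a bounded number of recollisions. Crucially, the elementary time step is not $\tau<T^\star$ but the \emph{microscopic} scale $\delta=\eps^{1-1/2d}\ll 1$ in \eqref{eq: choix parametres}, which is then cascaded up through an intermediate scale $\tau\ll 1$ (itself tending to $0$ with $\eps$, cf.\ \eqref{tau-conditions}). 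Your two-scale scheme ($\tau$ fixed in $(0,T^\star)$, then $\tau\to 0$ after $\mu_\eps\to\infty$) replaces this cascade by a single fixed scale, and at that scale neither the conditioning nor the geometric estimates can be invoked. Finally, the "resampling of the bulk at each $t_k$" that defines your $\widetilde C^\eps$ is not made precise and does not correspond to what \cite{BGSS2} does (which is a Duhamel pullback of dual observables, not a resampling of the forward dynamics); without a concrete definition the error term $C^\eps-\widetilde C^\eps$ cannot even be written in pseudo-trajectory form.
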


\subsection{Statement of the result}
Our goal  in this paper is  to build  upon the techniques introduced in~\cite{BGSS2}  to extend the validity of Theorem \ref{thmTCL} to arbitrarily large times.
To reach longer time scales, we devise a method of proof different from the one in \cite{BGSS3}: we   actually combine the cumulant technique of  \cite{BGSS3} (controlling locally the small correlations induced by the hard-sphere dynamics) with the weak convergence method introduced in \cite{BGSS2}, allowing to make an efficient use of the invariant measure  and thus providing   the long time convergence of the covariance of the fluctuation field.

We remark preliminarily, that at equilibrium, the tightness property \eqref{eq: tightness} can be readily generalized to arbitrary times.
Indeed splitting an arbitrary time interval $[0, \Theta]$ into 
subintervals of length $T^\star$, using a union bound and the time invariance of the equilibrium measure, we get
\begin{align}
\label{eq: long time tightness}
\bbP^{\rm eq}_\varepsilon \Big[ \sup_{ |s -t| \leq \delta \atop s,t  \in [0,\Theta] }
\big\| \zeta^{\varepsilon,\rm eq}_t  - \zeta^{\varepsilon,\rm eq}_s \big \|_{-k} \geq \delta'
\Big]
\leq 2 \frac{\Theta}{T^\star}
\bbP^{\rm eq}_\varepsilon \Big[ \sup_{ |s -t| \leq \delta \atop s,t  \in [0,T^\star] }
\big\| \zeta^{\varepsilon,\rm eq}_t  - \zeta^{\varepsilon,\rm eq}_s \big \|_{-k} \geq \delta'
\Big]\;.
\end{align}
Thus the short time fluctuations in  $[0, \Theta]$  can be controlled by  \eqref{eq: tightness}, and the same is true for the norm $\|_\cdot \|_{-k}$ of the field.

Our goal here is thus to extend the result $(c)$ of Theorem~\ref{thmTCL}, and this will be done by going  one step further in the weak convergence method of~\cite{BGSS2} looking  at clusters of trajectories to identify the fluctuation structure, and to combine it with a suitable iteration procedure, allowing to extend the convergence result \eqref{eq:convcov} to moments of the fluctuation field of order~$P>2$. We shall prove that such moments are vanishing for~$P$ odd, and converging to sums of products of covariances for $P$ even, in agreement with the Wick rule. This, together with the tightness property, will imply the convergence to the fluctuating Boltzmann equation.

\begin{Thm}  [{\bf  Long time convergence of the fluctuating field}]
\label{thmTCL'}
Consider a system of hard spheres at equilibrium  in a~$d$-dimensional periodic box with~$d\geq 3$.  
Then,
in the Boltzmann-Grad limit~$\mu_\eps \to \infty$, the fluctuation field $\left(\zeta^{\eps,\rm eq}_t\right)_{t \geq 0}$ converges in law for all times to the solution of the fluctuating Boltzmann equation 
\begin{equation}
\label{eq: OU'}
d \zeta_t   = \cL \,\zeta_t\, dt + d\eta_t\,,
\end{equation}
with initial datum \eqref{eq:idf}.
 \end{Thm}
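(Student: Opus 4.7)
The strategy is to extend the finite-dimensional distributional convergence of Theorem~\ref{thmTCL}(c) from the short window $[0,T^\star]$ to an arbitrarily large interval $[0,\Theta]$. Tightness on $[0,\Theta]$ is already available via \eqref{eq: long time tightness}; what remains is to characterize every subsequential limit. I would therefore focus on the convergence of joint moments: for every $P \geq 1$, every $0 \le t_1 \le \cdots \le t_P \le \Theta$ and every smooth $h^{(1)},\dots,h^{(P)}$, I would prove
\begin{equation*}
\bbE^{\rm eq}_\eps \Big[ \prod_{j=1}^P \zeta^{\eps,\rm eq}_{t_j}(h^{(j)}) \Big]
\xrightarrow[\mu_\eps\to\infty]{}
\begin{cases}
0 & \text{if $P$ is odd,} \\
\displaystyle\sum_{\pi} \prod_{\{i,j\}\in\pi} C(t_i,t_j;h^{(i)},h^{(j)}) & \text{if $P$ is even,}
\end{cases}
\end{equation*}
where the sum runs over pair partitions $\pi$ of $\{1,\dots,P\}$ and $C(s,t;g,h)$ denotes the limiting two-time covariance furnished by Theorem~\ref{thmTCLglob}. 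This Wick identity, together with tightness, pins down the finite-dimensional laws of any subsequential limit.

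\emph{Step 1 (short-time Wick factorization).} On a window of length $T^\star$, I would use the cumulant/cluster machinery of \cite{BGSS3}: each $P$-point function admits a pseudo-trajectory expansion organized into clusters, and the $P$-th cumulant of $\zeta^{\eps,\rm eq}$ is controlled by $\mu_\eps^{-(P-2)/2}$, so that only pair cumulants survive in the limit and Wick's rule holds on $[0,T^\star]$.

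\emph{Step 2 (iteration via the weak convergence method).} Partition $[0,\Theta]$ into $K = \lceil \Theta/\tau \rceil$ sub-intervals of length $\tau \le T^\star$. At each restart time $k\tau$, the stationarity of the equilibrium measure \eqref{eq: initial measure} effectively refreshes the law, and, following \cite{BGSS2}, one conditions on "good" configurations at the boundary of each sub-interval to discard pathological behaviours (large velocities, accumulations of recollisions, anomalously large clusters). The improvement needed for $P>2$, alluded to in the abstract, is to propagate clusters of $P$ correlated pseudo-trajectories across the restart times and to show that bridges joining clusters of different sub-intervals produce only subleading corrections, so that the pair-factorization structure of Step~1 is preserved under iteration.

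\emph{Step 3 (identification of the limit).} Tightness on $[0,\Theta]$ plus convergence of all joint moments characterize every subsequential limit as the centered Gaussian process with covariance $C(s,t;\cdot,\cdot)$ generated by the linearized Boltzmann semigroup; by \cite{holley1978generalized} this is exactly the (unique) martingale solution of \eqref{eq: OU'}, which concludes. The main obstacle lies in Step~2: the short-time cumulant bound $\mu_\eps^{-(P-2)/2}$ is sharp, but over $K$ sub-intervals one risks accumulating $K$-dependent errors while $K$ diverges with $\Theta$. The technical heart of the argument is therefore to quantify the probability of cluster bridging across restart points using equilibrium a priori estimates, and to balance these estimates against the local cumulant bounds so that the Wick structure is preserved uniformly in~$\mu_\eps$ and in the number of restarts.
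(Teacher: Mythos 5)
Your architecture matches the paper's: the target is precisely the quantitative Wick factorization of Proposition~\ref{prop: appariement}, which combined with tightness (extended to $[0,\Theta]$ by time-invariance as in~\eqref{eq: long time tightness}) identifies the limit via the martingale characterization. Step~1 and Step~3 of your plan are essentially what the paper does.

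The deviation is in Step~2, and it is not merely a matter of detail. Your ``restart at each boundary $k\tau$, where stationarity refreshes the law'' is a forward-in-time picture that does not on its own justify anything: the joint law across boundaries does not factorize (the same particles carry memory through $k\tau$), so one cannot simply re-apply the short-time cumulant bound on each slice and stitch the results. The paper goes in the \emph{opposite direction}: it pulls back test functions backward in time through a dual Duhamel iteration (Sections~3--4), never restarting the dynamics. Moreover the paper requires a two-level time hierarchy, $\delta = \eps^{1-1/2d} \ll \tau \ll 1 \ll \Theta$, whereas your plan uses a single scale $\tau \le T^\star$. The very small scale $\delta$ is forced by the conditioning $\Upsilon^\eps_N$: only on such intervals are microscopic clusters of bounded size, which makes the adjoint change of variables injective and keeps recollision combinatorics finite. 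The scale $\tau$ (much smaller than $T^\star$, not merely $\le T^\star$) is where the non-minimal cumulants, pair cumulants, and superexponential collision trees are discarded; the error per step is $O(\tau^{1/2})$, so one needs $\tau\ll 1$, and then the number of steps $\Theta/\tau$ must be controlled against the $\eps$-dependent errors --- this is what produces the double-exponential constraint $\Theta = O(\log\log|\log\eps|)$. Two further devices you do not anticipate are essential: (i) the \emph{minimal cumulant} extraction (retaining only minimally connected clustering graphs at each $\delta$-step, Section~4), without which the iterated combinatorics diverges; and (ii) the \emph{sub-exponential truncation} of cluster growth (Definition~\ref{def: phi tau single cluster}), which makes the sums over particle creations summable. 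Finally, the pairings are not identified through ``controlling bridges'' in a diffuse sense, but deterministically, one pair at a time, at each macroscopic sampling time $\theta_{p-1}$ via the decomposition of Proposition~\ref{Spohn-prop} --- this is what reduces the problem inductively in $P$.

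In short: your Step~2 correctly names the obstacle (error accumulation over many slices and cluster propagation across boundaries) but proposes a mechanism (forward restart + single scale) that would not close the argument. The paper's dual two-scale iteration with minimal-cumulant/sub-exponential truncation is not a refinement of your plan but a genuinely different route.
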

  
\begin{Rmk} 
For simplicity we have chosen to state and prove the result in dimension~$d\geq 3$ only: the geometric arguments
needed to control pathological trajectories   require indeed a specific treatment in  the case of two space dimensions and we prefer to leave out these additional technicalities in this paper. 
\end{Rmk}
\begin{Rmk} The  moments of the fluctuation field remain actually under control on time intervals with size 
diverging slowly with~$\eps$, as~$O \big( \log \log| \log \eps| \big) $, as will be made clear by the quantitative convergence estimate  in Proposition {\rm\ref{prop: appariement}}. Note that in this regime the hydrodynamical limit holds true as shown in~\cite{BGSR2}.
The derivation of the fluctuating hydrodynamics of a Boltzmann gas is  the subject of the third companion paper \cite{BGSS4}
(see also Chapter~{\rm 7} of \cite {S2} for the general structure of the fluctuation theory).
\end{Rmk}

The proof of Theorem \ref{thmTCL'} requires several iterative steps, involving different time scales. 
A generalized fluctuation structure  will first be obtained on a very small time scale~$\delta$ in Section~\ref{sec:Duhamel}, and iterated to reach intermediate (small, of size~$\tau$) and macroscopic times in Sections
\ref{sec:tau} and \ref{fluctuation-sec}. This requires in particular 
an elimination of small remainder terms encoding unlikely events, namely recollisions at scale $\delta$ and superexponential growth at scale $\tau$. 
The main term of the iteration will be finally 
shown to converge to a Gaussian pairing in Section \ref{main-sec}. 

This intricated time sampling will be discussed first informally in Section \ref{sec:strategy}~: we will see that at each time scale,  remainders of very different nature are identified. 
It will  be very important that  all these remainders share a structure of products of fluctuation fields (as defined in (\ref{tensor-product})), for which we can establish general estimates on the expectation and covariance  (see Sections \ref{geometric-sec}-\ref{variance-sec}).


\section{ Elements of strategy}
\label{sec:strategy}

The aim of this section is to provide an informal description of the procedure necessary to extend a convergence  result of type \eqref{eq:convcov}, to arbitrary moments
\begin{equation} \label{eq:IPepseq}
I^{\eps, \rm eq}_P := \bbE^{\rm eq}_\eps \left[ \prod_{p=1} ^P \zeta^{\eps,\rm eq}_{\theta_p} ( h^{(p)}) \right]\,, \quad P\geq 3\;,
\end{equation}
where $ (\theta_1,\dots,\theta_P)$ is a collection of times with~$
0 = \theta_1 \leq \theta_2 \leq \dots \leq \theta_P =: \Theta \,,
$
and~$ (h^{(1)},\dots,h^{(P)})$ a collection of test functions. 
We shall restrict to $L^{\infty}$ test functions from now on.

\medskip

Our main result is  the following. Here and in what follows we use the notation~$A \ll 1$ to indicate that~$A$ goes to zero when~$\mu_\eps$ goes to infinity.
\begin{Prop}
\label{prop: appariement}
For any $P\geq 2$, denote by ${\mathfrak S}_P^{\rm pairs}$ the set of partitions of $\{1, \dots , P\}$ made only of pairs. 
Then asymptotically when $\mu_\eps \to \infty$, the moments are determined by the covariances, in the following sense: for~$\Theta\geq 1 $ and~$\tau>0 $ satisfying
\begin{equation}
\label{tau-conditions}
\Theta^{3(P-1)} \tau \ll1\quad \hbox{ and} \quad \Theta / \left(\tau \log|\log\eps|\right) \ll 1,
\end{equation}
there holds uniformly in~$\theta_1, \dots \theta_P \in [0, \Theta]$
\begin{equation} 
\label{eq: appariement} 
\begin{aligned}
&\left| \bbE^{\rm eq}_\eps \left[ \prod_{p=1} ^P \zeta^{\eps,\rm eq}_{\theta_p} ( h^{(p)}) \right]
-  \sum_{\eta \in {\mathfrak S}_P^{\rm pairs}}
\ \prod_{\{i,j\}  \in \eta} \bbE^{\rm eq}_\eps \left[  \zeta^{\eps,\rm eq}_{\theta_i} ( h^{(i)}) \; \zeta^{\eps,\rm eq}_{\theta_j} ( h^{(j)})  \right] \right|\\
& \qquad  \leq  \Big( \prod_{p = 1} ^P \| h^{(p)}\|_{L^\infty}  \Big) \Big( C_P\tau^{1/2}  \Theta^{(2P-1)/2}  +(C_P\Theta) ^{2^{\Theta /  \tau}} \eps ^{1/8d} \Big)
\end{aligned}
\end{equation}
for some constant $C_P$ depending only on $P$.
Notice that if $P$ is odd then ${\mathfrak S}_P^{\rm pairs}$ is empty and the product of moments is asymptotically 0.
\end{Prop}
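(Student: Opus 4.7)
The plan is to expand each fluctuation field $\zeta^{\eps,\rm eq}_{\theta_p}(h^{(p)})$ using a Duhamel iteration of the BBGKY hierarchy. Over a very short step $\delta$, I can write $\zeta^{\eps,\rm eq}_{\theta+\delta}(h)$ as the transport-evolved fluctuation $\zeta^{\eps,\rm eq}_{\theta}$ applied to a suitably transported test function, plus a collisional term accounting for the binary collisions occurring in $[\theta,\theta+\delta]$, plus a remainder that can be traced to recollisions among particles (rare events controlled by the geometric estimates alluded to in Sections~\ref{geometric-sec}--\ref{variance-sec}). This produces a representation of the product $\prod_{p=1}^P\zeta^{\eps,\rm eq}_{\theta_p}(h^{(p)})$ as a sum over families of pseudo-trajectories (one family per factor), plus an error that, in $L^2(\bbP^{\rm eq}_\eps)$, is small of the order indicated by the first term in \eqref{eq: appariement}.

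Next I would analyse the expectation of the product of these pseudo-trajectories via the cluster structure introduced in \cite{BGSS3}: two pseudo-trajectories are grouped together whenever their constituent particles ever overlap, and the expectation factorises over disjoint clusters up to a correction controlled by cumulants. Under the equilibrium measure the key estimate is that a cluster involving $k\geq 2$ pseudo-trajectories contributes at order $\mu_\eps^{-(k-2)/2}$ beyond what is predicted by the product of pair clusters. In particular, size-$2$ clusters yield precisely the Gaussian pairing from Theorem~\ref{thmTCL}(a), while clusters of size $\geq 3$ vanish in the limit; this is the mechanism by which the Wick rule emerges in the short-time step.

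To lift this from a single step of size $\delta$ to the macroscopic time $\Theta$, I would proceed in two stages. First, iterate the Duhamel expansion to reach an intermediate scale $\tau$; here the combinatorial explosion — each step roughly doubles the number of pseudo-trajectories — is responsible for the factor $2^{\Theta/\tau}$ in the remainder of \eqref{eq: appariement}, and the unlikely super-exponential growth of collision numbers is absorbed against the $\eps^{1/8d}$ factor. Second, partition $[0,\Theta]$ into $\Theta/\tau$ intervals of length $\tau$ and apply the weak convergence method of \cite{BGSS2}: on each subinterval one restarts from the equilibrium measure (using its invariance and the grand canonical structure), decoupling the iteration and preventing an exponential accumulation of the error. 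Summing the $\Theta/\tau$ per-step errors of order $\tau^{1/2}$ with the appropriate tensor-power of the $L^\infty$ test functions produces the main term $C_P \tau^{1/2}\Theta^{(2P-1)/2}$.

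The principal obstacle will be handling the combinatorics and variance estimates for products of $P$ fluctuation fields with clusters of arbitrary multiplicity, while keeping all constants uniformly polynomial in $\Theta$. The two conditions in \eqref{tau-conditions} encode this tension: $\Theta^{3(P-1)}\tau\ll 1$ guarantees that the iterated $\tau^{1/2}$ terms remain small, whereas $\Theta/(\tau\log|\log\eps|)\ll1$ is exactly what is needed for the super-exponential factor $2^{\Theta/\tau}$ to be beaten by a power of $\eps$. Picking $\tau$ consistent with both forces the range $\Theta = O(\log\log|\log\eps|)$ announced in the remark following Theorem~\ref{thmTCL'}, and closing the proof requires verifying that the recollision remainders at scale $\delta$, the super-exponential remainders at scale $\tau$, and the cluster remainders all share the product-of-fluctuation-fields structure so that the variance bounds of Sections~\ref{geometric-sec}--\ref{variance-sec} apply uniformly along the iteration.
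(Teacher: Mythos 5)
Your proposal captures the broad time-scale hierarchy ($\delta\ll\tau\ll 1\ll\Theta$) and correctly identifies the conditioning/weak-convergence framework of \cite{BGSS2} as the long-time engine, but it misattributes the source of the Wick pairing, and this is not a cosmetic slip.

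You write that the expectation factorises over disjoint dynamical clusters and that ``size-$2$ clusters yield precisely the Gaussian pairing from Theorem~\ref{thmTCL}(a), while clusters of size $\geq 3$ vanish in the limit; this is the mechanism by which the Wick rule emerges.'' That is the mechanism of the short-time out-of-equilibrium argument of \cite{BGSS3}, and the paper explicitly departs from it. At equilibrium the dynamical pair cumulants $\bbE_\eps\big[\bar\phi^{(\rho_q)}_{|I_\ell|}\big]$ arising from the clustering of pseudo-trajectories do not produce the covariance: Proposition~\ref{Lem: miracle} shows precisely that these pair cumulants are of order $\eps^{1/8d}$ and hence disappear. The Wick pairing in this paper is generated by a different mechanism: when a new test function $h^{(p-1)}$ enters at time $\theta_{p-1}$, the product $\zeta^\eps_{\theta_{p-1}}(h^{(p-1)})\cdot\big(\Otimes_{i\in B}\zeta^\eps_{m_i,\theta_{p-1}}(\bar\phi^{(i)}_{\theta_i-\theta_{p-1}})\big)$ is decomposed according to repeated versus non-repeated particle indices. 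The non-repeated part yields a new $\ostar$-product (which is iterated further backward), while the repeated-index ``contracted product'' contributes the expectation $\bbE_\eps[\psi^{(j,p-1)}]$ — and it is \emph{this} term that is identified with the covariance $\bbE_\eps[\zeta^\eps_{\theta_{p-1}}(h^{(p-1)})\zeta^\eps_{\theta_j}(h^{(j)})]$ in Corollary~\ref{covariance extraction}, via a separate comparison with the two-point function. This $\ostar$/contraction decomposition at the evaluation times $\theta_p$ (Section~\ref{pairing scheme}, Proposition~\ref{Spohn-prop}) is the heart of the proof, and your proposal has no counterpart for it. If you pursue dynamical pair clusters as the source of the covariance, at equilibrium you would find them vanishing and would be unable to close the argument.

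A second, smaller issue: the error bookkeeping in the final stage does not work the way you describe. Summing $\Theta/\tau$ per-step errors each of order $\tau^{1/2}$ would give $\Theta\,\tau^{-1/2}\to\infty$. In the paper, Proposition~\ref{prop-from theta to theta + tau} is iterated over all $O(\Theta/\tau)$ intervals of length $\tau$ but the $\tau$-errors carry compounding exponents of the form $(C_P\Theta^{2P-1}\tau)^{2^{\kappa_p-1}}$ (see Proposition~\ref{theta-iteration-prop} and Corollary~\ref{covariance extraction}); these are geometric and sum to the stated $\tau^{1/2}\Theta^{(2P-1)/2}$ after the $P$ macroscopic steps, while the $\eps^{1/8d}$ term absorbs the combinatorial growth $(C_P\Theta)^{2^{\Theta/\tau}}$ coming from the subexponential tree sizes (Definition~\ref{def: phi tau single cluster}). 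There is also no ``restarting from the equilibrium measure on each subinterval''; the iteration is purely backward and sequential, and the equilibrium measure is exploited only through the a priori moment bounds of Proposition~\ref{Proposition - estimates on g0} and the conditioning $\Upsilon^\eps_N$.
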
 
As the limit of the covariance has been computed in Theorem \ref{thmTCL} and extended to the time interval $[0,\Theta]$ in \cite{BGSS2},  
Proposition \ref{prop: appariement} fully determines the limiting moments which 
turn out to coincide with the Gaussian moments of the solution to the fluctuating Boltzmann equation~\eqref{eq: OU'}. By the `Moment Method' (see \cite{Billingsley 2} Section~30, Theorem 30.1), this fully characterizes the limiting distribution. Combined with the tightness  of the process (see \eqref{eq: long time tightness} and~\eqref{eq: tightness}), this completes the proof of the convergence to the fluctuating Boltzmann equation stated in Theorem \ref{thmTCL'}.

 \subsection{The global pairing scheme}\label{pairing scheme}
For simplicity, we will assume that all evaluation times are different 
$$
0 = \theta_1 < \theta_2 < \dots <\theta_P =: \Theta \, .
$$ 
The idea is then  to  design an iteration scheme decreasing the parameter $P$, where each elementary step uses the weak convergence method introduced in \cite{BGSS2}, and which realizes progressively the pairing (Wick rule), up to small remainder terms.

Let us focus on the moments $I^{\eps,\rm eq}_P$ defined in \eqref{eq:IPepseq} 
and consider the first step of our iteration procedure.
Our goal  is to reduce the number of evaluation times by transforming the fluctuation at time $ \theta_P$  into a sum of (more complicated) fluctuations at time~$\theta_{P-1}$
\begin{equation}\label{eq: first step eq} I^{\eps,\rm eq}_P  =  \sum_{m_P} \bbE_\eps^{\rm eq} \left[ \Big( \prod_{p=1} ^{P-1} \,  \zeta^{\e,\rm eq}_{\theta_p} ( h^{(p)}) \Big) \, 
   \zeta^{\e,\rm eq}_{m_P,\theta_{P-1} } \left(\phi_{\theta_P-\theta_{P-1}} ^{(P)} \right)  \right]\;,  
   \end{equation} 
 where $\zeta^{\e,\rm eq}_{m_P,\theta_{P-1} } \left(\phi_{\theta_P-\theta_{P-1}} ^{(P)} \right)$ is a fluctuation of an observable involving $m_P$ particles at time~$\theta_{P-1}$ (the notation is defined below in (\ref{eq: def zeta m eq})).
The function~$\phi_{\theta_P-\theta_{P-1}} ^{(P)} = \phi^{(P)}_{ \theta_P-\theta_{P-1}}[h^{(P)}](Z_{m_P})$ of~$m_P$ particles is given by the elementary test function $h^{(P)}$ pulled back,  along the flow induced by the Duhamel formula, during a time  $\theta_P-\theta_{P-1}$. We refer to Section \ref{sec:Duhamel} below for a detailed discussion, and focus now on the iteration procedure.
  
  \medskip

For this it is convenient to use the following extension of Definition~(\ref{eq: empirical}) of the empirical distribution: for any integer~$m\geq 1$, any time~$t $  and any test function~$H_m$ defined on~$(\T ^d\times\R^d)^m$, we set
\begin{equation*}
\label{eq: fluct field m}
    \pi^\e_{m,t}(   H_m):= \frac1{\mu_\eps^m} \sum_{(i_1,\dots,i_m)}H_m \big({\mathbf z}^{\eps}_{i_1}(t ),\dots , {\mathbf z}^{\eps}_{i_{m}}(t )\big)\, , 
\end{equation*}
where  the symbol $\sum_{(i_1,\dots,i_m)}$ indicates  a sum over $m$-tuples of labels  running from $1$ to $\cN$ which are all mutually different ($i_j \neq i_k$ for $k\neq j$).  Note that $ \pi^\e_{1,t} = \pi^{\eps}_t$ according to~(\ref{eq: empirical}), but $\pi^\eps_{m, t} \neq (\pi^\eps_t) ^{\otimes m}$ for $m>1$ since indices cannot be repeated. It will still be convenient to maintain a product notation by introducing a product symbol, the so-called  ``$\ostar$-product", which by definition takes into account  non-repeated indices, so that we can write
$$ \pi^\eps_{m,t} =  (\pi^\eps_t) ^ {\ostar m}\,.$$
Finally we introduce
 the shorthand notation
$$ \bbE_\eps^{\rm eq} [H_m]  :=   \bbE_\eps^{\rm eq} [    \pi^\eps _{m,t}(   H_m)] \,,$$
and extend  Definition~\eqref{eq: fluctuation field}  
of the fluctuation field:
\begin{equation}
\label{eq: def zeta m eq}
\zeta^{\e,\rm eq}_{m,t}(   H_m):=\sqrt{\mu_\eps } \Big(   \pi^\eps _{m,t}(   H_m) - \bbE^{\rm eq}_\eps [H_m]\Big)  \, .
\end{equation}

Returning to~(\ref{eq: first step eq}) let us  consider the product
$$
 \zeta^{\eps,\rm eq}_{\theta_{P-1}} ( h^{(P-1)})   \zeta^{\e,\rm eq}_{m_P,\theta_{P-1} }\left(\phi_{\theta_P-\theta_{P-1}} ^{(P)} \right)$$
and look at the repeated indices to decompose it into two contributions, with the above notation:
\begin{itemize}
\item a {\color{black}``$\ostar$-product"}, which by definition takes into account the non-repeated indices, 
\begin{equation*}\label{magic tensor product}
\begin{aligned}
 &\zeta^{\e,\rm eq}_{\theta_{P-1}} ( h^{(P-1)})  \ostar \zeta^{\e,\rm eq}_{m_P,\theta_{P-1}} \left(\phi_{\theta_P-\theta_{P-1}} ^{(P)} \right)\\
 &={\color{black}\mu_\eps \left( {1\over \mu_\eps }\sum h^{(P-1)}   - \bbE_\eps ^{\rm eq} [ h^{(P-1)} ] \right)\ostar  \left( {1\over \mu_\eps^{m_P}} \sum \phi_{\theta_P-\theta_{P-1}} ^{(P)}    - \bbE_\eps ^{\rm eq}[\phi_{\theta_P-\theta_{P-1}} ^{(P)}  ] \right)}\\
&:= \mu_\eps   \pi^\eps _{m_P+1,\theta_{P-1} } \big(  h^{(P-1)}  \otimes \phi_{\theta_P-\theta_{P-1}} ^{(P)} \big)  + \mu_\eps\bbE_\eps ^{\rm eq} [ h^{(P-1)} ] \bbE_\eps ^{\rm eq}[ \phi_{\theta_P-\theta_{P-1}} ^{(P)} ]  \\
& \qquad - \mu_\eps \bbE_\eps^{\rm eq}[ h^{(P-1)} ]    \pi^\eps _{m_P,\theta_{P-1}}\big( \phi_{\theta_P-\theta_{P-1}} ^{(P)}  \big)  - \mu_\eps \pi^\eps _{\theta_{P-1} }\big( h ^{(P-1)}  \big) \bbE_\eps ^{\rm eq} [ \phi_{\theta_P-\theta_{P-1}} ^{(P)} ]  \end{aligned}
\end{equation*}
which will be a new fluctuation to be analyzed at time $\theta_{P-1}$;
\item a ``contracted product"
$$     \pi^\eps _{m_P,\theta_{P-1} } \big(   \psi^{(P,P-1)} \big) \, , \quad \mbox{with} \quad   \psi^{(P,P-1)}:= \phi_{\theta_P-\theta_{P-1}} ^{(P)}(Z_{m_P})  \sum_{j = 1} ^{m_P}  h^{(P-1)}(z_j) \, , $$ 
 which will essentially decouple from the rest of the weight and sum up to give the covariance 
$$
 \bbE^{\rm eq}_\eps [ \zeta^{\eps, \rm eq}_{\theta_{P-1}} ( h^{(P-1)})   \zeta^{\eps, \rm eq}_{\theta_P} ( h^{(P)})  ] = \sum_{m_P} \bbE_\eps^{\rm eq} \Big[     \pi^\eps _{m_P,\theta_{P-1} } \big(   \psi^{(P,P-1)} \big) \Big ] + o(1) \,.
 $$ 
\end{itemize}
We  obtain 
$$ \begin{aligned}
I^{\e,\rm eq}_P = &\sum_{m_P} \bbE_\eps^{\rm eq} \left[ \Big(\prod_{p=1} ^{P-2}  \zeta^{\e,\rm eq}_{\theta_p} ( h^{(p)}) \Big)\,    \zeta^{\e,\rm eq}_{\theta_{P-1}} ( h^{(P-1)})  {\color{black}\ostar } \zeta^{\e,\rm eq}_{m_P,\theta_{P-1} } \big(\phi_{\theta_P-\theta_{P-1}} ^{(P)} \big)\right]\\
&+   \bbE^{\rm eq}_\eps [ \zeta^{\eps, \rm eq}_{\theta_{P-1}} ( h^{(P-1)})   \zeta^{\eps, \rm eq}_{\theta_P} ( h^{(P)})  ]   I^{\eps, \rm eq}_{P-2} + o(1) \, .
\end{aligned}
$$

 From the above discussion, one can guess the structure for the general term to be iterated at time $\theta_p$
\begin{equation}\label{Jepsvarsibma}
 \prod_{\{ j,j'\} \in \rho  } \bbE^{\rm eq}_\eps  \Big[  \zeta^{\eps, \rm eq}_{\theta_{j}} ( h^{(j)})   \zeta^{\eps, \rm eq}_{\theta_{j'}} ( h^{(j')})   \Big]  \times \sum_{\MM}\bbE_\eps^{\rm eq}\left[ \Big(\prod_{u<p}  \zeta^{\e,\rm eq}_{\theta_{u}} ( h^{(u)}) \Big) \,    \Otimes_{ i \in B } \zeta^{\e,\rm eq}_{ m_i, \theta_p} \big( \phi_{\theta_i-\theta_p} ^{(i)} \big) \right]
 \end{equation}
where~: 
\begin{itemize}
\item  $B$ is a subset of  $\{p,\dots P\}$,  $\rho$ is a partition of $\{p,\dots P\}\setminus B $ in pairs $\{j,j'\}$;
\item $\MM = (m_i)_{ i \in B}$ records the number of variables; $\phi^{(p)}_0 = h^{(p)}$ and the other dual functions~$\phi_{\theta_i-\theta_p} ^{(i)}$ have been pulled back from $h^{(i)}$~: each $\phi_{\theta_i-\theta_p} ^{(i)}$ is a function of $m_i$ variables containing the information on the backward transport  of $h^{(i)}$ on $[\theta_p, \theta_i]$;
\item   the {\color{black}$\ostar$-product} is defined as previously by avoiding repeated indices
\begin{equation}
\label{tensor-product-eq}
 \Otimes_{ i \in B }  \zeta^{\eps, \rm eq}_{ m_i, \theta_p} \big( \phi_{\theta_i-\theta_p} ^{(i)} \big):= \mu_\eps^{|B|/2} \sum_{A \subset B  }     \pi^\eps_{M_A,\theta_p} \Big( \bigotimes_{i \in A}\phi_{\theta_i-\theta_p} ^{(i)} \Big) \prod_{ j \in B \setminus  A} \bbE_\eps^{\rm eq}  [-\phi_{\theta_j-\theta_p} ^{(j)}]\,
 \end{equation}
 where $M_A := \sum_{i \in A} m_i$.
 \end{itemize}
As before, the fluctuation at time $\theta_p $  is first transformed into a sum of fluctuations at time~$\theta_{p-1}$, so that (\ref{Jepsvarsibma}) can be rewritten~: 
\begin{equation}
\label{duhamel-iteration}
\begin{aligned} &   \prod_{\{ j,j'\} \in \rho  }\bbE^{\rm eq}_\eps  \Big[  \zeta^{\eps, \rm eq}_{\theta_{j}} ( h^{(j)})   \zeta^{\eps, \rm eq}_{\theta_{j'}} ( h^{(j')})   \Big]   \times \sum_{\MM,\NN} \bbE_\eps ^{ \rm eq}\left[\Big( \prod_{u<p}  \zeta^{\eps, \rm eq}_{\theta_{u}} ( h^{(u)})\Big)\,   \Otimes_{ i \in B } \zeta^{\eps, \rm eq}_{m_i+n_i,\theta_{p-1} } \big(  \phi_{\theta_i-\theta_{p-1}} ^{(i )}   \big)  \right] 
 \end{aligned}
 \end{equation}
where $\NN = (n_i)_{ i \in B}$ records the number of particles added when pulling back further the test functions indexed by~$i \in B$ during the time interval~$[\theta_{p-1},\theta_p]$.
%
Then (by looking at repeated indices), either $p-1$ is added to $B$, or  an element of~$B$ is randomly chosen and paired with~$p-1$, adding a new pair in $\rho$.
%
%
Finally going back   down to~$p = 1$,  the dominant term turns out to correspond to~$B = \emptyset$ and~$\rho \in {\mathfrak S}_P^{\rm pairs}$, describing all  possible pairings of the fluctuation fields in \eqref{eq: appariement}.

\subsection{The pullback of test functions on a time~$\delta \ll 1$}\label{pullback-subsec}

 By definition, the {\it ``block"}  $\phi_{\theta_i-\theta_p} ^{(i)}$ will be obtained by pulling back  $h^{(i)}$ according to a Duhamel series on~$[\theta_p, \theta_i]$.  We shall see that a proper definition of $\phi_{\theta_i-\theta_p} ^{(i)}$ requires to describe all possible forward-in-time dynamics starting from a configuration~$Z_{ m_i}$ at time $\theta_p$, respecting suitable connection constraints. We encounter here a first difficulty due to an uncontrolled number of collisions in this forward dynamics. This issue appears already in the study of the covariance in~\cite{BGSS2} ($I^{\e,\rm eq}_2$ with the above notation), where we showed that the construction of dual functions is efficient if one performs a {\it conditioning} of  the invariant measure: this conditioning ensures that all    microscopic configurations have a  controlled dynamical behaviour on an elementary time step of size $\delta$, with~$\eps \ll \delta \ll 1$.  
 \medskip
 
 Given  an integer~$\gamma \in \N$, we call {\it microscopic cluster of size~$\gamma$} a set~$\cG $ of~$\gamma$ particle configurations in~$\T ^d\times\R^d$ such that~$(z,z') \in \cG\times\cG$ if and only if there are~$z_{1} = z,z_2,\dots,z_{\ell}= z'$ in~$\cG$ such that
\begin{equation*}
\label{eq: cut off delta}
 |x_{i}-x_{i+1} | \leq   3\sqrt{\gamma}\bbV \delta \, , \quad \forall1 \leq i \leq \ell-1\, , 
\end{equation*}where ${\mathbb V}\in \R^+$ is related to an energy truncation. To fix   ideas, we   choose from now on the microscopic  time scale~$\delta $, the intermediate time scale~$ \tau$, the macroscopic time~$\Theta$, the energy cut-off~$ {\mathbb V} $ and the size of the cluster~$\gamma$ as follows:
\begin{equation}
\label{eq: choix parametres}
\eps \ll \delta \ll \tau \ll 1 \ll \Theta = O\left( \log\log|\log\eps|\right)\;,\quad \quad \gamma = 4d\;, \quad  {\mathbb V} =|\log  \eps|\;, \, \quad \delta = \e^{1 - \frac{1}{2d}}\;.
\end{equation}
\begin{Def}\label{conditioning}
Given ~$\gamma \in \N$,
we define the set~$\Upsilon^\eps_N$ as the set of initial configurations~${\bf Z}^{\eps 0}_N $ in~$ {\mathcal D}^{\eps}_{N}$ such that for any~$p \in \{2,\dots,P\}$ and integers $k,r$ such that $1 \leq k \leq (\theta_p-\theta_{p-1}) / \tau$ and~$r \in [0, \tau/\delta]$, the configuration at time~$\theta_p - (k-1) \tau -r \delta$ satisfies
$$
\forall 1 \leq j \leq N \, , \quad |v_j |\leq \bbV \, , 
$$
and any microscopic cluster of particles is of size at most~$ \gamma$.
\end{Def}
 On each elementary step, the hard sphere system  will be shown to behave in essence as a collection of independent clusters of small size, and in particular the total number of collisions is under control. 
Note also that
with the previous choices~{\rm(\ref{eq: choix parametres})} of parameters, the conditioning by $\Upsilon^\eps_\cN $ is typical  in the sense that the complement satisfies
\begin{equation}
\label{conditioning-est}
\bbP^{\rm eq}_\eps \big(  ^c \Upsilon^\eps_\cN \big)
\leq \Theta \eps^d\;.
\end{equation}
We refer to \cite[Section 6.1]{BGSS2} for the proof of this result. 

\medskip

In the following, we shall mostly  restrict our estimates to the sets~$\Upsilon^\eps_N$, so it is useful to introduce the following notation: the probability of an event~$A$ with respect to the (unnormalized) conditioned measure is denoted by~${\mathbb P}_\eps(A)$
$$
\bbP_\eps\left( A \right) := 
\bbP^{\rm eq}_\eps\left(\Upsilon^\eps_N  \right)  \bbP^{\rm eq}_\eps\left(A \ |\ \Upsilon^\eps_\cN  \right)
:= \bbP^{\rm eq}_\eps\left(\Upsilon^\eps_N  \cap A \right)\;,
$$
and~${\mathbb E}_\eps$ is the corresponding expected value.
The  moments of the fluctuation field under such conditioning are written
$$
I^{\eps}_P := \bbE_\eps \left[ \prod_{p=1} ^P \zeta^\eps_{\theta_p} ( h^{(p)}) \right] $$
where $\zeta^\eps$ is the non-centered field defined by
\begin{equation*}
\label{eq: fluctuation field tilde}
\zeta^{\eps}_t \big(  h  \big) :=  { \sqrt{\mu_\eps }} \, 
\Big( \pi^\eps_t(h) -  {\mathbb E}_\eps\big( \pi^\eps_t(h) \big)     \Big)\;.
\end{equation*}
Furthermore we use the notation
$$ \bbE_\eps  [H_m]  :=   \bbE_\eps  [    \pi^\eps _{m,t}(   H_m)] \,,$$
and
  \begin{equation}
  \label{eq: fluctuation field m}
\zeta^{\e }_{m,t}(   H_m):=\sqrt{\mu_\eps } \Big(   \pi^\eps _{m,t}(   H_m) - \bbE _\eps [H_m]\Big)  \, .
\end{equation}

For future convenience we notice that
\begin{equation}
\label{eq: def zeta m diff}
\zeta^\e_{m,t}(   H_m) = \zeta^{\e,\rm eq}_{m,t}(   H_m) + \sqrt{\mu_\eps } \,\bbE^{\rm eq}_\eps [{\mathbf 1}_{^c \Upsilon^\eps_N}\,  \pi^\eps _{m,t}( H_m)]\;.
\end{equation}
The pairing mechanism described in Section~\ref{pairing scheme} will be achieved with~$I_P^\eps$ rather than~$I_P^{\eps,\rm eq}$, and the difference between $I_P^\eps$ and $I_P^{\eps,\rm eq}$ will be shown to be of subleading order in \eqref{eq: appariement}, thanks to the estimate \eqref{conditioning-est} (see Proposition \ref{Proposition - estimates on g0} below).

The conditioning ensures that the    pullback of the test functions can be performed  efficiently, on very small time intervals of size $\delta = O(\eps ^{1- 1/2d}) $ (see Section 3). 
Each pullback  will involve combinatorial factors, counting the number of   trajectories compatible with a given~$Z_{m_i}$, which cannot  be iterated $O(1/\delta)$ times without leading to strong divergences. The idea is therefore to iterate only the principal terms, removing   at each time step $O(\delta)$ all ``non minimal correlations", up to reaching an intermediate time scale $\tau$ such that $\delta \ll \tau \ll 1$ (see Section \ref{sec:tau}). 

\subsection{The factorization defect on a time~$\tau\gg \delta$}\label{factorization-subsec}

 The second difficulty is that the pullback does not preserve exactly the factorized structure 
$$
 \bbE_\eps \left[ \Big(\prod_{u<p}  \zeta^\eps_{\theta_{u}} ( h^{(u)}) \Big) \,   {\color{black} \Otimes_{ i \in B } }\zeta^\eps_{ m_i, \theta_p} \big( \phi_{\theta_i-\theta_p} ^{(i)} \big) \right]
  \neq \sum_{\NN}  \bbE_\eps \left[ \Big(\prod_{u<p}  \zeta^\eps_{\theta_{u}} ( h^{(u)}) \Big) \,   {\color{black} \Otimes_{ i \in B } }\zeta^\eps_{ m_i+n_i , \theta_{p-1} } \big( \phi_{\theta_i-\theta_{p-1} } ^{(i)} \big) \right]\,.
$$
Let us  start from an observable  which is a product of blocks
\begin{equation} 
\label{blocks}
\Phi_\MM (Z_\M):= \prod _{ i \in B} \phi^{(i)}\big( Z_{m_i}^{(j)}\big)\,,
\end{equation}
denoting $\MM = (m_i)_{i \in B}$ with $m_i \geq 1$. On an elementary time interval of size~$\delta$, these blocks~$ \phi^{(i)}$ are transported dynamically, which can lead to some dynamical correlations. Blocks are then connected into {\it ``packets"} according to the dynamical correlations. Then, in order to keep a fluctuation structure, all variables have to be centered. The trivial packets containing only one block have already the fluctuation structure so no additional term appears. The other packets are called {\it ``clustering"} since they contain at least two blocks, and their centering provides a contribution of the expectation.
 The goal of Section 3 is thus to establish the following identity, relating the fluctuation structure at time~$\theta$ to that at time~$\theta-\delta$ (with~$\theta$ chosen such that~$\theta$ and~$\theta-\delta $ belong to~$(\theta_{p-1},\theta_p)$):
 \begin{equation}
\label{Duhamel-identity-block}
\begin{aligned}
&\bbE_\eps   \Big[   \Big( \prod_{u=1} ^{p-1}  \zeta^\eps_{\theta_u} \big( h^{(u)}\big)\Big)  \Otimes_{ i \in B}
 \zeta^\eps_{m_i ,\theta} \big(   \phi ^{(i)} \big) \Big] \\
  &\qquad  =
\sum_{\bf \NN }\sum_{\eta_1 \cup \eta_2 \hbox{ \tiny \rm partition of }B \atop \eta_2 \hbox{ \tiny\rm  clustering} }   \prod _{q\leq |\eta_2| }\mu_\eps^{1-\frac{|\eta_{2,q}|}2} \bbE_\eps\Big[   \phi_\delta^{(\eta_{2,q})}  \Big]   \\
& \qquad  \qquad  \times 
\bbE_\eps   \Big[   \Big( \prod_{u=1} ^{p-1}  \zeta^\eps_{\theta_u} \big( h^{(u)}\big) \Big)    \Big( \Otimes_{q\leq |\eta_1| }   \mu_\eps^{\frac12 -\frac{|\eta_{1,q}|}2}  \zeta^\eps_{ M_{\eta_{1,q}}^\delta ,\theta-\delta} \Big(  \phi_\delta^{(\eta_{1,q})}  \Big) \Big)\Big] \,.
\end{aligned}
\end{equation}
where $M^\delta_{\eta_{1,q}}$ is the total number of particles in the packet $\eta_{1,q}$ at time $\theta-\delta$, and $\phi_\delta^{(\eta_{1,q})} $ is supported on configurations obtained by backward pseudo-trajectories forming a cluster $\eta_{1,q}$ (in a sense to be made precise in Section~\ref{sec:Duhamel}).

  It will be useful in the sequel to interpret~(\ref{Duhamel-identity-block}) by the following recipe 
   (see Figure~\ref{fig:fubini nested-section 2} page~\pageref{fig:fubini nested-section 2}): between time~$\theta$ and $\theta-\delta$
\begin{itemize}
\item some observables are grouped into clusters by dynamical constraints, which leads to the partition into packets~$\eta_1, \eta_2$;
\item  observables are pulled back according to the Duhamel pseudo-trajectories compatible with the previous clustering conditions;
\item some non trivial clusters (containing at least two packets)   encoded in $\eta_2$ are expelled from the fluctuation, their expectation remaining as an independent factor in the product.
\end{itemize}

 \bigskip
To iterate this procedure down to time~$\theta-\tau$ with~$\tau = R\delta \ll 1$, we need to extend formula~(\ref{Duhamel-identity-block}) starting from packets and not only from blocks. We will therefore construct iteratively on each time step~$[\theta - r \delta, \theta - (r-1) \delta]$ for~$r =1,\dots,R$ a sequence of  nested partitions
 $\eta_1^{r-1} \hookrightarrow \eta^r_1 \cup \eta^r_2 $ with $\eta^r_2$ corresponding to packets which are expelled from the main factorized structure, contributing only via their expectation, and $\eta_1^r$ corresponding to packets contributing to the factorized structure via their fluctuations  (see Figure \ref{fig:fubini nested-section 2}).   
 \begin{figure}[h] 
\includegraphics[width=5in]{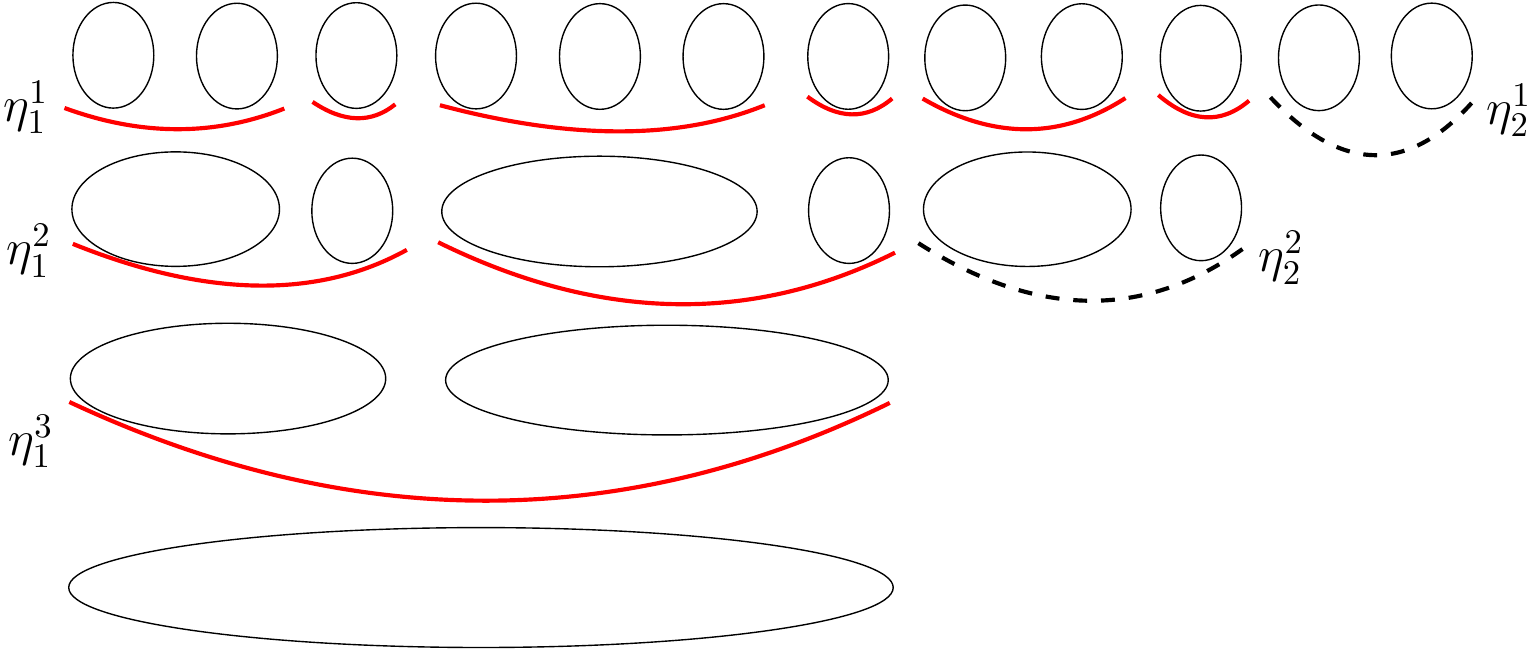}  
\caption{ At time $\theta$, the set $B$ contains 12 blocks. The nested partitions  are depicted and the dashed parts represent the expelled clustering cumulants.
After 3 iterations, blocks 8 to 12 (numbered from left to right)  have been expelled and
the other blocks  have merged into a single packet  connected by dynamical
constraints. 
}
 \label{fig:fubini nested-section 2}
\end{figure}

 \bigskip
As assumed in (\ref{duhamel-iteration}) and as becomes apparent in view of the powers of $\mu_\eps$ in (\ref{Duhamel-identity-block}), we expect that the leading order term corresponds to  the single block type factorized structure, i.e. to $  \cup _{r=1}^R \eta_1^r = \emptyset$ and $\eta_2^R$ is the trivial partition in singletons. 
However, discarding clustering terms at each time $\theta-r \delta$ ($r \leq R$) would generate diverging remainders since $1/\delta \gg 1$. We will therefore need to perform the full iteration (discarding only non minimal correlations as explained in Section~\ref{pullback-subsec}) on some intermediate time scale $ \tau\ll 1$, and then to combine all remainders due to clusterings on $[\theta- \tau, \theta]$ in  a rather subtle way (see Section  4) to recover the single block type factorized structure (see Section 5). We refer to Figure~\ref{fig:sampling-old} for a summary of the procedure.
 \begin{figure}[h] 
\includegraphics[width=5in]{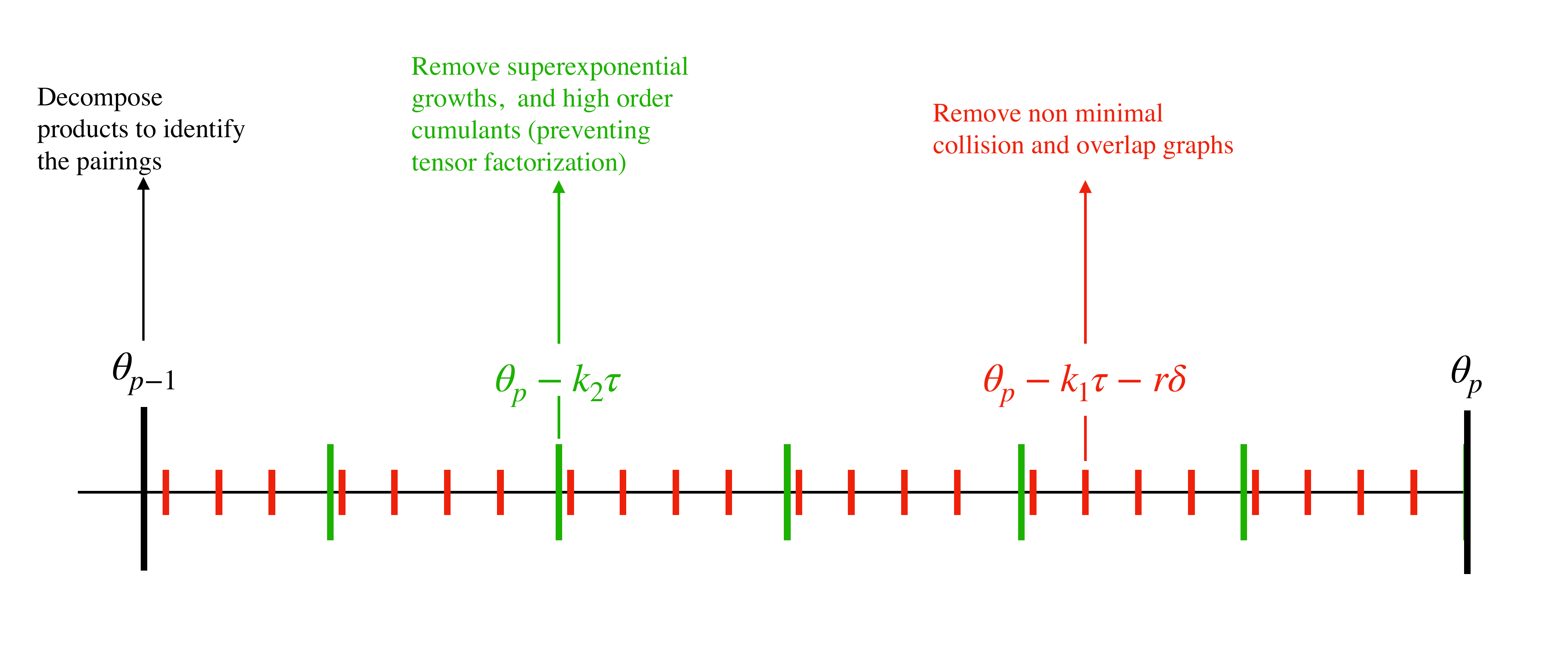}  
\caption{Double time-sampling of the   interval~$(\theta_{p-1}, \theta_{p})$ into pieces of size~$\tau$ (in green) and subpieces of size~$\delta$ (in red). At each time step of size~$\delta$, all non minimal correlations are discarded; at each time step of size~$\tau$, all other remainder terms are discarded (exponentially large collision trees, higher order clusters). Pairings are identified at each macroscopic time step~$\theta_p,\theta_{p-1}$, etc.}
 \label{fig:sampling-old}
\end{figure}

\subsection{Control of the remainder terms}

All the remainder terms coming from these two samplings (at scales $\delta$ and $\tau$) are controlled by decoupling the different times thanks to  H\"{o}lder's inequality.
This will rely on  the moment estimates on the fluctuation field
 stated in  the following proposition.
\begin{Prop}
\label{Proposition - estimates on g0}
Let~$h$ be a function in $L^\infty(\T ^d\times\R^d)$. 
Then for all~$1 \leq p < \infty$ and for~$\eps$ small enough, the moments of the fluctuation field (at equilibrium and under the conditioned measure) are bounded: 
\begin{eqnarray}
&&  \left|\bbE^{\rm eq}_\eps \Big( \big(\zeta^{\eps,\rm eq} (h)\big) ^p \Big)\right| \leq C_p   \| h \|_\infty^p\,, \label{eq: moment ordre 2,4 eq}\\
&&\left| \bbE_\eps \Big( \big(\zeta^\eps (h)\big) ^p \Big)\right| \leq C_p   \| h \|_\infty^p\,, \label{eq: moment ordre 2,4}
\end{eqnarray}
where the constant $C_p>0$ depends only on~$p$. 
Moreover under  the assumptions of Proposition~{\rm\ref{eq: appariement}}
\begin{equation} \label{eq:IPeq-IP}
\left| I_P^{\eps,\rm eq} - I_P^{\eps} \right| \leq \Big(
\prod_{p = 1} ^P \| h^{(p)}\|_{L^\infty} \Big)C_P \left(\Theta\e^{d}\right)^{1/2}
\end{equation}
uniformly in~$\theta_1, \dots \theta_p \in [0, \Theta]$.
\end{Prop}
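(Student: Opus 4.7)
The plan is to establish the three bounds in sequence, reducing the conditioned-measure estimates to the equilibrium one, and ultimately to a cumulant bound for the Gibbs field. First, for \eqref{eq: moment ordre 2,4 eq}, by time invariance of the equilibrium measure under the hard-sphere flow it suffices to prove the bound at $t=0$. Writing $\zeta^{\eps,\rm eq}(h) = \mu_\eps^{-1/2}(S - \bbE^{\rm eq}_\eps S)$ with $S=\sum_i h({\mathbf z}^{\eps 0}_i)$, we expand $\bbE^{\rm eq}_\eps[(\zeta^{\eps,\rm eq}(h))^p]$ as a sum over set partitions of $\{1,\dots,p\}$ of products of cumulants of the fluctuation field; only partitions whose blocks all have size at least two contribute, since the field is centered. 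The low-activity cluster expansion for the hard-sphere grand canonical measure (as developed in~\cite{BGSS2,BGSS3}) yields
\begin{equation*}
\big|c_k\big(\zeta^{\eps,\rm eq}(h)\big)\big| \leq C_k\,\mu_\eps^{1-k/2}\,\|h\|_\infty^k, \qquad k \geq 2.
\end{equation*}
A partition with $r$ blocks of total size $p$ contributes at most $C_p\,\mu_\eps^{r-p/2}\,\|h\|_\infty^p$; since all blocks have size $\geq 2$ we have $r \leq p/2$, so each contribution is $O(\|h\|_\infty^p)$, which after summation yields \eqref{eq: moment ordre 2,4 eq}.

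For \eqref{eq: moment ordre 2,4}, introduce the deterministic shift
\begin{equation*}
\Delta_{\eps,t}(h) := \zeta^\eps_t(h) - \zeta^{\eps,\rm eq}_t(h) = \sqrt{\mu_\eps}\,\bbE^{\rm eq}_\eps\big[\mathbf{1}_{^c\Upsilon^\eps_\cN}\,\pi^\eps_t(h)\big].
\end{equation*}
Writing $\mathbf{1}_{^c\Upsilon^\eps_\cN}\cN = \mathbf{1}_{^c\Upsilon^\eps_\cN}(\cN-\mu_\eps) + \mu_\eps\,\mathbf{1}_{^c\Upsilon^\eps_\cN}$, bounding $|\pi^\eps_t(h)| \leq \|h\|_\infty\cN/\mu_\eps$, and applying Cauchy--Schwarz with the Poisson-like bound $\mathrm{Var}(\cN) = O(\mu_\eps)$ together with \eqref{conditioning-est}, we obtain $|\Delta_{\eps,t}(h)| \leq C\|h\|_\infty\sqrt{\Theta\eps^d}$, which is infinitesimal. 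Since $\bbE_\eps[(\zeta^\eps_t(h))^p] \leq \bbE^{\rm eq}_\eps[|\zeta^{\eps,\rm eq}_t(h) + \Delta_{\eps,t}(h)|^p]$, the binomial inequality combined with \eqref{eq: moment ordre 2,4 eq} gives \eqref{eq: moment ordre 2,4}.

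For the difference \eqref{eq:IPeq-IP}, decompose
\begin{equation*}
I_P^{\eps,\rm eq} - I_P^\eps = \bbE^{\rm eq}_\eps\Big[\mathbf{1}_{^c\Upsilon^\eps_\cN}\prod_{p=1}^P\zeta^{\eps,\rm eq}_{\theta_p}(h^{(p)})\Big] + \bbE^{\rm eq}_\eps\Big[\mathbf{1}_{\Upsilon^\eps_\cN}\Big(\prod_{p=1}^P\zeta^{\eps,\rm eq}_{\theta_p}(h^{(p)}) - \prod_{p=1}^P\zeta^\eps_{\theta_p}(h^{(p)})\Big)\Big].
\end{equation*}
The first term is bounded by Cauchy--Schwarz together with \eqref{conditioning-est} and H\"older using \eqref{eq: moment ordre 2,4 eq} (applied at each $\theta_p$ by stationarity), yielding $C_P\sqrt{\Theta\eps^d}\prod_p\|h^{(p)}\|_\infty$. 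For the second term, use $\zeta^\eps_{\theta_p}(h^{(p)}) = \zeta^{\eps,\rm eq}_{\theta_p}(h^{(p)}) + \Delta_{\eps,\theta_p}(h^{(p)})$ to expand binomially into a sum indexed by $S\subseteq\{1,\dots,P\}$; each summand with non-empty $S$ is bounded by $C_P(\Theta\eps^d)^{|S|/2}\prod_p\|h^{(p)}\|_\infty$ using the shift estimate for the factors indexed by $S$ and H\"older with \eqref{eq: moment ordre 2,4 eq} for the others. The leading $|S|=1$ contribution dominates and gives \eqref{eq:IPeq-IP}.

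The essential analytical input is the cumulant bound in the first step, which relies on a combinatorial cluster expansion for the hard-sphere Gibbs measure at the low activity $\mu_\eps\eps^d=\eps$; this is the main technical ingredient, drawn from \cite{BGSS2,BGSS3}. The remainder of the proof is organized around a careful application of H\"older and Cauchy--Schwarz combined with the a priori conditioning estimate~\eqref{conditioning-est}, and the key quantitative point is that the deterministic shift $\Delta_{\eps,t}(h)$ inherits the smallness $\sqrt{\Theta\eps^d}$ from the conditioning only after exploiting the $\sqrt{\mu_\eps}$ gain coming from the fluctuation scaling of the centered particle number $\cN - \mu_\eps$.
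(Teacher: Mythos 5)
Your proof is correct and follows essentially the same route as the paper. The paper simply \emph{cites} the first bound \eqref{eq: moment ordre 2,4 eq} from Proposition A.1 of \cite{BGSS2}, whereas you supply a (correct) sketch via the cumulant expansion and the observation that centered cumulants scale as $\mu_\eps^{1-k/2}$; this is indeed how the companion paper argues. For \eqref{eq: moment ordre 2,4}, the paper uses the same binomial expansion in the deterministic shift $\Delta_\eps(h)=\sqrt{\mu_\eps}\,\bbE^{\rm eq}_\eps[\mathbf{1}_{^c\Upsilon^\eps_\cN}\pi^\eps(h)]$, but bounds the shift by splitting $\pi^\eps(h)=\bbE^{\rm eq}_\eps[\pi^\eps(h)]+\mu_\eps^{-1/2}\zeta^{\eps,\rm eq}(h)$ and applying Cauchy--Schwarz to the second term, rather than your route of bounding $|\pi^\eps(h)|\le\|h\|_\infty\cN/\mu_\eps$ and controlling $\cN-\mu_\eps$ in $L^2$; the two are equivalent and both yield $|\Delta_\eps(h)|\lesssim\|h\|_\infty(\Theta\eps^d)^{1/2}$. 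For \eqref{eq:IPeq-IP}, your decomposition into the restriction to $^c\Upsilon^\eps_\cN$ plus a telescoping term on $\Upsilon^\eps_\cN$ is algebraically the same as the paper's decomposition over subsets $A\subset\{1,\dots,P\}$ with nonempty complement, and the bookkeeping with H\"older and \eqref{conditioning-est} is identical.
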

The standard result  \eqref{eq: moment ordre 2,4 eq} can be found in Proposition A.1 from \cite{BGSS2}, from which~\eqref{eq: moment ordre 2,4} and~\eqref{eq:IPeq-IP} will be derived in Section \ref{variance-sec}.

The key argument  to implement the strategy is therefore to obtain
  estimates for the expectation and variance of $\otimes$ products defined by   (\ref{tensor-product-eq}),  proved in Sections~\ref{geometric-sec}
and~\ref{variance-sec}. To derive these estimates it is necessary to have a   precise description of the structure of the test functions~$\phi^{(i)}_{\theta_i-\theta_p}(Z_{m_i})$. As
   will be made precise in Section \ref{sec:tau}, 
  they  are supported on ``dynamical clusters''  of $m_i$  particles, called {\it forward clusters} below. This means that   there exists a graph with~$m_i$ vertices, constructed by adding one edge each time two particles find themselves at a distance (equal or) less than~$\eps$ during the time interval~$[\theta_{p}, \theta_i]$.  
     \begin{figure}[h] 
\centering
\includegraphics[width=2in]{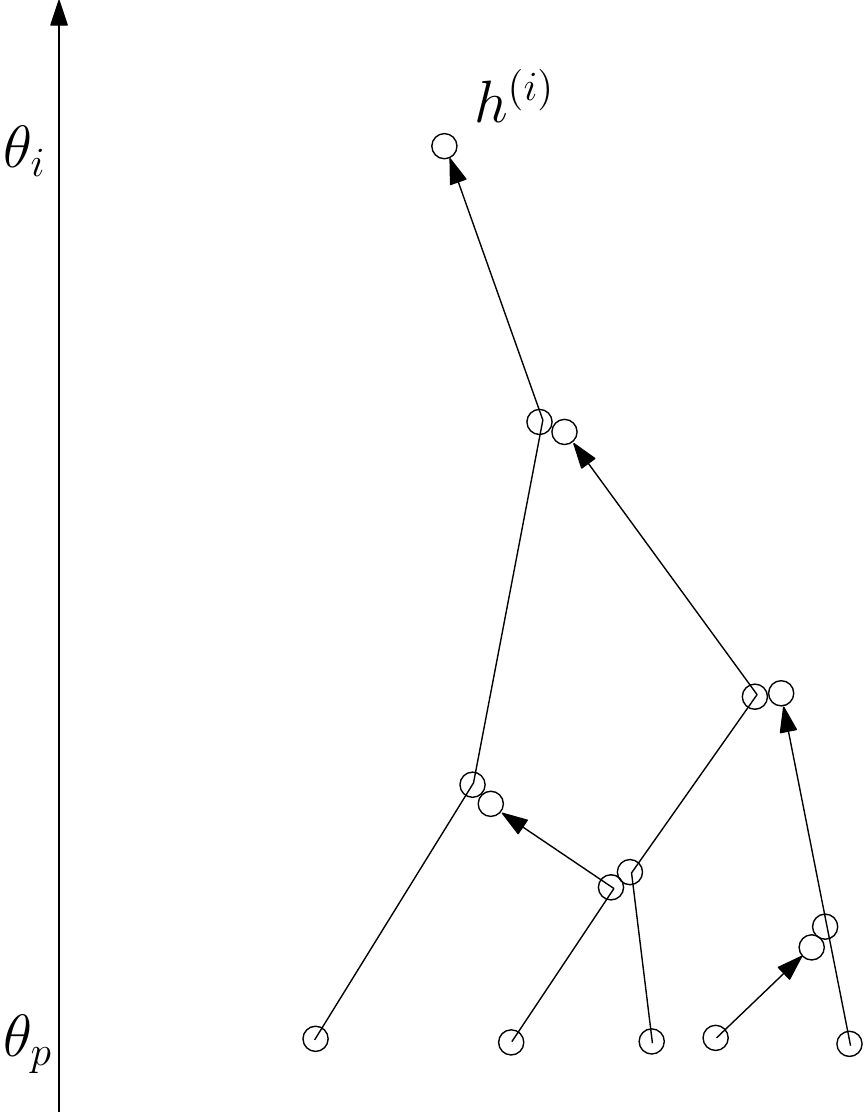} 
\caption{Forward cluster of $\phi_{\theta_i-\theta_{p}} ^{(i)}$ associated with the pullback of~$h^{(i)}$ during a time $\theta_i - \theta_{p}$, case of $m_i = 5$ hard spheres. The function $\phi^{(i)}$  is supported on a configuration $Z_{m_i}$ such that, starting at time $\theta_p$ and going forward in time, particles encounter, and are progressively removed from the dynamics, until only one particle is left at time $\theta_i$.
}
\label{fig:FC}
\end{figure}

   In particular 
   they will be shown to satisfy roughly an estimate of type
$$
\left|\phi^{(i)}_{\theta_i-\theta_p}\left(Z_{m_i}\right) \right| 
\leq  C {(C\mu_\eps)^{m_i- 1} \over m_i!}\sum \indc_{ Z_{m_i} \,\hbox{\tiny \rm forward cluster}}\;,$$
where we sum over all possible forward dynamics starting from $Z_{m_i}$ as in Figure \ref{fig:FC}.
 Note that the size of the typical volume spanned by one particle in a finite time is 
$\mu_\e^{-1}$, so that  the volume of a cluster is typically $\mu_\eps^{-m_i + 1}$. The $1/m_i!$ is due to the symmetrisation  by  permutation of the particle labels.
This estimate in turn   will imply that
$$
  \bbE _\eps  \left[   \left(\Otimes _{i \in B}     \zeta^{\eps} _{m_i} \big( \phi^{(i)} \big) \right)^2\right]    \leq  (C'\Theta)^{2M}   \, , \quad M := \sum_{i \in B} m_i\, . 
$$
Moreover using the fact that error terms are supported on clusters with additional constraints, we will get some additional smallness, providing the expected control on  the error terms at arbitrary times.

\section{Preserving the fluctuation structure on small times}
\label{sec:Duhamel}
 
  In this section we  detail one part of the discussion of the previous section, namely   how to transport the fluctuation structure between two time steps, thanks to  the Duhamel formula.   However in order to make sense of the Duhamel formula and its dual form uniformly in~$\varepsilon$, we will actually not connect directly time $\theta_p$ to time $\theta_{p-1}$, but rather introduce an iteration 
   on   infinitesimal time intervals (much smaller than Lanford's convergence time). This consists in   transforming a weight at a  time $\theta \in(\theta_{p-1},\theta_p)$ in a (more complicated) weight at a time~$\theta-\delta $ in~$(\theta_{p-1},\theta_p)$, for some very small~$\delta>0$ tuned in~(\ref{eq: choix parametres}). In what follows, we shall therefore focus only on the interval~$(\theta-\delta, \theta)$: the precise statement requires some notation and is given at the end of this section (see Proposition~\ref{prop-from theta to theta + delta} page~\pageref{prop-from theta to theta + delta}).

\medskip

  The procedure  relies   on three ingredients: we fix~$\theta= \theta_p - r\delta $ for some $r \in [0, (\theta_p-\theta_{p-1})/\delta]$.
 \begin{itemize}
 \item We first introduce  the family of {\it correlation functions} $\left( G^{\eps}_M\right)_{M \geq 1}$ at time 
 $t\in [\theta-\delta,\theta ]$, defined for any test function $H_M$  of $M$ variables by
\begin{equation}
\label{corr-function}
\begin{aligned}
 \int G^{\eps}_M (t,Z_M)H_M (Z_M) dZ_M := \bbE_\eps  \Big[\Big( \prod_{u=1} ^{p-1}  \zeta^\eps_{\theta_u} \big( h^{(u)}\big) \Big)\,    \pi^\eps _ {M,t} (H_M ) \Big]\, .
\end{aligned}
\end{equation}
These correlation functions satisfy a hierarchy of linear evolution equations, the so-called BBGKY hierarchy, so that $G^\eps_M (\theta)$ can be expressed   as a Duhamel sum involving the  correlation functions at time $\theta-\delta $ (see Section \ref{duhamel-subsec})
\begin{equation}
\label{eq:seriesexp}
 G^\eps_M(\theta ) = \sum_{N\geq 0} Q^\eps_{M, N} (\delta) G^\eps_{M+N} (\theta-\delta )\,,
 \end{equation}
 where the operator $Q^\eps_{M, N}$ encodes transport and collisions.
 
 \item We then  use a graphical representation of the elementary operator
 $Q^\eps_{M, N} (\delta)$ in terms of ``pseudo-trajectories'' to define an ``adjoint'' operator (see Section \ref{duality-subsec}). 
 
    \item We finally recombine the  contributions  of the different correlation functions to identify a fluctuation structure at time $\theta-\delta$ (see Section \ref{factorization-sec}). 
\end{itemize}
 Note that the last time interval   when $r = \lfloor \frac{\theta_p-\theta_{p-1}}{\delta} \rfloor$
 may be a little smaller than $\delta$, but the very same arguments can be applied. 

\subsection{The Duhamel iteration and its graphical representation} 
\label{duhamel-subsec}
 
In the grand canonical setting, \eqref{corr-function}  is equivalent to
  \begin{align*}
\label{eq: densities at t}
G^{\eps}_M (t,Z_M)
 = \frac1{\mu_\eps^M } \,\sum_{n=0}^{\infty} \,\frac{1}{n!} \,\int_{ (\T ^d\times\R^d)^n} dz_{M+1}\dots dz_{M+n} \,
W_{M+n}^\eps (t, Z_{M+n}) \, , \qquad t \in [0,\Theta] \, ,
\end{align*}
\medskip
where the (signed, non-normalized) measure $\left(W^\eps_N(t)\right)_{N \geq 1}$ is defined as follows. 

\noindent
On $[\theta_1, \theta_2]$,~$W^\eps_N$ solves the Liouville equation
\begin{equation}
\label{Liouville}
	\d_t W^{\eps}_N +V_N \cdot \nabla_{X_N} W^{\eps}_N =0  \,\,\,\,\,\,\,\,\, \hbox{on } \,\,\,{\mathcal D}^{\eps}_{N}\, ,
	\end{equation}
	with specular reflection on the boundary 
(and extending $W^\eps_N$ by zero outside~$\cD_N^\eps$) and with initial data (see \eqref{eq: initial measure})
$$W^\eps_N(\theta_1,Z_N) := {\mathbf 1}_{\Upsilon^\eps_N} 
  W^{\eps, {\rm eq} }_{N} (Z_N)  \;
 \frac1{\sqrt{\mu_\eps}}\left(\sum_{i=1}^N h^{(1)}(z_i) -   \mu_\e \bbE_\eps \big[ h^{(1)}\big]\right) \, .$$
 Inductively for $p>2$, one solves again Eq.\,\eqref{Liouville} on $[\theta_{p-1}, \theta_p]$,
with perturbed initial data 
 $${ W}^\eps_N(\theta_{p-1}^+,Z_N) :=
W^\eps_N(\theta_{p-1}^-,Z_N) \frac1{\sqrt{\mu_\eps}}  \left(\sum_{i=1}^N h^{(p-1)}(z_i) -  \mu_\e\bbE_\eps   \big[ h^{(p-1)}\big]\right)\, ,$$
 where $\pm$ indicate the limits from the future/past.

 \subsubsection{The Duhamel iteration.} 
   By integration of the Liouville equation~(\ref{Liouville}) for fixed $\eps$, one obtains formally that  for any integer~$M$, the $M$-particle correlation function~$G^{\eps}_M  $ satisfies
\begin{equation}
\label{BBGKYGC}
 \partial_t G^\eps_M + V_M \cdot \nabla_{X_M} G^\eps_M = C_{M,M+1}^\eps G^\eps_{M+1} \quad \mbox{on} \quad {\mathcal D}^\eps_{M  }\;,
\end{equation}
 with specular boundary reflection as in \eqref{Liouville}. This is the well-known  BBGKY hierarchy (see~\cite{Ce72}), which is the  elementary brick in the proof of   Lanford's theorem for short times~\cite{La75}. The operator~$C^\eps_{M, M+1}$ describes  the collision  between one ``fresh'' particle (labelled $M+1$) 
 and one given particle~$i\in \{1,\dots, M\}$:$$ C_{M,M+1} ^\eps G^\eps_{M+1}   := \sum_{i=1}^M C_{M,M+1} ^{\eps,i} G^\eps_{M+1}  $$
with
 $$
 \begin{aligned}
 (C_{M,M+1} ^{\eps,i} G^\eps_{M+1} )(Z_M)&:= 
  \int_{{\mathbb S}^{d-1} \times \R^d}   G^\eps_{M+1} (Z_M^{\langle i \rangle}, x_i,v_i',x_i+\e \omega ,u ') \big( (u - v_i)\cdot \omega  \big)_+ \, d \omega\, du\\
&\quad -  \int_{  {\mathbb S}^{d-1}\times \R^d}   G^\eps_{{M+1} } (Z_M, x_i+\e \omega,u) \big( (
u  - v_i
)\cdot \omega  \big)_- \, d \omega \,du\\
&= 
  \int_{{\mathbb S}^{d-1} \times \R^d}   G^\eps_{M+1} (Z_M^{\langle i \rangle}, x_i,v_i',x_i+\e \omega ,u ') \big( (u - v_i)\cdot \omega  \big)_+ \, d \omega\, du\\
&\quad -  \int_{  {\mathbb S}^{d-1}\times \R^d}   G^\eps_{{M+1} } (Z_M, x_i-\e \omega,u) \big( (
u  - v_i
)\cdot \omega  \big)_+ \, d \omega \,du \, ,
 \end{aligned}
 $$
 where~$(v'_i,u ')$ is recovered from~$(v_i,u )$ through the scattering law as in \eqref{scattlaw}, and with the notation
$$Z_M^{\langle i \rangle} := (z_1,\dots,z_{i-1},z_{i+1},\dots,z_M )\,.
$$ 
Now let us fix $\theta= \theta_p - r\delta $ for some $r \in [0, (\theta_p-\theta_{p-1})/\delta]$.
Denote by~$S^\eps_M$\label{Sn-def}  the group associated with  transport in $\cD^\eps_M$, with specular reflection on the boundary. 
By iteration of  Duhamel's formula
$$
G^\eps _M  (\theta ) = S^\eps_M(\delta) G_{M }^{\eps }(\theta-\delta )+ \int_{\theta-\delta }^{\theta   }S^\eps_M(\theta -t_1)  C_{M,M+1} ^\eps G^\eps_{M+1}(t_1)\, dt_1\,,
$$
  the solution~$G^\eps _M $ of the hierarchy~(\ref{BBGKYGC}) can formally be expressed as a sum of operators acting on the   data at time~$\theta-\delta $:
\begin{equation}\label{eq:seriesexpbis}
G^\eps _M   (\theta) =\sum_{N\geq0}    Q^\eps_{M, N}( \delta )  G_{M+N }^{\eps }(\theta-\delta )\, ,
\end{equation}
where we have defined  
$$\begin{aligned}
Q^\eps_{M, N }( \delta)   G_{M +N}^{\eps }(\theta -\delta  )&:= 
\int_{\theta -\delta}^{\theta } \int_{\theta -\delta}^{t_{1}}\dots  \int_{\theta-\delta }^{t_{N-1}}  S^\eps_M( \theta  -t_{ 1}) C^\eps_{M,M+1}  S^\eps_{M+1}(t_{1}-t_{2})    \\
&\qquad \qquad \qquad \dots  S^\eps_{M+N}(t_{N})    G_{M+N }^{\eps }(\theta-\delta ) \: dt_{N} \dots d t_{1} \, .
\end{aligned}
$$ 
We stress that formula \eqref{eq:seriesexpbis} is valid almost everywhere for a large class of measures~$\left(W^\eps_N(t)\right)_{N \geq 1}$, in spite of a collision operator being defined as a trace (see for instance \cite{GSRT, S}).

 \subsubsection{Backward pseudo-trajectories.} 
It is a standard procedure to translate the iterated Duhamel formula~(\ref{eq:seriesexpbis}) in terms of (backward) {\it pseudo-trajectories}. We first encode the combinatorics of collisions in  a graph $a = (a_j)_{1\leq j \leq N} $ where $a_j\in  \{1,\dots,M+j-1\}$ denotes the label of the particle colliding with particle $M+j$ at its creation time. Note that the set of graphs $a$ is a collection $\cA_{M,N}$ of $M$ binary trees with a total of $N$ branchings. We define~$\cA_{M,N}^\pm$ the set of such {\it collision trees}, where each 
$a_j$ is equipped with a sign $s_j \in \{-1,1\}$.
Then, given such an $a$, as well as a configuration $Z_M$ and collision parameters $(t_j, \omega_j, u_j)_{1\leq j \leq N}$ with~$t_{N+1} = \theta -\delta < t_N<\dots <t_1 <\theta =t_0$, we define iteratively the pseudo-trajectory 
$$\Psi^{\eps}_{M,N} = \Psi^{\eps}_{M,N} \Big(Z_{M}, a, ( t_{j}, \omega_{j}, u_j)_{j= 1,\dots, N}\Big)$$
 as follows
 (denoting by $Z^\e_{M+i}(t )$ the coordinates of  the  pseudo-particles at time~$ t \in ] t_{i+1},t_{i}]$):
\begin{itemize} \label{listPT}
\item starting from $Z_M$ at time $ \theta $,
\item transporting all existing particles backward on $(t_{j}, t_{j-1})$  (on ${\mathcal D}^\eps_{M +j-1}$ with specular reflection at collisions),
\item adding a new particle labeled~$M+j$ at time $t_{j} $  at position~$x^\e_{a_{j}}(t_{j}) +\eps s_{j} \omega_{j}$, and with velocity~$u_j$,
\item   applying the scattering rule  if $s_{j} >0$.
\end{itemize}

We discard non admissible parameters for which the above procedure is ill-defined; in particular we exclude values of $\omega_{j}$ corresponding to an overlap of particles  (two particles at mutual distance less than $\e$) as well as those such that
 $\omega_j \cdot \big( u_j-v^\e_{ a_{j}} (t_{j}^+)\big) \leq0$.
In the following we denote by~$\cG^\eps_N(a,Z_M )$ the set of admissible parameters   $(t_j, \omega_j, u_j)_{1\leq j \leq N}$
and  by~$Z^\e_{M+N}(\theta -\delta)$ the configuration at time~$\theta -\delta$.
With these notations, one gets the following geometric representation : 
\begin{equation*}
\begin{aligned}
 G^\eps _M (\theta, Z_M) &=\sum_{N \geq 0}
\sum_{a \in \cA^\pm_{M,N} }
\int_{\cG^\eps_N(a,Z_M )}    dT_{N}  d\Omega_{N}  dU_N\\
&\qquad \times \left(\prod_{j=  1}^{  N}  s_{j}\Big(\big( u_j -v^\e_{ a_{j}} (t_{j}^+)\big) \cdot \omega_{j} \Big)_+\right)  
G_{M+N}^{\eps  } \big (\theta -\delta, Z^\e_{M+N}(\theta -\delta)\big)\, , 
\end{aligned}
\end{equation*}
where $(T_{N}, \Omega_{N}, U_N) := (t_{j}, \omega_{j}, u_j)_{  1\leq j\leq   N}$.

\medskip

We recall the following classical notions of {\it collision} and {\it recollision} in a pseudo-trajectory.
  \begin{Def}
\label{def: clustering}
A \emph{collision} is the addition of a fresh particle at distance~$\eps$  from an existing particle (see the third item above), while a  \emph{recollision} involves two  particles transported by the backward flow~$S^\e$
(in between two collision times).
\end{Def}

\subsubsection{Blocks and packets.} 

As mentioned in Section \ref{factorization-subsec}, when pulling back the product of fluctuation fields,   various structures are involved.  We refer to Figure~\ref{fig:blocks and packets} for a schematic description of the following definition.

\begin{Def}\label{def:blocks and packets}
Given a particle labeled~$j\in \{1, \dots, P\}$ and the number~$m_j $ of particles in the collision tree of~$j$ at  some time, the associate
  \emph{block}  of particles  is  the set of all  particles  in the collision tree  at that time.  We   denote by $Z_{m_j}^{(j)}$ the corresponding configuration.

A \emph{packet}  of particles  is a union of different blocks which have been connected dynamically  at time~$t$. These packets   aggregate as the iteration described in Section~{\rm\ref{factorization-subsec}} progresses.  Given $B\subset \{1, \dots, P\},$  a partition $\varsigma$ of $B$  and $i \leq |\varsigma|$,  we     call~$C_i$ the   {\it packet} of~$M_{\varsigma_i} :=\displaystyle \sum_{j \in \varsigma_i} m_j$ particles with  configuration~$( Z_{m_j}^{(j)})_{j \in \varsigma_i} $, connecting  all the blocks~$j $ in~$ \varsigma_i$.
\end{Def} 

   \begin{figure}[h] 
\centering
\includegraphics[width=4in]{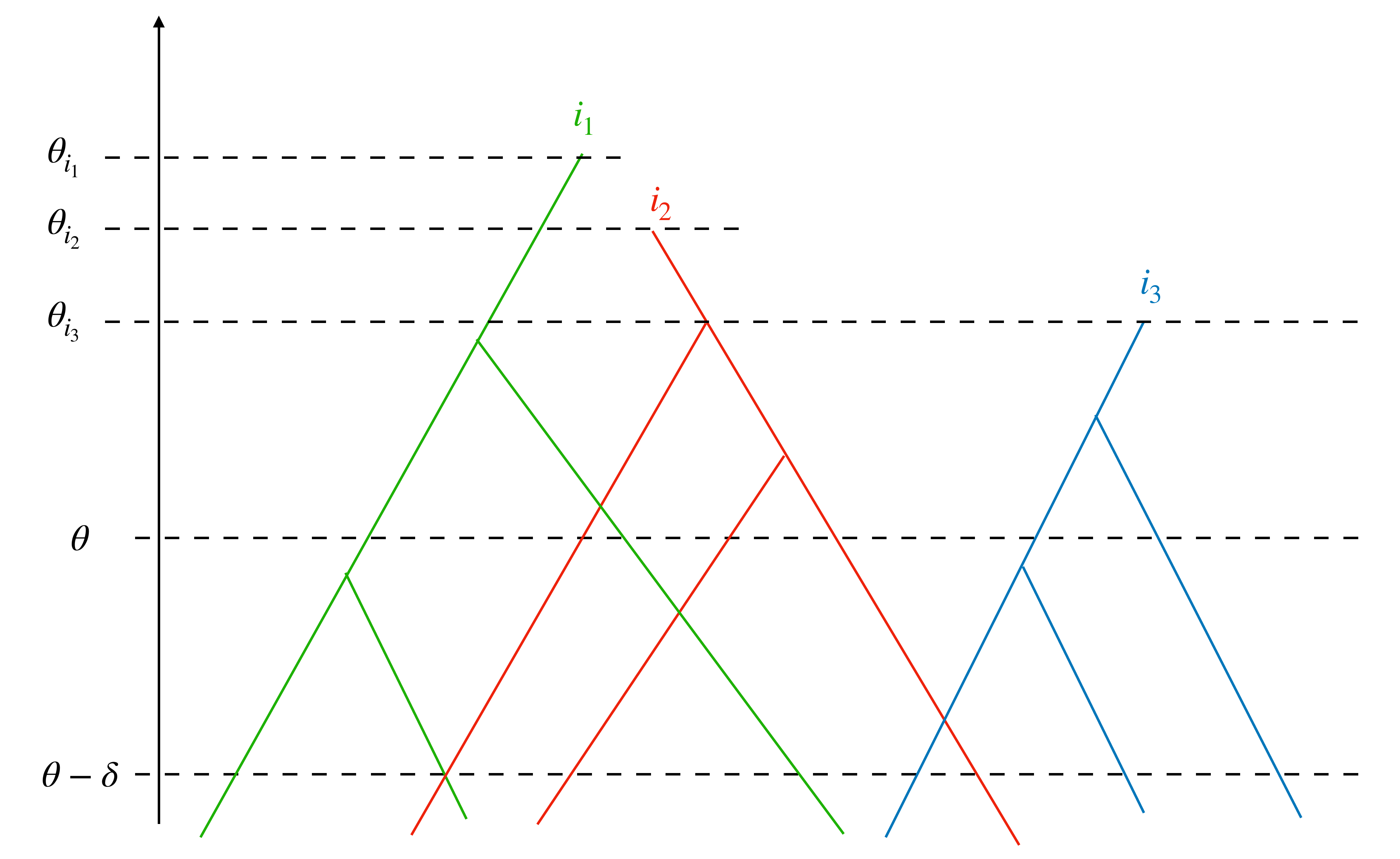} 
\caption{Three blocks indexed by~$i_1,i_2,i_3$ consist   of~$m_{i_1} = 2$, ~$m_{i_2} = 3$ and~$m_{i_3} = 2$ particles   at time~$\theta$, at which time blocks~$i_1$ and~$i_2$ have merged into one packet. The remaining block merges with that packet to build one packet at time~$\theta-\delta$.}
\label{fig:blocks and packets}
\end{figure}

For the elementary step of the iteration, we then have to consider functions of the form
\begin{equation} 
\label{eq:defPhi}
\Phi_\MM (Z_\M):= \prod _{ i \leq  |\varsigma| } \phi^{(\varsigma_i)}\big(\{ Z_{m_j}^{(j)}\} _{j \in \varsigma_i} \big) 
\end{equation}
where the  functions~$\phi^{(\varsigma_i)}$ will be assumed  symmetric within each block $j\in \varsigma_i$.  We have written~$\MM:= \left(m_j\right)_{j\in B} $.

  For a given $a \in \cA^\pm_{M,N}$ with $M = \sum_{j \in B} m_j$,
   we   denote by $n_j$ the number of branchings in the collision tree issued from the block $j$  on~$(\theta-\delta,\theta)$. Note that the   cardinal of the blocks and the packets vary with time. 
  We   also denote~$\NN:= \left(n_j\right)_{j\in B} $ so that  $\sum_{j\in B}  n_j = N$. 
Finally we   denote by~$a^{(j)}\in  \cA^\pm_{m_j,n_j}$ the restriction of~$a$ to the block $j$. Then starting from~(\ref{eq:defPhi}) we can define
  \begin{equation*}
\begin{aligned}
 G^\eps _M (\theta, Z_\M) &=\sum_{\NN}
 \sum_{a \in \cA^\pm_{\MM,\NN}} 
 \int_{\cG^\eps_\NN(a,Z_\M )}    dT_{\NN}  d\Omega_{\NN}  dU_\NN\\
&\qquad \times \left(\prod_{j \in B}\prod_{\ell=  1}^{ n_j}  s_\ell^{(j)}\Big(\big( u_\ell^{(j)} -v^\e_{ a_\ell^{(j)}} (t_\ell^{(j)+})\big) \cdot \omega_\ell^{(j)} \Big)_+\right)  
G_{M+N}^{\eps  } \big (\theta -\delta, Z^\e_{\M+\NN}(\theta -\delta)\big)\, , 
\end{aligned}
\end{equation*}
where  $(T_{\NN}, \Omega_{\NN}, U_\NN)
 = \left((t_\ell^{(j)}, \omega_\ell^{(j)}, u_\ell^{(j)})_{1\leq \ell \leq n_j}\right)_{j \in B}$ and each of the~$|B|$ sets~$(t_\ell^{(j)})_{1 \leq \ell \leq n_j}$ is ordered;  recalling~\eqref{corr-function} and \eqref{eq:seriesexpbis} and using the symmetry of correlation functions, one has 
\begin{equation}
\label{Duhamelformulabeforeduality} 
\begin{aligned}
\bbE_\eps  & \Big[ \Big( \prod_{u=1} ^{p-1}  \zeta^\eps_{\theta_u} \big( h^{(u)}\big) \Big)    \pi^\eps _ {M, \theta }( \Phi_\MM ) \Big] \\
&  = \int G^{\eps}_M (\theta ,Z_\MM)\Phi_\MM (Z_\MM) dZ_\MM 
  =
 \sum _{\NN } 
 \int  \Big( Q^{\eps} _{\M,  \NN}( \delta )    G^{\eps }_{M+N}(\theta-\delta) \Big)(Z_\MM)\; \Phi_\MM (Z_\MM) dZ_\MM \, .
\end{aligned}
\end{equation} 
\begin{Rmk}
By the Fubini identity, one may equivalently    prescribe an  order on all the collision times~$(t_j)_{1 \leq j \leq N}$ corresponding to   the trees~$a \in \cA^\pm_{M,N}$, or a partial order on all the times~$(t_\ell^{(j)})_{1 \leq \ell \leq n_j}$ for each~$j \in B$, corresponding to   the trees~$a \in \cA^\pm_{\M,\NN}$.
\end{Rmk}

\subsection{Pullback of observables}
\label{duality-subsec}
The idea now is to take advantage of  the geometric representation to construct the ``adjoint" of the operator $Q^\eps_{\M, \NN }( \delta ) $, rewriting~(\ref{Duhamelformulabeforeduality}) as
 $$
\begin{aligned}
\bbE_\eps   \Big[  \Big( \prod_{u=1} ^{p-1}  \zeta^\eps_{\theta_u} \big( h^{(u)}\big) \Big)    \pi^\eps _ {M, \theta }( \Phi_\MM ) \Big] 
&  ``= "\sum _{\NN } 
 \int     G^{\eps }_{M+N}(\theta-\delta) \Big( Q^{\eps * } _{\M,  \NN}( \delta )  \Phi_\MM \Big) dZ_{\MM+\NN} \\
 & ``= "\sum_\NN \bbE_\eps  \Big[  \Big( \prod_{u=1} ^{p-1}  \zeta^\eps_{\theta_u} \big( h^{(u)}\big) \Big)    \pi^\eps _ {M+N, \theta-\delta  }\Big(Q^{\eps * } _{\M,  \NN}( \delta )  \Phi_\MM \Big) \Big]  \, .
\end{aligned}
$$ 
In other words, this means that we would like to change variables
\begin{equation}
\label{change-variables}
 (Z_\M, T_{\NN}, \Omega_{\NN}, U_\NN) \mapsto Z^\e_{\M+\NN}(\theta-\delta)
 \end{equation}
 where $(T_{\NN}, \Omega_{\NN}, U_\NN)
 = \left((t_\ell^{(j)}, \omega_\ell^{(j)}, u_\ell^{(j)})_{1\leq \ell \leq n_j}\right)_{j \in B}$.
Unfortunately it is not true that the change of variables (\ref{change-variables}) is admissible in general, due to the presence of recollisions. Actually only recollisions involving particles of the same block are an issue, which leads to the following classification~:

\begin{Def}
 A recollision is said to be  \emph{internal}
 if it involves two particles of the same block. It is said  \emph{external} if it involves two particles of different blocks.
 \end{Def} 

\subsubsection{Duality in absence of internal recollisions.}

Let us first describe the duality argument in the case when  there is no internal recollision, which is simpler.  Setting~$\MM:= \left(m_j\right)_{j\in B}$   the number of particles in the block~$j$ at time~$\theta$ and~$\NN:= \left(n_j\right)_{j\in B} $ the number of particles added to the block~$j$ between times~$\theta$ and~$\theta-\delta$,  we denote by $Q^{\eps0}_{\M, \NN}$   the restriction of~$Q^\eps_{\M, \NN}$ to pseudo-trajectories  without internal recollisions. We set, recalling~(\ref{eq:defPhi}) and~(\ref{Duhamelformulabeforeduality}),
\begin{equation*}
I^0_{\M, \NN}  :=  
 \int \Big( Q^{\eps 0} _{\M,  \NN}( \delta )    G^{\eps }_{M+N}(\theta-\delta) \Big) (Z_\MM)\; \Phi_\MM (Z_\M) dZ_\MM \, .
 \end{equation*}
Given~$a \in \cA^\pm_{\M,\NN }$, consider the change of variables~:
\begin{equation}
\label{change of variables}
 (Z_\M, T_{\NN}, \Omega_{\NN}, U_\NN) \longmapsto Z^\e_{\M+\NN}(\theta-\delta ) \in \, \cR^0_{a} \,,
\end{equation}
where the configurations in $\cR^0_{a} $ have to be  compatible with 
 pseudo-trajectories satisfying the following constraints on $(\theta-\delta, \theta)$: 
 \begin{itemize}
 \item[(i)] there are $n_j$ particles added to the block $j$;
 \item[(ii)]  the addition of new particles in the block $j$  is prescribed by  $a^{(j)}$;
 \item[(iii)] the pseudo-trajectory has no internal recollision. 
\end{itemize}
Note that  pseudo-trajectories compatible with~$\cR^0_{a} $ may have external recollisions (recall Definition~\ref{def:blocks and packets}).
  This change of variables is injective since  the forward flow
underlying \eqref{change of variables} starting from~$Z_{\MM+\NN}\in \cR_a ^0$ at $\theta-\delta$  can be defined in a unique way on $[\theta-\delta, \theta]$ as explained below.
 \begin{Def}\label{def:encounter}
We   say that  two particles \emph{encounter}  when they  approach at a distance~$\e$ at some time, in the forward flow. 
\end{Def} 
If two particles encounter at time~$t^-$, their resulting configuration at   time~$t^+$   is  then obtained as follows:
\begin{itemize}
\item[(a)] if they belong to two different blocks, then their velocities are deflected according to the scattering law (\ref{defZ'nij}); \label{pointa}
\item[(b)]   if they are in the same block $j$, the collision  is prescribed by the tree~$a^{(j)}$, and one of the two particles disappears (it is removed) from the flow. 
 The velocity of the remaining particle is updated by scattering, or not,   according to the parameters $\SS_j = ( s_\ell^{(j)} )_{\ell \leq n_j}$ encoded also by the tree $a^{(j)}$.
\end{itemize}
One can prove recursively that the  jacobian  of the inverse map (\ref{change of variables})  is
$$
 \frac1{\mu_\eps ^{N}} \prod_{j\in B } \prod_{\ell = 1}^{n_j} \Big(\big(
u_{\ell}^{(j)} -v^\e_{a_\ell^{(j)} }(t_\ell^{(j)+} )\big)\cdot \omega_\ell^{(j)}
\Big)_+ \, .
$$

\medskip

Denoting by~$Z_\M^\e(\theta ,Z_{\M+\NN })$ the configuration of   the $M$ particles at time~$\theta $ starting from~$Z_{\M+\NN } $ in~$ \cR_{a} ^0$ at time~$\theta-\delta$, one  can therefore write  
\begin{equation} \begin{aligned}
\label{eq : I0 M N}
I^0_{\M, \NN}  
& =  \sum_{a \in \cA^\pm_{\MM,\NN}}  { \mu_\eps^{N }   }  \int_{\cR^0_{a} } dZ_{\M+\NN}
  G^{\eps}_{M+N} ( \theta-\delta,Z_{M+N })\Phi_\MM \big(Z_\M^\e(\theta ,Z_{\M+\NN })\big )   \prod _{j\in B}\sgn(\SS_j) \, ,
  \end{aligned}
  \end{equation} 
  where the factor~$ \mu_\eps^{N }$ comes from the jacobian, and~$\sgn(\SS_j)$ is the product of all scatterings signs attached to the block $j$
  $$ \sgn(\SS_j) = \prod_{\ell = 1}^{n_j} s_\ell^{(j)}\,.$$

Referring to (\ref{eq:defPhi}), since $\Phi_\MM$ is symmetric within blocks, we get a complete symmetrization within blocks  by performing a partial symmetrization on the added particles~: we will denote by ${\mathfrak S}^{n_j} _{m_j+n_j}$ the partitions of~$n_j+m_j$ particles in $n_j$ (ordered) singletons and a block of size~$m_j$. Then we can set
$$
 \Phi^{0 }_{\M,\NN }(Z_{\M+\NN}):=\mu_\eps^{N  } \sum_{\big( \sigma_j \in {\mathfrak S}^{n_j} _{n_j+m_j}\big)_{j\in B}}  
 \sum_{
a \in {\mathcal A}^\pm_{\M, \NN}
} 
\Phi_\MM \big(Z_\M^\e(\theta , Z_\sigma)\big ) \indc_{\{ Z_{\sigma} \in \cR_{a}  ^0\}    } \prod _{j\in B }\left( \frac  { m_j!  }{  (m_j+n_j) !} \sgn(\SS_j)\right) \, ,
$$
where we have denoted for simplicity~$Z_\M^\e(\theta , Z_\sigma)$ for the~$M$ remaining particles at time~$\theta$, and 
as a consequence \eqref{eq : I0 M N} can be rewritten  by duality, recalling~(\ref{corr-function}), 
\begin{align}\label{eq : Irec M N 0 dual}
I^0_{\M, \NN}  
=     
\; \bbE_\eps   \Big[  \Big( \prod_{u=1} ^{p-1}  \zeta^\eps_{\theta_u} \big( h^{(u)}\big) \Big)  
  \pi^\eps_{M + N,\theta-\delta} \big( \Phi^{0 }_{\M,\NN } \big) \Big] \, .
  \end{align}
Note that  on  each configuration $Z_{\M+\NN}$, there exist at most $4^{N}$  different $(\sigma, a)$ such that~$Z_{ \sigma }$ belongs to~$ \cR_{a}^0 $. Indeed at each encounter in the forward dynamics (recall Definition~\ref{def:encounter}), 
the particle which  disappears has to be chosen, as well 
as a possible   scattering. To fix these discrepancies, we introduce for each index~$j\in B $ two sets of signs~$\bar\SS_j:=(\bar s_\ell^{(j)} )_{1 \leq \ell \leq n_j}$ and~$\SS_j:=(s_\ell^{(j)})_{1 \leq \ell \leq n_j} $ which determine respectively which particle should be annihilated (say~$\bar s _\ell^{(j)} = +$ if the particle with largest index remains,~$\bar s _\ell^{(j)} = -$ if it disappears) and whether there is scattering ($s _\ell^{(j)}  = +$) or not ($s _\ell^{(j)} = - $).
Note that
  the signs~$(s_\ell^{(j)} )_{1 \leq j \leq n_i}$ are encoded  in
 the collision tree $a^{(j)}$ while~$(\bar s_\ell^{(j)} )_{1 \leq \ell \leq n_j}$  are known if~$\sigma_j$ is given. 
We stress the fact that if two particles in different blocks  encounter,  there is no ambiguity on the dynamics: it corresponds to  a recollision in the backward pseudo-trajectory hence there is always scattering  (see Case~(a) page~\pageref{pointa}).
  If we prescribe the sets~$({\mathbf S}, \bar \SS)   :=(  \SS_j,\bar\SS_j ) _{j\in B  }$, then the mapping
\begin{equation}\label{mapping'}(a, (\sigma_j)_{j\in B}, Z_\M, T_{\NN}, \Omega_{\NN}, U_\NN) \longmapsto Z^\e_{\M+\NN}(\theta-\delta )
\end{equation}
restricted to pseudo-trajectories compatible with $({\mathbf S}, \bar \SS) $, is  injective.
This leads to defining
\begin{equation}
\label{defPhi0MNS}
 \Phi^{0 }_{{\mathbf \Xi}^0}(Z_{\M+\NN}) := \mu_\eps^{N  } 
 \indc_{\{ Z_{\M+\NN} \in \cR^0_{{\mathbf S}, \bar \SS }  \}} 
 \; \Phi_\MM \big(Z_\M^\e(\theta , Z_{\M+\NN} )\big )     \prod _{j\in B }\left( \frac  { m_j!  }{  (m_j+n_j) !} \sgn(\SS_j) \right)   \, ,
\end{equation}
where $\cR_{{\mathbf S} , \bar \SS} ^0$ is the set of configurations such that a forward flow with $n_j$  annihilations in the block $j$ and compatible with $({\mathbf S}, \bar \SS)  $ exists, and where
\begin{equation}\label{defXi0}{\mathbf \Xi}^0 :=(\MM ,\NN  , {\mathbf S} , \bar \SS)\, .
\end{equation}
Our final result is then  \eqref{eq : Irec M N 0 dual} with
$$
\Phi^{0 }_{\M,\NN }=  \sum_{  \SS,\bar \SS}   \Phi^{0 }_{{\mathbf \Xi}^0}\;,
$$
where the sum over~$ \SS,\bar \SS$ runs in~$\{-1,1\}^{2n_j}, j \in B.$
\begin{Rmk}The symmetrisation over the labels of the particles, which was already an important argument   in~\cite{BGSS2}, is a key step of the procedure: it is not apparent when looking at the expectation since the sum over the partial permutations compensates exactly the combinatorial factor~$m_j!/(m_j+n_j) !$, on the other hand since the supports of the test functions are disjoint, it will be  a true gain when computing the variance.
\end{Rmk}
%
  
  \subsubsection{Duality: general case.}

In the case when  internal recollisions are allowed in backward pseudo-trajectories, the change of variables (\ref{change-variables})  is no longer  injective and, in order to apply our strategy, we need to control the number of   internal recollisions.
  The important fact is that   thanks to the conditioning~$\Upsilon_\cN^\eps$ introduced in Definition \ref{conditioning},   the configuration at time $\theta-\delta$ has no microscopic cluster of more than $\gamma$ particles, and the total energy of each microscopic cluster is at most $\gamma \bbV^2/2$ so that the variation of the  relative distance between  two particles from different clusters is at most~$2 \sqrt{\gamma} \,  \bbV \delta$, which prevents any collision during the time lapse~$\delta$.  Each cluster evolves therefore independently from the other clusters on the time interval~$[\theta-\delta, \theta] $.
 
 Furthermore the recollisions in each cluster cannot be due to periodicity since~$\bbV \delta \ll1$.
 Since the total number of collisions for  a system of $\gamma $ hard spheres in the whole space  is finite (see Theorem 1.3 in~\cite{Burago} or \cite{Illner89})
 say at most~$k_\gamma$, each particle in a pseudo-trajectory cannot have more than $K_\gamma = \sum_{\ell =2}^\gamma k_\ell$ recollisions during the short amount of time $\delta$. This crude upper bound on the number of recollisions takes into account the fact that the number of particles in a cluster may have varied on $[\theta-\delta, \theta]$  due the creation of new particles. 
We then associate with each particle $i$ an index $\kappa_i$ (less than $K_\gamma$) which is zero at time $\theta$ and  increased by one each time the particle undergoes a recollision in the backward pseudo-dynamics. We denote by~${\mathbf K}_{\M+\NN} $ the set of recollision indices~$(\kappa^{(j)}_\ell) _{\ell \leq m_j+ n_j }$ at time $\theta-\delta$. 
This new set of parameters enables us to recover the lost injectivity of~(\ref{mapping'}).
The
 construction of the forward dynamics starting from a configuration~$Z_{\M+\NN}$ is slightly more intricate. 
Fix a tree~$a \in \cA^\pm_{\M,\NN }$, a set of   indices~${\mathbf K}_{\M+\NN}  $ and the starting configuration~$Z_{\M+\NN} = (Z_{m_j+n_j}^{(j)} )_{j\in B }$. The forward flow
is uniquely defined  based on the following three possibilities, each time two particles encounter~:
\begin{itemize}
 \item[(a)] either the two  particles belong to two different blocks : in this case the particles are scattered and their indices are unchanged (this corresponds to an external recollision in the backward pseudo-trajectory);
\item[(b)] or the two particles belong to the same block  and have a positive index: in this case also the particles are scattered, and their indices are decreased by~1 (this corresponds to an internal recollision in the backward pseudo-trajectory);
\item[(c)] or the two particles belong to the same block and one particle has  zero index: one particle (with zero index) is annihilated and the other one is possibly scattered, as  prescribed by the collision tree $a$.  The indices are unchanged.
\end{itemize}

Finally we define, for each~$a \in \cA^\pm_{\M,\NN }$  and each~$ {\mathbf K} $  (we drop the index~$\M+\NN$  in~$ {\mathbf K}_{\M+\NN}$ for the sake of readability in the sequel),  the set~$\cR_{ {\mathbf K},a} $ of configurations compatible with backward pseudo-trajectories having the following constraints:
  \begin{itemize}
 \item[(i)] there are $n_j$ particles added to the block $j$;
 \item[(ii)]  the addition of new particles is prescribed by  $a^{(j)}$;  
 \item[(iii)]  internal recollisions  are compatible  with~$(\kappa^{(j)}_\ell) _{\ell \leq m_j+ n_j }$ (coded in $ {\mathbf K}$).
 \end{itemize}
Notice that, denoting by ${\mathbf K}={\mathbf 0}$ the set of all null recollision indices, $\cR_{ {\mathbf 0},a} = \cR_{  a}^0$.
The change of variables, as in \eqref{change of variables},
\begin{equation}
\label{change of variables'}
 (Z_\M, T_{\NN}, \Omega_{\NN}, U_\NN) \longmapsto Z^\e_{\M+\NN}(\theta-\delta ) \in \, \cR_{ {\mathbf K},a}
\end{equation}
is injective.
Denoting as previously by~$Z_\M^\e(\theta ,Z_{\M+\NN })$ the configuration of   the $M$ particles at time~$\theta $ starting from~$Z_{\M+\NN } \in \cR_{ {\mathbf K},a}$ at time~$\theta-\delta$, one  can therefore write  
  \begin{equation}\label{eq : Irec M N}
\begin{aligned}
I_{\M, \NN}  &:=  
 \int  \Big( Q^{\eps } _{\M,  \NN}( \delta )    G^{\eps }_{M+N}(\theta-\delta) \Big)(Z_\MM)\; \Phi_\MM (Z_\MM) dZ_\MM \\
& =  \sum_{a \in \cA^\pm_{\MM,\NN}}  \sum_{{\mathbf K}}{ \mu_\eps^{N }   }  \int_{\cR_{ {\mathbf K},a}} dZ_{\MM+\NN }
  G^{\eps}_{M+N} ( \theta-\delta,Z_{\MM+\NN } )\Phi_\MM \big(Z_\M^\e(\theta ,Z_{\M+\NN })\big )    \prod _{j\in B }\sgn(\SS_j) \, .  
  \end{aligned}
  \end{equation}
As    in~(\ref{defPhi0MNS}), we can use the exchangeability  of~$G^{\eps}_{M+N}$ and~$\phi^{(\varsigma_i)}$ to symmetrize partially the particles in each block~$j$  at time~$\theta-\delta$, by summing over~$ \sigma_j \in {\mathfrak S}^{n_j} _{m_j+n_j}$. The mapping   
$$ (a,  (\sigma_j)_{j\in B}, Z_\M, T_\NN, \Omega_\NN, U_\NN) \mapsto Z^\eps _{\M+\NN } (\theta- \delta)$$
  is injective for any fixed ${\mathbf K}$ and $(\SS, \bar \SS)$,  so   one can define the  $\delta$-pullback of test functions 
\begin{equation}
\label{eq: pushforward}
{\sharp}_\delta \Phi_\MM (Z_{\M+\NN}): = 
\Phi_\MM \big(Z_\M^\e(\theta , Z_{\M+\NN})\big ) 
\end{equation}
where the configuration $Z_\M^\e(\theta , Z_{\M+\NN})$ at time $\theta$ is obtained by the forward dynamics described above from the configuration $Z_{\M+\NN}$ at  time $\theta - \delta$.
Finally  we set
\begin{equation}
\label{defPhiNk}
  \Phi_{{\mathbf \Xi}}(Z_{\M+\NN}):= \mu_\eps^{N  } 
\; {\sharp}_\delta \Phi_\MM (Z_{\M+\NN}) \;
\indc_{\{ Z_{\M+\NN} \in \cR_{ {\mathbf K},\SS, \bar \SS} \}    } \prod _{j\in B }\left( \frac  { m_j!  }{  (m_j+n_j) !} \sgn(\SS_j)\right)  ,
\end{equation}
where $\cR_{ {\mathbf K},{\mathbf S}, \bar \SS }  $ is the set of configurations such that a forward flow with $n_j$ annihilations in the block $j$ and  compatible with $ ({\mathbf K},{\mathbf S}, \bar \SS )$ exists.
We have denoted as in~(\ref{defXi0})
\begin{equation}\label{defXi}
{\mathbf \Xi}  :=(\MM ,\NN  , {\mathbf K}, {\mathbf S}, \bar \SS )\, .
\end{equation}
For ${\mathbf K} = {\mathbf 0}$, note that~$\cR_{ {\mathbf 0},{\mathbf S}, \bar \SS} = \cR_{ {\mathbf S}, \bar \SS}^0$.   
 
 \medskip
 
Finally set
\begin{equation}\label{remove sum K S}
 \Phi_{\M,\NN } :=\sum_{ {\mathbf K},\SS, \bar \SS} \Phi_{{\mathbf \Xi}}  \, , 
\end{equation}
where the sum over~$ \SS,\bar \SS$ runs in~$\{-1,1\}^{2n_j}$, $ j \in B$ and the sum over~$ {\mathbf K}$ runs in $\{0, \dots , K_\gamma\}^{m_j+ n_j}$, with~$j\in B$.
Identity  \eqref{eq : Irec M N} can be rewritten  by duality
 \begin{align}
\label{eq : Irec M N dual}
\bbE_\eps   \Big[ \Big( \prod_{u=1} ^{p-1}  \zeta^\eps_{\theta_u} \big( h^{(u)}\big) \Big)  
  \pi^\eps_{M ,\theta} \big( \Phi_\M \big) \Big] 
=    \sum_{\NN   }  
\; \bbE_\eps   \Big[ \Big( \prod_{u=1} ^{p-1}  \zeta^\eps_{\theta_u} \big( h^{(u)}\big) \Big)  
  \pi^\eps_{M + N,\theta-\delta} \big( \Phi_{ \M,\NN } \big) \Big] \, .
  \end{align}
 

\subsection{Clustering structure} 
\label{factorization-sec}

Our aim is to study the transport of the factorization structure on an infinitesimal time interval~$[\theta-\delta, \theta]$. 
We are interested in 
\begin{equation}
\label{eq: I M}
\cI_\MM : = \bbE_\eps   \Big[  \Big( \prod_{u=1} ^{p-1}  \zeta^\eps_{\theta_u} \big( h^{(u)}\big) \Big)  \Otimes_{ i \leq  |\varsigma| } 
 \zeta^\eps_{M_{\varsigma_i},\theta} \big(   \phi ^{(\varsigma_i)} \big) \Big]
\end{equation}
for a partition~$\varsigma$ in packets of some $B\subset \{p, \dots, P\} $. As in \eqref{eq:defPhi}, the function~$ \phi ^{(\varsigma_i)}$ is evaluated at the configuration~$ ( Z_{m_j}^{(j)})_{j \in \varsigma_i} $ at time~$\theta$. We recall that~$M_{\varsigma_i}:=\displaystyle \sum_{j \in \varsigma_i} m_j$.  By analogy with  (\ref{tensor-product-eq}), we define the {\color{black}$\ostar$-product} for the conditioned fluctuation fields   by  discarding repeated indices
\begin{equation}
\label{tensor-product}
 \Otimes_{ i \leq  |\varsigma|  }   \zeta^\eps_{M_{\varsigma_i},\theta} \big(   \phi ^{(\varsigma_i)} \big):= \mu_\eps^{|\varsigma|/2} \sum_{\alpha \subset \{1, \dots,  |\varsigma|\}   }     \pi^\eps_{M_\alpha,\theta} \Big( \bigotimes_{i \in \alpha} \phi ^{(\varsigma_ i)} \Big) \prod_{ j \in \alpha^c} \bbE_\eps  [- \phi ^{(\varsigma_j)}]\,
\end{equation}
where $M_A = \sum_{i \in A} M_{\varsigma_i} $.

After using a pullback as in~\eqref{eq : Irec M N dual}, 
we would like to recover a factorized structure  with centered observables. This means that we need to  decompose the functions~$\Phi_{{\mathbf \Xi} } $  defined in~(\ref{defPhiNk}) into products, and take care of the counterterms corresponding to contributions of different  correlation functions.
The technical procedure implementing this program is a cumulant decomposition of trajectories  as devised in \cite{BGSS3} (which we apply here to dual functions).

%

\medskip
Let us first of all decompose the   product
\begin{equation}
\label{322'}\begin{aligned}
 \cI_\MM   
&= \mu_\eps^{   |\varsigma|/2} \sum_{\alpha  \subset   \{1, \dots ,|\varsigma|\}   } 
\left( \prod_{i\in   \alpha^c} \bbE_\eps  [-\phi ^{(\varsigma_i)}] \right) \times
\bbE_\eps   \Big[ \Big( \prod_{u=1} ^{p-1}  \zeta^\eps_{\theta_u} \big( h^{(u)}\big) \Big)  
    \pi^\eps_{M_\alpha,\theta }(\Phi_\alpha )
\Big]  \, ,
\end{aligned}
\end{equation}
denoting here   $\,\,\M_\alpha = \left(m_j\right)_{i\in \alpha, j\in \varsigma_i }$,  and 
$
\Phi_\alpha = \displaystyle  \bigotimes_{i\in \alpha} \phi^{(\varsigma_i)} \,.
$
To simplify notation, given the family~$\,\,\NN_\alpha = \left(n_j\right)_{i\in \alpha, j\in \varsigma_i }$ of added particles on $[\theta-\delta, \theta]$, we set~$M_\alpha^\delta := M_\alpha+N_\alpha$  for the number of particles in the sets~$(C_i)_{i\in \alpha}$ at time~$\theta-\delta$. 
As in~\eqref{eq : Irec M N dual},  we can write $$\begin{aligned}
& \bbE_\eps   \Big[ \Big( \prod_{u=1} ^{p-1}  \zeta^\eps_{\theta_u} \big( h^{(u)}\big) \Big)  
    \pi^\eps_{M_\alpha,\theta }(\Phi_\alpha )
\Big]\\
&\qquad\qquad\qquad\qquad\qquad =  \sum_{\NN_\alpha}\int   \Big(Q^{\eps } _{\M_\alpha,  \NN_\alpha}( \delta )    G^{\eps }_{M_\alpha+N_\alpha}(\theta-\delta) \Big)(Z_{\MM_\alpha}) \; \Phi_\alpha (Z_{\M_\alpha}) dZ_{\M_\alpha}
 \\
& \qquad\qquad\qquad\qquad\qquad =  \sum_{\NN_\alpha }  \; \bbE_\eps   \Big[ \Big( \prod_{u=1} ^{p-1}  \zeta^\eps_{\theta_u} \big( h^{(u)}\big) \Big)  
  \pi^\eps_{ M_\alpha^\delta ,\theta-\delta}\big(\Phi_{\MM_\alpha,\NN_\alpha}\big) \Big]   \, ,
\end{aligned}
$$
where as in~(\ref{remove sum K S})
$$
\Phi_{\MM_\alpha,\NN_\alpha}:= \sum_{{\mathbf K}_\alpha,{\mathbf S}_\alpha, \bar {\mathbf S}_\alpha}\Phi _{{\mathbf \Xi}_\alpha}\, .
$$
From now on to lighten further the notation we omit the dependence on the number of variables and set
$$
\Phi_{\alpha , \delta} :=\Phi_{\MM_\alpha,\NN_\alpha}\, , $$
so that  
\begin{equation}
\label{eq: 3.19}
   \bbE_\eps   \Big[ \Big( \prod_{u=1} ^{p-1}  \zeta^\eps_{\theta_u} \big( h^{(u)}\big) \Big)  
    \pi^\eps_{M_\alpha,\theta }(\Phi_\alpha )
\Big]\  =  \sum_{\NN_\alpha }  \; \bbE_\eps   \Big[  \Big( \prod_{u=1} ^{p-1}  \zeta^\eps_{\theta_u} \big( h^{(u)}\big) \Big)  
  \pi^\eps_{ M_\alpha^\delta ,\theta-\delta}\big(\Phi_{\alpha , \delta}\big) \Big]   \, .
\end{equation}
Note that in particular  there holds 
\begin{equation}
\label{eq: 1 site}
\bbE_\eps  [\phi ^{(\varsigma_i)}]
 = \bbE_\eps   \Big[  \pi^\eps_{M_{\varsigma_i},\theta}\big(  \phi ^{(\varsigma_i)} \big) \Big]
= \sum_{N_{\varsigma_i} } 
\bbE_\eps   \Big[    \pi^\eps_{{M_{\varsigma_i} ^\delta}, \theta-\delta }
\big(  \phi ^{(\varsigma_i)}_\delta \big)  \Big] = \sum_{\NN_{\varsigma_i} } 
\bbE_\eps   \Big[  \phi ^{(\varsigma_i)}_\delta   \Big]  \, ,
\end{equation}
 where as above~$ \phi ^{(\varsigma_i)}_\delta$ is defined after a summation over~$\KK_{\varsigma_i} ,\SS_{\varsigma_i}, \bar \SS_{\varsigma_i}$. 

\medskip

Let us analyse~$\Phi_{\alpha , \delta}$.  
Setting $\M_\alpha^\delta = \M_\alpha+\NN_\alpha$, one has  
\begin{equation} 
\label{eqPhiad}
\Phi_{\alpha , \delta}(Z_{\M_\alpha^\delta})= \sum_{{\mathbf K}_\alpha,{\mathbf S}_\alpha, \bar \SS_{\alpha}}\Phi _{{\mathbf \Xi}_\alpha}(Z_{\M_\alpha^\delta})\, ,
\end{equation}
and the function $\Phi_{\alpha , \delta}$ is   supported on configurations  at time~$\theta-\delta$ of the backward pseudo-trajectories corresponding to the packets~$  (C_i)_{i \in \alpha}$.   
We stress the fact that the variable decomposition among the blocks is still encoded  in $Z_{\M_\alpha^\delta}$.
If these pseudo-trajectories  were evolving independently, then each of them would  
lead to a dual function $\phi ^{(\varsigma_i) }_\delta$ and the product form would be exact.
 Even though this  product form is the main part, there are further contributions due to   dynamical correlations between the packets $(C_i)_{i\in \alpha}$ which we are going to analyze below. 

 We are going to group packets $(C_i)_{i\in \alpha}$ which are connected by 
 (external) recollisions. 
Denote by~$ \cP_\alpha$ the set of  partitions of~$\alpha$. Given~$\lambda \in \cP_\alpha$, we  restrict the change of variables~\eqref{change of variables'}
to  the pseudo-trajectories such that 
a chain of recollisions occurs in each set $(\lambda_\ell)_{\ell \leq |\lambda|}$,    meaning that the  
graph with packets $C_i$ as vertices and  recollisions as edges has connected components specified by $\lambda_\ell$. In particular,  the pseudo-trajectories from two different connected components $\lambda_{\ell_1}$ and $\lambda_{\ell_2}$ do not approach, which we will denote by $\indc_{\lambda_{\ell_1} \not\sim \lambda_{\ell_2}}$.
Each connected component $\lambda_\ell$ will be called a {\it forest}.  By extension, the blocks (and particles) of the associate packets  will be said to belong to~$\lambda_\ell$. Denoting by~$\cR^{ \lambda}_{{\mathbf K}_\alpha, {\mathbf S}_\alpha, \bar \SS_\alpha}$ the corresponding restriction of~$\cR_{{\mathbf K}_\alpha,  {\mathbf S}_\alpha, \bar \SS_\alpha}$, we get the change of variables
\begin{equation}
\label{eq: R lambda K}
 (Z_{\M_\alpha}, T_{\NN_\alpha}, \Omega_{\NN_\alpha}, U_{\NN_\alpha} ) \longmapsto 
 Z^\e_{\M_\alpha^\delta}(\theta-\delta ) \in \, \cR^{ \lambda}_{{\mathbf K}_\alpha,  {\mathbf S}_\alpha, \bar \SS_\alpha}\,   ,
\end{equation}
 which is injective: by construction when two particles meet in the forward flow, they have to belong to the same forest~$\lambda_\ell$ and the rule upon encounter (disappearance of a particle or not, scattering or not) is given by Definition~\ref{rules of the evolution} with parameters~${\mathbf K}_\alpha,  {\mathbf S}_\alpha, \bar \SS_\alpha$. In the following we shall denote by~$ \cR^{\lambda_\ell}_{{\mathbf K}_{\lambda_\ell},\SS_{\lambda_\ell}, \bar \SS_{\lambda_\ell}}$ the set~$\cR^{ \lambda}_{{\mathbf K}_\alpha,  {\mathbf S}_\alpha, \bar \SS_\alpha}$  restricted on a single forest.
 The definition~\eqref{defPhiNk} can be extended to dual functions 
with the constraint above:
 \begin{equation}
\label{forest}
\begin{aligned}\Phi_{{\mathbf \Xi}_\alpha}(Z_{\M_\alpha^\delta})   &  = 
\sum_{\lambda \in  \cP _\alpha}
  \mu_\eps^{N_\alpha } 
\Phi_{\alpha}\big(Z_{\M_\alpha}^\e(\theta , Z_{\M_\alpha^\delta })\big ) 
\indc_{\{ Z_{\M_\alpha^\delta } \in \cR^{\lambda}_{{\mathbf K}_\alpha,\SS_\alpha, \bar \SS_\alpha } \}}\prod _{i\in \alpha \atop j\in \varsigma_i }\left( \frac  { m_j!  }{  (m_j+n_j) !} \sgn(\SS_j) \right)\\
&  = \sum_{\lambda \in  \cP _\alpha}
 \prod_{\ell= 1}^{|\lambda|} \Big(
\Phi_{\lambda_\ell } \big(Z_{\M_{\lambda_\ell}}^\e(\theta , Z_{\M_{\lambda_\ell}+\NN_{\lambda_\ell}})\big )
\widetilde\varphi_{\lambda_\ell} ( Z_{\M_{\lambda_\ell} +\NN_{\lambda_\ell}}) \Big)
  \times \prod_{ \ell_1 \not = \ell_2 } {\mathbf 1}_{ \lambda_{\ell_1} \not \sim \lambda_{\ell_2} }(Z_{\M_\alpha+\NN_\alpha}) \\
  &  = \sum_{\lambda \in  \cP _\alpha}
 \prod_{\ell= 1}^{|\lambda|} \Big(
({\sharp}_\delta \Phi_{\lambda_\ell} )
\widetilde\varphi_{\lambda_\ell}\Big)
  \times \prod_{ \ell_1 \not = \ell_2 } {\mathbf 1}_{ \lambda_{\ell_1} \not \sim \lambda_{\ell_2} }(Z_{\M_\alpha+\NN_\alpha})  \, ,
\end{aligned}
\end{equation}
denoting by $({\sharp}_\delta \Phi_{\lambda_\ell} )$ the $\delta$-pullback of $ \Phi_{\lambda_\ell}$ by the dynamics as in \eqref{eq: pushforward}, and 
where the contribution of a forest is
\begin{equation}
\label{eq: tilde varphi}
 \widetilde\varphi_{\lambda_\ell}( Z_{\M_{\lambda_\ell} +\NN_{\lambda_\ell}}) 
 :=   \mu_\eps^{N_{\lambda_\ell} } \indc_{ \{Z_{\M_{\lambda_\ell}+ \NN_{\lambda_\ell}}    \in \cR^{\lambda_\ell}_{{\mathbf K}_{\lambda_\ell},\SS_{\lambda_\ell}, \bar \SS_{\lambda_\ell}} \} }  
\prod _{i\in \lambda_\ell \atop j\in \varsigma_i }\left( \frac  { m_j!  }{  (m_j+n_j) !} \sgn(\SS_j)\right) \,.
\end{equation}
We have used the fact that  the  pseudo-trajectories associated with different forests do not intersect in order to decouple the $\Phi_{\lambda_\ell }$.    
  The function $\widetilde \varphi_{\lambda_\ell}$ encodes in particular  the correlations due to  encounters between particles of different packets.  
    
    \medskip
    
A  correlation remains through the dynamical exclusion condition expressed by the constraint 
$$
\prod_{ \ell_1 \not = \ell_2 } {\mathbf 1}_{ \lambda_{\ell_1} \not \sim \lambda_{\ell_2} }(Z_{\M_\alpha+\NN_\alpha}) \;,
$$ encoding the fact that no  encounter should occur between the particles in different forests~$\lambda_{\ell_1}$ and~$ \lambda_{\ell_2}$. 
We will expand this exclusion condition writing
$ \indc_{ \lambda_{\ell_1} \not \sim \lambda_{\ell_2}} = 1- \indc_{ \lambda_{\ell_1}  \sim \lambda_{\ell_2}}\,,$ and 
defining the following notion.
  \begin{Def}
\label{def:overlap}
An  \underline{overlap} occurs between two forests $\lambda_{\ell_1}, \lambda_{\ell_2}$ (which is denoted by $\lambda_{\ell_1} \sim \lambda_{\ell_2}$) if two pseudo-particles from  $\lambda_{\ell_1}$ and  $\lambda_{\ell_2}$  find themselves at a distance   less than~$\eps$ one from the other at some time.
\end{Def} 
\noindent
Note that an overlap between two forests is   a mathematical artefact to analyze the dynamical correlations. In particular, it does not modify the dynamics in the forests.
 \begin{Def}[Extended encounter rules] \label{rules of the evolution}
Given a set of   $\kappa$-indices~${\mathbf K} $, a set of signs $(\SS, \bar \SS)$ and a partition $\lambda$ in forests, the forward flow  starting from some configuration~$Z_{\M+\NN} = (Z_{m_j+n_j}^{(j)} )_{j\in B }$  is reconstructed according to the following   rules each time two particles  encounter:
\begin{itemize}
 \item either the two  particles belong to two different forests~: they do not see each other. The $\kappa$-indices are unchanged;
  \item or   the two  particles belong to two different blocks but to the same forest~: they are scattered. The $\kappa$-indices are unchanged;
\item or the two particles belong to the same block and have a positive $\kappa$-index: they are scattered. Both indices are decreased by~1;
\item or the two particles belong to the same block and one particle has  zero $\kappa$-index: one particle (with zero index) is annihilated. The label of the particle which is annihilated, and the possible scattering of the other colliding particle are prescribed by the signs~$(\SS, \bar \SS)$. The other indices are unchanged.
\end{itemize}
\end{Def}

\bigskip

In order to identify all possible correlations, we introduce  now a cumulant expansion of the non overlapping constraint
\begin{equation}\label{def-phirho}
\begin{aligned}
\prod_{ \ell_1 \not =\ell_2 \atop 1 \leq\ell_1, \ell_2 \leq |\lambda|} 1_{ \lambda_{\ell_1} \not \sim \lambda_{\ell_2}}
&=  \sum_{G \in \GG_{|\lambda|}} \prod _{\{\ell_1, \ell_2\} \in E(G)} (-\indc _{ \lambda_{\ell_1}\sim \lambda_{\ell_2}} ) =\sum_{\rho \in \cP_{ |\lambda|}}  
\prod_{q=1}^{{ |\rho|}} \varphi_{\rho_q} \, ,
\end{aligned}
\end{equation}
where $\GG_{|\lambda|}$ is the set of graphs $G$ with $|\lambda|$ vertices,   $E(G)$ denotes the set of edges of a graph~$G$, and  the cumulants are defined on the connected components $\rho_q$ of $\{1, \dots, |\lambda|\}$ by
\begin{equation}
\label{defphiro}
\varphi_{\rho_q} =
\sum_{G'\in \cC_{ \rho_q}} \prod_{\{ \ell_1,\ell_2\}  \in E (G')} (-\indc_{\lambda_{\ell_1} \sim \lambda_{\ell_2}})\;,
\end{equation}
denoting  $\cC_{ \rho_q}$  the set of connected graphs with vertices $\rho_q$. In particular, the function
$\varphi_{\rho_q}$ is supported on clusters formed by overlapping   forests.

Combining (\ref{forest}) with (\ref{def-phirho}), we get
$$\Phi_{{\mathbf \Xi}_\alpha } =   \sum_{\lambda\in \cP _\alpha}   \sum_{\rho \in \cP_{|\lambda|} }  \prod_{\ell= 1}^{|\lambda|} \big(
{\sharp}_\delta\Phi_{\lambda_\ell } \widetilde\varphi_{\lambda_\ell}  \big)
 \prod_{q=1}^{|\rho|} \varphi_{\rho_q}\;,$$
 denoting by $({\sharp}_\delta \Phi_{\lambda_\ell} )$ the $\delta$-pullback of $ \Phi_{\lambda_\ell}$ by the dynamics as in \eqref{eq: pushforward}.
Exchanging the order of the sums,  we end up with the following (scaled) cumulant expansion  
\begin{equation}
\label{eq: cumulant decomposition}
\Phi_{{\mathbf \Xi}_\alpha}=    \sum_{\eta \in \cP _\alpha} 
\; \prod_{q = 1}^{|\eta|}  \mu_\eps^{1-|\eta_q|} \phi_{\delta,{\mathbf \Xi}_{\eta_q}} ^{(\eta_q)}\,   ,
\end{equation}
where the  (dual) cumulants are defined for any subset~$\eta_q$ of $\{1, \dots, |\varsigma|\}$ by
\begin{equation} 
\label{eq:defcum}
\begin{aligned}
 \phi_{\delta,{\mathbf \Xi}_{\eta_q}}^{(\eta_q)} :=  \mu_\eps^{|\eta_q |-1}   \sum_{\lambda \in \cP_{\eta_q}  }
 \Big( \prod_{\ell =1}^{|\lambda|} {\sharp}_\delta \Phi_{\lambda_\ell }
 \; \widetilde \varphi_{\lambda_\ell} ( Z_{\M_{\lambda_\ell} +\NN_{\lambda_\ell}})  \Big)  \varphi_{\{\lambda_1, \dots, \lambda_{|\lambda|}\}} \,.
 \end{aligned}
\end{equation}
Recall that $\widetilde \varphi$ encodes the external recollisions between packets  in each forest  and keeps track of~$\KK_{\lambda_\ell},\SS_{\lambda_\ell}, \bar \SS_{\lambda_\ell}$, while $\varphi$ encodes the overlaps between forests. Finally we set  
\begin{equation} 
\label{eq:defcum-sum}
 \phi_\delta^{(\eta_q)} (Z_{M_{\eta_q} ^\delta}) :=\sum_{\KK_{\eta_q} , \SS_{\eta_q }, \bar \SS_{\eta_q} }\phi_{\delta,{\mathbf \Xi}_{\eta_q}}^{(\eta_q)}
\end{equation}
so that, plugging (\ref{eq: tilde varphi}) in (\ref{eq:defcum}) and denoting $\sgn (\SS_{\eta_q})$ the product of all scattering signs~$\SS_{\eta_q}$, we obtain
\begin{equation}
\label{defphideltaetai}
\begin{aligned}
 \phi_\delta^{(\eta_q)} 
  =  \mu_\eps^{N_{\eta_q} + |\eta_q |-1} &\left( \prod _{i  \in \eta_q\atop j\in \varsigma_i}\frac  { m_j!  }{  (m_j+n_j) !} \right) \sum_{\KK_{\eta_q} , \SS_{\eta_q }, \bar \SS_{\eta_q} }
 \sum_{\lambda \in \cP_{\eta_q}  }
   \sgn (\SS_{\eta_q}  )   
  \varphi_{\{\lambda_1, \dots, \lambda_{|\lambda|}\}}\\
  &\qquad \qquad\times   \Big(  \prod_{\ell =1}^{|\lambda|}({\sharp}_\delta \Phi_{\lambda_\ell } ) \indc_{ \{Z_{\M_{\lambda_\ell}+ \NN_{\lambda_\ell}}    \in \cR^{\lambda_\ell}_{{\mathbf K}_{\lambda_\ell},\SS_{\lambda_\ell}, \bar \SS_{\lambda_\ell} } \} }  
  \Big) 
  \, .
  \end{aligned}
\end{equation}
As a consequence of \eqref{eqPhiad}, \eqref{eq: cumulant decomposition} and \eqref{eq:defcum-sum}, we finally obtain a cumulant decomposition for any  collection $\alpha$ of packets 
\begin{equation}
\label{eq: cumulant decomposition alpha}
\Phi_{\alpha , \delta}(Z_{\MM_\alpha^\delta}) =    \sum_{\eta \in \cP _\alpha} 
\; \prod_{q = 1}^{|\eta|}  \mu_\eps^{1-|\eta_q|} \phi_{\delta} ^{(\eta_q)} \,   ,
\end{equation}
where $\M_\alpha^\delta = \MM_\alpha + \NN_\alpha$.

 By definition,~$\phi_\delta^{(\eta_q)}$ corresponds to the contribution of packets 
$(C_i)_{i\in \eta_q}$ which are completely connected dynamically by encounters. 
From the definition \eqref{322'} of $ \cI_\MM$, identities \eqref{eq: 3.19}-\eqref{eq: 1 site}  and
 the cumulant decomposition \eqref{eq: cumulant decomposition alpha}, we arrive at \begin{align*}
 \cI_\MM & =\mu_\eps^{ |\varsigma|/2} 
\sum_{\bf N } \sum_{\alpha  \subset \{1,\dots, |  \varsigma|\}  } 
 \left(   \prod_{ i \in   \alpha^c} \bbE_\eps  [- \phi_\delta^{( \varsigma_i )}] \right)\\
& \qquad  \qquad \qquad  \qquad \qquad \times \sum_{\eta \in \cP _\alpha} 
\bbE_\eps   \Big[ \Big( \prod_{u=1} ^{p-1}  \zeta^\eps_{\theta_u} \big( h^{(u)}\big) \Big)   \pi^\eps_{ M_\alpha^\delta  ,\theta-\delta} \Big( \bigotimes_{q = 1} ^{|\eta|}   \mu_\eps^{1-|\eta_q|}  \phi_\delta^{(\eta_q)}  \Big)\Big] \, .
\end{align*}
Note that $\phi_\delta^{( \varsigma_i)}$ is indexed by a single set $\varsigma_i$ so that it is constructed without resorting to forests or overlaps.
Moreover we can decompose each cumulant in the sum of its expectation and its  fluctuation
\begin{equation}
\label{exp-fluct-dec}
\begin{aligned}
  \pi^\eps_{ M_{\eta_q}^\delta   ,\theta-\delta}   ( \phi_\delta^{(\eta_q)}  ) & = \mu_\eps^{-\frac12} \zeta^\eps_{ M_{\eta_q}^\delta  ,\theta-\delta} ( \phi_\delta^{(\eta_q)}  )+ {\mathbb E} _\eps[ \phi_\delta^{(\eta_q)}   ] \, .  \end{aligned}
\end{equation}
We obtain (cf.\,\eqref{tensor-product})
\begin{align*}
\cI_\MM & =\mu_\eps^{ |\varsigma|/2} 
\sum_{\bf N } \sum_{\alpha  \subset \{1,\dots, |  \varsigma|\}  } 
\sum_{\eta \in \cP _\alpha}   \left(   \prod_{ i \in   \alpha^c} \bbE_\eps  [- \phi_\delta^{( \varsigma_i )}] \right)\\
& \times \sum_{\eta \in \cP _\alpha}  \sum_{ I \subset \{1, \dots , |\eta|\} } \prod_{q \in I^c} \mu_\eps^{1-|\eta_q|} \bbE_\eps\Big[\phi_\delta^{(\eta_q)}  \Big] 
\;\bbE_\eps   \Big[ \Big( \prod_{u=1} ^{p-1}  \zeta^\eps_{\theta_u} \big( h^{(u)}\big) \Big)    \Big( \Otimes_{q \in I}   \mu_\eps^{\frac12 -|\eta_q|}  \zeta^\eps_{ M_{\eta_q}^\delta ,\theta-\delta} (\phi_\delta^{(\eta_q)}  ) \Big)\Big] \, .
\end{align*}
For a given $\alpha$, we denote by $I$ the set of observables contributing to  the fluctuation field, and by $I^c$ the set of observables contributing via their  expectation. We then split $I^c$ into two parts
$${\color{black} I^c_1:= \{ q \in I^c \,/\, |\eta_q | = 1\}\, , \quad I^c_2 :=\{ q\in  I^c  \,/\, |\eta_q | \geq 2\}\, .}$$
We also define
$$ \beta_- =\alpha^c  \, ,  \quad  \beta_+ =  \bigcup_{q \in  I^c_1 } \eta_q  \, ,  \quad \beta_1 = \bigcup_{q \in I} \eta_q  \, , \quad \beta_2= \bigcup_{q \in  I^c_2} \eta_q \,,$$
and we denote (abusively) $\eta_{1}$ and $\eta_{2}$ the restriction of $\eta$ to $\beta_1$ and $\beta_2$ respectively. By definition, $\eta_{2}$ has no singleton: recall that  as defined in Section~\ref{factorization-subsec}, such a partition is called  {\it clustering}. 
Then by Fubini, 
\begin{align}
 \label{eq:partnosing}
\cI_\MM & =\mu_\eps^{ |\varsigma|/2} 
\sum_{\bf N }\sum_{\beta_1, \beta_2 , \beta_-, \beta_+ \atop \hbox{\tiny \rm partition of } \{1, \dots, | \varsigma]\} } 
\sum_{\eta_{1} \in \cP_{\beta_1}} \sum_{\eta_{2}\in \cP_{\beta_2} \atop \eta_2 \hbox{ \tiny\rm  clustering}} 
\prod_{ \ell_- \in   \beta_- } \bbE_\eps  [-\phi _\delta^{( \ell_-)}]\\& 
\qquad \qquad\times\prod_{\ell_+ \in \beta_+} \bbE_\eps  [\phi _\delta^{(\ell_+)}]  
 \prod _{1\leq q\leq |\eta_2| } \mu_\eps^{1-|\eta_{2,q}|}\bbE_\eps[  \phi_\delta^{(\eta_{2,q})}  ]  
 \nonumber \\
& \qquad  \qquad  \times 
\bbE_\eps   \Big[  \Big( \prod_{u=1} ^{p-1}  \zeta^\eps_{\theta_u} \big( h^{(u)}\big) \Big)    \Big( \Otimes_{1\leq q\leq |\eta_1| }   \mu_\eps^{\frac12 -|\eta_{1,q}|}  \zeta^\eps_{ M_{\eta_{1,q}}^\delta ,\theta-\delta} (\phi_\delta^{(\eta_{1,q})} ) \Big)\Big] \, . \nonumber
\end{align}
Fixing $\beta_1$ and $\beta_2$, we see that  the sum over $\beta_-, \beta_+$ is zero as soon as $\beta_1\cup \beta_2 \neq  \{1, \dots, |\varsigma|\} $.
We find \begin{align*}
\cI_\MM & =\mu_\eps^{| \varsigma|/2} 
\sum_{\bf N } \sum_{\beta_1, \beta_2  \atop \hbox{\tiny \rm   partition of } \{1, \dots, |\varsigma|\}}
\sum_{\eta_{1} \in \cP_{\beta_1}} \sum_{\eta_{2}\in \cP_{\beta_2} \atop \eta_2 \hbox{ \tiny\rm  clustering} }   \prod _{1\leq q\leq |\eta_2| } \mu_\eps^{1-|\eta_{2,q}|} \bbE_\eps\Big[ \phi_\delta^{(\eta_{2,q})}  \Big]   \\
& \qquad  \qquad  \times 
\bbE_\eps   \Big[ \Big( \prod_{u=1} ^{p-1}  \zeta^\eps_{\theta_u} \big( h^{(u)}\big) \Big)    \Big( \Otimes_{1\leq q\leq |\eta_1| }   \mu_\eps^{\frac12 -|\eta_{1,q}|}  \zeta^\eps_{ M_{\eta_{1,q}}^\delta ,\theta-\delta} (\phi_\delta^{(\eta_{1,q})}  ) \Big)\Big]\, .
\end{align*}
  Finally given~$\eta_{1}$ and~$\eta_{2}$
we decompose 
$
| \varsigma| =   \sum_{q } |\eta_{1,q}| +   \sum_{q } |\eta_{2,q }|
$
and we arrive at the following  identity. 

\begin{Prop}
\label{prop-from theta to theta + delta}
Consider  a   partition $\varsigma$ of a set $B \subset \{p, \dots, P\}$, indexing the  test functions~$(\phi ^{(\varsigma_i)} )_{i \leq  |\varsigma| }$ as in~\eqref{eq:defPhi}.
Then  for any~$\theta= \theta_p - r\delta$ with ~$r \in [0, (\theta_p - \theta_{p-1})/\delta]$, there holds with notation~{\rm(\ref{defphideltaetai})}
\begin{equation}
\label{Duhamel-identite}
\begin{aligned}
&\bbE_\eps   \Big[ \Big( \prod_{u=1} ^{p-1}  \zeta^\eps_{\theta_u} \big( h^{(u)}\big)\Big)  \Otimes_{ i \leq  |\varsigma| }
 \zeta^\eps_{M_{\varsigma_i} ,\theta} \big(   \phi ^{(\varsigma_i)} \big) \Big] \\
  &\qquad  =
\sum_{\bf \NN }\sum_{\eta_1 \cup \eta_2 \hbox{ \tiny \rm partition of } \{1, \dots, |\varsigma|\}\atop \eta_2 \hbox{ \tiny\rm  clustering} }   \prod _{q= 1}^{|\eta_2|}\mu_\eps^{1-\frac{|\eta_{2,q}|}2} \bbE_\eps\Big[   \phi_\delta^{(\eta_{2,q})}  \Big]   \\
& \qquad  \qquad  \times 
\bbE_\eps   \Big[ \Big( \prod_{u=1} ^{p-1}  \zeta^\eps_{\theta_u} \big( h^{(u)}\big) \Big)    \Big( \Otimes_{q= 1}^{|\eta_1|}  \mu_\eps^{\frac12 -\frac{|\eta_{1,q}|}2}  \zeta^\eps_{ M_{\eta_{1,q}}^\delta ,\theta-\delta} \Big(  \phi_\delta^{(\eta_{1,q})}  \Big) \Big)\Big] \,.
\end{aligned}
\end{equation}
\end{Prop}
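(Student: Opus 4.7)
The plan is to derive the identity (\ref{Duhamel-identite}) by a direct bookkeeping, combining three algebraic steps all of which have essentially been set up in the preceding Sections \ref{duhamel-subsec}--\ref{factorization-sec}: expansion of the $\ostar$-product, dual Duhamel iteration on $[\theta-\delta,\theta]$, and the cumulant decomposition \eqref{eq: cumulant decomposition alpha}. The only genuine content of the proof is to recognize a sign-cancellation that forces a pairing of the "singleton" cumulants with the terms $\alpha^c$ coming from the initial $\ostar$-expansion, yielding an expansion indexed only by two blocks $\eta_1,\eta_2$ partitioning $\{1,\dots,|\varsigma|\}$.

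\smallskip

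\textbf{Step 1.} Starting from the left-hand side of (\ref{Duhamel-identite}), expand the $\ostar$-product of fluctuation fields at time $\theta$ using the definition (\ref{tensor-product}). This gives
\[
\cI_\MM = \mu_\eps^{|\varsigma|/2}\!\!\sum_{\alpha\subset\{1,\dots,|\varsigma|\}}\!\!\Big(\prod_{i\in\alpha^c}\bbE_\eps[-\phi^{(\varsigma_i)}]\Big)\,\bbE_\eps\Big[\Big(\prod_{u=1}^{p-1}\zeta^\eps_{\theta_u}(h^{(u)})\Big)\pi^\eps_{M_\alpha,\theta}(\Phi_\alpha)\Big],
\]
with $\Phi_\alpha=\bigotimes_{i\in\alpha}\phi^{(\varsigma_i)}$, which is exactly (\ref{322'}).

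\smallskip

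\textbf{Step 2.} Apply the dual Duhamel iteration \eqref{eq : Irec M N dual} to replace $\pi^\eps_{M_\alpha,\theta}(\Phi_\alpha)$ by $\sum_{\NN_\alpha}\pi^\eps_{M_\alpha^\delta,\theta-\delta}(\Phi_{\alpha,\delta})$, and insert the cumulant expansion \eqref{eq: cumulant decomposition alpha} to rewrite
\[
\Phi_{\alpha,\delta}=\sum_{\eta\in\cP_\alpha}\prod_{q=1}^{|\eta|}\mu_\eps^{1-|\eta_q|}\phi_\delta^{(\eta_q)}.
\]
Then decompose each sub-empirical-average by \eqref{exp-fluct-dec}, i.e.\ $\pi^\eps_{M_{\eta_q}^\delta,\theta-\delta}(\phi_\delta^{(\eta_q)})=\mu_\eps^{-1/2}\zeta^\eps_{M_{\eta_q}^\delta,\theta-\delta}(\phi_\delta^{(\eta_q)})+\bbE_\eps[\phi_\delta^{(\eta_q)}]$, and introduce a further sum over the subset $I\subset\{1,\dots,|\eta|\}$ of indices that are kept as fluctuations rather than expectations.

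\smallskip

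\textbf{Step 3.} Group the indices of $\{1,\dots,|\varsigma|\}$ into four classes, as in the derivation of \eqref{eq:partnosing}: $\beta_-=\alpha^c$, $\beta_+=\bigcup_{q\in I^c_1}\eta_q$ (singletons kept as expectations), $\beta_2=\bigcup_{q\in I^c_2}\eta_q$ (non-singletons kept as expectations, i.e.\ the clustering part), and $\beta_1=\bigcup_{q\in I}\eta_q$. By Fubini, the resulting expression is a sum over such quadruples. The key observation is that $\beta_+$ and $\beta_-$ appear only through the signed factor
\[
\prod_{\ell\in\beta_-}\bbE_\eps[-\phi_\delta^{(\ell)}]\prod_{\ell\in\beta_+}\bbE_\eps[\phi_\delta^{(\ell)}],
\]
so that, once $\beta_1$ and $\beta_2$ are fixed, summing over arbitrary disjoint splittings $\beta_+\sqcup\beta_-$ of the leftover indices produces a factor $\sum_{\beta_+\sqcup\beta_-=\text{rest}}(-1)^{|\beta_-|}$, which vanishes unless $\text{rest}=\emptyset$. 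This forces $\beta_1\cup\beta_2=\{1,\dots,|\varsigma|\}$ and eliminates the auxiliary partition $\alpha,I$.

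\smallskip

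\textbf{Step 4.} It remains to collect the powers of $\mu_\eps$: the prefactor $\mu_\eps^{|\varsigma|/2}$ combines with the cumulant weights $\mu_\eps^{1-|\eta_q|}$ from Step 2 and the $\mu_\eps^{-1/2}$ per fluctuation from \eqref{exp-fluct-dec} to give, on $\beta_1$, exponents $\frac12-\frac{|\eta_{1,q}|}{2}$ (using $\sum_q|\eta_{1,q}|=|\beta_1|$ and reabsorbing half of the total into the $\ostar$), and on $\beta_2$, exponents $1-\frac{|\eta_{2,q}|}{2}$. This is exactly the right-hand side of (\ref{Duhamel-identite}).

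\smallskip

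The only delicate step is Step 3, where one must check that the signs from the original $\ostar$-expansion in (\ref{tensor-product}) and from the $I^c_1/I^c_2$ splitting indeed conspire to give the binomial cancellation; this is where the structure of the $\ostar$-product (based on subtracting expectations) is crucial, and also where the restriction of $\eta_2$ to clustering partitions arises, singletons having been absorbed into $\beta_+\sqcup\beta_-$ prior to the cancellation.
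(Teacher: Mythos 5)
Your proposal reproduces the paper's own derivation in Section~\ref{factorization-sec} step for step: expansion of the $\ostar$-product as in \eqref{322'}, pullback via \eqref{eq: 3.19} and the cumulant identity \eqref{eq: cumulant decomposition alpha}, the decomposition \eqref{exp-fluct-dec}, the four-fold partition into $\beta_\pm,\beta_1,\beta_2$, and the binomial cancellation $\sum_{\beta_+\sqcup\beta_-}(-1)^{|\beta_-|}=0$ forcing $\beta_1\cup\beta_2=\{1,\dots,|\varsigma|\}$ as in \eqref{eq:partnosing}. The argument and the power-counting are correct and essentially identical to the paper's.
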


The algebraic identity  (\ref{Duhamel-identite}) extends formula (\ref{Duhamel-identity-block})  to take  into account the structure of packets at time $\theta$. The length $\delta$ of the time interval is limited only by the fact that we need to control the number of internal recollisions uniformly in $\e$ (so that the sums over~$\KK_{\eta_{i,q}}$ 
defining~$  \phi_\delta^{(\eta_{i,q})} $ 
are finite). Extending this time interval would require to modify the conditioning, but then $\indc_{(\Upsilon^\eps_N)^c}$ would not be a negligible correction.


\section{Extracting minimal cumulants} 
\label{sec:tau}

In this section, we aim at iterating  Proposition \ref{prop-from theta to theta + delta} to  pull back the fluctuation structure on an  intermediate  time scale~$\tau$ such that $\delta \ll \tau \ll 1$.  
For the sake of simplicity, we choose $\tau$ such that $R:=\tau / \delta$ is an integer.  
Let $\theta = \theta_p -k\tau $ for some integer~$k$ be such that~$[\theta-\tau, \theta]\subset[ \theta_{p-1}, \theta_p]$.

\medskip

\subsection{Backward iterated clustering}

Let $B= \{ b_\ell \,| \   \ell = 1, \dots, |B|\}$ be a subset of $\{p, \dots, P\} $
indexing the test functions $(\phi ^{(i)})_{i \in B}$ at time $\theta$.
As explained in Section \ref{factorization-subsec},  the strategy is to  iterate~$R= \tau/\delta $ times the formula  (\ref{Duhamel-identite})  down to time~$\theta-\tau$. We therefore construct iteratively on each time step~$[\theta - r \delta, \theta - (r-1) \delta]$ for~$r =1,\dots,R$       nested partitions
 $\eta_1^{r-1} \hookrightarrow \eta^r_1 \cup \eta^r_2 $ with $\eta^r_2$ corresponding to (non trivial) packets which are expelled from the main factorized structure, contributing only via their expectation, and $\eta_1^r$ corresponding to packets contributing to the factorized structure via their fluctuations.

Keeping  track of all the intermediate~$\eta^r$ for~$1 \leq r \leq R$ would imply rapidly growing combinatorics. Therefore, at time $\theta-r \delta$,  we are going to identify cumulants of the form $\phi_{r\delta}^{(A)}$, with $A \subset B$ by taking into account all possible clustering dynamics leading to a given cluster indexed by $A$, whatever the successive partitions 
$  \eta^1_1, \dots, \eta^r_1$ : the identification of packets of packets with the union of these packets is therefore essential to gather all contributions.
This identification will be key to control the combinatorics.

\begin{figure}[h] 
\includegraphics[width=5in]{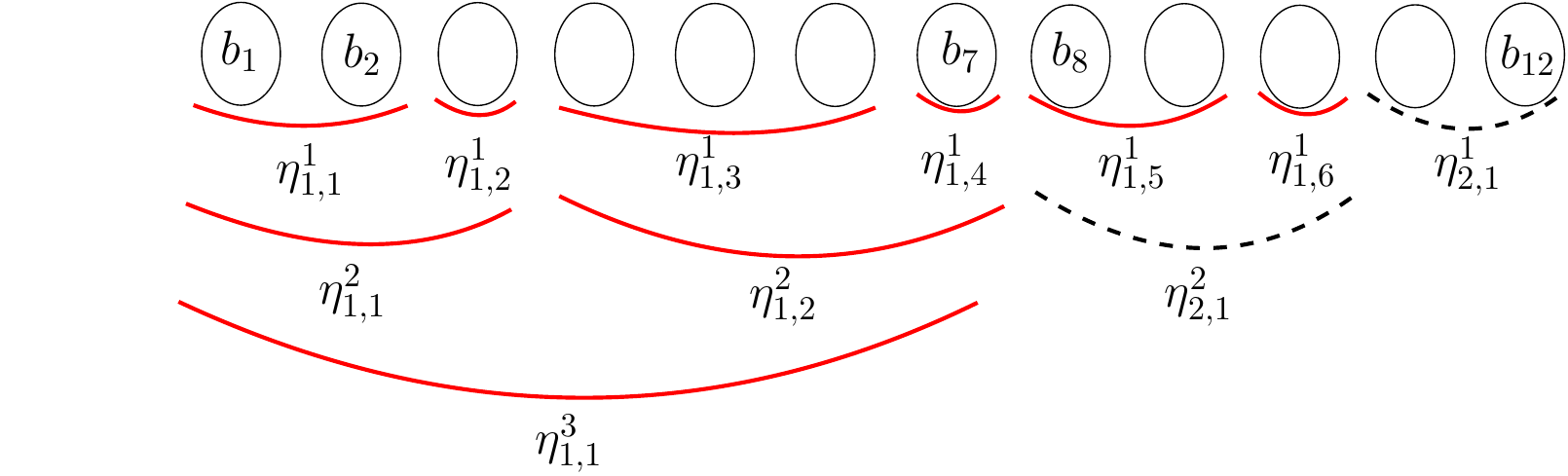}  
\caption{
The nested partitions  are depicted and the dashed parts represent the expelled clustering cumulants.
 By summing over all the possible intermediate decompositions of $\eta^3_{1,1}=\{ b_1, \dots , b_7 \}$, we will recover the cumulant~$\phi^{(\eta^3_{1,1})}_{3 \delta}$.
The expelled cluster will be collected in the set  $\rho^3 = \{ \eta^1_{2,1}, \eta^2_{2,1} \}$. 
}
 \label{fig:fubini nested}
\end{figure}


 \bigskip

On the other hand,  the  combinatorics  encoding all possible elementary pullbacks  on $[\theta-r\delta, \theta-(r-1)\delta]$   leading to a given configuration  at time $\theta-r\delta$,  is also quite bad and by iteration will be out of control. Indeed,  the number of possible $\KK $ describing the forward dynamics on $[\theta-r\delta, \theta-(r-1)\delta]$  is $ K_\gamma ^{M^{r\delta}_B} $  (recalling~$M_B^{r\delta }= M_B^{(r-1)\delta} +N^r_B$).  In particular, we do not expect that formula (\ref{Duhamel-identite}) can be iterated brutally~$O(\tau /\delta)$ times without having a strong divergence.  
   The strategy to avoid this   divergence will consist in retaining  at each time step~$[\theta-(r-1)\delta, \theta-r\delta]$,   only ``local minimal cumulants" defined by pulling back the observables along backward pseudo-trajectories such that 
   \begin{itemize}
\item[(i)]  internal recollisions (inside any block $j$) are forbidden, i.e.  ${\mathbf K}_j = 0$;
 \item[(ii)]  recollisions between blocks in any given packet are forbidden;
\item[(iii)]  the graph encoding the recollisions between packets of any given forest has to be minimally connected;
\item[(iv)]  the graph encoding the overlaps between the different forests has to be minimally connected.
\end{itemize}
We then define
   \begin{equation}
\label{minimal}
\begin{aligned}
\bar  \phi_\delta^{(\eta_q)} 
 : =  \mu_\eps^{N_{\eta_q} + |\eta_q |-1} &\left( \prod _{i  \in \eta_q\atop j\in \varsigma_i}\frac  { m_j!  }{  (m_j+n_j) !} \right) \sum_{ \SS_{\eta_q }, \bar \SS_{\eta_q} }
 \sum_{\lambda \in \cP_{\eta_q}  }
   \sgn (\SS_{\eta_q}  )   
 \bar  \varphi_{\{\lambda_1, \dots, \lambda_{|\lambda|}\}}\\
  &\qquad \qquad\times   \Big(  \prod_{\ell =1}^{|\lambda|}({\sharp}_\delta \Phi_{\lambda_\ell } ) \indc_{ \{Z_{\M_{\lambda_\ell}+ \NN_{\lambda_\ell}}    \in \bar \cR^{\lambda_\ell}_{\SS_{\lambda_\ell}, \bar \SS_{\lambda_\ell} } \} }  
  \Big) 
  \, ,
  \end{aligned}
\end{equation}
where $\bar \cR^{\lambda_\ell}_{\SS_{\lambda_\ell}, \bar \SS_{\lambda_\ell} }$ is the set of configurations compatible with pseudo-trajectories in a forest satisfying (i)(ii)(iii), and  $\bar  \varphi$ is the restriction of $\varphi$ to minimally connected graphs according to (iv). In particular, denoting by~$\bar \cR^{\lambda }_{\SS_{\lambda }, \bar \SS_{\lambda  } } $ the resulting set of configurations, there holds
 \begin{equation}
 \label{varphibar}
\bar  \varphi_{\{\lambda_1, \dots, \lambda_{|\lambda|}\} }    \indc_{ \{Z_{\M_{\lambda}+ \NN_{\lambda}}    \in \bar \cR^{\lambda}_{\SS_{\lambda}, \bar \SS_{\lambda} } \} } = (-1)^{|\lambda| - 1}    \indc_{ \{Z_{\M_{\lambda}+ \NN_{\lambda}}    \in \bar \cR^{\lambda}_{\SS_{\lambda}, \bar \SS_{\lambda} } \} } 
\end{equation}

\bigskip
Only the contribution of such local minimal cumulants in (\ref{Duhamel-identite}) will be iterated. Starting with~$|B|$ blocks at time $\theta$, we therefore obtain with this partial  iteration ``minimal cumulants" defined by pulling back the observables along backward pseudo-trajectories such that 
   \begin{itemize}
\item[(i)]  internal recollisions (inside any block $j$) are forbidden, i.e.  $K_j = 0$;
 \item[(ii)]  the $|B|$ blocks are  dynamically connected according to a graph with exactly $|B|-1$ edges representing all encounters (corresponding to external recollisions and overlaps in the backward pseudo-trajectory). This graph is therefore minimally connected (meaning there are neither multiple edges nor cycles).
\end{itemize}
\begin{Rmk}\label{independence-rmk} Note that, by definition, particles which are in different packets at time $\theta - r \delta$ are independent on $[\theta- r \delta, \theta]$. In particular, even if they approach at a distance $\eps$ on $[\theta- r \delta, \theta]$, this is neither a recollision nor an overlap.
\end{Rmk}
%


\subsection{Iterated forward dynamics}\label{sec-forward dynamics}
 
For  minimal cumulants,   the forward dynamics can be encoded by simpler combinatorics. Indeed  to describe the partition in forests $\lambda$, it is enough to prescribe  a sequence of signs~$\EE \in \{-1, +1\} ^{|B|-1} $  encoding whether the $|B| - 1$ encounters between particles within different blocks have scattering  (sign +1) or not (sign -1). Since we further assume that there is no internal encounter without annihilation, we end up with a ``minimal  forward dynamics" which is parametrized only by the sets of signs $(\SS , \bar \SS) $ and~$\EE$. 

\begin{Def} [{\bf Minimal forward dynamics}]
\label{def minimal forward dynamics}
 Given~$1 \leq r \leq R$, let~$B$ be any subset of~$  \{1,\dots,P\}$, and consider  integers~$\MM_B:=(m_j )_{j\in B}$ and
 \begin{equation}
\label{eq: notation MMB r delta}
\MM_B^{r\delta} := \MM_B+\NN_B   := (m_j+n_j)_{j\in B}.
\end{equation}
 representing the particle numbers in  the blocks respectively at times~$\theta$ and $\theta-r\delta$.

 A \emph{minimal forward dynamics} on~$[\theta-r\delta, \theta]$ starting from $Z_{\M_B^{r\delta}} $ is completely prescribed by  two sequences $(\SS, \bar \SS)  = (\SS_j, \bar \SS_j)_{j\in B}$ with~$(\SS_j, \bar \SS_j ) \in \{-1, +1\} ^{2n_j }$, and~$\EE \in \{-1, +1\} ^{|B|-1} $. Moving forward in time, each time two particles approach at a distance $\eps$, 
\begin{itemize}
\item if they belong to the same block~$j$:     the particle to be removed and the possible scattering   are encoded by~$\SS_j, \bar \SS_j$;
\item if they belong to two different blocks:   a (signed), minimally connected graph $G_B$ with~$|B|-1$ edges is constructed iteratively by adding an edge decorated with a sign according to~$\EE$ if the two particles are not already in the same connected component of the graph. If the two particles are in the same connected component of $G_B$, then
\begin{itemize}
\item if the before-last edge in this connected component was created in a different time interval~$[\theta-r'\delta, \theta-(r'-1)\delta]$, the particles are unaffected  (this is actually not an encounter, see Remark {\rm\ref{independence-rmk}});
\item if it occurs in the same time interval, the configuration is not admissible (by definition, the graph representing all encounters has to be minimal).
\end{itemize}
 \end{itemize}
We say that  $Z_{\M_B^{r\delta}}$ is a {\rm minimal forward cluster} associated with  $(\SS, \bar \SS , \EE )$ if the $N_B$ annihilations occur, as well as the $|B|-1$ encounters making the graph $G_B$ connected. This is denoted by $Z_{\M_B^{r\delta}} \in \cR^{\rm min}_{\EE  , \SS, \bar \SS  } $. 

\smallskip

The case when~$B = \{ i\}$ is reduced to one singleton  is  stressed by the denomination {\rm single minimal forward cluster}.
Notice that a single minimal forward cluster is simply parametrised by $\SS, \bar \SS$ (as 
$\EE$  becomes irrelevant) and we will write 
$Z_{\M_B^{r\delta}} \in \cR^{\rm min}_{\SS, \bar \SS  }$. 
\end{Def}

\begin{Rmk}
 Note that  the time intervals $[\theta - r' \delta, \theta - (r'-1) \delta]$ (with $r' \leq r$)
 when the different   encounters  happen are not prescribed in the definition above.
\end{Rmk}
%
%
%
With this definition of minimal forward dynamics, we obtain the following representation of  minimal cumulants at time~$\theta-r \delta$:
\begin{equation}
\label{barphi-def}
\bar   \phi_{r \delta}^{(B)}  (Z_{\M_B^{r\delta}})
  := \mu_\eps^{N_B +|B| -1 } \left( \prod_{j \in B} {m_j! \over M_j^{r\delta} !} \right) \sum_{ \SS,\bar  \SS , \EE  }  \sgn(\EE) \sgn(\SS ) \indc_{Z_{\M_B^{r\delta}} \in \cR^{\rm min}_{\EE , \SS, \bar \SS } } 
  ({\sharp}_{r\delta} \bigotimes _{i \in B}  \; \phi^{(i)})  \,,
\end{equation}
where $\cR^{\rm min}_{\EE, \SS, \bar \SS}$ imposes that $Z_{\M_B^{r\delta}}$
is associated with a (unique) minimal forward dynamics on~$[\theta- r \delta, \theta]$.  
%
%
%
As in~\eqref{eq: pushforward}, the pullback during a time $r \delta$ is given by
\begin{equation*}
{\sharp}_{r\delta} \bigotimes_{i \in B} \; \phi^{(i)} (Z_{\M_B^{r\delta}}) := 
\bigotimes_{i \in B}  \; \phi^{(i)} \big(Z_{\M_B}^\e(\theta , Z_{\M_B^{r\delta}})\big) \, .
\end{equation*}
 \begin{Rmk}
 Note that  it is possible to prescribe a priori  the numbers $(n_j^{r'})_{j \in B, r' \leq r}$ of particles annihilated at each time step in each tree, in which case there are additional ``sampling" conditions on $\cR^{\rm min}_{\EE , \SS, \bar \SS }$. 
\end{Rmk}

%
%
%
After iterating Proposition \ref{prop-from theta to theta + delta} up to time $\theta- r \delta$ for~$1 \leq r \leq R$, we expect that 
the main contribution will be given by minimal cumulants of the form $\bar   \phi_{r \delta}^{(B)}$.
  There are however   additional terms  at each time step~$\theta - r'\delta$, for~$1 \leq r' \leq r$, which will  not be iterated; their contribution  will be shown to be negligible in the limit.
In order to analyse these error terms recursively for each $r$, we    need a more general notion of forward cluster on~$[\theta-r \delta, \theta]$,  such that  in the first interval~$[\theta-r \delta, \theta-(r-1) \delta]$ the forward dynamics is not necessarily minimal but it is minimal starting at time~$ \theta-(r-1) \delta$.  
\begin{Def} [{\bf Forward dynamics}]
\label{def forward dynamics}
 Given~$1 \leq r \leq R$, let~$B$ be any subset of~$  \{1,\dots,P\}$, and consider  integers~$\MM_B:=(m_j )_{j\in B}$, $\MM_B  ^{(r-1)\delta} :=\NN_B^{<r}   + \MM_B:= (m_j + n^{<r}_j )_{j \in B} $ and $\MM_B  ^{r\delta} :=\NN_B^r +\NN_B^{<r}   + \MM_B = (m_j +n^{<r}_j +n^r_j)_{j \in B}$  representing the particle numbers in the blocks respectively at times~$\theta$, $\theta-(r-1) \delta$ and $\theta-r \delta$.  
\smallskip
 
A \emph{forward dynamics} on $[\theta-r\delta, \theta]$ starting from $Z_{\M_B^{r\delta}} $ is completely prescribed by the following parameters~:
\begin{itemize}
\item for each~$j \in B$, a sequence $(\SS_j , \bar \SS_j)  \in \{-1, +1\} ^{2n_j }$ encoding the particle to be removed and the possible scattering at each encounter  between two particles of the same block~$j$.
The restriction to the time interval $[\theta- r \delta, \theta-(r-1)\delta]$ of these parameters is 
 denoted by $(\SS^r_j , \bar \SS^r_j)  \in \{-1, +1\} ^{2n^r_j }$ and the other parameters by 
 $(\SS^{<r}_j , \bar \SS^{<r}_j)  \in \{-1, +1\} ^{2n^{<r}_j }$;
\item a partition $\varsigma  $ of $B$, defining packets~$(C_i)_{i \leq |\varsigma |}$ at time $\theta - (r-1) \delta$;
\item a partition $\lambda  \in \cP_{ |\varsigma|}$ in forests on $[\theta- r \delta, \theta-(r-1)\delta]$;
\item a multi-index $\KK  \in \{0,\dots, K_\gamma\}^{M_B^{r\delta}} $ encoding the number of  internal encounters without annihilation per particle on $[\theta- r \delta, \theta-(r-1)\delta]$.
Note that within the packet $C_i$, particles in different blocks are always scattered when they encounter during the time interval $[\theta- r \delta, \theta-(r-1)\delta]$; 
\item for each $i\leq  |\varsigma|$, a sequence $\EE_i  \in \{-1, +1\}^{|\varsigma _i |- 1} $ encoding the types of encounters in the packet~$C_i$ on $[\theta-(r-1)\delta, \theta]$.
\end{itemize}
We say that $Z_{\M_B^{r\delta}}$ is a {\rm forward cluster}  associated with $(\SS , \bar \SS  , \varsigma , \lambda , \KK , (\EE_i )_{i\leq |\varsigma | })$ if all encounters occur in such a way that the graph  coding these encounters is completely connected, modulo the identification of the $m_j$ particles of each block $j$ at time $\theta$ as a unique vertex.
\end{Def}

\begin{Rmk}
 The  forward cluster~$Z_{\M_B^{r\delta}}$  associated with $(\SS , \bar \SS , \varsigma  , \lambda , \KK , (\EE_i )_{i\leq  |\varsigma | })$ is recovered 
 by creating $ |\varsigma | $ independent minimal forward clusters indexed by each $\varsigma_i $ up to time~$\theta-(r-1) \delta$.
The corresponding packets  are   linked dynamically to form a 
forward cluster at time $\theta- r\delta$.
 \end{Rmk}
 \begin{Rmk}
 Note that the partition into packets and the subpartition into forests are only prescribed on the first time interval~$[\theta- r \delta, \theta-(r-1)\delta]$: thanks to the minimality assumption there is no need to prescribe those objects at each intermediate time step. All possible divergences are therefore concentrated on the first time interval, whose size $\delta$ was chosen for them to remain  under control.
 \end{Rmk}

Fix $n_j^r \leq M_j^{r \delta}$ for any block $j$ in $B$.
Going back to the definition (\ref{defphideltaetai}) of cumulants, the forward dynamics  are related to a cumulant $\phi_{r \delta}^{(B)}$ which is obtained in terms of the minimal cumulants 
$\bar \phi_{(r-1)\delta} ^{(\varsigma_i ) }$ defined in~(\ref{barphi-def})  as follows
\begin{equation}
\label{notbarphi-def}
\begin{aligned}
 \phi_{r \delta}^{(B)}(Z_{\M_B^{r\delta}})
: =   \Big( \prod_{j \in B} {M_j^{(r-1)\delta}! \over M_j^{r\delta} !} \Big) \sum _{\varsigma  \in \cP_B}  &  \     \mu_\eps^{N_B^r + | \varsigma  |-1}  \sum_{\KK , \SS^r , \bar \SS^r ,\lambda \atop 
 \varsigma \hookrightarrow \lambda} \sgn (\SS^r  )   
   \varphi_{\{\lambda_1, \dots, \lambda_{|\lambda|}\}} \\
&     \times\prod_{\ell =1}^{|\lambda|}\Big({\sharp}_\delta  
\bigotimes_{\varsigma_i \subset \lambda_\ell} \bar \phi_{(r-1) \delta} ^{ (\varsigma_i)} \Big) 
\indc_{ \{Z_{\M_{\lambda_\ell}^{r\delta} }    \in \cR^{\lambda_\ell, \varsigma}_{{\mathbf K}_{\lambda_\ell},\SS^r_{\lambda_\ell},\bar \SS^r_{\lambda_\ell}} \} }  
   \, ,
\end{aligned}
\end{equation}
where for any $i \leq |\varsigma|$, the minimal cumulant $\bar \phi_{(r-1) \delta}^{ (\varsigma_i)}$
is coded by the parameters $\EE_i$ and $(\SS^{<r}_j , \bar \SS^{<r}_j)_{j \in \varsigma_i}$, and 
$\cR^{\lambda_\ell, \varsigma}_{{\mathbf K}_{\lambda_\ell},\SS^r_{\lambda_\ell},\bar \SS^r_{\lambda_\ell}}  $
is the set of configurations compatible with packets $\varsigma$ and forests $(\lambda_\ell)$ in the first time interval (cf.\,\eqref{eq: R lambda K}).

\bigskip
\subsection{Discarding non minimal dynamics in the  iteration} 
\label{subsec:EMC}

Starting from general functions  $\{ \phi ^{(i )} \}_{ i \in B}$ at time $\theta$, we have explained how they can be aggregated in \eqref{notbarphi-def}
 to form cumulants at time $\theta - r \delta$. 
In what follows, the structure of the functions $\{ \phi ^{(i )} \}_{ i \in B}$ will become relevant.
For $i \geq p$, we will assume that $\phi^{(i)} = \bar \phi^{(i)}$  is  built from a 
{\it single} minimal cumulant obtained by the pullback of the test function $h^{(i)}$ during the time interval $[\theta, \theta_i]$ 
\begin{equation}
\label{eq: cumulant induction}
\bar \phi^{(i)}  (Z_{m_i})= \frac{ \mu_\eps^{m_i-1}}{m_i!} \sum_{\SS_i^\theta , \bar \SS_i^\theta} \; 
\sgn(\SS_i^\theta ) \indc_{Z_{m_i} \in \cR^{\rm min}_{\SS_i^\theta, \bar \SS_i^\theta } } 
({\sharp}_{\theta_i-\theta} h^{(i)})\, ,
\end{equation}
where the signs $(\SS_i^\theta, \bar \SS_i^\theta) \in \{-1,1\}^{2(m_i-1)}$ prescribe the encounters in the time interval $[\theta, \theta_i]$
in the same way as in Definition \ref{def minimal forward dynamics}.
From now on, we shall write $\bar \phi^{(i)}$ instead of $\phi^{(i)}$ to emphasize the minimality of the cumulant. 
This structure will be crucial for the geometric estimates in Sections \ref{geometric-sec} and~\ref{variance-sec}.

\bigskip

The goal of this section is to prove the following approximate preservation of the fluctuation structure on the  generic time interval $[\theta- \tau, \theta]$ (or on a possibly smaller one), discarding non minimal dynamics.
\begin{Prop}
\label{prop-from theta to theta + tau}
Fix~$\theta = \theta_p -k\tau $ for some integer~$k$ such that~$[\theta-\tau, \theta]\subset[ \theta_{p-1}, \theta_p]$. Consider  a set $B \subset\{p, \dots P\}$ and observables $( \bar \phi ^{(i)} )_{i \in B }$
supported on single minimal forward cluster at time $\theta$ as in \eqref{eq: cumulant induction}.
 There is a constant~$C_P$ depending only on~$P$  such that for~$\delta \ll \tau=R\delta \ll 1$ the following holds,  as~$\mu_\eps \to \infty$
\begin{equation}
\label{Duhamel-identite-tau}
\begin{aligned}
&\left| \bbE_\eps   \Big[ \Big( \prod_{u=1} ^{p-1}  \zeta^\eps_{\theta_u} \big( h^{(u)}\big)\Big)  \Otimes_{ i \in B }
 \zeta^\eps_{m_i,\theta} \big(  \bar \phi ^{(i)} \big) \Big]\right. \\
  &\qquad \left. - 
\sum_{ \NN   } \sum_{ \eta  \cup \rho \hbox{ \tiny\rm partition of } B\atop \rho \hbox{ \tiny\rm  clustering} }   \prod _{q= 1}^{|\rho|}  \mu_\eps^{1-\frac{|\rho_q |}2} \bbE_\eps\Big[ \bar \phi_{\tau}^{(\rho_q )}  \Big]  \times\,  \bbE_\eps   \Big[\Big( \prod_{u=1} ^{p-1}  \zeta^\eps_{\theta_u} \big( h^{(u)}\big)\Big)\, \Big( \Otimes_{q=1}^{|\eta|}   \mu_\eps^{\frac12 -\frac{|\eta_{q}|}2}   \zeta^\eps_{M_{\eta_q}^\tau,\theta - \tau}  \big(
\bar \phi_{\tau }^{(\eta_{q} )}  \big) \Big)\Big]\right|  \\
& \qquad \qquad \qquad \qquad \qquad \qquad   \leq  (C_P\Theta)^{M_B} 
\Big(\prod_{i \in B\cup\{1,\dots,p-1\}} \| h^{(i)}\|_{L^\infty}\Big)   \eps^{\frac1{8d}} ,
\end{aligned}
\end{equation}
   with notation~\eqref{barphi-def}, and with~$M_{\eta_q}^\tau $ the total number of particles in the packet $\eta_q$ at time $\theta- \tau$.
\end{Prop}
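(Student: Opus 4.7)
The plan is to iterate Proposition~\ref{prop-from theta to theta + delta} $R = \tau/\delta$ times over the sub-intervals $[\theta-r\delta,\theta-(r-1)\delta]$ for $r=1,\dots,R$, while at each elementary step replacing the full $\delta$-cumulants $\phi_\delta^{(\eta_q)}$ produced by the formula by their \emph{minimal} counterparts $\bar\phi_\delta^{(\eta_q)}$ of~\eqref{minimal}, accumulating the discrepancy as a remainder. This is structurally the nested-partition procedure sketched in Figure~\ref{fig:fubini nested}: at each intermediate time $\theta-r\delta$ the running partition $\eta^{r-1}_1$ splits into $\eta^r_1\cup\eta^r_2$, the clustering part $\eta^r_2$ is expelled as an expectation factor, and $\eta^r_1$ survives as observables of the fluctuation field at the next time slice. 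The initial observables at time $\theta$ are precisely of the single minimal form~\eqref{eq: cumulant induction} required by the induction, and after one application of Proposition~\ref{prop-from theta to theta + delta}, one splits $\phi_\delta = \bar\phi_\delta + (\phi_\delta-\bar\phi_\delta)$ and only iterates on the $\bar\phi_\delta$ part; the iteration is well-posed because, via~\eqref{notbarphi-def}, $\bar\phi_\delta^{(\eta^1_{1,q})}$ is again supported on single minimal forward clusters and can be used as input to the next step.

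The second task is the algebraic identification of the final sum. After $R$ iterations the main term is a sum over tuples $(\eta^r_1,\eta^r_2)_{r\le R}$ of a product of expelled expectations and of fluctuations of minimal cumulants at time $\theta-\tau$. I would then use~\eqref{notbarphi-def} in reverse: summing over all refinement sequences that yield a prescribed final surviving partition $\eta$ of a subset of $B$ collapses the intermediate minimal cumulants into $\bar\phi_\tau^{(\eta_q)}$ by telescoping the definition. For the expelled packets, each $\rho_q$ was expelled at some step $r<R$ as $\bbE_\eps[\bar\phi_{r\delta}^{(\rho_q)}]$; replacing this factor by $\bbE_\eps[\bar\phi_\tau^{(\rho_q)}]$ uses the approximate stationarity of $\bbE_\eps$ on the conditioned phase space (the invariance of~\eqref{eq: initial measure} under the flow, up to the $\Theta\,\eps^d$ loss from~\eqref{conditioning-est}, which is far below~$\eps^{1/8d}$). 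After this reindexing the sum takes the form displayed in~\eqref{Duhamel-identite-tau}.

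The third task is to bound the remainders, which come in two types. Type (A): at each step $r$ the replacement $\phi_\delta^{(\cdot)}\to\bar\phi_\delta^{(\cdot)}$ discards pseudo-trajectories with at least one internal recollision (index $\kappa>0$), a non-trivial forest with a non-minimally connected overlap graph, or a packet with more than $|\varsigma_i|-1$ inter-block recollisions. Type (B): the stationarity approximation above. For (A) I would apply H\"older's inequality to decouple the test functions $\prod_u\zeta^\eps_{\theta_u}(h^{(u)})$ from the  $\Otimes$-product over $B$ and use~\eqref{eq: moment ordre 2,4} to bound the first factor by $C_P\prod_u\|h^{(u)}\|_\infty$; the second factor is then estimated through the variance bound on $\Otimes$-products announced in Sections~\ref{geometric-sec}--\ref{variance-sec}, which gives $(C\Theta)^{M_B}$ times an extra geometric smallness coming from the non-minimal constraint. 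Each such constraint (an additional recollision or overlap within a block or forest over a time interval of length $\delta$) produces a volume gain of order $\eps\bbV^d\delta = O(\eps^{2-1/2d}\cdot|\log\eps|^d)$ relative to the minimal volume~$\mu_\eps^{-(M-1)}$. Summing $R\le\tau/\delta$ such contributions and using $\delta=\eps^{1-1/2d}$ from~\eqref{eq: choix parametres} leaves a net smallness of order $\eps^{1/2d}|\log\eps|^{O(1)}\ll\eps^{1/8d}$, yielding the claimed bound after absorption of log factors in $(C_P\Theta)^{M_B}$.

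The main obstacle is the uniform geometric estimate for Type~(A) errors: one must show that any non-minimal structure (internal recollision, extra overlap, or a cycle in the encounter graph of a packet/forest) produces a volume gain of at least one factor of $\eps$ relative to the natural volume of a minimal forward cluster, and that this gain is uniform with respect to the intricate combinatorics of packets, forests and overlaps enumerated in~\eqref{notbarphi-def}. This is the pseudo-trajectory analogue of the classical recollision estimates in Lanford's theorem, but here it must be compatible with the clustering expansion and with the sums over $\KK,\SS,\bar\SS,\EE$. The choice $\delta=\eps^{1-1/2d}$ is precisely calibrated so that $\delta/\eps$ (the number of allowed microscopic steps per pullback) is polynomially bounded, which in turn keeps $R=\tau/\delta$ from defeating the per-step smallness; this is the quantitative heart of the argument.
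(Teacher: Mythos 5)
Your overall architecture matches the paper's: iterate Proposition~\ref{prop-from theta to theta + delta} over~$R=\tau/\delta$ steps, extract minimal cumulants at each step, and control the accumulated remainders by H\"older plus the geometric/variance estimates of Sections~\ref{geometric-sec}--\ref{variance-sec}. Your discussion of type~(A) remainders and the calibration $\delta=\eps^{1-1/2d}$ is also in the right spirit of what is done in Proposition~\ref{remove recollisions on delta}.

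However, there is a genuine gap in the treatment of the \emph{expelled} packets~$\rho_q$. You propose to pass from $\bbE_\eps[\bar\phi_{r\delta}^{(\rho_q)}]$ (expelled at some intermediate time~$\theta-r\delta$) to $\bbE_\eps[\bar\phi_\tau^{(\rho_q)}]$ by invoking ``approximate stationarity of $\bbE_\eps$ under the flow.'' This does not work: $\bar\phi_{r\delta}^{(\rho_q)}$ and $\bar\phi_\tau^{(\rho_q)}$ are expectations of genuinely \emph{different} test functions, living on different configuration spaces ($M_{\rho_q}^{r\delta}$ versus $M_{\rho_q}^{\tau}$ particles), with different clustering supports. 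Stationarity relates $\bbE_\eps[\pi^\eps_{m,t}(H_m)]$ across times $t$ for the \emph{same} $H_m$, and gives no relation whatsoever between expectations of cumulants pulled back over different time lengths. What is needed instead is to keep pulling back the expelled cumulants through the Duhamel/BBGKY identity~\eqref{eq: 1 site}, as the paper does in the proof of Proposition~\ref{fubini-prop}: $\bbE_\eps[\bar\phi_{(r-1)\delta}^{(\rho_q)}] = \sum_{\NN^r}\bbE_\eps[\phi_{r\delta}^{(\rho_q),{\rm t}}]$, and then restrict to minimal dynamics. This second operation generates the remainder $\cR^{\rm int,2}_r$ in~\eqref{eq: R int}, which is \emph{absent} from your proposal. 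In other words, the minimality extraction must be carried out not only on the surviving $\Otimes$-product (your $\cR^{\rm int,1}$-type error), but also inside each expelled expectation factor, and this second family of errors requires its own estimate via~\eqref{expectation-phi-rec} (with the cluster-variance bound on the remaining $\Otimes$-product from~\eqref{variance-phi-sigma}).

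A secondary, more minor point: the Fubini step by which nested partitions $\eta^{r-1}\hookrightarrow\eta^r_1\cup\eta^r_2$ collapse into a single partition~$\eta^r\cup\rho^r$ is not purely a ``telescoping'' of definition~\eqref{notbarphi-def}; it is the identity~\eqref{notbarphi-def-eta}, which relies on the fact that the freshly expelled set~$\eta^r_2$ and the previously expelled set~$\rho^{r-1}$ play symmetric roles and their transported cumulants merge into a common~$\phi_{r\delta}^{(\rho^r_q)}$. This is exactly the step that makes the combinatorics of the nested partitions manageable, and it is tied to the point above: it only works because the expelled cumulants are transported, not frozen at their expulsion time.
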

 The proof of this proposition is the content of the next two sections and proceeds in two steps: 
starting from
\begin{equation}
\label{defJM}
\cJ_{\MM}:=\bbE_\eps   \Big[ \Xi_{p-1}\,   \Otimes_{ i\in B} 
 \zeta^\eps_{m_i,\theta } \big(  \bar \phi ^{(i)}  \big) \Big]\, \quad  \hbox{ with the short notation }  \Xi_{p-1}:= \prod_{u=1} ^{p-1}  \zeta^\eps_{\theta_u} \big( h^{(u)}\big) \, ,
\end{equation}
we first extract  iteratively the
 remainder terms (for which the minimality condition is violated), and identify the main part. This procedure produces a sum of~$R: = \frac{\tau}{\delta} $ error terms, which are estimated in Paragraph~\ref{remove recollisions}.

\begin{Prop}
\label{fubini-prop}
The fluctuation structure~{\rm(\ref{defJM})} is transported on $[\theta- \tau, \theta]$ according to 
\begin{equation}
\label{JM-decomposition}
 \cJ_{\MM}=\bar  \cJ_{\MM}  +\sum_{r = 1} ^R \cR^{\rm int}_r\,,
 \end{equation}
where  the principal part is given by 
\begin{align}
\label{eq: cJ MM 0} 
\bar \cJ_{\MM} & :=
  \sum_{  \NN^{\tau}_B} \sum_{ \eta ^R  \cup \rho^R  \hbox{ \tiny\rm partition of } B \atop  \rho^R \hbox{ \tiny\rm  clustering} }  \prod _{q=1}^{|\rho^R|}\mu_\eps^{1 -\frac{|\rho^R_q|}2} 
 \bbE_\eps\Big[ 
  \bar \phi_\tau^{(\rho^R_q)}  \Big]   \\
& \qquad  \qquad  \times
\bbE_\eps   \Big[  
 \Xi_{p-1}  \Big( \Otimes_{q=1}^{|\eta^R|} \mu_\eps^{\frac12-\frac{ |\eta_q^R| }2}   \zeta^\eps_{M^\tau_q ,\theta-\tau} \Big(
 \bar \phi_\tau^{(\eta^R_{q}) } \Big) \Big)\Big]  \nonumber
\end{align}
with notation~\eqref{barphi-def},    the short notation $M_{q}^{r\delta}:=M_{\eta_{q}^{r}}^{r\delta}$ for the total number of particles in $\eta_{q}^{r}$ at time~$\theta- r \delta$, and  
 $\NN^{r\delta}_B = ( n_j )_{j \in B}$  the number of  particles  removed in each block during $[\theta - r \delta, \theta]$. 
The remainders are defined, with notation~\eqref{notbarphi-def}, as the sums of 
\begin{equation}
\label{eq: R int}
\begin{aligned}
\cR^{\rm int, 1}_r &= 
 \sum_{ \NN^{r\delta}_B }  \sum_{ \eta  ^r  \cup \rho^r  \hbox{ \tiny\rm partition of } B \atop  \rho^r \hbox{ \tiny\rm  clustering} }
 \prod _{q=1}^{|\rho^r|} \mu_\eps^{1-\frac{|\rho_q^r|  }2}  \bbE_\eps\Big[ 
  \bar \phi_{r \delta}^{(\rho^r_q )}  \Big]  \\
&  \qquad \times
\bbE_\eps   \left[  \Xi_{p-1}    \left( \Otimes_{q=1}^{|\eta^r|}  \mu_\eps^{\frac12-\frac{ |\eta^r_{q}| }2}    \zeta^\eps_{ M^{r\delta}_q    ,\theta-r\delta} \Big(  \phi_{r\delta}^{(\eta^r_{q})} \Big)  - \Otimes_{q=1}^{|\eta^r|} \mu_\eps^{\frac12-\frac{ |\eta^r_{q}| }2}    \zeta^\eps_{ M^{r\delta}_q    ,\theta-r\delta} \Big( 
  \bar  \phi_{r\delta}^{(\eta^r_{q})}  \Big) \right)\right] , \\
\cR^{\rm int, 2} _r&= 
 \sum_{ \NN^{r\delta}_B } \sum_{ \eta ^r  \cup \rho^r   \hbox{ \tiny\rm partition of } B \atop  \rho^r \hbox{ \tiny\rm  clustering} }   \left(  \prod _{q=1}^{|\rho^r|} \mu_\eps^{1-\frac{|\rho_q^r|  }2}  \bbE_\eps\Big[   
  \phi_{r\delta}^{(\rho_q^r)}  \Big]  - \prod _{q=1}^{|\rho^r|}  \mu_\eps^{1-\frac{|\rho_q^r|  }2} \bbE_\eps\Big[  
   \bar \phi_{r\delta}^{(\rho_q^r)}  \Big]  \right) \\
&   \qquad   \times 
\bbE_\eps   \Big[  \Xi_{p-1}    
 \Big( \Otimes_{q=1}^{|\eta^r|}    \mu_\eps^{\frac12-\frac{ |\eta^r_{q}| }2}   \zeta^\eps_{ M^{r\delta}_q    ,\theta-r\delta} 
 \Big(  \phi_{r\delta}^{(\eta^r_{q})}  \Big) \Big)\Big] \, .
\end{aligned}
\end{equation}
\end{Prop}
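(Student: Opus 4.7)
The strategy is to set up a telescoping induction on the time step index $r = 0, 1, \dots, R$, based on the single-step Duhamel identity of Proposition \ref{prop-from theta to theta + delta}. For $r \in \{0,1,\dots,R\}$ define
\[
\cB_r := \sum_{\NN^{r\delta}_B}\;\sum_{\substack{\eta^r \cup \rho^r \text{ partition of }B \\ \rho^r \text{ clustering}}}
\prod_{q=1}^{|\rho^r|}\mu_\eps^{1-\frac{|\rho^r_q|}{2}}\,\bbE_\eps[\bar\phi_{r\delta}^{(\rho^r_q)}]\;
\bbE_\eps\!\left[\Xi_{p-1}\;\Otimes_{q=1}^{|\eta^r|}\mu_\eps^{\frac12-\frac{|\eta^r_q|}{2}}\,\zeta^\eps_{M^{r\delta}_q,\theta-r\delta}\!\left(\bar\phi_{r\delta}^{(\eta^r_q)}\right)\right] ,
\]
with the convention that for $r=0$ only the partition in singletons survives (no clustering yet) so that $\bar\phi_0^{(\{i\})}=\bar\phi^{(i)}$ and $\rho^0=\emptyset$; thus $\cB_0 = \cJ_\MM$. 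For $r=R$ the above is precisely $\bar\cJ_\MM$. It then suffices to prove the one-step identity
\[
\cB_{r-1} \;=\; \cB_r \;+\; \cR^{\rm int,1}_r \;+\; \cR^{\rm int,2}_r ,\qquad r = 1,\dots,R,
\]
and sum telescopically.

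\textbf{Applying Proposition \ref{prop-from theta to theta + delta} at step $r$.} Fix $r\geq 1$ and consider each term in $\cB_{r-1}$. The factor $\prod_q\bbE_\eps[\bar\phi_{(r-1)\delta}^{(\rho^{r-1}_q)}]$ together with $\Xi_{p-1}$ is frozen (it is deterministic or depends only on times $\le \theta_{p-1}$), so we only need to transport
\[
\bbE_\eps\!\left[\Xi_{p-1}\,\Otimes_{q=1}^{|\eta^{r-1}|}\mu_\eps^{\frac12-\frac{|\eta^{r-1}_q|}{2}}\zeta^\eps_{M^{(r-1)\delta}_q,\theta-(r-1)\delta}\!\left(\bar\phi_{(r-1)\delta}^{(\eta^{r-1}_q)}\right)\right]
\]
from time $\theta-(r-1)\delta$ to $\theta-r\delta$. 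I apply Proposition \ref{prop-from theta to theta + delta} on this interval with the partition $\varsigma$ given by $\eta^{r-1}$, each packet $\eta^{r-1}_q$ being a single block carrying the observable $\bar\phi_{(r-1)\delta}^{(\eta^{r-1}_q)}$. This yields a new partition $\widetilde\eta \cup \widetilde\rho$ of $\{1,\dots,|\eta^{r-1}|\}$ together with the $\delta$-cumulants $\phi_\delta^{(\widetilde\eta_q)}$ and $\phi_\delta^{(\widetilde\rho_q)}$ built out of the input blocks $\bar\phi_{(r-1)\delta}^{(\eta^{r-1}_\cdot)}$. The crucial combinatorial step is then to reorganise the double sum over $\eta^{r-1}$ and over $(\widetilde\eta,\widetilde\rho)$: every such pair determines a unique partition $\eta^r\cup\rho^r$ of $B$ obtained by unioning blocks along $\widetilde\eta,\widetilde\rho$, and conversely for any such partition of $B$ the set of compatible $(\eta^{r-1},\widetilde\eta,\widetilde\rho)$ is indexed by choices of intermediate refinement. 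Summing over these intermediate refinements is exactly what formula \eqref{notbarphi-def} encodes, producing the non-minimal cumulant
\[
\phi_{r\delta}^{(A)} \;=\; \text{sum over intermediate packets, forests, overlaps of } \bar\phi_{(r-1)\delta}\text{-products on }A.
\]
After this collection one obtains
\[
\cB_{r-1} \;=\; \cA_r \;:=\; \sum_{\NN^{r\delta}_B}\sum_{\substack{\eta^r\cup\rho^r\text{ partition of }B\\ \rho^r \text{ clustering}}}
\prod_q\mu_\eps^{1-\frac{|\rho^r_q|}{2}}\bbE_\eps[\phi_{r\delta}^{(\rho^r_q)}]\;
\bbE_\eps\!\left[\Xi_{p-1}\Otimes_q\mu_\eps^{\frac12-\frac{|\eta^r_q|}{2}}\zeta^\eps_{M^{r\delta}_q,\theta-r\delta}(\phi_{r\delta}^{(\eta^r_q)})\right].
\]

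\textbf{Extracting the minimal part.} With $\cA_r$ and $\cB_r$ of the same shape, differing only by the replacement $\phi_{r\delta}\leftrightarrow\bar\phi_{r\delta}$, the difference is split via the trivial algebraic identity $ab-a'b' = (a-a')b + a'(b-b')$ applied to $a=\prod\bbE_\eps[\phi_{r\delta}^{(\rho^r_q)}]$, $a'=\prod\bbE_\eps[\bar\phi_{r\delta}^{(\rho^r_q)}]$ and $b,b'$ the two fluctuation expectations. This gives $\cA_r - \cB_r = \cR^{\rm int,2}_r + \cR^{\rm int,1}_r$, which are precisely the terms written in \eqref{eq: R int}. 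Combining with $\cB_{r-1}=\cA_r$ from the previous paragraph, summing over $r=1,\dots,R$ and noting $\cB_0=\cJ_\MM$, $\cB_R=\bar\cJ_\MM$ yields the claim \eqref{JM-decomposition}.

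\textbf{Main obstacle.} The one genuinely nontrivial step is the reorganisation in the previous paragraph: verifying that the hierarchy of nested partitions produced by iterating Proposition \ref{prop-from theta to theta + delta} collapses cleanly to the single-layer indexation of \eqref{notbarphi-def}. Concretely, one must check that the weights (powers of $\mu_\eps$, combinatorial factors $m_j!/(m_j+n_j)!$, sign factors, and forest/overlap cumulants $\widetilde\varphi,\varphi$) accumulated across intermediate $\delta$-steps match exactly the definition of $\phi_{r\delta}^{(A)}$. This is a careful but essentially bookkeeping argument relying on the Fubini-type remark following \eqref{Duhamelformulabeforeduality} and on the fact that at intermediate times $\theta-s\delta$, $0<s<r$, the decomposition $\phi_{s\delta}=\bar\phi_{s\delta}+(\phi_{s\delta}-\bar\phi_{s\delta})$ has not yet been applied along the trajectory that leads to $\bar\phi_{r\delta}^{(A)}$ inside $\cB_r$: the collection of non-minimal trajectories is in fact retained inside $\phi_{r\delta}$, whereas $\bar\phi_{r\delta}$ corresponds to the further restriction (i)--(iv) of Section \ref{subsec:EMC}.
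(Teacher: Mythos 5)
Your proposal follows the same inductive strategy as the paper's proof: define the partially-iterated quantity $\cB_r$ (the paper's $\bar\cJ_\MM^r$), transport one $\delta$-step via Proposition~\ref{prop-from theta to theta + delta}, merge the nested partitions by Fubini into the single-layer $\phi_{r\delta}$-cumulants of~\eqref{notbarphi-def}, and split off the minimal part with the identity $ab-a'b'=(a-a')b+a'(b-b')$ to identify $\cR^{\rm int,1}_r$ and $\cR^{\rm int,2}_r$. One imprecision to tighten: you describe the scalar factor $\prod_q\bbE_\eps[\bar\phi_{(r-1)\delta}^{(\rho^{r-1}_q)}]$ as ``frozen,'' but it must itself be Duhamel-expanded to time $\theta-r\delta$ (as in~\eqref{eq: 1 site}, giving the transported cumulants $\phi_{r\delta}^{(\rho^{r-1}_q),\mathrm t}$) before the Fubini merge that combines $\rho^{r-1}$ with the newly expelled $\widetilde\rho$ into $\rho^r$; otherwise the expelled cumulants in $\bar\cJ_\MM$ would not all end up evaluated at the common time $\theta-\tau$.
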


\begin{proof}
Proposition \ref{fubini-prop} is proved recursively. 
More precisely we start with  a decomposition of~$ \cJ_{\MM}$ at step~$r-1$ under the form $$
 \cJ_{\MM}=\bar  \cJ_{\MM}^{r-1} + \sum_{r' = 1} ^{r-1}  \cR^{\rm int}_{r'}\,,
$$
where the main term is defined by
$$\begin{aligned}
\bar  \cJ_{\MM}^{r-1} &:=  \sum_{ \NN^{(r-1) \delta}_B }  \sum_{ \eta ^{ r -1 }  \cup \rho^{ r -1 } \hbox{ \tiny\rm partition of } B \atop  \rho^{ r -1 } \hbox{ \tiny\rm  clustering} }    \prod _{q}\mu_\eps^{1-\frac{|\rho_q^{r-1}|  }2}  \bbE_\eps\Big[   
  \bar \phi_{(r-1)\delta} ^{(\rho_q^{r-1})}  \Big]   \\
& \qquad  \qquad  \times
\bbE_\eps   \Big[
\Xi_{p-1}  \Big( \Otimes_{q}   \mu_\eps^{\frac12-\frac{|\eta_{q}^{r-1}| }2}  \zeta^\eps_{M_{q}^{(r-1)\delta} ,\theta-(r-1)\delta} \Big(
\bar \phi_{(r-1)\delta}^{(\eta^{r-1}_{q}) } \Big) \Big)\Big] .
\end{aligned}
$$
For each~$q$,
we pull back the cumulants~$ \bar \phi_{(r-1)\delta} ^{(\rho_q^{r-1})}$
 and~$\bar \phi_{(r-1)\delta}^{(\eta^{r-1}_{q}) }$   on a time step~$\delta$
 and extract minimal clusters and remainder terms. A Fubini equality will enable us to   replace    the nested partitions by one partition~$ \eta ^{ r }  \cup \rho^{ r   } $ thus completing the induction.
   
\medskip

We first consider the expelled cumulants.
As in (\ref{eq: 1 site}), we have 
$$
 \bbE_\eps\Big[ \bar \phi_{(r-1)\delta}^{(\rho_q^{r-1} )}  \Big]
 = 
 \sum_{\NN^r } 
\bbE_\eps   \Big[   \phi_{r\delta}^{(\rho_q^{r-1 }), {\rm t}}  \Big]  \, ,
$$
 where the superscript $\rm t$ in $\phi_{r\delta}^{(\rho_q^{r-1} ), {\rm t}}$ stands for cumulants transported by the forward dynamics  in the sense of Definition~\ref{def forward dynamics}, without any clustering  in the last time interval $[\theta - r \delta, \theta - (r-1) \delta]$. By construction
\begin{equation}
\label{exp-factor}
\phi_{r\delta}^{(\rho_q^{r-1} ), {\rm t}}
:=    \mu_\eps^{N^r_q } \left( \prod_{j \in \rho_q^{r-1} } \frac  { M_j^{(r-1)\delta} !  }{  M_j^{r\delta}  !}  \right)
\; \sum_{\KK , \SS^r,\bar \SS^r} \sgn (\SS^r  )   
 \Big({\sharp}_\delta   \bar \phi_{(r-1) \delta} ^{ (\rho_q^{r-1})} \Big) \indc_{ \{Z_{\M_q^{r\delta} }    \in \cR^{\rho_q^{r-1}}_{{\mathbf K},\SS ,\bar \SS}\}  } \,,
 \end{equation}
 denoting for simplicity by $N_q^r$ the number of particles annihilated in  $\rho_q^{r-1}$ on $[\theta- r \delta, \theta- (r-1) \delta]$ so that the number of particles in~$\rho_q^{r-1}$ at~$\theta- r \delta$ is~$M_q^{r\delta} = M^{(r-1)\delta}_{\rho_q^{r-1}} +N_q^r$.  
%

\medskip

We turn now to the product of the fluctuation fields. The packets at time $\theta- (r-1)\delta$ are prescribed by 
$\eta^{r-1}$, and we will temporarily keep track of this fact by an additional superscript $\eta^{r-1} $ in the pulled back observables~:
$$\begin{aligned}
\label{fluct-factor}
&\bbE_\eps   \Big[  \Xi_{p-1}\, \Big( \Otimes_{q}   \mu_\eps^{\frac12 -\frac{|\eta^{r-1}_{q}|}2}   \zeta^\eps_{M_{q}^{(r-1)\delta} ,\theta - (r-1)\delta}  \big(
  \bar \phi_{(r-1)\delta }^{(\eta^{r-1}_{q})}  \big) \Big)\Big] \\
  & \  =
\sum_{\NN^r} \sum_{\eta^{r-1} \hookrightarrow \eta_1^r \cup \eta_2^r  \atop \eta_2^r {\rm clustering}  } 
  \prod _{q}\mu_\eps^{1-\frac{|\eta^r_{2,q}|}2} \bbE_\eps\Big[  
   \phi_{r\delta}^{(\eta^r_{2,q}), \eta^{r-1}  }  \Big]    \times 
\bbE_\eps   \Big[ \Xi_{p-1}    \Big( \Otimes_{q}   \mu_\eps^{\frac12 -\frac{|\eta^r_{1,q}|}2}  \zeta^\eps_{M_{q}^{r\delta},\theta-r\delta} \Big( 
\phi_{r\delta} ^{(\eta_{1,q}^r), \eta^{r-1}  }  \Big) \Big)\Big]   \, ,
\end{aligned}
$$
where $\eta_1^r\cup \eta_2^r$ is a coarser partition than $\eta^{r-1}$, and each part of $\eta_2^r$ contains at least two components of $\eta^{r-1}$: this term only appears if~$|\eta^{r-1}| \geq 2$. 

Our goal is to sum over all~$\eta^{r-1}$ compatible with a given coarser decomposition $\eta^r$, 
in order to retrieve cumulants at step $r$ (see Figure \ref{fig:fubini nested}).
Let us first introduce
\begin{equation}
\label{notbarphi-def-eta}
\begin{aligned}
 \phi_{r \delta}^{(\eta_{1,q}^r)}
& :=  \Big(  \prod _{j \in \eta_{1,q}^r}  \frac  { M_j^{(r-1)\delta} !  }{  M_j^{r\delta} !} \Big) \sum_{ \varsigma ,\lambda\atop \varsigma \hookrightarrow\lambda \hookrightarrow \eta_{1,q}^r}  \mu_\eps^{N^r + |\varsigma  |-1} \sum_{\KK , \SS^r, \bar\SS^r} \sgn (\SS^r  )   
   \varphi_{\{\lambda_1, \dots, \lambda_{|\lambda|}\}} \\
   &\qquad \qquad\qquad \times \prod_{\ell =1}^{|\lambda|}
 \Big({\sharp}_\delta  \bigotimes_{\varsigma _j \subset \lambda_\ell} \bar \phi_{(r-1) \delta} ^{ (\varsigma _j)} \Big) 
 \indc_{ \{Z_{\M_{\lambda_\ell}^{r\delta} }    \in \cR^{\lambda_\ell}_{{\mathbf K}_{\lambda_\ell},\SS_{\lambda_\ell}} \} }  \\
 \phi_{r\delta} ^{(\eta_{2,q}^r),c} 
 & := \Big(  \prod _{j \in \eta_{2,q}^r}  \frac  { M_j^{(r-1)\delta} !  }{  M_j^{r\delta} !} \Big) \sum_{ \varsigma ,\lambda\atop{ \varsigma \hookrightarrow\lambda \hookrightarrow \eta_{2,q}^r\atop |\varsigma| \geq 2}}  \mu_\eps^{N^r + |\varsigma  |-1}  \sum_{\KK , \SS^r, \bar\SS^r}  \sgn (\SS^r  )   
   \varphi_{\{\lambda_1, \dots, \lambda_{|\lambda|}\}} \\
   & \qquad\qquad\qquad
\times\prod_{\ell =1}^{|\lambda|}\Big({\sharp}_\delta  \bigotimes_{\varsigma _j \subset \lambda_\ell} \bar \phi_{(r-1) \delta} ^{ (\varsigma _j)} \Big) \indc_{ \{Z_{\M_{\lambda_\ell}^{r\delta} }    \in \cR^{\lambda_\ell}_{{\mathbf K}_{\lambda_\ell},\SS_{\lambda_\ell}} \} }  
 \end{aligned}
\end{equation}
denoting by $N^r$ the total number of particles annihilated  on~$[\theta- r \delta, \theta-(r-1) \delta]$ in the  packets.
Observe that, with respect to Eq.\,\eqref{defphideltaetai}, this definition contains an additional sum over  previous packets $\varsigma$ at time $\theta- (r-1) \delta$.
Now we can apply Fubini's equality to sum over~$\eta^{r-1}$ thanks to the above definitions. We define indeed~$\rho^r := \rho^{r-1} \cup \eta_2^r$ and~$\eta^r := \eta_1^{r}$. The sets~$ \rho^{r-1}$ and~$\eta_2^r$   play symmetric roles  and the cumulants~$\phi_{r\delta}^{(\rho_q^{r-1} ), {\rm t}}
$ and~$ \phi_{r\delta} ^{(\eta_{2,q}^r),c} $ can be combined into one cumulant~$
 \phi_{r\delta}^{(\rho_q^r)} $. Now we    sum~(\ref{fluct-factor}) over~$ \eta^{r-1}$ and use~(\ref{notbarphi-def-eta}) to recover
 $$
 \begin{aligned}
 \bar \cJ_{\MM}^{r-1} & = \sum_{ \NN^{r }_{B} }   \sum_{\eta  ^{ r  }  \cup \rho^{ r   } \hbox{ \tiny\rm partition of } B \atop  \rho^{ r  } \hbox{ \tiny\rm  clustering} }  \left(  \prod _{q}  \mu_\eps^{1 -\frac{|\rho^r_q | }2} 
\bbE_\eps\Big[ 
 \phi_{r\delta}^{(\rho_q^r)}  \Big]  \right)   \times  \bbE_\eps   \Big[  \Xi_{p-1}     \Big( \Otimes_{q}  \mu_\eps^{\frac12-\frac{|\eta^r_{q}| }2}   \zeta^\eps_{ M^{r\delta}_{\eta^r_{q}}    ,\theta-r\delta} \Big(
  \phi_{r\delta}^{(\eta^r_{q})}  \Big) \Big)\Big] \,.
\end{aligned}
$$

We finally extract from  the main term  the minimal cumulants.  We  restrict the support of the cumulants~$ \phi_{r\delta}^{(\sigma)}$  (with~$\sigma = \eta^r_{q}$ or~$\rho^r_q$) to configurations such that the graph recording all the encounters on the time interval~$[\theta-r\delta,\theta-(r-1)\delta]$  has no cycle (nor multiple edge). In particular  there is no internal encounter without annihilation inside the blocks so that the sum  over~$\KK $   in~(\ref{notbarphi-def-eta}) disappears, and by definition, all ``admissible'' partitions~$\varsigma = (\varsigma_i )_{1 \leq i \leq |\varsigma|}$ of~$\sigma$ into  packets are characterized by the graph recording the encounters between  the~$\varsigma_i$ which must be   minimally connected. 
  This local minimal dynamics in~$[\theta-r\delta,\theta-(r-1)\delta]$
  is then entirely prescribed by the signs~$\SS^r,\bar  \SS^r $ encoding encounters between particles of the same tree on the time interval, and  signs~$\EE^r  \in \{-1, +1\} ^{|\varsigma| -1} $. 
  \begin{figure}[h] 
\includegraphics[width=5in]{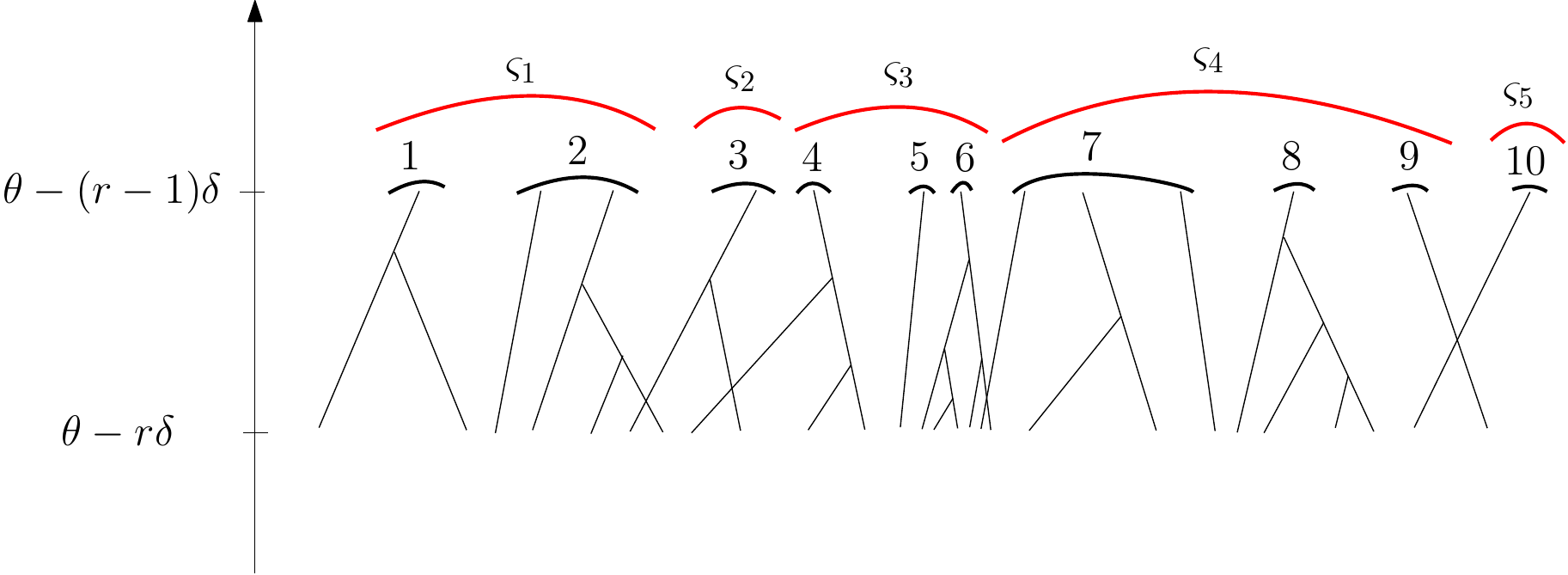}  
\caption{ A schematic picture of generic contribution to the minimal cumulant $\bar \phi_{r \delta}^{(\sigma) }$, in the case of a $\sigma$ consisting of $10$ blocks (for simplicity $\sigma=\{1,2,\cdots,10\}$) grouped into  $s=5$ packets. An  admissible partition $\varsigma$ is  represented  in red~: it  imposes some dynamical constraints after $\theta- (r-1) \delta$. The dynamics in the depicted time interval is prescribed by $ \EE^r  , \SS^r ,\bar  \SS^r$. Note that two blocks of the same packet $\varsigma_i$ should not collide or overlap on $[\theta- r \delta, \theta-(r-1) \delta]$.
}
 \label{fig:fubini nested}
\end{figure}

For such an admissible $\varsigma$, there is a unique partition $\lambda $ in forests and   (\ref{varphibar}) holds.
Thus  the restriction of~$\phi_{r \delta}^{(\sigma) }$  to minimal local forward dynamics is given by
\begin{equation}
\label{barphi-defloc}
\begin{aligned}
\bar \phi_{r \delta}^{(\sigma) } = 
 & \sum_{s \leq |\sigma| }\mu_\eps^{N^r_\sigma +s-1}  \sum_{ \EE^r  , \SS^r ,\bar  \SS^r  }  
 \sgn(\EE^r) \, \sgn( \SS^r ) \\
&\qquad\qquad\qquad\qquad   \times
 \sum_{ \hbox{\tiny \rm admissible $\varsigma$ }  \atop |\varsigma|=s} \indc_{Z_{\MM_{\sigma}^{r\delta}} \in \cR^{\rm loc \, min, \varsigma}_{\EE^r , \SS^r,\bar  \SS^r} }  \prod_{i=1} ^s  \prod_{j \in \varsigma_i } {M_{j}^{(r-1)\delta}  ! \over  M_{j} ^{r\delta} !}   ({\sharp}_\delta   \bar \phi_{(r-1)\delta}  ^{ (\varsigma_i)}  )  \,,
\end{aligned}
\end{equation}
where~$\cR^{\rm loc \, min, \varsigma}_{\EE^r  , \SS ^r,\bar  \SS^r}$ is the set of all configurations leading to minimal forward dynamics on the time interval~$[\theta-r\delta, \theta-(r-1)\delta]$ compatible with~$\EE^r \in 
\{ -1,1\}^{s-1}$, $  \SS^r$, $\bar \SS^r $ and $\varsigma$. 
 
  Now by the induction assumption,~$\bar \phi_{(r-1)\delta}  ^{( \varsigma_i)}$ is a minimal cumulant given by  formula~(\ref{barphi-def})  at time~$\theta- (r-1)\delta$ 
$$
\begin{aligned}
 \bar \phi_{(r-1)\delta}  ^{ (\varsigma_i) } (Z_{M_{\varsigma_i}^{(r-1)\delta}}) 
 = & \mu_\eps^{N^{<r} _{\varsigma_i}  +|\varsigma_i|-1  }  
  \left( \prod_{j \in \varsigma_i}{ m_j! \over   M^{{(r-1)\delta}}_j !} \right) \\
& \sum_{\bar \SS_i^{<r}  ,\SS_i^{<r}  , \EE_i^{<r}   }  \sgn(\EE_i^{<r}) \sgn(\SS_i^{<r}  ) \indc_{ Z_{M_{\varsigma_i}^{(r-1)\delta}} \; \in \; \cR^{\rm min}_{\EE_i^{<r}  , \SS_i^{<r} ,\bar \SS_i^{<r}  } }   
\big( {\sharp}_{(r-1) \delta} \bigotimes _{j \in \varsigma_i }  \; \phi^{(j)}   \big)  \,,
\end{aligned}
$$
where~$N^{<r} _{\varsigma_i}  $ denotes the number of particles annihilated on the full time interval~$[\theta - (r-1) \delta, \theta]$ and~$\EE_i^{<r}  ,\SS_i^{<r} ,\bar\SS_i^{<r} $ is the collection of all signs on that same time interval.
Plugging this formula into~(\ref{barphi-defloc}) (and using again Fubini's identity to pass from the parametrization in terms of $\left(\EE^r  , \SS^r ,\bar  \SS^r , \left(\bar \SS_i^{<r}  ,\SS_i^{<r}  , \EE_i^{<r} \right)_i\right)$ to a global parametrization $\left(\EE , \SS, \bar \SS\right)$), we recover formula~(\ref{barphi-def}) at time~$\theta- r\delta$.
 Thus we obtain the expected decomposition at $\theta-r \delta$:
$$\begin{aligned}
 \bar \cJ_{\MM}^{r-1} & = \sum_{ \NN^{r }_{B} }  \sum_{\eta  ^{ r  }  \cup \rho^{ r   } \hbox{ \tiny \rm partition of }  B\atop  \rho^{ r  } \hbox{ \tiny\rm  clustering} }  \left(  \prod _{q} \mu_\eps^{1 -\frac{|\rho^r_q | }2} 
\bbE_\eps\Big[  
 \bar \phi_{r\delta}^{(\rho_q^r)}  \Big]  \right) \bbE_\eps   \Big[  \Xi_{p-1}     \Big( \Otimes_{q}    \mu_\eps^{\frac12 - \frac{|\eta^r_{q}|  }2} 
 \zeta^\eps_{ M^{r\delta}_{\eta^r_{q}}    ,\theta-r\delta} \Big(
\bar  \phi_{r\delta}^{(\eta^r_{q})}  \Big) \Big)\Big]   +\cR^{{\rm int}}_r  \,.
\end{aligned}
$$
Proposition~\ref{fubini-prop}
 is proved. \end{proof}

\subsection{Estimates of the remainders}
\label{remove recollisions}

We   now establish the following estimates for the remainders, which thanks to Proposition~\ref{fubini-prop} imply Proposition~\ref{prop-from theta to theta + tau} as an immediate corollary. 
\begin{Prop}
\label{remove recollisions on delta}
Under the assumptions of Proposition {\rm\ref{prop-from theta to theta + tau}}, 
there is a constant~$C_P$ depending only  on $P$
such that the remainder $ \cR ^{{\rm int}}_r $   defined in~{\rm(\ref{eq: R int})} satisfies the following estimate~:
$$
\begin{aligned}
\left| \sum_{r = 1} ^R \cR ^{{\rm int}}_r  \right|& \leq (C_P\Theta)^{M_B}   \Big(\prod_{i \in B \cup\{1,\dots,p-1\}} \| h^{(i)}\|_{L^\infty}\Big)   \eps^{\frac1{8d}} \,,
\end{aligned}
$$
with $R = \tau/\delta$ and $M_B = \sum_{i \in B} m_i  $.
\end{Prop}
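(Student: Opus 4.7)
The remainders $\cR^{\rm int,1}_r$ and $\cR^{\rm int,2}_r$ in (\ref{eq: R int}) record the error committed on the elementary step $[\theta-r\delta,\theta-(r-1)\delta]$ when replacing the generic cumulants $\phi_{r\delta}^{(\cdot)}$ by their minimal counterparts $\bar\phi_{r\delta}^{(\cdot)}$. The first move is a telescoping identity on the $\ostar$-product (resp.\ on the product of expectations), which reduces each remainder to a sum of terms in which a single distinguished cumulant, indexed by some $A$, carries the difference $\phi_{r\delta}^{(A)}-\bar\phi_{r\delta}^{(A)}$ while all other factors are already minimal. By construction of minimality in Section \ref{subsec:EMC}, the support of this difference forces, on the elementary time step, at least one geometrically \emph{non-minimal} event: an internal recollision inside some block ($\kappa_j\ne 0$), a recollision between two blocks of a common packet, a cycle in the external recollision graph of the forests, or a cycle in the overlap graph between forests.

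The past factor $\Xi_{p-1}$ is then decoupled from the fluctuation product at time $\theta-r\delta$ by Cauchy--Schwarz,
\[
\bigl|\bbE_\eps[\Xi_{p-1}\,F_r]\bigr|\le \bbE_\eps[\Xi_{p-1}^2]^{1/2}\;\bbE_\eps[F_r^2]^{1/2}.
\]
The first factor is controlled by $C_P\prod_{u<p}\|h^{(u)}\|_\infty$ using Proposition \ref{Proposition - estimates on g0}. The second is estimated by the variance estimates for $\ostar$-products announced in Section \ref{sec:strategy} and established, together with the volume and moment bounds on forward clusters, in Sections \ref{geometric-sec}--\ref{variance-sec}: on minimal cumulants these deliver a baseline $(C_P\Theta)^{M_B}\prod_{i\in B}\|h^{(i)}\|_\infty$, and on the distinguished slot they provide an additional small factor carrying the geometric cost of the excluded event. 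Under the conditioning $\Upsilon^\eps_\cN$ of Definition \ref{conditioning}, velocities are bounded by $\bbV=|\log\eps|$ and any particle performs at most $K_\gamma$ encounters per step, so the constraint that two already-close trajectories re-approach within distance $\eps$, or that a recollision/overlap graph contain one cycle, integrates to a gain of order $\eps\cdot(\log|\log\eps|)^{O(1)}$ relative to the unconstrained forward-cluster volume --- exactly the kind of geometric input used in \cite{BGSS2} for the covariance, now applied block by block.

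Summing over $r\le R=\tau/\delta$ and over the bounded combinatorics of signs $(\SS,\bar\SS,\KK,\EE)$ and of partitions $(\varsigma,\lambda,\eta,\rho)$ of $B$ --- absorbed into a $(C_P)^{M_B}$ prefactor via the factorial normalisations $m_j!/M_j^{r\delta}!$ built into the dual functions and standard exponential-moment bounds on the collision trees --- the cumulative loss is at most $R\,\eps\,(\log|\log\eps|)^{O(1)} = \eps^{1/2d - o(1)}$, comfortably dominated by $\eps^{1/8d}$ for the choice $\delta=\eps^{1-1/2d}$. The main obstacle is not any single inequality but the bookkeeping: at each step $r$ the smallness must apply \emph{once} per telescoped term, so that the $R$ iterations accumulate arithmetically rather than geometrically. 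This is precisely what the nested-partition structure of Proposition \ref{fubini-prop} is designed to guarantee, since after discarding non-minimal contributions the iteration restarts from a genuinely minimal configuration.
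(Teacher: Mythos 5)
Your plan reproduces the paper's architecture faithfully: a telescoping that isolates one slot carrying $\phi_{r\delta}^{(A)}-\bar\phi_{r\delta}^{(A)}=\phi_{r\delta}^{(A),\rm cyc}$, a Cauchy--Schwarz decoupling of $\Xi_{p-1}$ controlled via Proposition~\ref{Proposition - estimates on g0}, the expectation and variance bounds of Lemmas~\ref{phi-sigma-lemma} and~\ref{phi-rec-lemma} for the slot with the forced cycle, and a sum over the $R=\tau/\delta$ elementary steps. That is exactly the paper's argument.

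The quantitative summary, however, misstates the per-step gain. The variance bound for an $\Otimes$-product with one ``cyc'' slot is (\ref{variance-phi-rec}), carrying the small factor $\eps\delta|\log\eps|(|\log\eps|\Theta)^{2d+4}$; since it enters through $\bbE_\eps^{1/2}[F_r^2]$, the effective per-step smallness is $(\eps\delta|\log\eps|)^{1/2}(|\log\eps|\Theta)^{d+2}$. With $\delta=\eps^{1-1/2d}$ this is $\approx\eps^{1-1/4d}$ up to logarithmic corrections, strictly \emph{larger} (worse) than the $\eps$ you announce, and the logarithms are powers of $|\log\eps|$ (from the velocity cut-off $\bbV=|\log\eps|$), not of $\log|\log\eps|$. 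Summing over $r\le R$ then yields $\tau\,\eps^{1/4d}$ up to polylogs, not the $\eps^{1/2d-o(1)}$ you claim. The discrepancy is harmless for the final conclusion --- the exponent margin $1/8d<1/4d$ absorbs both errors --- but the intermediate bookkeeping as written would not pass scrutiny; in a careful version you should quote (\ref{variance-phi-rec}) and (\ref{expectation-phi-rec}) directly and track where the square root from Cauchy--Schwarz lands. One further small inaccuracy: after telescoping, the ``other'' factors are a mixture of minimal and non-minimal cumulants, not uniformly minimal; this is precisely why Lemma~\ref{phi-rec-lemma} is proved for general $\phi_{r\delta}^{(\sigma_j)}$ and then noted to hold as well for $\bar\phi_{r\delta}^{(\sigma_j)}$.
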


In our argument, the specific form of the function $\Xi_{p-1}$ (see \eqref{defJM}) will be irrelevant. 
Only the following  two features are needed :
\begin{itemize}
\item $ \Xi_{p-1}$ depends  on the particle configurations before (and at) time $\theta_{p-1}$;
\item $ \Xi_{p-1}$ has a uniformly bounded variance.
\end{itemize}
Notice indeed that  by H\"older's inequality, 
 \begin{equation}
\label{eq: borne L2 Xi}
 \bbE_\eps [  \Xi_{p-1}^2 ] = \bbE_\eps   \Big[
\Big( \prod_{u=1} ^{p-1}  \zeta^\eps_{\theta_u} \big( h^{(u)}\big) \Big)^2    \Big]
\leq 
\prod_{u=1} ^{p-1}  \bbE_\eps   \Big[   \zeta^\eps_{\theta_u} \big( h^{(u)}\big)^{2 (p-1)} 
\Big]^{\frac{1}{ (p-1)} }
\leq C_p \prod_{u=1} ^{p-1} \| h^{(u)} \|_\infty^2\, ,
\end{equation}
and thus~$ \Xi_{p-1}$ is bounded in $L^2$, by Proposition \ref{Proposition - estimates on g0}  which ensures that the fluctuation fields  are bounded in $L^{2 (p-1)}$. 

The key  ingredient will be the following lemma controlling the expectation and variance of cumulants  based on their clustering structure. It will be   proved  in Sections \ref{geometric-sec} and~\ref{variance-sec}: see Section~\ref{geometric-sec} for the proof of~(\ref{expectation-phi-sigma}), Paragraph~\ref{sec: Expectation of centered Otimes-products} for the proof of \eqref{E-HM 1}  and Paragraph~\ref{variance} for the proof of (\ref{variance-phi-sigma}).

\begin{Lem}
\label{phi-sigma-lemma} Let $B \subset \{p,p+1,\dots,P\}$.
Consider observables $( \bar \phi ^{(i)} )_{i \in B}$,  supported on single minimal forward clusters as in \eqref{eq: cumulant induction} at time  $\theta = \theta_p - k \tau$.
Let $\sigma = (\sigma_i)_{i \leq |\sigma|}$ be a collection  of   packets in $B$, and the corresponding cumulants $ \phi_{r\delta}^{(\sigma_i) }$ defined by  {\rm(\ref{notbarphi-def})}. Denote by~$M_{\sigma_i}$ the number of particles in~$\sigma_i $ at time~$\theta$, by  $N_{\sigma_i }^r$ the number of particles to be removed  in the last time interval $[\theta- r \delta, \theta-(r-1) \delta]$, by $N_{\sigma_i}^{ <r}$ the number of other particles to be removed    in $[\theta- (r-1) \delta, \theta]$ and by $M_{\sigma_i}^{r\delta} = M_{\sigma_i} + N_{\sigma_i}^{r}+N_{\sigma_i}^{ <r}  $ the total number of particles in $\sigma_i$ at time $\theta- r\delta$.
Then there is constant $C_P>0$ depending  on $P$ and $K_\gamma$ such that 
such that
\begin{equation}
\label{expectation-phi-sigma}
 \bbE_\eps\Big[    \Big| \phi_{r\delta}^{(\sigma_i) }\Big|  \Big] \leq  C_P  \Big(  \prod_{ j \in \sigma_i } \| h^{(j)}\|_{L^\infty}  \Big)
  \;  (C_P\Theta)^{M_{\sigma_i} - |\sigma_i|}  (C_P \delta)^{ N_{\sigma_i }^{r}}  (C_P \tau)^{N_{\sigma_i}^{<r}+|\sigma_i | - 1}  \, ,
\end{equation}
 and  
\begin{equation}
 \begin{aligned}
\label{E-HM 1}  
\left|\bbE_\eps  \Big[  \Otimes_{i\leq |\sigma|}  \zeta^\eps_{M_{\sigma_i}^{r\delta}, \theta-r \delta  } 
\big( \phi_{r\delta}^{(\sigma_i) } \big)  \Big]  \right|
&\leq C_P \eps \prod_{i\leq |\sigma|}  \Big(  \prod_{ j \in \sigma_i } \| h^{(j)}\|_{L^\infty}  \Big)
\\
  &\quad \times  (C_P \Theta)^{M_{\sigma_i} - |\sigma_i|}  \Big( (C_P \delta)^{ N_{\sigma_i }^{r}}  (C_P \tau)^{N_{\sigma_i}^{<r}+|\sigma_i | - 1} \Big)^{1/2} \;,
   \end{aligned}
\end{equation}
as well as 
\begin{equation}
\label{variance-phi-sigma}
\begin{aligned}
\bbE_\eps\Big[ \Big( \Otimes_{i\leq |\sigma| }  \zeta^\eps_{M_{\sigma_i}^{r\delta}, \theta-r \delta  } 
\big(  \phi_{r\delta}^{(\sigma_i) } \big) \Big)^2 \Big] 
&\leq C_P   \prod_{i\leq |\sigma|}  \Big(  \prod_{ j \in \sigma_i } \| h^{(j)}\|_{L^\infty}  \Big)^2
 \\
 &\qquad  \times \Big( (C_P \Theta)^{ 2M_{\sigma_i}  + N_{\sigma_i }^r + N_{\sigma_i }^{<r} - |\sigma_i|  }  (C_P \delta)^{ N_{\sigma_i}^{r }}  (C_P \tau)^{N_{\sigma_i }^{<r}+|\sigma_i | - 1} \Big) \, .
 \end{aligned}
\end{equation}
The same estimates hold  for  the minimal cumulants $\bar \phi_{r\delta}^{(\sigma_i) }$ defined by {\rm(\ref{barphi-def})}.
\end{Lem}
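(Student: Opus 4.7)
The plan is to prove the three bounds in the order stated, since the latter two reduce, in a rather mechanical way, to geometric integration estimates of the type underlying the first.

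\textbf{Expectation bound \eqref{expectation-phi-sigma}.} Starting from the definition \eqref{notbarphi-def} of $\phi_{r\delta}^{(\sigma_i)}$ in terms of the minimal cumulants at the preceding step and iterating down to the single blocks $\bar\phi^{(j)}$ in \eqref{eq: cumulant induction}, I would view $\bbE_\eps\big[|\phi_{r\delta}^{(\sigma_i)}|\big]$ as an integral over the full pseudo-trajectory data: a base configuration $Z_{M_{\sigma_i}}$ at the latest time, plus collision parameters $(t,\omega,u)$ for each creation event along the history. The $L^\infty$-bound on the test functions $h^{(j)}$, together with the energy truncation $|v|\leq \bbV$ under the conditioning $\Upsilon^\eps_\cN$, reduces matters to counting admissible forward-cluster configurations. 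Each creation event carries a factor $\eps^{d-1}|u|\,\Delta t\sim \mu_\eps^{-1}\Delta t$, with $\Delta t\in\{\delta,\tau,\Theta\}$ depending on which time window the event occurs in; the Gaussian weight $\cM$ absorbs the resulting $\bbV$-powers into $O(1)$ constants. The $|\sigma_i|-1$ minimal encounters imposed by the packet-connection constraint contribute $|\sigma_i|-1$ extra volume reductions of size $\tau/\mu_\eps$ via a Penrose-type tree inequality. The prefactor $\mu_\eps^{N^r_{\sigma_i}+|\sigma_i|-1}$ in \eqref{notbarphi-def}, combined with the $1/M_j^{r\delta}!$ symmetrization factors coming from the partial symmetrizations done in Section~\ref{duality-subsec}, then cancels all powers of $\mu_\eps$ and yields \eqref{expectation-phi-sigma}.

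\textbf{Mean of the tensor product \eqref{E-HM 1}.} I unfold the definition \eqref{tensor-product} of the $\ostar$-product,
\[
\bbE_\eps\Big[\Otimes_{i\leq|\sigma|}\zeta^\eps_{M^{r\delta}_{\sigma_i},\theta-r\delta}(\phi_{r\delta}^{(\sigma_i)})\Big] = \mu_\eps^{|\sigma|/2}\sum_{\alpha\subset\{1,\ldots,|\sigma|\}}\bbE_\eps\Big[\pi^\eps_{M_\alpha,\theta-r\delta}\Big(\bigotimes_{i\in\alpha}\phi_{r\delta}^{(\sigma_i)}\Big)\Big]\prod_{j\notin\alpha}\bbE_\eps\big[-\phi_{r\delta}^{(\sigma_j)}\big].
\]
By construction of the $\ostar$-product, the fully factorized contribution cancels exactly, and what remains measures the correlations between particles bearing different labels in different packets. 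Those correlations all stem from the hard-sphere exclusion constraints $|x_i-x_j|>\eps$: each relevant diagram forces at least one position pair into a set of measure $\eps^d$, producing the announced $\eps$-gain after the usual volume counting $\eps^d\cdot\mu_\eps=\eps$. Multiplying by the individual bounds obtained in the previous step yields \eqref{E-HM 1}.

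\textbf{Variance bound \eqref{variance-phi-sigma}.} For the square of the $\ostar$-product, the same expansion produces a sum over pairs $(\alpha,\alpha')$ of subsets and over matchings between the particles of the two copies. Each term is a product of $\pi^\eps$-traces of ``doubled'' cumulants defined on up to $2M^{r\delta}_{\sigma_i}$ variables, together with expectations of single cumulants, and each factor can be controlled by the bound \eqref{expectation-phi-sigma} applied to the doubled object (which is itself supported on a forward cluster with twice as many creations). The extra factor $(C_P\Theta)^{M_{\sigma_i}+N^r_{\sigma_i}+N^{<r}_{\sigma_i}}$ in \eqref{variance-phi-sigma}, compared with the naive square of \eqref{expectation-phi-sigma}, arises from this matching combinatorics: each particle that is allowed to be shared between the two copies contributes one additional $\Theta$-factor of time-integration volume.

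\textbf{Main obstacle.} The delicate point is the combinatorial bookkeeping in the third step: since the forward-cluster trees may have $O(M^{r\delta}_{\sigma_i})$ vertices, the number of admissible pairings between the two copies could naively grow factorially, and one has to use the $1/(m_j+n_j)!$ symmetrizations together with Penrose/Poincar\'e tree inequalities to avoid $(\tau/\delta)!$-type blow-ups. A related subtlety appears in \eqref{E-HM 1}, where one must identify, for each non-factorizing diagram, which exclusion constraint is responsible for the $\eps$-smallness, so that the gain is truly geometric and not merely an algebraic cancellation of the $\ostar$-expansion.
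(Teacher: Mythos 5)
Your outline for the first bound \eqref{expectation-phi-sigma} is essentially right: bound the equilibrium correlation function $G_M^{\eps,\rm eq}\leq \cM^{\otimes M}$, represent the forward-cluster support as a sum over ordered trees, and integrate the $M^{r\delta}_{\sigma_i}-1$ sequentially independent relative-position variables, each contributing $\mu_\eps^{-1}$ times the velocity cross-section times the relevant time-window length $\delta,\tau$ or $\Theta$. One small misattribution: the $|\sigma_i|-1$ inter-block connections do not need a Penrose-type inequality. A forward cluster is \emph{by definition} supported on a minimally connected (ordered) graph, so the tree structure is built in; the only crude combinatorial bound used at that stage is $|\varphi_{\{\lambda_1,\dots\}}|\leq |\lambda|!$. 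The Penrose tree inequality enters only in Section~\ref{variance-sec} for the \emph{static} hard-sphere exclusion cumulants $\varphi(Y)$ between different $\ostar$-factors and the grand-canonical background particles; keep the two kinds of trees distinct.

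For \eqref{E-HM 1}, your description of the mechanism is correct in spirit: after the cluster expansion of $\indc_{\cD^\eps}$ the singleton components exactly reproduce $\bbE_\eps^{\rm eq}[\phi^{(\sigma_j)}]$ and cancel against the centering, and what survives is a sum over singleton-free partitions of $\{1,\dots,q\}$, each extra connected component paying $C\eps^d\mu_\eps=C\eps$; since such partitions have at most $q/2$ parts, one gains $\eps^{q/2}\leq\eps$. This is what the paper does. You should note, however, that the lemma is stated for the conditioned measure, and for \eqref{E-HM 1} the transfer from $\bbE^{\rm eq}_\eps$ to $\bbE_\eps$ is not trivial — it is not just $\indc_\Upsilon\leq 1$ because $\zeta^\eps\neq\zeta^{\eps,\rm eq}$; one must use \eqref{eq: def zeta m diff} together with \eqref{conditioning-est}, which costs a factor $(\Theta\eps^d)^{1/2}$ that still beats $\eps$ only because $d\geq 3$.

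The genuine gap is in your treatment of \eqref{variance-phi-sigma}. You propose to ``control each factor by \eqref{expectation-phi-sigma} applied to the doubled object, which is itself supported on a forward cluster with twice as many creations.'' This is not correct: when two forward clusters from the two copies are glued by $\ell_i$ green contractions, the resulting graph of dynamical constraints generically contains \emph{cycles}, hence is \emph{not} a forward cluster and does not carry the sequential-independence structure that makes the integration in Section~\ref{geometric-sec} work. If your argument were valid as stated, you would obtain the \emph{square} of \eqref{expectation-phi-sigma}, i.e.\ $\delta^{2N^r}\tau^{2(N^{<r}+|\sigma_i|-1)}$, which is strictly stronger than the true bound \eqref{variance-phi-sigma}; the fact that the paper cannot achieve this should be a warning sign. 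What the paper actually does is an \emph{asymmetric} construction: take as skeleton the union of the orange trees on the $A$-copy, then graft in only those $A'$-edges that do not create cycles after identification of the contracted particles. The skeleton edges keep the fine $\delta/\tau/\Theta$ time sampling; the grafted $A'$-edges are estimated crudely, discarding the sampling constraint and paying a full $\Theta$ each. This is exactly the origin of the extra factor $(C_P\Theta)^{M_{\sigma_i}+N^r_{\sigma_i}+N^{<r}_{\sigma_i}}$ in \eqref{variance-phi-sigma} compared with the naive square, not (as you write) ``one additional $\Theta$-factor per shared particle.'' Without this asymmetric spanning-tree-plus-crude-estimate idea, the sequential integration over the doubled configuration does not go through, and the factorial blow-up you flag at the end is not resolved by cluster (Penrose) inequalities alone, since those pertain to the static exclusion, not to the overlapping dynamical constraints.
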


Recall that  the remainders $\cR ^{{\rm int}}_r$ in~(\ref{eq: R int}) are due to the non minimal dynamics. The smallness will come from the fact that at least one factor in the product of expectations, or one fluctuation field involves a dynamical graph with a cycle (or multiple dynamical edge)  
\begin{equation}
\label{phicyc-def}
\phi_{r\delta}^{(\sigma_i ), {\rm cyc} } 
:= \phi_{r\delta}^{(\sigma_i )} - \bar  \phi_{r\delta}^{(\sigma_i )}\,.
\end{equation}
We will therefore also need the following lemma, proved in Section \ref{geometric-sec} for~(\ref{expectation-phi-rec}) and Section~\ref{variance}  for~(\ref{variance-phi-rec}). 

\begin{Lem}
\label{phi-rec-lemma}
Consider observables $( \bar \phi ^{(i)} )_{i \in B}$ at time $\theta$ supported on single minimal forward cluster as in \eqref{eq: cumulant induction}.
Define  $ \phi_{r\delta}^{(\sigma_i ), {\rm cyc}}$ by {\rm(\ref{phicyc-def})}. 
Then, with the   notations of Lemma {\rm\ref{phi-sigma-lemma}}, we have that for any $i\leq |\sigma|$
\begin{equation}
\label{expectation-phi-rec}
\begin{aligned}
& \bbE_\eps\Big[   \Big|  
 \phi_{r\delta}^{(\sigma_i ), {\rm cyc}}\Big|  \Big]\leq   C_P \Big(  \prod_{ j \in \sigma_i } \| h^{(j)}\|_{L^\infty}  \Big)
\\
&\quad \times  \eps\delta  |\log \eps| (|\log \eps| \Theta) ^{ 2d+4}    (C_P \Theta)^{M_{\sigma_i } -|\sigma_i|} (C_P \delta)^{ (N_{\sigma_i  }^{r}-1)_+}  (C_P \tau)^{(N_{\sigma_i}^{ <r}+|\sigma_i  | - 2)_+},
\end{aligned}
\end{equation}
 and  
\begin{equation}
\label{variance-phi-rec}
\begin{aligned}
&\bbE_\eps\Big[  \Big( \zeta^\eps_{M_{\sigma_i}^{r\delta} , \theta-r \delta  } 
\big(
 \phi_{r\delta}^{(\sigma_i ), {\rm cyc} }  \big) \Otimes \Big( \Otimes_{j \neq i } \zeta^\eps_{M_{\sigma_j}^{r\delta}, \theta-r \delta  }
\big(
\phi_{r\delta}^{(\sigma_j) } \big)\Big)  \Big)^2 \Big] \\
& \leq C_P  \prod_{i\leq |\sigma|}  \Big(  \prod_{ j \in \sigma_i } \| h^{(j)}\|_{L^\infty}  \Big)^2
\eps\delta  | \log \eps | (|\log \eps| \Theta) ^{ 2d+4}  (C_P \Theta)^{ M_{\sigma_i }+M_{\sigma_i}^{r\delta}   - |\sigma_i |} (C_P \tau)^{(N_{\sigma_i }^{<r}+|\sigma_i  | - 2)_+}\\
&\quad   \times  (C_P \delta)^{( N_{\sigma_i  }^{r}-1)_+}  \prod_{j \neq i}   \Big( (C _P\Theta)^{2M_{\sigma_j}  + N_{\sigma_j }^r + N_{\sigma_j }^{<r}  - |\sigma_j |}  (C_P \delta)^{  N_{\sigma_j}^{r} }  (C_P \tau)^{N_{\sigma_j}^{<r}+|\sigma_j | - 1)}\Big)  \, .
\end{aligned}
\end{equation}
The same estimates hold  when replacing the cumulants $ \phi_{r\delta}^{(\sigma_j) }$ by   the minimal cumulants $\bar \phi_{r\delta}^{(\sigma_j) }$ defined by {\rm(\ref{barphi-def})}.
\end{Lem}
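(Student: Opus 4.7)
The plan is to derive \eqref{expectation-phi-rec}--\eqref{variance-phi-rec} by re-running the argument behind Lemma \ref{phi-sigma-lemma}, while extracting from $\phi_{r\delta}^{(\sigma_i),\rm cyc}$ the additional smallness produced by the non-minimal geometric constraint that distinguishes $\phi$ from $\bar\phi$. Comparing \eqref{notbarphi-def} with \eqref{barphi-def}, the difference $\phi^{(\sigma_i)}_{r\delta}- \bar\phi^{(\sigma_i)}_{r\delta}$ is a sum of terms corresponding to pseudo-trajectories whose restriction to the last time interval $[\theta-r\delta,\theta-(r-1)\delta]$ fails to be minimal in exactly one of the following ways: either $\KK \neq 0$ (an internal recollision inside some block), or the graph of encounters and overlaps between the packets $\varsigma_i$ admits a cycle, or it admits a multiple edge. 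In each case, at least one extra proximity condition must be realised during a time window of length $\delta$, in addition to the connectivity already enforced by the cumulant factor $\varphi_{\{\lambda_1,\ldots,\lambda_{|\lambda|}\}}$ in \eqref{eq:defcum}.

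For the expectation bound \eqref{expectation-phi-rec}, I would apply the change of variables \eqref{change of variables'} to rewrite $\bbE_\eps[|\phi^{\rm cyc}|]$ as an integral over the final configuration $Z_{M_{\sigma_i}}$ at time $\theta$ and collision parameters $(T_\NN,\Omega_\NN,U_\NN)$, with Jacobian $\mu_\eps^{-N_{\sigma_i}}$ and cross-section weights $\bigl((u-v)\cdot\omega\bigr)_+$. The bulk of this integral is bounded exactly as in the proof of \eqref{expectation-phi-sigma}, producing $(C_P\Theta)^{M_{\sigma_i}-|\sigma_i|}(C_P\tau)^{N^{<r}_{\sigma_i}+|\sigma_i|-1}(C_P\delta)^{N^r_{\sigma_i}}$. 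The isolated extra constraint is then controlled by a dedicated geometric lemma (in the spirit of the recollision/overlap estimates of \cite{BGSR2,BGSS2,BGSS3}) quantifying the volume of pseudo-trajectories that carry one additional recollision or cycle-closing encounter in the window $[\theta-r\delta,\theta-(r-1)\delta]$: this produces a factor $\eps$ from spatial proximity at a fixed instant, a factor $\delta$ from the free choice of the extra collision time, and a loss $|\log\eps|(|\log\eps|\Theta)^{2d+4}$ arising from the velocity truncation $\bbV=|\log\eps|$, from a dyadic localisation of impact parameters and from the summation over the possible positions of the cycle inside the cumulant graph. Trading one $C_P\delta$ for this gain yields \eqref{expectation-phi-rec}.

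For the variance \eqref{variance-phi-rec}, expanding the centered $\ostar$-product \eqref{tensor-product} writes the second moment as a double sum, over pairs of subsets $A,A'\subset\{1,\ldots,|\sigma|\}$, of terms of the shape
\[
\mu_\eps^{|\sigma|}\;\bbE_\eps\Big[\pi^\eps_{M_A,\theta-r\delta}\Big(\bigotimes_{j\in A}\phi^{(\sigma_j)}_{r\delta}\Big)\,\pi^\eps_{M_{A'},\theta-r\delta}\Big(\bigotimes_{j\in A'}\phi^{(\sigma_j)}_{r\delta}\Big)\Big]\prod_{j\in A^c\cup (A')^c}\bbE_\eps\bigl[-\phi^{(\sigma_j)}_{r\delta}\bigr]\,,
\]
in which the $\rm cyc$ factor sits in one or both copies. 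Doubling the pseudo-trajectory representation and iterating the clustering expansion of Section \ref{factorization-sec} simultaneously on the two copies, the cancellations that yield \eqref{variance-phi-sigma} force at least one trajectory of the left copy to share a particle with one of the right copy, which accounts for the additional prefactor $(C_P\Theta)^{M^{r\delta}_{\sigma_i}}$ in \eqref{variance-phi-rec} compared to the expectation bound. Applying the same geometric lemma as above to the $\rm cyc$ factor, uniformly in the configuration of the second copy, supplies the smallness $\eps\delta|\log\eps|(|\log\eps|\Theta)^{2d+4}$, while the other factors are estimated by \eqref{expectation-phi-sigma}--\eqref{variance-phi-sigma} of Lemma \ref{phi-sigma-lemma}.

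The main obstacle lies in the geometric lemma itself. Thanks to the conditioning $\Upsilon^\eps_\cN$, each microscopic cluster has at most $\gamma=4d$ particles and uniformly bounded energy, so that the dynamical subsystem on which the extra encounter takes place is finite-dimensional and the bound of \cite{Burago,Illner89} on the total number of collisions applies. The delicate point is to isolate one scalar integration variable of the pseudo-trajectory (an impact angle, a collision time, or a relative velocity) whose Jacobian is non-degenerate at the cycle-closing encounter, and to decouple it from the $K_\gamma$ encounters already present in the cumulant structure; this is what turns the extra proximity condition into the $\eps\delta$ gain, at the price of the logarithmic loss. In the variance, the further subtlety is to guarantee that the shared-particle coincidence forced by the $\ostar$-centering does not \emph{coincide} with the extra proximity constraint of the $\rm cyc$ factor, so that the $\eps\delta$ gain is not absorbed by the sharing.
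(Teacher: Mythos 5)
Your proposal follows essentially the same route as the paper. For the expectation bound the paper (after dominating $\bbE_\eps$ by $\bbE_\eps^{\rm eq}$) refines the tree estimate \eqref{expectation-phi-sigma bis} by replacing \eqref{eq: integration Bi} with the cycle-constrained version \eqref{eq: integration Bi'} imported from \cite{BGSS2} (Eq.\,(5.12) there), which is exactly the ``dedicated geometric lemma'' you invoke and correctly identify as the source of the gain $\eps\delta|\log\eps|(|\log\eps|\Theta)^{2d+4}$; for the variance the paper likewise reruns the $\ostar$-product/cluster-expansion computation of Section~\ref{variance-sec} replacing $\cR_{T_\prec}^{\rm comp}$ by $\cR_{T_\prec}^{\rm comp,rec}$ and settling the subtlety you flag about the interaction of contracted variables with the cycle constraint by a short case analysis on whether $i$ lies in $B$, $B'$, $^cB$, $^cB'$. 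Two small discrepancies of bookkeeping: the paper attributes the factor $\delta$ to the time integration of the \emph{first} edge of the forward tree (necessarily in $\cI_\delta$), not to ``the free choice of the extra collision time'' which is determined once the tree variables are fixed, and because the nature of that edge is not tracked \emph{both} the $\delta$- and the $\tau$-exponents are lowered by one (yielding $(N^r-1)_+$ and $(N^{<r}+|\sigma_i|-2)_+$), whereas your sketch trades only one power of $\delta$.
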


\medskip

\begin{proof}[Proof of Proposition~{\rm\ref{remove recollisions on delta}}]

Using the homogeneity, we can assume without loss of generality  that $\| h^{(i)}\|_{L^\infty} \leq 1$  so  we do not keep track of~$ \| h^{(i)}\|_{L^\infty} $ in the estimates.
Recall  the definition~(\ref{eq: R int}) 
of $\cR ^{{\rm int}}_r$. The fluctuation terms in $\cR ^{{\rm int}}_r$ can be decoupled from the function~$ \Xi_{p-1}$ by using the Cauchy-Schwarz estimate, leading in the case of~$\cR ^{{\rm int, 1}}_r$ to
$$
\begin{aligned}
\big| \cR^{\rm int, 1}_r \big| &\leq 
 \sum_{ \NN^{r\delta}_B }  \sum_{ \eta  ^r  \cup \rho^r  \hbox{ \tiny\rm partition of } B \atop  \rho^r \hbox{ \tiny\rm  clustering} }
 \prod _{q=1}^{|\rho^r|} \mu_\eps^{1-\frac{|\rho_q^r|  }2}  \bbE_\eps\Big[ 
  \bar \phi_{r \delta}^{(\rho^r_q )}  \Big]  \bbE_\eps ^\frac12[  \Xi_{p-1}^2 ]
 \\
&  \qquad \times\bbE_\eps^\frac12   \left[   \left( \Otimes_{q=1}^{|\eta^r|}  \mu_\eps^{\frac12-\frac{ |\eta^r_{q}| }2}    \zeta^\eps_{ M^{r\delta}_q    ,\theta-r\delta} \Big(  \phi_{r\delta}^{(\eta^r_{q})} \Big)  - \Otimes_{q=1}^{|\eta^r|} \mu_\eps^{\frac12-\frac{ |\eta^r_{q}| }2}    \zeta^\eps_{ M^{r\delta}_q    ,\theta-r\delta} \Big( 
  \bar  \phi_{r\delta}^{(\eta^r_{q})}  \Big) \right)^2\right] \\
   &\leq C_P
 \sum_{ \NN^{r\delta}_B }  \sum_{ \eta  ^r  \cup \rho^r  \hbox{ \tiny\rm partition of } B \atop  \rho^r \hbox{ \tiny\rm  clustering} }
 \prod _{q=1}^{|\rho^r|} \bbE_\eps\Big[ 
  \bar \phi_{r \delta}^{(\rho^r_q )}  \Big] \bbE_\eps^\frac12   \left[   \left( \Otimes_{q=1}^{|\eta^r|}   \zeta^\eps_{ M^{r\delta}_q    ,\theta-r\delta} \Big(  \phi_{r\delta}^{(\eta^r_{q})} \Big)  - \Otimes_{q=1}^{|\eta^r|}   \zeta^\eps_{ M^{r\delta}_q    ,\theta-r\delta} \Big( 
  \bar  \phi_{r\delta}^{(\eta^r_{q})}  \Big) \right)^2\right ]\end{aligned}$$
    thanks to (\ref{eq: borne L2 Xi}) and the fact that $\mu_\eps^{ 1- |\eta_q^r|} \leq 1$.
    Then, since there is at least one factor  $\phi_{r\delta}^{(\eta_q^r  ), {\rm cyc} } 
= \phi_{r\delta}^{(\eta_q^r )} - \bar  \phi_{r\delta}^{(\eta_q^r )}$, using   (\ref{expectation-phi-sigma})  and~(\ref{variance-phi-rec})  we find that~$\cR^{\rm int, 1}_r$
     is bounded by
$$
\begin{aligned}
&  C_P  (\eps\delta|\log \eps|  )^{1/2} ( |\log \eps| \Theta) ^{d+2} \; (C_P  \delta)^{N_{\rho^r}^{r} }  (C_P   \tau)^{N_{\rho^r}^{ <r }+\sum_i|\rho^r_i | - |\rho^r| }(C_P \Theta)^{M_{\rho^r}-\sum_i|\rho^r_i |}  \\
&\quad \times (C_P  \Theta)^{(M_{\eta ^r}-\sum_i|\eta_{i}^r | +M_{\eta ^r}^{r\delta})/2}  (C_P \delta)^{ (N_{\eta ^r}^{r}-1)_+ /2}  (C_P  \tau)^{(N_{\eta ^r}^{ <r}+\sum_i|\eta_{i}^r |  - |\eta|-1)_+/2}\, .
\end{aligned}
$$
The reasoning is similar for~$\cR^{\rm int, 2}_r$, using  (\ref{variance-phi-sigma}) and  (\ref{expectation-phi-rec}). Summing over  $(\NN^{r'}) _{r' \leq r}$, then over $r \leq R= \frac{\tau}{\delta}$, we get 
\begin{equation}
\label{Rint-est}
 \Big| \sum_{r=1}^R \cR_r ^{{\rm int} } \Big| \leq   (C_P \Theta)^{M_B}  (\eps\delta|\log \eps| )^{1/2} ( |\log \eps| \Theta) ^{d+2} {\tau\over \delta}\leq    (C_P \Theta)^{M_B} \eps^{\frac1{8d}} \,,
 \end{equation}
with  the choice $\delta = \e^{1 - \frac{1}{2d}}$ in \eqref{eq: choix parametres}  and~$\tau$ satisfying (\ref{tau-conditions}).
This concludes the proof of Proposition~\ref{remove recollisions on delta}, and thus of Proposition~\ref{prop-from theta to theta + tau}.

Notice that the  choice of the parameter $\delta$ is an optimisation   between the fact that~$\delta$ has to be small so that $\Upsilon^\eps_N$ is a typical event and the necessity for $\delta$ to be larger than $\eps$ for the  estimate (\ref{Rint-est})  to converge to 0. 
\end{proof}


\section{Almost-preserving of the fluctuation structure}
\label{fluctuation-sec}

\subsection{Subexponential  clusters}
\label{subsec: Subexponential forward clusters}

 In Proposition \ref{prop-from theta to theta + tau}, we proved that  the fluctuation structure at a time $\theta$ can be pulled back  to time $\theta - \tau$ up to small error terms. We now want to iterate this formula to pull back the fluctuation on any macroscopic  time interval~$[\theta_{p-1}, \theta_p]$ ($2\leq p \leq P$).

For this, we choose the parameter~$\tau$ so that for all~$i \in [1,P], (\theta_i-\theta_{i-1})/\tau$ is not an integer.
Each time interval~$\big[\theta_{i-1},\theta_i\big]$ is cut  into~$k_i= [(\theta_i-\theta_{i-1})/\tau]+1$  slices (of size~$\tau$,  except for the last slice~$\big[\theta_{i-1},\theta_i  - (k_i-1)  \tau\big]$ which is smaller due to  this assumption on~$\tau$). This leads to a decomposition of~$[0,\Theta]$ into
$
K_P\displaystyle  :=\sum_{i = 2} ^P k_i  $
 slices, denoted~$I_\ell= [\tau_{\ell+1}, \tau_\ell]$ (in decreasing order): thus~$I_1:= (\Theta-\tau, \Theta),  I_2 = (\Theta-2\tau, \Theta-\tau) \dots$. 

In particular, we introduce the decreasing sequence of integers $\{ \kappa_p \}_{p \leq P}$ 
such that 
\begin{equation}
\label{eq: kappa}
\theta_p = \tau_{\kappa_p}
\quad \text{and  we set} \quad 
\LL_p = \{ \kappa_p,\dots, \kappa_{p-1} - 1\}.
\end{equation}

\begin{figure}[h] 
\includegraphics[width=4in]{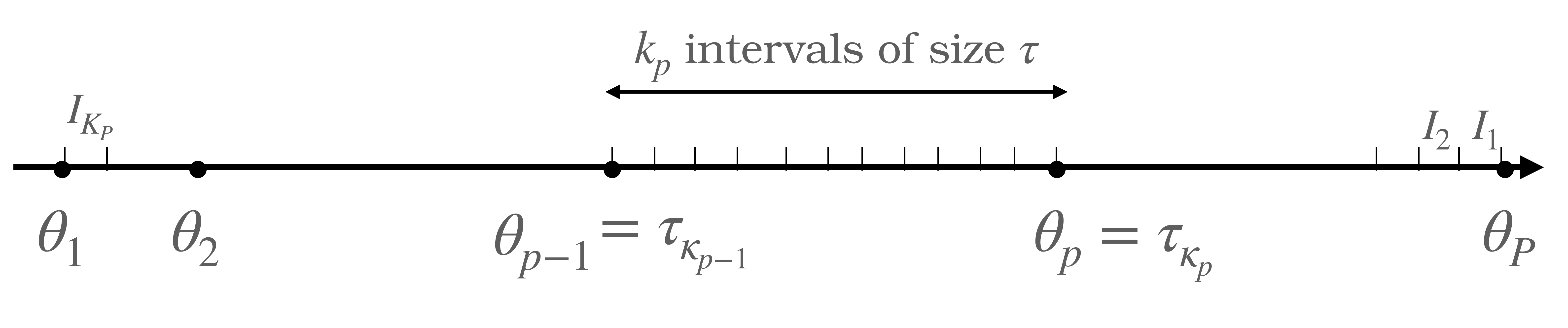}  
\caption{
The time interval $[\theta_1, \theta_P ]$ is split into $K_P$ intervals of smaller size $\tau$ denoted by $I_1, \dots, I_{K_P}$ and ranked in a decreasing order.
}
 \label{fig:f scales tau}
\end{figure}

In the next definition, we are going to strengthen the notion of single minimal forward cluster introduced in Definition~{\rm\ref{def minimal forward dynamics}}.
 \begin{defi}[Subexponential  cluster]
\label{def: phi tau single cluster}
Let~$i \in \{1,\dots,P\}$ and~$\tau_\ell< \theta_i = \tau_{\kappa_i}$ be given.  We consider 
a single  minimal forward cluster during the time interval $[\tau_\ell, \theta_i]$, 
 originating from a single particle.
This cluster is said to be subexponential  at  $\tau_\ell$ if on
 each time interval~$I_k$ for~$\kappa_i \leq k<\ell $ the number of annihilations~$n_i^k$ in the forward dynamics  is less than~$2^k$.
 
The corresponding single minimal cumulant $\bar \phi^{(i)}$
is defined as in  \eqref{eq: cumulant induction} as the pullback during~$[\tau_\ell, \theta_i]$ 
of the function $h^{(i)}$, with the appropriate subexponential restrictions on the annihilation numbers.
\end{defi} 
Note that the reference time for the subexponential growth is chosen to be~$\Theta $ rather than~$\theta_i$ as one might have expected (since the backward flow only starts at time~$\theta_i$). As will be apparent later (see Proposition~\ref{prop:superexp}), the reason for this choice is that the contribution of one single superexponential cluster  must be small enough to compensate the size of all other subexponential clusters, so the reference time has to be the same for all~$i \in \{1,\dots,P\}$.  
 
 \medskip

The main result of this section is the following: it shows that the fluctuation structure involving single subexponentials cumulants is preserved on any macroscopic time interval~$[ \theta_{p-1}, \theta_{p}]$.
Its proof is the goal of the following paragraphs.
 \begin{Prop}
\label{theta-iteration-prop}
Given a subset  $B$   of $\{p, \dots, P\}$, consider   for each $i \in B$ a   single minimal cumulant $\bar \phi^{(i)} $  of~$m_i$ variables, supported on  a  subexponential    cluster  at time~$\theta_p = \tau_{\kappa_p}$ as in Definition~{\rm{\ref{def: phi tau single cluster}}}.
Then denoting by~$\NN^\ell:= (n^\ell_i)_{i \in B}$ the number of particles annihilated in each block on the time step~$\ell$, and 
by~$M_{i}^{\kappa_{p-1}} = m_i + \sum_{\ell \in \LL_p} n_i^\ell$ the number of particles at time~$\theta_{p-1}  $, there holds
 \begin{equation*}
\begin{aligned}
& \Big| \bbE_\eps   \Big[   \Xi_{p-1}    \Big( \Otimes_{i \in B}   \zeta^\eps_{m_i, \theta_p} ( \bar \phi^{(i)}  ) \Big)\Big] - 
\sum_{(\NN^\ell)_{\ell \in \LL_p} \atop {\rm subexp}}   
\bbE_\eps   \Big[  \Xi_{p-1}    \Big( \Otimes_{i \in B}   \zeta^\eps_{M_i^{\kappa_{p-1}}, \theta_{p-1} } ( \bar \phi^{(i)}_{ \theta_p - \theta_{p-1}  }  ) \Big)\Big] \Big|\\
& \qquad \qquad \leq C_P \Big(  \prod_{i \in B\cup\{1,\dots, p-1\} } \| h^{(i)}\|_{L^\infty} \Big)
\left(\left(C_P  \Theta \right)^{ P\cdot 2^{\kappa_{p}}  } \eps^{\frac1{8d}}
+(C_P\Theta^{2P-1}  \tau ) ^{ 2^{\kappa_p - 1}}  \right)\, , 
\end{aligned}
\end{equation*}
where~$\bar \phi^{(i)}_{ \theta_p - \theta_{p-1}}  $ is supported on   a (single minimal) subexponential     cluster  at time~$\theta_{p-1 }$.
\end{Prop}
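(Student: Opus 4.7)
The plan is to iterate Proposition~\ref{prop-from theta to theta + tau} across the $|\LL_p|$ time slices~$I_\ell$, $\ell \in \LL_p$, keeping at each step only the ``diagonal'' contribution in which no new clustering among distinct blocks occurs and the number of annihilations stays subexponential, while bounding three other error types.

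\emph{One-step decomposition.} Suppose inductively that at time~$\tau_\ell$ the computation is reduced to $\bbE_\eps[\Xi_{p-1} \Otimes_{i \in B} \zeta^\eps_{\bullet, \tau_\ell}(\bar\phi^{(i)}_{\theta_i - \tau_\ell})]$ with each~$\bar\phi^{(i)}_{\theta_i - \tau_\ell}$ a single minimal subexponential cumulant on~$[\tau_\ell,\theta_i]$. Applying Proposition~\ref{prop-from theta to theta + tau} on~$I_\ell = [\tau_{\ell+1},\tau_\ell]$ yields a sum over partitions $\eta \cup \rho$ of~$B$ (with $\rho$ clustering) and annihilation numbers~$\NN^\ell = (n_i^\ell)_{i \in B}$, modulo a remainder of size~$(C_P \Theta)^{M^\ell_B}\eps^{1/8d}$ where~$M^\ell_B$ is the total particle count at time~$\tau_\ell$. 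I split the sum into: (D)~the diagonal where $\rho = \emptyset$, $\eta$ is the singleton partition of~$B$, and~$n_i^\ell \leq 2^\ell$ for all~$i$; (C)~the clustering terms where at least one part of~$\eta \cup \rho$ has cardinality~$\geq 2$; (S)~the superexponential terms where some~$n_i^\ell > 2^\ell$. Piece~(D) is exactly what the induction needs: by construction~\eqref{barphi-def} and Definition~\ref{def: phi tau single cluster}, the one-step pullback of a single minimal subexponential cumulant on~$[\tau_\ell,\theta_i]$ along an independent dynamics on~$I_\ell$ is again a single minimal subexponential cumulant on~$[\tau_{\ell+1},\theta_i]$. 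Iterating~$|\LL_p|$ times and collecting only the diagonals produces the claimed right-hand side at time~$\theta_{p-1} = \tau_{\kappa_{p-1}}$.

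\emph{Bounds on \textup{(C)} and \textup{(S)}.} For both error pieces I isolate~$\Xi_{p-1}$ by Cauchy--Schwarz and use Proposition~\ref{Proposition - estimates on g0} to dominate $\bbE_\eps[\Xi_{p-1}^2]^{1/2}$ by $C_P \prod_{u<p}\|h^{(u)}\|_\infty$. For~(C), Lemma~\ref{phi-sigma-lemma} provides a factor $(C_P \tau)^{|\sigma|-1}$ for each cluster~$\sigma$ in both the expectation estimate~\eqref{expectation-phi-sigma} and the variance estimate~\eqref{variance-phi-sigma}, while the combinatorial weights~$\mu_\eps^{1-|\sigma|/2}$ appearing in~\eqref{Duhamel-identite-tau} balance the cumulant volume. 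Summing over admissible~$\NN^\ell$ and over partitions, each non-diagonal event on slice~$I_\ell$ costs at most a factor~$C_P \Theta^{2P-1}\tau$; the power $2P-1$ absorbs the worst-case total~$M_\sigma \lesssim P$ through the~$(C_P\Theta)^{M_\sigma-|\sigma|}$ prefactor. For~(S), the tail $\sum_{n > 2^\ell} (C_P\tau)^n$ derived from~\eqref{expectation-phi-sigma} contributes at most~$(C_P\tau)^{2^\ell}$ per offending block, dominated by~$(C_P\Theta^{2P-1}\tau)^{2^\ell}$ under~\eqref{tau-conditions}.

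\emph{Final summation and main obstacle.} Summed over $\ell \in \LL_p$, the Proposition~\ref{prop-from theta to theta + tau} remainders yield at most~$|\LL_p|(C_P \Theta)^{P\cdot 2^{\kappa_p}}\eps^{1/8d}$, where $P\cdot 2^{\kappa_p}$ bounds the maximal particle count compatible with subexponentiality. The combined (C)+(S) remainders form a geometric sum in~$\ell$ with ratio $(C_P\Theta^{2P-1}\tau)^{2^\ell}$; under~\eqref{tau-conditions} the smallest exponent~$2^{\kappa_p-1}$ dominates, yielding the announced bound $(C_P\Theta^{2P-1}\tau)^{2^{\kappa_p - 1}}$. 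The main difficulty is controlling the combinatorial blow-up: each one-step remainder carries $(C_P\Theta)^{M^\ell_B}$, and without the subexponential constraint this would diverge catastrophically after~$|\LL_p|$ iterations. The constraint~$n_i^\ell \leq 2^\ell$ built into the induction keeps $M^\ell_B \leq P\cdot 2^\ell$, so the prefactor remains at the doubly exponential but still tractable level~$(C_P\Theta)^{P\cdot 2^{\kappa_p}}$, which is then beaten by~$\eps^{1/8d}$ via the parameter choice~\eqref{tau-conditions}. The delicate point is therefore not the per-step estimate but the \emph{definition} of the running object: one must propagate not just a smallness bound but the structural property that $\bar\phi^{(i)}_{\theta_i - \tau_\ell}$ remains a single minimal subexponential cumulant at every step, so that Lemma~\ref{phi-sigma-lemma} keeps applying with the same constants.
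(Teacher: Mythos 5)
Your scheme for iterating Proposition~\ref{prop-from theta to theta + tau} across the slices~$I_\ell$, and the observation that the diagonal (D) piece reproduces a single minimal subexponential cumulant at $\tau_{\ell+1}$, are exactly right and match the paper's structure. However, there is a genuine gap in your treatment of the error piece (C): you lump together all clustering terms (``at least one part of~$\eta\cup\rho$ has cardinality~$\geq 2$'') and assign each such event a per-slice cost~$C_P\Theta^{2P-1}\tau$, then assert that (C)+(S) form a geometric sum with ratio~$(C_P\Theta^{2P-1}\tau)^{2^\ell}$. These two claims are mutually inconsistent, and the first one is fatal on its own. When~$\rho$ contains a \emph{pair}~$\rho_q$ with~$|\rho_q|=2$ and~$\eta$ consists of singletons, the factor~$\mu_\eps^{1-|\rho_q|/2}$ in~\eqref{Duhamel-identite-tau} is exactly~$1$: there is \emph{no} gain of a power of~$\mu_\eps$, and~\eqref{expectation-phi-sigma} yields only the single factor~$(C_P\tau)^{|\rho_q|-1}=C_P\tau$. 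Summed over~$|\LL_p|\sim\Theta/\tau$ slices, this produces an error of order~$\Theta$, which does not vanish. Your geometric-sum claim~$(C_P\Theta^{2P-1}\tau)^{2^\ell}$ can only be justified for the superexponential piece (S), where at least~$2^\ell$ annihilations force that many~$\tau$-factors; it is false for a single pair clustering in a slice.

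What is missing is precisely the equilibrium cancellation isolated in Proposition~\ref{Lem: miracle}. The pair cumulant~$\cI^{2,\ell}$ must be treated separately: the paper shows that~$\sum_{\NN^\ell}\bbE_\eps\big[\bar\phi^{(\{1,2\})}_{|I_\ell|}\big]$ is of order~$\eps^{1/8d}$, not~$\tau$, by writing it in~\eqref{eq: pairing expansion} as a difference of two centered $\Otimes$-products and invoking~\eqref{E-HM 1} from Lemma~\ref{phi-sigma-lemma}, which gains an extra factor~$\eps$ from the cluster expansion of the invariant measure. This is a distinct mechanism from the $(C_P\tau)^{|\sigma|-1}$ volume bound you invoke, and it is the only reason the iteration over~$O(\Theta/\tau)$ slices converges. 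Without it the pair remainders dominate and the proposition fails. (The higher-order clusters, with~$|\rho_q|\geq 3$ or some~$|\eta_q|\geq 2$, do gain a power~$\mu_\eps^{-1/2}$ as you implicitly rely on, and your argument is adequate for those; the gap is specific to pairs.)
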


Using the homogeneity, we can assume without loss of generality  that $\| h^{(i)}\|_{L^\infty} \leq 1$  so from now on we no longer keep track of~$ \| h^{(i)}\|_{L^\infty} $ in the estimates.

 \subsection{The main term and the remainders on a small time step}

 Let us start by considering one time step~$I_\ell= [\tau_{\ell +1}, \tau_\ell] \subset[\theta_{p-1},\theta_p]$.
Given a subset  $B$   of~$\{p, \dots, P\}$, consider   for each $i \in B$ a   
function~$\bar \phi^{(i)} $  of~$m_i^\ell$ variables, supported on  a  (single minimal)  subexponential cluster  at time~$\tau_\ell$.
 Define
 $$\cI_{\MM ^\ell}^\ell:=  \bbE_\eps \left[   \Xi_{p-1}   \Otimes_{ i \in  B}
 \zeta^\eps_{m_i^\ell,\tau_\ell} \big(  \bar \phi^{(i)} \big)  \right]\, .
$$
Let us apply Proposition~\ref{prop-from theta to theta + tau}: we are going to show that asymptotically as~$\mu_\eps \to \infty$, the fluctuation structure at time $\tau_{\ell+1}  $  is similar to  the fluctuation structure at time $\tau_\ell$.
For this, consider the principal part $\bar \cI_{\MM^\ell}^\ell $ defined in Proposition \ref{fubini-prop},
but now on the time interval $I_\ell$
$$
\bar \cI_{\MM^\ell}^\ell :=  \sum_{  \NN^\ell  } \sum_{\eta   \cup \rho  \hbox{ \tiny \rm partition of }  B\atop  \rho  \hbox{ \tiny\rm  clustering} }  \prod _{q} \mu_\eps^{1 -\frac{|\rho _q|}2} 
 \bbE_\eps\Big[ 
    \bar \phi_{|I_\ell | } ^{(\rho _q)}  \Big]  \times
\bbE_\eps   \Big[
\Xi _{p-1}  \Big( \Otimes_{q}  \mu_\eps^{\frac12-\frac{ |\eta_q | }2}   \zeta^\eps_{M^{\ell+1}_{\eta_q} ,\tau_{\ell+1} } \Big(
 \bar \phi_{|I_\ell | } ^{(\eta_{q}) } \Big) \Big)\Big]\;,
$$
where
\begin{equation} \label{eq:estsec4ell}
\left| \cI_{\MM ^\ell}^\ell - \bar \cI_{\MM^\ell}^\ell \right|
\leq (C_P\Theta)^{M^\ell_B}    \eps^{\frac1{8d}}
\end{equation}
by Propositions \ref{fubini-prop}-\ref{remove recollisions on delta},
and split in turn $\bar \cI_{\MM^\ell}^\ell$ into different contributions
$$
\bar \cI_{\MM^\ell}^\ell  =  \cI^{ \ell+1}_{\MM^{\ell+1} }+\cI^{\rm exp,\ell}_{\MM} + \cI^{2,\ell}_{\MM}+\cI^{>, \ell }_{\MM}
 $$
which are defined below. We have written~$M_{\eta_q}^{\ell+1} $ for the number of particles in~$\eta_q$ at time~$\tau_{\ell+1}$, and $\NN^\ell$ the number of particles annihilated on $[\tau_{\ell+1}, \tau_\ell]$.

The main contribution is  a product where each of the $|B |$ terms has evolved independently~$(|\rho| = \emptyset$, $\eta $ consisting only of singletons) in a controlled way
$$
\begin{aligned}
 \cI^{\ell+1 }_{\MM^{\ell+1} }&:=\sum_{ \NN^\ell \ \hbox{\tiny\rm subexp} } 
\bbE_\eps   \Big[   \Xi_{p-1}     \Big( \Otimes_{i \in B}   \zeta^\eps_{ m_i^\ell +n_i^\ell ,\tau_{\ell+1}  } (  \bar  \phi_{|I_\ell|} ^{(i)}  ) \Big)\Big] \,,
\end{aligned}$$
  where the sum is restricted to subexponential $\NN^\ell$, meaning that   for all $i \in B$, $n_i^\ell \leq  2^\ell $.
The function~$\bar \phi_{|I_\ell|} ^{(i)} $  is thus  supported on  a (single minimal) subexponential  cluster at time~$\tau_{\ell+1}$ as   in Definition~\ref{def: phi tau single cluster}.
We stress the fact that $ \cI^{\ell+1 }_{\MM^{\ell+1} }$ 
has the same product structure as $\cI^\ell_{\MM^\ell}$.

\medskip

The remainder~$\bar \cI^\ell_{\MM^\ell}- \cI^{\ell+1 }_{\MM^{\ell+1} }$ is  split into the following three terms: 

\medskip

--- the
higher order cumulants
\begin{equation}
 \begin{aligned}
 \label{eq: > MM}
\cI^{>,\ell}_{\MM} & := \sum_{\eta   \cup \rho  \hbox{ \tiny \rm partition of }  B\atop  \rho  \hbox{ \tiny\rm  clustering} } \sum_{\NN_\rho ^\ell }    {\mathbf 1}^>  \prod _{q} \mu_\eps^{1 -\frac{|\rho _q|}2} 
 \bbE_\eps\Big[  \bar  \phi_{|I_\ell | } ^{(\rho_q )}  \Big]    \times  \sum_{\NN_{\eta}^\ell } 
\bbE_\eps   \Big[ \Xi_{p-1}    \Big(  \Otimes_{q}  \mu_\eps^{\frac12-\frac{ |\eta_q | }2}  \zeta^\eps_{ M_{\eta_{q}} ^{\ell+1} ,\tau_{\ell+1} } \Big(  \bar \phi_{|I_\ell |} ^{(\eta_{q})}  \Big) \Big)\Big] \, ,
 \end{aligned}
\end{equation}
where ${\mathbf 1}^>$ indicates that either at least one $\eta_{q}$ has more than one element, or   at least one $\rho_q$ has more than two elements;

\smallskip

--- a term collecting pair cumulants in $\rho$
$$
\cI^{2,\ell}_{\MM}  :=
 \sum_{\eta   \cup \rho  \hbox{ \tiny \rm partition of }  B \atop  \eta  \hbox{ \tiny \rm singletons}, 
 \,   \rho  \hbox{ \tiny \rm pairs } } 
   \sum_{\NN_\rho ^\ell }  \prod _{q}  \bbE_\eps\Big[   \bar  \phi_{|I_\ell | } ^{(\rho_q )}  \Big]   \times   \sum_{\NN_{\eta}^\ell } 
\bbE_\eps   \Big[  \Xi_{p-1}    \Big( \Otimes_{q}  
 \zeta^\eps_{ M_q  ^{\ell+1} ,\tau_{\ell+1} } \Big(   \bar \phi_{|I_\ell |} ^{(q)}  \Big) \Big)\Big] \, ,
$$

\smallskip

--- a term with only single minimal cumulants but at least one has a  superexponential growth
\begin{equation}
 \begin{aligned}
 \label{eq: superexp}
\cI^{\rm exp,\ell}_{\MM}  &:=\sum_{\NN^\ell   \hbox{\tiny\rm superexp} } 
\bbE_\eps   \Big[  \Xi_{p-1}    \Big( \Otimes_{i\in B}  
 \zeta^\eps_{ m_i^\ell + n_i^\ell ,\tau_{\ell+1} } \Big(   \bar \phi_{|I_\ell |} ^{(i)}  \Big) \Big)\Big] ,
\end{aligned}
\end{equation}
where the sum is restricted to superexponential $\NN^\ell$, meaning that  at least one $i \in B$ satisfies that~$  n_i^\ell > 2^{\ell }$.

 \medskip

The following Paragraphs \ref{remove clusters}, \ref{sec:PC} and \ref{remove superexp} of this section consist in proving that the terms~$\cI^{>, \ell}_\MM$,   $\cI^{2, \ell}_\MM$ and~$\cI^{\rm exp, \ell}_\MM$ are small.
As a consequence of Propositions 
\ref{prop: high order clustering}, \ref{Lem: miracle}, \ref{prop:superexp} and of \eqref{eq:estsec4ell},
we   deduce the following result on the time step $[\tau_{\ell+1}, \tau_\ell]$.
\begin{Prop}
\label{Prop: transport structure on time tau}
The following estimate holds:
\begin{equation*}
 \big| \bar \cI^\ell_{\MM^\ell} -  \cI^{\ell+1 }_{\MM^{\ell+1} } \big|
\leq
  \left(C_ P   \Theta\right)^{ 2^\ell  P}  \eps^{\frac1{8d}}
+\left(C_ P   \Theta^{2P-1}\tau \right)^{ 2^{\ell  -1}} \,.
\end{equation*}
\end{Prop}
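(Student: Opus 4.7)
The proof is essentially an assembly: the four-way decomposition
$$
\bar \cI^\ell_{\MM^\ell} \;=\; \cI^{\ell+1}_{\MM^{\ell+1}} \;+\; \cI^{\rm exp,\ell}_{\MM} \;+\; \cI^{2,\ell}_{\MM} \;+\; \cI^{>,\ell}_{\MM}
$$
has already been set up above, together with the bound
$\big|\cI^\ell_{\MM^\ell} - \bar\cI^\ell_{\MM^\ell}\big| \leq (C_P\Theta)^{M_B^\ell}\eps^{1/8d}$
coming from Propositions~\ref{fubini-prop}--\ref{remove recollisions on delta} (equation~\eqref{eq:estsec4ell}). So the plan is simply to quote the three smallness estimates for the three remainder terms and add them.

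\smallskip
First, I would apply Proposition~\ref{prop: high order clustering} to $\cI^{>,\ell}_\MM$: because ${\mathbf 1}^>$ forces at least one cumulant $\rho_q$ of size $\geq 3$, or at least one fluctuation packet $\eta_q$ of size~$\geq 2$, the cumulant estimate~\eqref{expectation-phi-sigma} of Lemma~\ref{phi-sigma-lemma} produces at least one extra factor of $\tau$ beyond the trivial $\tau^{|\sigma|-1}$ already present in a pair, and the powers of $\mu_\eps^{1-|\rho_q|/2}$ and $\mu_\eps^{1/2-|\eta_q|/2}$ do not compensate this gain. Decoupling $\Xi_{p-1}$ by Cauchy--Schwarz (using~\eqref{eq: borne L2 Xi}) and summing over~$\NN^\ell$ with subexponential constraint, one obtains a bound of order $(C_P\Theta^{2P-1}\tau)^{2^{\ell-1}}$, since the total number of particles involved at time $\tau_{\ell+1}$ is controlled by $P \cdot 2^{\ell}$.

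\smallskip
Next, for $\cI^{2,\ell}_\MM$, I would invoke Proposition~\ref{Lem: miracle} (the ``miracle''/pair-cancellation estimate). Here $\rho$ consists exclusively of pairs and $\eta$ of singletons, so one is evaluating explicit two-body cumulants $\bar\phi^{(\{i,j\})}_{|I_\ell|}$ against pullbacks of single cumulants. This is the delicate contribution: the naive bound would only yield $O(1)$, and one must exploit an additional cancellation (presumably against the part of $\cI^{\ell+1}_{\MM^{\ell+1}}$ that would reconstruct the covariance on the two times), again to produce a surplus factor $\tau$. This is where the $\Theta^{2P-1}\tau$ size really matters and is, I expect, the main obstacle of the proof---but it is precisely what Proposition~\ref{Lem: miracle} does. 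One then gets a contribution bounded by $(C_P\Theta^{2P-1}\tau)^{2^{\ell-1}}$.

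\smallskip
Finally, for $\cI^{\rm exp,\ell}_\MM$, Proposition~\ref{prop:superexp} quantifies the smallness coming from superexponential growth: if some block undergoes $n_i^\ell > 2^\ell$ annihilations in $[\tau_{\ell+1},\tau_\ell]$, then the estimate~\eqref{variance-phi-sigma} gives a factor $(C_P\tau)^{2^\ell}$ which, combined with standard Chebyshev-type arguments and a factor $(C_P\Theta)^{2^\ell P}$ accounting for the $2^\ell$ particles in each of the $P$ blocks at worst, is much smaller than the prescribed threshold and is absorbed in $(C_P\Theta)^{2^\ell P}\eps^{1/8d}$ (using $\delta = \eps^{1-1/2d}$ and $\tau/\delta$ superexponential). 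Adding the three bounds together with the $(C_P\Theta)^{M_B^\ell}\eps^{1/8d}$ contribution from \eqref{eq:estsec4ell}, and using $M_B^\ell \leq P\cdot 2^\ell$ (from the inductive subexponential hypothesis on the input), yields exactly
$$
(C_P\Theta)^{2^\ell P}\eps^{1/8d} + (C_P\Theta^{2P-1}\tau)^{2^{\ell-1}},
$$
concluding the proof.
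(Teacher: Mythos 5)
Your overall decomposition is correct, but the attribution of each remainder term to the two parts of the final bound is essentially backwards, and one of the resulting claims is false as stated.

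In the paper, the three remainder estimates are: $|\cI^{>,\ell}_{\MM}| \leq (C_P\Theta)^{2^\ell P}\mu_\eps^{-1/2}\tau^{1/2}$ (Proposition~\ref{prop: high order clustering}), $|\cI^{2,\ell}_{\MM}| \leq (C_P\Theta)^{2^\ell P}\eps^{1/8d}$ (Proposition~\ref{Lem: miracle}), and $|\cI^{\rm exp,\ell}_{\MM}| \leq (C_P\Theta^{2P-1}\tau)^{2^{\ell-1}}$ (Proposition~\ref{prop:superexp}). Thus it is the \emph{superexponential} term that produces the $\tau$-part of the final bound, whereas the high-order-cluster term (after noting $\mu_\eps^{-1/2} \leq \eps^{1/8d}$ for $d\geq 3$) and the pair-cumulant term are both absorbed into the $\eps^{1/8d}$-part. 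You assign the $\tau$-term to $\cI^{>,\ell}_{\MM}$ and $\cI^{2,\ell}_{\MM}$ and claim that $\cI^{\rm exp,\ell}_{\MM}$ is ``absorbed in $(C_P\Theta)^{2^\ell P}\eps^{1/8d}$''. The last claim is wrong: under \eqref{tau-conditions}, $\tau$ can be as large as $O\bigl(\Theta/\log|\log\eps|\bigr)$, which is vastly larger than $\eps^{1/8d}$, so $(C_P\Theta^{2P-1}\tau)^{2^{\ell-1}}$ cannot be subsumed into the $\eps^{1/8d}$ term. The $\tau$-contribution is irreducible and is precisely what forces the second summand in the statement.

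There is also a conceptual error in your description of $\cI^{2,\ell}_{\MM}$. You conjecture a cancellation ``against the part of $\cI^{\ell+1}_{\MM^{\ell+1}}$ that would reconstruct the covariance''. This is not the mechanism. Proposition~\ref{Lem: miracle} obtains the smallness of $\bbE_\eps[\bar\phi^{(\{1,2\})}_{|I_\ell|}]$ \emph{intrinsically}, by applying Proposition~\ref{prop-from theta to  theta + tau} with $|B|=2$ and $p=1$ to rewrite the pair-cumulant expectation as a difference of two $\Otimes$-products (at times $\tau_\ell$ and $\tau_{\ell+1}$), both of which are small by \eqref{E-HM 1}; the smallness comes from the centering and the equilibrium structure of the $\Otimes$-product, not from interference with the main iterated term. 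Finally, Proposition~\ref{Prop: transport structure on time tau} concerns $\bar\cI^\ell_{\MM^\ell} - \cI^{\ell+1}_{\MM^{\ell+1}}$ only, so \eqref{eq:estsec4ell} does not enter its proof as an additional additive contribution; it is needed only internally within Proposition~\ref{Lem: miracle}, and then again when assembling Proposition~\ref{theta-iteration-prop}.
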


\bigskip
\begin{proof}[Proof of Proposition~{\rm\ref{theta-iteration-prop}}]
Using  repeatedly the results of \eqref{eq:estsec4ell} and Proposition~\ref{Prop: transport structure on time tau},  which transports the fluctuation structure on any intermediate interval $I_\ell$, we can  recover the fluctuation structure on the  longer time interval~$[\theta_{p-1}, \theta_p]$.

\smallskip

Recall that $\theta_p = \tau_{\kappa_p}$ with the notation \eqref{eq: kappa}.
We consider  the following fluctuation structure at time $\theta_p$
\begin{align*}
 \cI_{\MM}& := 
\bbE_\eps   \Big[  \Xi_{p-1}    \Big( \Otimes_{i \in B}   \zeta^\eps_{m_i, \theta_p} (\bar\phi^{(i)}  ) \Big)\Big],
\end{align*}
where $B$ is a subset of $\{p, \dots, P\}$ and for each $i \in B$, $\bar\phi^{(i)} $ is a function  
 of $m_i  $ variables,
 supported on  a   subexponential   minimal  cluster at time~$\theta_p$. 

\bigskip
 Using repeatedly Proposition~\ref{Prop: transport structure on time tau} on $I_\ell$ for $\ell$ in $\LL_p$, we get that  
\begin{align*}
 \cI_{\MM}& = \sum_{(\NN^\ell)_{\ell \in \LL_p}  \atop {\rm subexp}} 
\bbE_\eps   \Big[   \Xi_{p-1}     \Big( \Otimes_{i \in B}   \zeta^\eps_{M_i ^{\kappa_{p-1}}, \theta_{p-1} } (  \bar \phi^{(i)}_{ \theta_p - \theta_{p-1}  }  ) \Big)\Big] 
+\sum_{\ell  \in \LL_p}   \cR^{\ell}_{\MM},
\end{align*}
where for each $i \in B$, $  \bar \phi^{(i)}_{ \theta_p-\theta_{p-1} } $ is a function  
 of $M_i ^{\kappa_{p-1}} := m_i + \sum_{ \ell \in \LL_p} n_i^\ell$ variables,
 supported on a subexponential   minimal single cluster at time~$\theta_{p-1}$.
 
The remainders $\cR^{\ell}_{\MM}$ come from the terms which are neglected at each step :  big clusters, vanishing pairings, and superexponential terms,  as
well as the remainder terms~$\sum_r \cR ^{{\rm int}}_r$. 
By Propositions \ref{remove recollisions on delta} (see Eq.\,\eqref{eq:estsec4ell}) and   \ref{Prop: transport structure on time tau}, we get at each step $\ell$
\begin{equation*}
\cR^{\ell }_{\MM} \leq 
   \left(C_P \Theta\right)^{ 2^\ell  P}  \eps^{\frac1{8d}}
+ \left(C_P\Theta^{2P-1}\tau  \right)^{2^{\ell-1} }\, .
\end{equation*}
Note that the exponential factor takes also into account the sum over all choices 
of $(\NN^k)_{k \leq \ell}$ since~$2^{1 + \dots + \ell} \leq 2^{\ell^2}$.
\end{proof}

\subsection{Removing big clusters}
\label{remove clusters}
We treat here the high order cumulants collected in $\cI^{>, \ell}_{\MM}$ (see~\eqref{eq: > MM}).
These cumulants describe dynamical correlations which  are negligible at the scale of the fluctuations.
More precisely, we have the following result.

\begin{Prop}
\label{prop: high order clustering}
The remainder accounting for  big clusters satisfies the following estimate:
\begin{equation}
\label{eq: estimation J>M}
\left| \cI^{>,\ell}_{\MM} \right| 
\leq  (C_P \Theta )^{2^\ell P}  \;   \mu_\eps^{-\frac12} \;  \tau^{\frac12}\, .
\end{equation}
\end{Prop}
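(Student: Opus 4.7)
The plan is to apply Cauchy--Schwarz to detach the multiplier $\Xi_{p-1}$ (which is uniformly bounded in $L^2$ by \eqref{eq: borne L2 Xi}) from the $\Otimes$-product of fluctuation fields, and then to invoke the geometric estimates of Lemma \ref{phi-sigma-lemma}, extracting the required smallness from the constraint ${\mathbf 1}^>$. More precisely, one gets
\begin{equation*}
|\cI^{>,\ell}_\MM| \leq C_P \sum_{\eta,\rho,\NN^\ell} {\mathbf 1}^> \prod_q \mu_\eps^{1-|\rho_q|/2}\, \bbE_\eps\bigl[|\bar\phi^{(\rho_q)}_{|I_\ell|}|\bigr] \; \bbE_\eps\Bigl[\Bigl(\Otimes_q \mu_\eps^{\frac12 - |\eta_q|/2}\, \zeta^\eps_{M^{\ell+1}_{\eta_q},\tau_{\ell+1}}\bigl(\bar\phi^{(\eta_q)}_{|I_\ell|}\bigr)\Bigr)^2\Bigr]^{1/2}.
\end{equation*}

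Next I would apply the expectation bound \eqref{expectation-phi-sigma} to each $\bbE_\eps[|\bar\phi^{(\rho_q)}_{|I_\ell|}|]$ and the variance bound \eqref{variance-phi-sigma} to the squared $\Otimes$-product (with $r\delta$ replaced by the current step-length $|I_\ell| = \tau$, and with the count $N^{<r}$ absorbing the annihilations from the previous pullbacks from $\theta_p$ down to $\tau_\ell$ encoded in $\bar\phi^{(i)}$, which are themselves single minimal). The key gain comes from the clustering indicator ${\mathbf 1}^>$: either some $|\rho_q| \geq 3$, in which case $\mu_\eps^{1-|\rho_q|/2} \leq \mu_\eps^{-1/2}$ and \eqref{expectation-phi-sigma} produces an extra factor $(C_P\tau)^{|\rho_q|-1} \leq (C_P\tau)^2$ over the pair case; or some $|\eta_q| \geq 2$, in which case $\mu_\eps^{\frac12 - |\eta_q|/2} \leq \mu_\eps^{-1/2}$ and \eqref{variance-phi-sigma} produces an extra factor $(C_P\tau)^{|\eta_q|-1} \leq C_P\tau$ before taking the square root. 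Either way one extracts exactly $\mu_\eps^{-1/2}\,\tau^{1/2}$ with combinatorial room to spare.

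Finally I would sum the remaining geometric prefactors $(C_P\Theta)^{M_{\sigma_i}-|\sigma_i|}$ from Lemma \ref{phi-sigma-lemma} over all partitions $\eta \cup \rho$ of $B$ and over the annihilation vectors $\NN^\ell$. The latter range is restricted by the subexponential condition $n_i^\ell \leq 2^\ell$ inherited from the definition of the $\bar\phi^{(i)}$ at time $\tau_\ell$, and $|B| \leq P$, so the overall number of configurations is bounded by $\bigl(C\cdot 2^\ell\bigr)^P \leq (C_P\Theta)^{2^\ell P}$ (using $\Theta \geq 1$ and absorbing constants into $C_P$). The resulting bound is $(C_P\Theta)^{2^\ell P}\, \mu_\eps^{-1/2}\,\tau^{1/2}$, which is \eqref{eq: estimation J>M}.

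The main obstacle is purely bookkeeping: keeping the exponents in $\Theta$, $\tau$ and $\mu_\eps$ aligned across the product of factors coming from $\rho$ and from $\eta$, while making sure that the subexponential restriction on $\NN^\ell$ is exactly what is needed to tame the sum over annihilation numbers (which a priori could grow factorially with $M_{\sigma_i}$) into the clean prefactor $(C_P\Theta)^{2^\ell P}$. In particular, one must verify that the factor $\tau^{N^{<r}}$ in \eqref{expectation-phi-sigma}-\eqref{variance-phi-sigma} coming from the previously accumulated annihilations together with the combinatorial sum $\sum_{\NN^{<\ell}}$ stays under control, which is guaranteed precisely by the subexponential assumption and by $\tau \ll 1$.
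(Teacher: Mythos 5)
Your core mechanism matches the paper's proof: detach $\Xi_{p-1}$ by Cauchy--Schwarz (the paper uses H\"older, which amounts to the same thing here), apply Lemma~\ref{phi-sigma-lemma} to the two pieces, and extract the gain $\mu_\eps^{-1/2}\tau^{1/2}$ from the constraint $\indc^>$ (either $|\rho_q|\geq 3$ in the product of expectations, or $|\eta_q|\geq 2$ in the variance of the $\Otimes$-product). That part is correct and is exactly what the paper does.

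Your final bookkeeping paragraph, however, is wrong on two points. First, the sums over $\NN^\ell_\rho$ and $\NN^\ell_\eta$ in the definition~\eqref{eq: > MM} of $\cI^{>,\ell}_\MM$ are \emph{not} restricted to be subexponential; no such restriction is imposed on this remainder term (it is the main iterated term $\cI^{\ell+1}_{\MM^{\ell+1}}$ that carries the constraint $n_i^\ell\leq 2^\ell$). What actually makes the $\NN^\ell$ sums converge is the geometric factor $(C_P\tau)^{N_{\sigma_i}^{<r}+\dots}$ produced by \eqref{expectation-phi-sigma}--\eqref{variance-phi-sigma} together with $\tau\ll 1$, summing to a bounded geometric series as in \eqref{eq: big cluster decay 2} and \eqref{eq: big cluster decay 3}. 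Second, the prefactor $(C_P\Theta)^{2^\ell P}$ is not a count of configurations in the $\NN^\ell$ sum; it is the $\Theta$-power $(C_P\Theta)^{M_{\sigma_i}-|\sigma_i|}$ coming out of Lemma~\ref{phi-sigma-lemma}, bounded using the fact that the incoming block sizes at time $\tau_\ell$ satisfy $m_i^\ell\leq 2^\ell$ (this is where the subexponential structure \emph{does} enter, through the data $\bar\phi^{(i)}$, not through the new annihilations) so that $\sum_i M_{\sigma_i}^\ell\leq 2^\ell P$. Your inequality $(C\cdot 2^\ell)^P\leq(C_P\Theta)^{2^\ell P}$ happens to be true numerically, but it obscures the actual mechanism and would give wrong exponents if one tracked the constants more carefully (for instance in Proposition~\ref{prop:superexp}, where the $\Theta$-exponent is genuinely different and matters).
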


\begin{proof}
By construction each cumulant $\bar \phi ^{(\rho_q)}_{|I_\ell |}$ appearing in \eqref{eq: > MM} is supported on a  minimal cluster  and the product of the expectations
can be estimated by Lemma \ref{phi-sigma-lemma}:
\begin{align}
\label{eq: inegalite rho intermediaire 1}
\left| 
 \bbE_\eps\Big[   \bar  \phi_{|I_\ell | } ^{(\rho_q )}  \Big] \right|
\leq  C     \left(  C_P\Theta \right)^{M _{\rho_q}^{\ell}-|\rho_q|}  (C_P\tau)^{N_{\rho_q}^\ell + |\rho_q|-1},
\end{align}
where $N_{\rho_q}^\ell  $ is the total number of particles annihilated in $\rho_q$ during $I_\ell$ and  $M_{\rho_q}^{\ell} = \sum_{i \in \rho_q} m_i^{\ell}$.
The functions $\bar \phi^{(i)}$ at time $\tau_\ell$ are supported on (single minimal) subexponential clusters whose sizes are bounded by
\begin{equation}
\label{eq: subexp taille}
m_i^{\ell} \leq 2^\ell
\quad \text{so that} \quad \sum_q M_{\rho_q}^{\ell} \leq 2^\ell  P.
\end{equation}
In the inequality \eqref{eq: inegalite rho intermediaire 1}, the power of $\tau$ keeps track of the total number~$N_{\rho_q}^\ell$ of annihilated particles  and 
of the~$ |\rho_q|-1 $  clusterings in the time interval $I_\ell$. Since $ \tau \ll 1$, the sums with respect to~$\NN_{\rho_q}^\ell$ converge and a factor $\tau^{|\rho_q|-1}$ remains.
We get
\begin{equation}\label{eq: big cluster decay 2}
  \left|\sum_{\NN_{\rho }^\ell}   \prod_q \,  \mu_\eps^{1 -\frac{|\rho _q|}2} 
 \bbE_\eps\Big[ \bar  \phi_{|I_\ell | } ^{(\rho_q )}  \Big] \right|
\leq       \left(  C_P  \Theta \right)^{2^\ell P }  \prod_q   \mu_\eps^{1 -\frac{|\rho _q|}2}  \tau^{ |\rho_q|-1}   \,
    \, .
\end{equation}
If one of the~$\rho_q$   is not a pair  then~$ |\rho_q|/2 \geq 3/2$ and this leads to an additional decay in $\tau\mu_\eps^{-\frac12}$.

We turn now to the estimate of the part of $\cI^{>,\ell}_{\MM}$ which is weighted by a product of fluctuation fields. 
By H\"older's inequality and Lemma~{\rm\ref{phi-sigma-lemma}}, we   have that 
\begin{equation}\label{eq: decoupling estimate}
\begin{aligned}
&  \left|\bbE_\eps   \Big[   \Xi_{p-1}   \;   \Big( \Otimes_{q }    \mu_\eps^{\frac12-\frac{  |\eta_q| }2}
\zeta^\eps_{M_{\eta_{q}}  ^{\ell+1}  ,\tau_{\ell+1} } \Big(    \bar \phi_{|I_\ell |}^{(\eta_{q})}  \Big) \Big)\Big]\right|      \\
& \qquad  
\leq
   \prod_{u=1} ^{p-1} 
\bbE_\eps \left[   \zeta^\eps \big( h^{(u)}\big)^{2(p-1)} \right]^{\frac{1}{2(p-1)}}
\bbE_\eps   \Big[\Big( \Otimes_{q}    \mu_\eps^{\frac12-\frac{  |\eta_q| }2}
  \zeta^\eps_{M_{\eta_{q}}  ^{\ell+1} ,\tau_{\ell+1} } \big(   \bar \phi_{|I_\ell |}^{(\eta_{q})} \big) \Big)^2\Big]^{\frac{1}{2}}    \\
& \qquad  
\leq  C_P \, 
\prod_q \,  
(C_P  \Theta)^{ M_{\eta_q} ^{\ell}+  \frac{1}2  N_{\eta_q}^\ell  -  \frac{1}2  |\eta_q|}
\; (C_P\tau)^{\frac{N_{\eta}^\ell }2} \;
\left(\frac{C_P \tau}{  \mu_\eps}\right)^{\frac{  |\eta_q| }2-\frac12} \,,  \end{aligned}
 \end{equation}
 where the moments of the fluctuation field are bounded by Proposition \ref{Proposition - estimates on g0}.
Summing over~$\NN_\eta^\ell$ gives
\begin{equation} 
\label{eq: big cluster decay 3}
\begin{aligned}
\sum_{\NN_\eta^\ell}  \left|\bbE_\eps   \Big[    \Xi_{p-1}   \;   \Big( \Otimes_{q }    \mu_\eps^{\frac12-\frac{  |\eta_q| }2}
\zeta^\eps_{M_{\eta_{q}}  ^{\ell+1} ,\tau_{\ell+1} } \Big(    \bar \phi_{|I_\ell |}^{(\eta_{q})}  \Big) \Big)\Big]\right|   
  \leq   (C_P  \Theta)^{2^{\ell } P }
\prod_q\left(\frac{C\tau}{  \mu_\eps}\right)^{\frac{  |\eta_q| }2-\frac12} \,  .
 \end{aligned}
 \end{equation}
 In particular if one~$\eta_q$ satisfies $|\eta_q|>1 $, we gain at least one power of~$\mu_\eps^{-1/2}\tau^{1/2}$. Combining \eqref{eq: big cluster decay 3} with \eqref{eq: big cluster decay 2}, we recover
 $$\left| \cI^{>,\ell}_{\MM} \right| 
\leq    (C_P \Theta )^{2^\ell P}  \; \mu_\eps^{-\frac12} \;  \tau^{\frac12}
$$
and~\eqref{eq: estimation J>M} follows thanks to the fact that~$\tau \leq1$. 
 We stress that the combinatorial factors arising from partitioning $B \subset \{1, \dots, P\}$ into $  \rho,\eta$ depend only on $P$.  Proposition~\ref{eq: estimation J>M} is proved.
\end{proof}

\subsection{Control of pair cumulants at equilibrium}
\label{sec:PC}
If~$\rho$ is made only of pairs and~$\eta$ of singletons, then~\eqref{eq: big cluster decay 2} and~\eqref{eq: big cluster decay 3} do not provide any decay as a power of $\mu_\eps$.
In fact out of equilibrium, these pairings contribute to the covariance and they were first analysed in \cite{S81} in terms of a recollision operator (see also \cite{BGSS3}). Instead at equilibrium, these terms vanish in the limit $\mu_\eps \to \infty$ due to a symmetry property of the limiting  measure. Thus to avoid the bookkeeping exercise of tracking these terms in the  iteration, we prefer to show that they do not contribute in the equilibrium regime considered in this paper.
\begin{Prop}
\label{Lem: miracle}
The remainder accounting for pair cumulants is estimated as follows~:
\begin{equation}
\begin{aligned}
\left| \cI^{2,\ell}_{\MM} \right| \leq    (C_P\Theta)^{2^\ell P}  \;
 \eps^{\frac1{8d}}
  \;.
\end{aligned}
\end{equation}
\end{Prop}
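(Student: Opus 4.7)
The plan is to exploit the microreversibility of the Maxwellian measure, which causes the recollision and overlap contributions inside each pair cumulant to cancel up to an $\eps$-boundary term. This symmetry is present only at equilibrium, which is why the analysis out of equilibrium (see \cite{S81,BGSS3}) has to retain the pair cumulants as an essential contribution to the covariance evolution.

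First, I would unpack the structure of a single pair cumulant $\bar\phi^{(\rho_q)}_{|I_\ell|}$ when $\rho_q = \{i,j\}$. By Definition~\ref{def minimal forward dynamics} and \eqref{barphi-def}, the minimal forward dynamics on $I_\ell = [\tau_{\ell+1},\tau_\ell]$ carries a single inter-block edge parametrized by a sign $\varepsilon_q \in \{-1,+1\}$, corresponding to scattering ($|\lambda|=1$, a recollision in the backward picture) or to overlap ($|\lambda|=2$). The signs $(-1)^{|\lambda|-1}$ from \eqref{varphibar} and from the cumulant expansion \eqref{def-phirho}--\eqref{defphiro} give the decomposition
\begin{equation*}
\bar\phi^{(\rho_q)}_{|I_\ell|}(Z) \;=\; A^{\rm s}_{\rho_q}(Z) \;-\; A^{\rm o}_{\rho_q}(Z),
\end{equation*}
where $A^{\rm s}$ (resp.\ $A^{\rm o}$) enforces the scattering (resp.\ straight-line crossing) of the two encountering particles, but otherwise carries the same transport, annihilation and sign weights of the minimal dynamics.

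Second, I would perform a change of variables $(v_a,v_b)\mapsto (v_a',v_b')$ at the encounter time $t_*$ inside $\bbE_\eps[A^{\rm s}_{\rho_q}]$, where $(v_a,v_b)$ are the velocities of the two colliding particles. This is a measure-preserving involution on $\R^d \times \R^d$, and the product $\cM(v_a)\cM(v_b)$ is invariant under it by energy conservation. The effect is to re-identify the scattered forward trajectory with a straight-line one, up to the fact that after $t_*$ the two particles travel along \emph{different} rays in $A^{\rm s}$ than in $A^{\rm o}$. The discrepancy is supported on configurations where (a) an additional recollision or overlap with a third particle occurs during the remaining time $|I_\ell| \leq \tau$, (b) the pair re-encounters within a tube of width $\eps$, or (c) the exclusion $\indc_{\Upsilon^\eps_\cN}$ is sensitive to the swap. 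Using the conditioning (which caps the number of encounters per cluster by $K_\gamma$ and the velocities by $\bbV = |\log\eps|$), each of these sets has phase-space volume $O(\eps\,\bbV^{O_d(1)})$, which gives
\begin{equation*}
\big|\bbE_\eps[\bar\phi^{(\rho_q)}_{|I_\ell|}]\big| \;\leq\; C_P\,(C_P\Theta)^{M_{\rho_q}^\ell - 2}\,(C_P\tau)^{N_{\rho_q}^\ell + 1}\, \eps^{\alpha_d},
\end{equation*}
with $\alpha_d = 1/(8d)$ under the choice \eqref{eq: choix parametres}, after absorbing $|\log\eps|$ losses into a fractional power of $\eps$. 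To assemble the bound on $\cI^{2,\ell}_\MM$, I would then use H\"older's inequality and the variance estimate \eqref{variance-phi-sigma} exactly as in \eqref{eq: decoupling estimate} to bound the singleton part by $(C_P\Theta)^{M_\eta^{\ell+1}}$, invoking Proposition~\ref{Proposition - estimates on g0} for the moments of $\Xi_{p-1}$; multiplying over the $|\rho|\geq 1$ pairs yields at least one factor $\eps^{\alpha_d}$; summing the convergent geometric series in $\NN^\ell_\rho$ and $\NN^\ell_\eta$, using $\sum_i m_i^\ell \leq 2^\ell P$ from \eqref{eq: subexp taille}, and absorbing the $O_P(1)$ combinatorics of partitions gives the claimed bound.

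The hard part is the second step: the scattering involution identifies the \emph{velocities} at $t_*$, but the downstream trajectories of $A^{\rm s}$ and $A^{\rm o}$ diverge, so the cancellation $A^{\rm s} \equiv A^{\rm o}$ is not pointwise. Making the $O(\eps)$ control of the difference rigorous requires a geometric argument showing that the perturbed and unperturbed forward trajectories either decorrelate quickly (so that integrating over the residual velocity freedom reproduces the Maxwellian factor) or agree on a set of $O(\eps)$ measure. Here the minimality of the cumulants, the bound on the number of internal encounters, and the conditioning $\Upsilon^\eps_\cN$ all play an essential role in preventing a cascade of discrepancies from blowing up across the $O(1/\tau)$ iterations.
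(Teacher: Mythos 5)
Your proposal takes a genuinely different route from the paper's, and in my view it has a serious unresolved gap in exactly the place you flag as ``the hard part.''

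The paper does not attempt a direct cancellation at the level of individual pair cumulants. Its argument is an indirect sandwiching: applying Proposition~\ref{prop-from theta to  theta + tau} with $|B|=2$, $p=1$ gives an exact (up to $O(\eps^{1/8d})$) identity
\[
\bbE_\eps\Big[\Otimes_{i=1}^2\zeta^\eps_{m_i^\ell,\tau_\ell}(\bar\phi^{(i)})\Big]
= \sum_{\NN^\ell}\bbE_\eps\big[\bar\phi^{(\{1,2\})}_{|I_\ell|}\big]
+ \sum_{\NN^\ell}\bbE_\eps\Big[\Otimes_{i=1}^2\zeta^\eps_{m_i^\ell+n_i^\ell,\tau_{\ell+1}}(\bar\phi^{(i)}_{|I_\ell|})\Big]
+ O\big((C_P\Theta)^{m_1^\ell+m_2^\ell}\eps^{1/8d}\big)\,,
\]
and \emph{both} $\Otimes$-expectations on the left and on the right are $O(\eps)$ by the \emph{static} estimate \eqref{E-HM 1}, which is a consequence of the cluster expansion of the grand canonical equilibrium exclusion (each centered factor must be connected to another by a static overlap of size $\eps^d$, and $\mu_\eps\eps^d = \eps$). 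The pair cumulant sum is then the difference of two small quantities. The mechanism the paper actually exploits is the \emph{time-invariance} of the equilibrium measure (so that the two $\Otimes$-expectations at $\tau_\ell$ and $\tau_{\ell+1}$ are comparable) together with the static $\eps$-gain; the ``symmetry of the limiting measure'' invoked in the prose is not the scattering involution.

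Your approach aims to exhibit the cancellation directly, via the decomposition $\bar\phi^{(\{1,2\})} = A^{\rm s}-A^{\rm o}$ and a change of variables at the encounter time. This is the classical heuristic behind the vanishing of the recollision operator at equilibrium (Spohn '81), and the decomposition itself is correct. But the key step is where the argument breaks. After the involution the scattered and un-scattered forward trajectories share the same post-encounter velocities, but the pre-encounter velocities of the colliding pair become $(v_a',v_b')$, for which $(v_a'-v_b')\cdot\omega > 0$: the pair is then \emph{receding} rather than approaching at $t_*$, so the image configuration is no longer in the support of the encounter condition. The involution therefore does not map $\mathrm{supp}(A^{\rm s})$ onto $\mathrm{supp}(A^{\rm o})$ (a change of sign of $\omega$ or a time reversal is needed, and that interacts non-trivially with the block structure, the exclusion $\indc_{\cD^\eps}$, and the conditioning $\Upsilon^\eps_\cN$). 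Moreover, the test functions $\bar\phi^{(i)}$ are themselves pulled back over the full interval $[\tau_\ell,\theta_i]$, so the scattered and un-scattered trajectories diverge over \emph{macroscopic} times and their end configurations are macroscopically different; the claim that the residual discrepancy lives on a set of measure $O(\eps\,\bbV^{O_d(1)})$ is not justified and would in fact require the full apparatus of Section~\ref{geometric-sec} (cycle estimates such as \eqref{eq: integration Bi'}). You note this difficulty yourself but leave it unresolved. Finally, your conclusion is claimed for each fixed $\NN^\ell$, whereas the paper only controls the \emph{sum} over $\NN^\ell$; establishing the per-$\NN$ bound is a genuinely stronger statement, and it is not what the pullback mechanism delivers.

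Your assembly of the final bound (H\"older to decouple the singleton fluctuations, variance estimate \eqref{variance-phi-sigma}, summation over $\NN$, subexponential size bound \eqref{eq: subexp taille}) matches the paper's once the key estimate on $\bbE_\eps[\bar\phi^{(\{1,2\})}]$ is granted, so that part is fine. The missing piece is precisely the rigorous cancellation, and the paper deliberately avoids confronting it by routing through the Duhamel identity and the static estimate \eqref{E-HM 1}.
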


\begin{proof}
The key estimate is to show that 
for pairs the expectation $\bbE_\eps\Big[\bar  \phi^{(\rho_q )}_{|I_\ell |}  \Big]$ vanishes in the limit when~$\mu_\eps \to \infty$.
To fix ideas, we consider a clustering $\rho_q$ of the form $\{1 , 2 \}$
 connecting the single minimal cumulants $\bar \phi^{(1)}$, $\bar \phi^{(2)}$ supported on subexponential clusters at time $\tau_\ell$
   by an encounter on $I_\ell = [\tau_{\ell +1}, \tau_\ell]$.
Let us show that 
\begin{equation}
\label{eq: key equilibrium estimate}
 \; \,\Big| \sum_{\NN^\ell_{\{1, 2\}} }  \,\bbE_\eps\Big[   \bar\phi^{( \{1 , 2 \})}_{|I_\ell |}  \Big] \; \Big|
\leq  (C_P \Theta)^{m_1^{\ell}+m_2^{\ell}}  \,  \eps^{\frac1{8d}} \;, 
\end{equation}
from which   Proposition \ref{Lem: miracle} follows immediately by summing over the partitions and taking into account the size \eqref{eq: subexp taille} of the subexponential clusters at time $\tau_\ell$.

Using Proposition  \ref{prop-from theta to  theta + tau}  with $|B|=2, p =1$  on the time interval $I_\ell$ leads to the explicit decomposition
\begin{align}
\label{eq: pairing expansion}
\bbE_\eps   \Big[  \Otimes_{ i =1 }^2 
 \zeta^\eps_{m_i^{\ell},\tau_{\ell}} \big(  \bar \phi ^{(i)} \big) \Big] 
& =   \sum_{\NN^\ell_{\{1, 2\}}} 
 \bbE_\eps\Big[    \bar \phi^{( \{1 , 2 \})}_{|I_\ell |}  \Big] + \sum_{n_1^\ell, n_2^\ell} 
\bbE_\eps   \Big[   \Otimes_{ i =1 }^2 \zeta^\eps_{m_i^{\ell} +n_i^\ell , \tau_{\ell+1}} ( \bar \phi_{|I_\ell |}^{(i)}  )  \Big]  \\
& \qquad \qquad 
+ O\Big( (C_P\Theta)^{m_1^{\ell}+m_2^{\ell}}  \eps^{\frac1{8d}} \Big)
 \, . \nonumber
\end{align}
In this way,  estimating the expectation of the pair correlations $\bar \phi^{( \{1 , 2 \})}_{|I_\ell |}$ can be achieved by controlling the two $\Otimes$-products : one at time~$\tau_\ell$ and the other one at time $\tau_{\ell+1}$.
By construction $\bar \phi_{|I_\ell |}^{(i)}$ is supported on a single minimal cluster, thus by \eqref{E-HM 1} in Lemma~\ref{phi-sigma-lemma}, the expectation is small
\begin{equation}
\label{eq: tensor bounds}
\begin{aligned}
&\left|  \sum_{\NN^\ell_{\{1, 2\}}} \bbE_\eps   \Big[ \Otimes_{ i =1 }^2 \zeta^\eps_{m_i^{\ell}+n_i^\ell , \tau_{\ell+1}} ( \bar \phi_{|I_\ell |}^{(i)}  ) \Big] \right| 
\leq  (C_P\Theta)^{m_1^{\ell}+m_2^{\ell}} \, \sum_{n_1^\ell, n_2^\ell} (C_P\tau)^{(n_1^\ell +n_2^\ell)/2 } \;  \eps  \, .  
\end{aligned}
\end{equation}
The expectation $\bbE_\eps   \Big[  \Otimes_{ i =1 }^2 
 \zeta^\eps_{m_i,\tau_{\ell}} \big( \bar  \phi ^{(i)} \big) \Big]$ can be  estimated in the same way. Since $ \tau \ll 1$,
summing over $n_1^\ell, n_2^\ell$ completes the proof of~\eqref{eq: key equilibrium estimate}.
\end{proof}

\subsection{Removing superexponential collision trees}
\label{remove superexp}

In this section, we estimate dynamical flows exhibiting a superexponential number of annihilations, namely
$\cI^{\rm exp, \ell}_{\MM}$ defined in~(\ref{eq: superexp}). The result is the following.
\begin{Prop} 
\label{prop:superexp}
The remainder corresponding to superexponential clusters is estimated as follows~: 
$$
 \left| \cI^{\rm exp,\ell}_{\MM}  \right| 
\leq    \left(C_P\Theta^{2P-1}\tau\right)^{2^{\ell-1}}\;. 
$$
\end{Prop}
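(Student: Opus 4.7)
The strategy is to decouple $\Xi_{p-1}$ from the tensor product of fluctuation fields via Cauchy-Schwarz, then apply the variance estimate \eqref{variance-phi-sigma} of Lemma~\ref{phi-sigma-lemma} for single minimal cumulants, and finally exploit the superexponential restriction on $\NN^\ell$ to harvest a factor $\tau^{2^{\ell-1}}$.

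\textbf{Step 1 (decoupling).} I apply Cauchy-Schwarz inside the sum:
\[
|\cI^{\rm exp,\ell}_{\MM}|\leq\sum_{\NN^\ell\hbox{\tiny\rm superexp}}\bbE_\eps[\Xi_{p-1}^2]^{1/2}\;\bbE_\eps\Big[\Big(\Otimes_{i\in B}\zeta^\eps_{m_i^\ell+n_i^\ell,\tau_{\ell+1}}(\bar\phi^{(i)}_{|I_\ell|})\Big)^{\!2}\Big]^{1/2}.
\]
The first factor is bounded by $C_P$ via \eqref{eq: borne L2 Xi} (after normalizing $\|h^{(u)}\|_\infty\leq 1$).

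\textbf{Step 2 (variance of the $\Otimes$-product).} Each $\bar\phi^{(i)}_{|I_\ell|}$ is a \emph{single} minimal cumulant, so $|\sigma_i|=1$ in the notation of Lemma~\ref{phi-sigma-lemma}. Applying \eqref{variance-phi-sigma} on the $\tau$-scale of $I_\ell$ (where the total annihilation count $n_i^\ell$ is distributed between the ``last $\delta$-step'' and the preceding $(R-1)\delta$), and taking the square root, I obtain
\[
\bbE_\eps\Big[\Big(\Otimes_{i\in B}\zeta^\eps_{m_i^\ell+n_i^\ell,\tau_{\ell+1}}(\bar\phi^{(i)}_{|I_\ell|})\Big)^{\!2}\Big]^{1/2}\leq C_P\prod_{i\in B}(C_P\Theta)^{m_i^\ell+n_i^\ell/2-1/2}(C_P\tau)^{n_i^\ell/2}.
\]

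\textbf{Step 3 (extracting $\tau^{2^{\ell-1}}$).} I write the superexponential sum as a union bound over the distinguished block $i_0\in B$ with $n_{i_0}^\ell>2^\ell$:
\[
\sum_{\NN^\ell\hbox{\tiny\rm superexp}}\;\leq\;\sum_{i_0\in B}\sum_{n_{i_0}>2^\ell}\prod_{i\neq i_0}\sum_{n_i\geq 0}.
\]
The condition $\Theta^{3(P-1)}\tau\ll 1$ from \eqref{tau-conditions} yields $(C_P\Theta\tau)^{1/2}\ll 1$, so the geometric tail at $i_0$ is bounded by $\sum_{n>2^\ell}(C_P\Theta\tau)^{n/2}\leq 2(C_P\Theta\tau)^{2^{\ell-1}}$, while each of the other series $\sum_{n\geq 0}(C_P\Theta\tau)^{n/2}$ converges to a constant. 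The subexponential constraint at time $\tau_\ell$ gives $m_i^\ell\leq 2^\ell$, hence $\sum_{i\in B}m_i^\ell\leq P\cdot 2^\ell$, producing the prefactor $(C_P\Theta)^{P\cdot 2^\ell}$. Combining everything,
\[
|\cI^{\rm exp,\ell}_{\MM}|\leq C_P\cdot|B|\cdot(C_P\Theta)^{P\cdot 2^\ell}(C_P\Theta\tau)^{2^{\ell-1}},
\]
which rearranges into $(C_P\Theta^{2P-1}\tau)^{2^{\ell-1}}$ upon regrouping the $\Theta$-exponents as $P\cdot 2^\ell=(2P)\cdot 2^{\ell-1}$ and absorbing the leftover $\Theta$-factors into $C_P$.

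\textbf{Main obstacle.} The delicate point is the $\Theta$-power bookkeeping: the variance bound is linear in $m_i^\ell$ per block (exponent $2m_i^\ell+n_i^\ell-1$ before square root), and a naive count would give an exponent larger than the announced $(2P-1)\cdot 2^{\ell-1}$. The argument therefore hinges on using the tight subexponential bound $m_i^\ell\leq 2^\ell-2^{\kappa_i}+1$ combined with the $-|B|/2$ saving coming from the ``$-1$'' in \eqref{variance-phi-sigma}, and on the fact that only single minimal cumulants (no clustering) appear in $\cI^{\rm exp,\ell}_{\MM}$, so that no combinatorial overhead from partitions of $B$ intervenes.
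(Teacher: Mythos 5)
Your proof follows the paper's route exactly: H\"older's inequality to decouple $\Xi_{p-1}$ from the $\Otimes$-product, the variance estimate~\eqref{variance-phi-sigma} applied to single minimal cumulants without microscopic splitting (so $|\sigma_i|=1$, $N^r=0$, $N^{<r}=n^\ell_i$), and the geometric decay of $\sum_{\NN^\ell\,\text{\rm superexp}}(C_P\Theta\tau)^{\sum n^\ell_i/2}$ driven by $\max_i n^\ell_i>2^\ell$, together with the subexponential control $m^\ell_i\leq 2^\ell$ on the prefactor. The $\Theta$-power discrepancy you flag under ``Main obstacle'' is real but is not closed by the tighter bound $m^\ell_i\leq 2^\ell-2^{\kappa_i}+1$ (since $2^{\kappa_i}$ can be $1$) nor by the $-|B|/2$ shift: the honest count gives $(C_P\Theta^{2|B|+1}\tau)^{2^{\ell-1}}$ rather than $(C_P\Theta^{2|B|-1}\tau)^{2^{\ell-1}}$, a slack that the paper's own last displayed inequality shares; it is harmless downstream because the hypothesis~\eqref{tau-conditions} (or a trivially strengthened version) still makes $\Theta^{2P+1}\tau$ small, so the statement of Proposition~\ref{prop:superexp} and its use in Propositions~\ref{Prop: transport structure on time tau} and~\ref{theta-iteration-prop} are unaffected.
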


\begin{proof}
Compared to the previous sections, the control of $\cI^{\rm exp, \ell}_{\MM} $ requires a more careful description of the functions $ \phi^{(i)}$, taking into account the time sampling. They are supported on subexponential clusters at time $\tau_\ell$ (see Definition \ref{def: phi tau single cluster}).  
As a consequence each function depends at most on $2^{\ell}$ particles and the   total number of particles  at time~$\tau_\ell$ is at most~$|B |\,  2^{\ell}$. 

By definition of $\cI^{\rm exp, \ell}_{\MM} $,  there is~$ i \in B $ such that on~$I_\ell$, the number $n_i^\ell$ of annihilated particles associated with~$i$    is larger than~$2^{ \ell}$.
This means that on a time step of size~$\tau$, at least half of the particles (up to the factor~$|B |$) are removed.  %

By  H\"older's inequality  as in \eqref{eq: borne L2 Xi} and by \eqref{variance-phi-sigma} (without microscopic time splitting), we then have that 
  \begin{align*}
\left|\cI^{\rm exp,\ell}_{\MM}\right| 
&\leq \sum_{ \NN^\ell \  \hbox{\tiny\rm superexp}} \;
\left|  \bbE_\eps   \Big[
    \Xi_{p-1}  \;
\Big( \Otimes_{i \in B}    
\zeta^\eps_{m_i ^{\ell}+ n_i^\ell ,\tau_{\ell+1}} ( \bar \phi^{(i)}_{|I_\ell |}  ) 
\Big)\Big]\right|
\\
& \leq \sum_{\NN^\ell \  \hbox{\tiny\rm superexp}}
 \; \prod_{u=1} ^{p-1} \bbE_\eps \left[   \zeta^\eps \big( h^{(u)}\big)^{2(p-1)} \right]^{\frac{1}{2(p-1)}}
   \bbE_\eps   \Big[\Big( \Otimes_{i \in B}     \zeta^\eps_{m_i^{\ell}+n_i^\ell ,\tau_{\ell+1} } ( \bar \phi^{(i)}_{| I_\ell |}  ) \Big)^2\Big]^{\frac{1}{2}}\\
 & \leq C_P \sum_{\NN^\ell \  \hbox{\tiny\rm superexp}}\prod_{i\in B} \Big( (C_P \Theta)^{ 2m^\ell_{i}  +  n^\ell_i- 1 }   \tau^{n_{i }^{\ell}} \Big)^{1/2}  \\
 & \leq      \left(C_P \Theta^{2|B|-1} \tau \right)^{2^{\ell}/2}\,.
\end{align*}
 Proposition \ref{prop:superexp} follows using that $|B| \leq P$.
\end{proof}


\section{Asymptotics of the principal term}
\label{main-sec}

As explained in Section~\ref{sec:strategy}, our goal  is to pull back the test functions in time 
in order to build pairings and establish the Wick factorisation of the moments. 
In Section \ref{fluctuation-sec}, we have been able to pull back minimal cumulants from one sampling time $\theta_p$ to the next $\theta_{p-1}$.
In order to proceed to the next step and reach $\theta_{p-2}$, one has to take into account the 
new  structure of the expectation   at time $\theta_{p-1}$ after the multiplication by  the function $h^{(p-1)}$.
This will induce the pairing mechanism identified in Section~\ref{sec:strategy} 
which will be quantified  in this section.

In Section \ref{sec: Asymptotic pairing}, we   analyse  the repeated indices at time $\theta_{p-1}$ when taking the product of the fluctuation field  $\zeta^\eps_{\theta_{p-1}} (h^{(p-1)})$ with the   $\Otimes$-product obtained in Proposition~\ref{theta-iteration-prop} from the iteration.
The induction for the derivation 
of Proposition \ref{prop: appariement} is completed in Section \ref{sec: Gaussian structure of the limiting fluctuation field}.

\medskip


\subsection{Asymptotic pairing}
\label{sec: Asymptotic pairing}

Recall that at time $\theta_{p-1}$, for any $i\in \{p, \dots, P\}$, the  function~$\bar \phi^{(i)}_{ \theta_i - \theta_{p-1}  }$ is the pullback of the  test function $h^{(i)}$ on $[ \theta_{p-1}, \theta_i ]$ 
(in the sense of Definition \ref{def: phi tau single cluster}). 
We now study the product 
\begin{equation*}
\begin{aligned}
& \zeta^\eps_{\theta_{p-1}} (h^{(p-1)} )\times \Big( \Otimes_{i \in B}   \zeta^\eps_{m_i ^{\kappa_{p-1}},\theta_{p-1} } (\bar \phi^{(i)}_{   \theta_i - \theta_{p-1} }  ) \Big)   \\
&=\mu_\eps^{(|B|+1) /2}  
   \Big( \pi^\eps _{\theta_{p-1}} \big( h^{(p-1)} \big) - \bbE_\eps [ h^{(p-1)}] \Big) 
\; \left( 
\sum_{A \subset B  } \prod_{ i \in B \setminus  A}  \bbE_\eps  \Big[-\bar \phi^{(i)}_{  \theta_i - \theta_{p-1} } \Big] 
\pi^\eps_{ M_A^{\kappa_{p-1}},\theta_{p-1}} \Big( \Otimes_{i \in A}  \bar \phi^{(i)}_{\theta_i - \theta_{p-1} } \Big)
\right),
 \end{aligned}
\end{equation*}
where
\begin{equation}
\label{eq: borne mi} 
m_i ^{\kappa_{p-1}} \leq 2^{\kappa_{p-1}},
\end{equation}
as by definition, the number of  particles added in a subexponential cluster on~$I_\ell$ is smaller than~$2^\ell$.
  In particular, the following crude bound holds~:
$M_A ^{\kappa_{p-1}}= \sum_{i \in A} m_i ^{\kappa_{p-1}}\leq 2^{\kappa_{p-1}} |A| $. For the sake of readability, we will omit the superscript $\kappa_{p-1}$ in the rest of this paragraph.

We split the sum in $\pi^\eps _{\theta_{p-1}} \big( h^{(p-1)} \big)$ according to the repeated indices : when the index does not appear in the sum $\pi^\eps_{M_A,\theta_{p-1}}$, we get a $\ostar$-product, else we get a contracted product: 
$$
\begin{aligned}
  \zeta^\eps_{\theta_{p-1}} (h^{(p-1)} )&\times \Big( \Otimes_{i \in B}   \zeta^\eps_{m_i ,\theta_{p-1} } (\bar \phi^{(i)}_{ \theta_i - \theta_{p-1} }  ) \Big)   \\
&=    \mu_\eps^{(|B|+1) /2} \sum_{A \subset B  } 
\left( \prod_{ i \in B \cup\{p-1\}  \setminus  A}  \bbE_\eps  \Big[-\bar \phi^{(i)}_{\theta_i - \theta_{p-1} } \Big] \right) 
\,  \pi^\eps_{M_A,\theta_{p-1}} \Big( \bigotimes_{i \in A} \bar \phi^{(i)}_{  \theta_i - \theta_{p-1} } \Big) \\
& + \mu_\eps^{(|B|+1) /2}  \sum_{A \subset B  }
\left(    \prod_{ i \in B \setminus  A}  \bbE_\eps  \Big[-\bar \phi^{(i)}_{ \theta_i - \theta_{p-1} } \Big] \right) 
\,   \pi^\eps_{M_A+1,\theta_{p-1}} \Big( h^{(p-1)} \bigotimes_{i \in A}\bar  \phi^{(i)}_{\theta_i - \theta_{p-1} } \Big) \\
 & + \mu_\eps^{(|B|-1) /2} \sum_{A \subset B  }  
 \left(  \prod_{ i \in B \setminus  A}  \bbE_\eps  \Big[-\bar \phi^{(i)}_{\theta_i - \theta_{p-1} } \Big] \right) 
 \,  \pi^\eps_{M_A,\theta_{p-1}} \Big(  \sum_{ j \in A}  \psi^{(j,p-1)}\bigotimes_{i \in A\atop i \neq j } \bar  \phi^{(i)}_{  \theta_i - \theta_{p-1} } \Big) ,
 \end{aligned}
$$
where $\bar \phi^{(p-1)}_{ 0} := h^{(p-1)} $ and 
\begin{equation}
\label{psi-j,p} 
\psi^{(j,p-1)} (Z_{m_j})
: =
\bar \phi^{(j)}_{  \theta_j - \theta_{p-1} } (Z_{m_j})  \sum_{\ell =1}^{m_j} h^{(p-1)} (z_\ell )\,.
\end{equation}
Using the definition of the $\ostar$-product, we get the identity
$$
\begin{aligned}
& \zeta^\eps_{\theta_{p-1}} (h^{(p-1)} )\times \Big( \Otimes_{i \in B}   \zeta^\eps_{m_i ,\theta_{p-1} } (\bar \phi^{(i)}_{  \theta_i - \theta_{p-1} }  ) \Big)  =    \Otimes_{i \in B \cup\{p-1\} }   \zeta^\eps_{m_i ,\theta_{p-1} } (\bar \phi^{(i)}_{ \theta_i - \theta_{p-1} }  )  \\
 &\quad  + \mu_\eps^{(|B|-1) /2} \sum_{ j \in B} \sum_{\tilde A \subset B \setminus \{j \}  }   
\Big(\prod_{ i \in (B\setminus \{j \})  \setminus  \tilde A} 
  \bbE_\eps  \Big[- \bar \phi^{(i)}_{\theta_i - \theta_{p-1} } \Big] \Big)\pi^\eps_{M_{\tilde A} + m_j,\theta_{p-1}} \Big(   \psi^{(j,p-1)}  \bigotimes_{i \in \tilde A} \bar \phi^{(i)}_{  \theta_i - \theta_{p-1} }\Big) \, .
 \end{aligned}
 $$
Decomposing $\mu_\eps^{-m_j} \sum \psi^{(j,p-1)}$ in its expectation plus a fluctuation as in 
\eqref{exp-fluct-dec}
\begin{equation*}
\pi^\eps_{m_j, \theta_{p-1}} ( \psi^{(j,p-1)} )
 =  \bbE_\eps [\psi^{(j,p-1)}] + \mu_\eps^{-\frac12}    \zeta^\e_{m_j, \theta_{p-1} }(   \psi^{(j,p-1)}) 
  \, ,
\end{equation*}
we finally obtain, using again the definition of the $\ostar$-product, 
\begin{equation}
\label{eq: produit donne Psi}
\begin{aligned}
  \zeta^\eps_{\theta_{p-1}} (h^{(p-1)} )\times \Big( \Otimes_{i \in B}   \zeta^\eps_{m_i ,\theta_{p-1} } (\bar \phi^{(i)}_{ \theta_i - \theta_{p-1} }  ) \Big)  
 &=    \Otimes_{i \in B \cup\{p-1\} }   \zeta^\eps_{m_i ,\theta_{p-1} } (\bar \phi^{(i)}_{ \theta_i - \theta_{p-1} }  ) \\
  &+  \sum_{ j \in B}   \bbE_\eps [ \psi^{(j,p-1)} ]  \Otimes_{i \neq j }  \zeta^\eps_{m_i ,\theta_{p-1} } (\bar \phi^{(i)}_{  \theta_i - \theta_{p-1} }  ) \\
  & + \mu_\eps^{-1/2} \sum_{ j \in B}   \zeta^\eps_{m_j, \theta_{p-1}} ( \psi^{(j,p-1)} )\Otimes \Big( \Otimes_{i \neq j }  \zeta^\eps_{m_i,\theta_{p-1} } (\bar \phi^{(i)}_{  \theta_i - \theta_{p-1} }  )\Big)   \,.
 \end{aligned}
\end{equation}
As the function $\psi^{(j,p-1)}$ has the same cluster structure of 
$\bar \phi^{(j)}_{  \theta_j - \theta_{p-1} }$ (with only a slightly different weight), the inequalities of Lemma~{\rm\ref{phi-sigma-lemma}} apply also for $\psi^{(j,p-1)}$.
Proceeding as in \eqref{eq: decoupling estimate}, we apply 
 H\"older's inequality and Lemma~{\rm\ref{phi-sigma-lemma}} to show that
the last term in the previous decomposition has a vanishing contribution in the limit
$$
\begin{aligned}
\mu_\eps^{-1/2} &\left| \bbE_\eps   \Big[ \Xi _{p-2}
\; \Big( \sum_{ j \in B}   \zeta^\eps_{m_j^{\kappa_{p-1}}, \theta_{p-1}} ( \psi^{(j,p-1)} )    \Otimes_{i \neq j }  \zeta^\eps_{m_i ^{\kappa_{p-1}},\theta_{p-1} } ( \bar \phi^{(i)}_{\theta_i - \theta_{p-1} }  ) \Big) \Big]  \right|  
\leq    \mu_\eps^{-1/2} (C_P\Theta)^{M} ,
\end{aligned} 
$$
with $M= \sum_{i \in B}  m_i \leq 2^{\kappa_p} \, P$ and 
 $ \Xi_{p-2}:= \prod_{u=1} ^{p-2}  \zeta^\eps_{\theta_u} \big( h^{(u)}\big)$
 was introduced in \eqref{defJM}.

\medskip

Thus we  obtain the following result.
\begin{Prop}
\label{Spohn-prop}
For any $j \geq p$,  let $\bar \phi^{(j)}_{  \theta_j - \theta_{p-1} }$   be the generic  subexponential dynamical cluster from the expansion of $h^{(j)}$, and denote  $\psi^{(j,p-1)} $ its contraction with $h^{(p-1)}$  defined by {\rm(\ref{psi-j,p})}.   We set $\phi^{(p-1)}_0 = h^{(p-1)}$.
Then\begin{equation}
\label{Spohn-decomposition}
\begin{aligned}
\Bigg| \; 
\bbE_\eps  & \Big[   \Xi_{p-1} \;  \Big( \Otimes_{i \in B}   \zeta^\eps_{m_i ^{\kappa_{p-1}},\theta_{p-1} } (\bar \phi^{(i)}_{\theta_i - \theta_{p-1} }  )  \Big) \Big]  
 -  \bbE_\eps\Big[ \Xi _{p-2} \;  \Big(  \Otimes_{i \in B\cup\{p-1\} }   \zeta^\eps_{m_i^{\kappa_{p-1}} ,\theta_{p-1} } (\bar \phi^{(i)}_{ \theta_i - \theta_{p-1} } ) \;  \Big) \Big]\\
& - \sum_{j \in B}  \bbE_\eps [ \psi^{(j,p-1)} ] \bbE_\eps\Big[ 
\Xi_{p-2}    \Big( \Otimes_{i \in B \setminus \{j\} }   
\zeta^\eps_{m_i ^{\kappa_{p-1}},\theta_{p-1} } ( \bar \phi^{(i)}_{  \theta_i - \theta_{p-1} }  ) 
\Big) \Big] \Bigg|
 \leq C_P \mu_\eps^{-1/2} (C_P\Theta)^{2^{\kappa_p} \, P}  .
\end{aligned}
\end{equation}
\end{Prop}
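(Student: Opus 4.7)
The strategy is to convert the statement into a consequence of the algebraic identity~(\ref{eq: produit donne Psi}) derived in Section~\ref{sec: Asymptotic pairing}, together with the moment estimates of Lemma~\ref{phi-sigma-lemma} and Proposition~\ref{Proposition - estimates on g0}. Writing $\Xi_{p-1}=\Xi_{p-2}\cdot \zeta^\eps_{\theta_{p-1}}(h^{(p-1)})$, I would multiply (\ref{eq: produit donne Psi}) by $\Xi_{p-2}$ and take the expectation under $\bbP_\eps$. The two first terms on the right-hand side then reproduce exactly the two contributions appearing in (\ref{Spohn-decomposition}), while the third term (carrying the prefactor $\mu_\eps^{-1/2}$) must be shown to contribute the claimed error $C_P\mu_\eps^{-1/2}(C_P\Theta)^{2^{\kappa_p}P}$.

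To derive (\ref{eq: produit donne Psi}) itself I would start from the definition~(\ref{tensor-product}) of the $\ostar$-product and split the sum $\mu_\eps\pi^\eps_{\theta_{p-1}}(h^{(p-1)})$ in the product according to whether its summation index coincides with one already appearing in $\pi^\eps_{M_A,\theta_{p-1}}(\bigotimes_{i\in A}\bar\phi^{(i)}_{\theta_i-\theta_{p-1}})$. Non-repeated indices contribute to the enlarged $\ostar$-product over $B\cup\{p-1\}$, whereas a repeated index $j\in B$ contracts $h^{(p-1)}$ with a particle of the block $j$, producing the pulled-back test function $\psi^{(j,p-1)}$ defined in~(\ref{psi-j,p}). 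Splitting $\pi^\eps_{m_j,\theta_{p-1}}(\psi^{(j,p-1)})$ into expectation plus fluctuation as in (\ref{exp-fluct-dec}) separates the deterministic factor $\bbE_\eps[\psi^{(j,p-1)}]$ from a residual fluctuation $\zeta^\eps_{m_j,\theta_{p-1}}(\psi^{(j,p-1)})$ that inherits the factor $\mu_\eps^{-1/2}$.

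The main obstacle is to control this last error term uniformly in $\eps$. I would use H\"older's inequality to decouple $\Xi_{p-2}$ (bounded in $L^{2(p-2)}$ by Proposition~\ref{Proposition - estimates on g0}, uniformly in the time labels) from the remaining $\ostar$-product, so it suffices to estimate
\begin{equation*}
\bbE_\eps\Big[\Big(\zeta^\eps_{m_j,\theta_{p-1}}(\psi^{(j,p-1)})\Otimes \Otimes_{i\in B,\,i\neq j}\zeta^\eps_{m_i^{\kappa_{p-1}},\theta_{p-1}}(\bar\phi^{(i)}_{\theta_i-\theta_{p-1}})\Big)^2\Big]^{1/2}.
\end{equation*}
The function $\psi^{(j,p-1)}$ has the same (single minimal) forward cluster support as $\bar\phi^{(j)}_{\theta_j-\theta_{p-1}}$, with only an additional multiplicative factor $\sum_{\ell}h^{(p-1)}(z_\ell)$ bounded by $m_j\|h^{(p-1)}\|_\infty$. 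Hence Lemma~\ref{phi-sigma-lemma}, applied blockwise, yields a variance bound of order $(C_P\Theta)^{2M}$ where $M=\sum_{i\in B}m_i^{\kappa_{p-1}}\leq 2^{\kappa_p}P$ by the subexponential size bound~(\ref{eq: borne mi}). Multiplying by the prefactor $\mu_\eps^{-1/2}$ gives exactly the claimed bound. Summing over the at most $|B|\leq P$ choices of $j$ and over the $A\subset B$ only modifies the constant $C_P$, completing the proof.
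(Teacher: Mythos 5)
Your proposal is correct and follows essentially the same route as the paper: you re-derive identity~(\ref{eq: produit donne Psi}) by splitting the sum in $\pi^\eps_{\theta_{p-1}}(h^{(p-1)})$ according to repeated indices, multiply by $\Xi_{p-2}$, take expectations, and then control the $\mu_\eps^{-1/2}$-prefactored remainder via H\"older's inequality and Lemma~\ref{phi-sigma-lemma} (noting that $\psi^{(j,p-1)}$ shares the forward-cluster support of $\bar\phi^{(j)}$ up to a harmless bounded weight). This is precisely the argument given in Section~\ref{sec: Asymptotic pairing}.
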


\smallskip
The second term in (\ref{Spohn-decomposition}) has the required structure to be pulled back up to time $\theta_{p-2}$ by considering cumulants indexed by the larger set $B \cup \{ p - 1\}$. The  sum in (\ref{Spohn-decomposition}) involves the product of $\bbE_\eps \left[   \psi^{(j,p-1)} \right]$, which will be linked to a covariance 
in  Corollary \ref{covariance extraction}, and a $\Otimes$-product which has the right structure to be pulled back up to time $\theta_{p-2}$ by considering cumulants indexed by the smaller set $B \setminus \{j\}$.

\begin{Cor}
\label{covariance extraction}
For any $j \geq p$,  denote by  $\bar \phi^{(j)}_{ \theta_j- \theta_{p-1} }$    the subexponential minimal cumulants  issued from $h^{(j)}$, and denote by $\psi^{(j,p-1)} $ their contraction with $h^{(p-1)}$  defined by {\rm(\ref{psi-j,p})}.  Then, one has
\begin{equation*}
\Bigg|  \sum_{\NN_j \atop {\rm subexp}  }
\bbE_\eps \left[   \psi^{(j,p-1)} \right] - \bE_\eps \Big[ \zeta^\eps _{\theta_{p-1} } (h^{(p-1)} ) \,  \zeta^\eps _{\theta_j} (h^{(j)} )  \Big] \;  \Bigg|
\leq (C _P \Theta  )^{ 2^{\kappa_p}  }\eps^{\frac1{8d}} 
 +  (C _P \Theta^3\tau  )^{ 2^{\kappa_p-1} }\, \cdotp
\end{equation*}
In particular, one has the uniform bound
 $$\left| \sum_{\NN_j \atop {\rm subexp}  }
\bbE_\eps \left[   \psi^{(j,p-1)} \right] \right| \leq C\,.$$
\end{Cor}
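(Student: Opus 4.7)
The identity to be established combines two ingredients already at hand: the backward transport of the two-time covariance from $\theta_j$ to $\theta_{p-1}$ (obtained by iterating Proposition~\ref{theta-iteration-prop}) and the pairing identity of Proposition~\ref{Spohn-prop} specialised to a single block $B=\{j\}$. Throughout, I use the auxiliary weight $\Xi:=\zeta^\eps_{\theta_{p-1}}(h^{(p-1)})$, which depends only on the configuration at time $\theta_{p-1}$ and has finite variance thanks to estimate~\eqref{eq: moment ordre 2,4} of Proposition~\ref{Proposition - estimates on g0}. This $\Xi$ is exactly the kind of ``anchor weight'' that the proofs of Propositions~\ref{remove recollisions on delta}, \ref{Prop: transport structure on time tau} and~\ref{theta-iteration-prop} were designed to accommodate: they only use the string of fluctuation fields denoted $\Xi_{s-1}$ in that section through its $L^2$ norm and its independence from configurations strictly after $\theta_{s-1}$.

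The first step is to transport $\zeta^\eps_{\theta_j}(h^{(j)})$ backward to $\theta_{p-1}$. With $B=\{j\}$ the only admissible partition in the sum of Proposition~\ref{theta-iteration-prop} is $\eta=\{\{j\}\}$, $\rho=\emptyset$, and there is no extra clustering term to extract. Applying this proposition successively on each sampling interval $[\theta_{s-1},\theta_s]$ for $s=j,j-1,\dots,p$, with $\Xi_{s-1}$ replaced by the single factor $\Xi$, yields
\begin{equation*}
\bbE_\eps\!\left[\zeta^\eps_{\theta_{p-1}}(h^{(p-1)})\,\zeta^\eps_{\theta_j}(h^{(j)})\right]
= \sum_{\NN_j\ \text{\tiny subexp}}\bbE_\eps\!\left[\zeta^\eps_{\theta_{p-1}}(h^{(p-1)})\,\zeta^\eps_{m_j^{\kappa_{p-1}},\theta_{p-1}}\!\left(\bar\phi^{(j)}_{\theta_j-\theta_{p-1}}\right)\right] + R_1,
\end{equation*}
where $\NN_j$ collects the annihilation numbers on all intermediate intervals $I_\ell$. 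Using $\kappa_s\le\kappa_p$ for $s\ge p$ and absorbing prefactors into $C_P$, the cumulated remainder satisfies $|R_1|\leq (C_P\Theta)^{2^{\kappa_p}}\eps^{1/8d} + (C_P\Theta^{3}\tau)^{2^{\kappa_p-1}}$.

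Next, at time $\theta_{p-1}$, I apply Proposition~\ref{Spohn-prop} with $B=\{j\}$ and $\Xi_{p-2}=1$, the role of $\Xi_{p-1}$ being played by $\Xi$ itself. This gives
\begin{equation*}
\bbE_\eps\!\left[\zeta^\eps_{\theta_{p-1}}(h^{(p-1)})\,\zeta^\eps_{m_j^{\kappa_{p-1}},\theta_{p-1}}\!\left(\bar\phi^{(j)}_{\theta_j-\theta_{p-1}}\right)\right]
= \bbE_\eps\!\left[\Otimes_{i\in\{j,p-1\}}\zeta^\eps_{m_i,\theta_{p-1}}\!\left(\bar\phi^{(i)}_{\theta_i-\theta_{p-1}}\right)\right] + \bbE_\eps[\psi^{(j,p-1)}] + R_2,
\end{equation*}
with $|R_2|\leq C\mu_\eps^{-1/2}(C_P\Theta)^{2^{\kappa_{p-1}}}$ (here $\bar\phi^{(p-1)}_0:=h^{(p-1)}$). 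The two-factor $\Otimes$-product expectation on the right-hand side is controlled by estimate~\eqref{E-HM 1} of Lemma~\ref{phi-sigma-lemma}: since $\bar\phi^{(j)}_{\theta_j-\theta_{p-1}}$ is supported on a single minimal forward cluster with $m_j^{\kappa_{p-1}}-1$ annihilations spread over the intervals $I_\ell$, this expectation is of order $C_P\eps$ times polynomial factors in $\Theta$ and $\tau$ which, after summation over subexponential $\NN_j$, are absorbed into the announced error. Combining the two identities concludes the proof of the main estimate.

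For the uniform bound, Cauchy--Schwarz together with the moment bound~\eqref{eq: moment ordre 2,4} of Proposition~\ref{Proposition - estimates on g0} gives immediately $|\bbE_\eps[\zeta^\eps_{\theta_{p-1}}(h^{(p-1)})\zeta^\eps_{\theta_j}(h^{(j)})]|\leq C\|h^{(p-1)}\|_\infty\|h^{(j)}\|_\infty$, while the correction terms in the main estimate are $o(1)$ under the hypotheses~\eqref{tau-conditions}. The main delicate point in this proof is the iteration of Proposition~\ref{theta-iteration-prop} across several sampling intervals when the only anchor fluctuation field is placed at $\theta_{p-1}$ rather than at each intermediate sampling time; one has to verify that the cluster estimates and the H\"older decouplings of Sections~\ref{remove recollisions}--\ref{remove superexp} only depend on the anchor through its $L^2$ norm, which is precisely what their proofs provide. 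Beyond this bookkeeping point, the argument is a direct two-time specialisation of the machinery already developed.
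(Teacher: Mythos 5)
Your proposal follows the paper's own proof essentially step by step: iterate Proposition~\ref{theta-iteration-prop} with two test functions (and the original time sampling) to pull the covariance back to time $\theta_{p-1}$, apply Proposition~\ref{Spohn-prop} with $B=\{j\}$ to split off $\bbE_\eps[\psi^{(j,p-1)}]$ and a residual $\ostar$-product, and kill the latter via the smallness of centered $\Otimes$-products (the paper cites \eqref{eq: tensor bounds}, which is precisely a specialization of \eqref{E-HM 1} that you invoke directly). Your explicit Cauchy--Schwarz derivation of the final uniform bound, which the paper leaves implicit, is the only addition; otherwise the two arguments coincide.
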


\begin{proof}[Proof of Corollary {\rm\ref{covariance extraction}}]
Using repeatedly Proposition \ref{theta-iteration-prop}  with  only two test functions $h^{(p-1)}, h^{(j)}$ but with the same time sampling $I_\ell$, we get 
\begin{equation*}
\begin{aligned}
&\left| \bbE_\eps   \Big[  \zeta^\eps_{\theta_{p-1}} \big( h^{(p-1)}\big)     \zeta^\eps_{\theta_j} (h^{(j)} ) \Big)\Big] -    \sum_{\NN_j \atop {\rm subexp}  }
\bbE_\eps   \Big[ \zeta^\eps_{\theta_{p-1}} \big( h^{(p-1)}\big)      
\zeta^\eps_{m_{j} ^{\kappa_{p-1}},\theta_{p-1}} (  \bar \phi^{(j)}_{ \theta_j - \theta_{p-1} }  ) \Big)\Big]  \right| \\
&\qquad \qquad \leq  (C _P \Theta  )^{ 2^{\kappa_p}  }\eps^{\frac1{8d}} 
 +  (C _P \Theta^3\tau  )^{ 2^{\kappa_p-1} }\
 \, .
\end{aligned}
\end{equation*}
Then, from Proposition \ref{Spohn-prop}  with $B = \{j\}$, we get 
$$
 \begin{aligned}
&\Bigg| \bbE_\eps   \Big[ \zeta^\eps_{\theta_{p-1}} \big( h^{(p-1)}\big)      \zeta^\eps_{m_j ^{\kappa_{p-1}},\theta_{p-1}} (\bar \phi^{(j)}_{ \theta_j - \theta_{p-1} }  ) \Big)\Big]  \\
& \qquad -
\bbE_\eps\Big[  \zeta^\eps_{\theta_{p-1}} \big( h^{(p-1)}\big)   \Otimes  \zeta^\eps_{m_j ^{\kappa_{p-1}},\theta_{p-1} } (\bar \phi^{(j)}_{ \theta_j - \theta_{p-1} }  ) \Big] 
- \bbE_\eps [ \psi^{(j,p-1)} ]  
\Bigg| 
\leq C \; \mu_\eps^{-1/2} (C \Theta)^{2^{\kappa_p}}  .
\end{aligned}
$$
It remains to sum over 
the subexponential annihilation numbers $\NN_j = (n_j^\ell)_{ \kappa_j < \ell \leq \kappa_{p-1} -1 }$.
Since~$\NN_j$ takes at most $2^{1 + \dots + 2^{\kappa_p}}$ values, the error terms remain under control  and we get
$$
 \begin{aligned}
&\left| \bbE_\eps   \Big[\zeta^\eps_{\theta_{p-1}} \big( h^{(p-1)}\big)     \zeta^\eps_{\theta_j} (h^{(j)} )\Big]  - \sum_{\NN_j \atop {\rm subexp}  }  \bbE_\eps [ \psi^{(j,p-1)} ]
-   \sum_{\NN_j \atop {\rm subexp}  }
\bbE_\eps\Big[ \zeta^\eps_{\theta_{p-1}} \big( h^{(p-1)}\big)   
\Otimes  
\zeta^\eps_{m_j^{\kappa_{p-1}},\theta_{p-1} } (\bar \phi^{(j)}_{ \theta_j - \theta_{p-1} }  ) \Big] \right|
\\
&\qquad \qquad  \leq (C _P \Theta  )^{ 2^{\kappa_p}  }\eps^{\frac1{8d}} 
 +  (C _P \Theta^3\tau  )^{ 2^{\kappa_p-1} }\ \;.
\end{aligned}
$$
By  (\ref{eq: tensor bounds}), 
we find that the first term in the right-hand side vanishes in the limit~:
\begin{align*}
\Bigg|   \sum_{\NN_j \atop {\rm subexp}  } 
  \bbE_\eps\Big[ \zeta^\eps_{\theta_{p-1}} \big( h^{(p-1)}\big)   \Otimes  
\zeta^\eps_{m_j^{\kappa_{p-1}},\theta_{p-1} } (\bar \phi^{(j)}_{ \theta_j - \theta_{p-1} }  ) \Big]  \Bigg|
\leq  (C _P \Theta  )^{ 2^{\kappa_p+1}  }\eps\,.
\end{align*}
This completes Corollary \ref{covariance extraction}.
 \end{proof}

In the following section, we are going to iterate these propositions in order to decompose the moments of the field as a product of covariances and some remainder terms.

\subsection{Proof of Proposition 
\ref{prop: appariement} : convergence of the moments}
\label{sec: Gaussian structure of the limiting fluctuation field}

We are now going to combine the previous results  to prove Proposition 
\ref{prop: appariement}.
We proceed by induction and at time~$\theta_p$, we assume that the following decomposition  holds, with notation~(\ref{psi-j,p}):
\begin{equation}
\begin{aligned}
\bbE_\eps \Big[   \prod_{u=1} ^{P}  \zeta^\eps_{\theta_u} \big( h^{(u)}\big)   \Big] 
& \sim
\sum_{B\cup B^c = \{p,\dots, P\} \atop  B^c \cap B = \emptyset }
\sum_{\eta_p \in {\mathfrak S}_{ B^c }^{\rm pairs}}
\prod_{\{i,j\} \in \eta _p\atop i <j } \Big( \sum_{\NN_j \atop {\rm subexp}  }  \bbE_\eps [ \psi^{(j,i)} ] \Big)  \\
& \qquad \qquad 
\times \sum_{\NN_B \atop {\rm subexp} }
\bbE_\eps \Big[  \Big( \prod_{u=1}^{p-1}  \zeta^\eps_{\theta_u} \big( h^{(u)}\big) \Big)    \Big(   \Otimes_{i \in B} \zeta^\eps_{m_i^{\kappa_{p-1}} ,\theta_{p-1} } (\bar \phi^{(i)}_{ \theta_i - \theta_{p-1} }  )\Big) \Big] \, ,
\label{eq: induction p}
\end{aligned}
\end{equation}
where $\sim$ means that the difference is bounded by $  (C _P \Theta  )^{ P\cdot 2^{\kappa_P}  }\eps^{\frac1{8d}} 
 +  (C _P \Theta^{2P-1}\tau)^{1/2} $.
Notice that the decomposition is valid at time $\Theta$ with $B = \{P\}$ and $B ^c   = \emptyset$.

Given $B, \eta_p \in  {\mathfrak S}_{ B^c }^{\rm pairs}$, we are going to apply the procedure described in the 
previous sections to expand the expectation in the second line of \eqref{eq: induction p} and to derive the induction relation at time $\theta_{p-1}$.
Combining  (\ref{Spohn-decomposition}) and  (\ref{eq: induction p}), we obtain 
\begin{equation}
\label{eq: induction p-1 decomposition}
\begin{aligned}
& \bbE_\eps  \Big[     \prod_{u=1} ^{P}  \zeta^\eps_{\theta_u} \big( h^{(u)}\big)   \Big] 
 \sim
\sum_{B\cup B^c = \{p,\dots, P\} \atop  B^c \cap B = \emptyset }
\sum_{\eta_p \in {\mathfrak S}_{ B^c }^{\rm pairs}}
\prod_{\{i,j\} \in \eta _p\atop i <j }\Big(  \sum_{\NN_j \atop {\rm subexp}  }  \bbE_\eps [ \psi^{(j,i)} ]  \Big)  \\
&   \qquad \qquad \qquad \times \sum_{\NN_B \atop {\rm subexp } }
\bbE_\eps\Big[   \Big( \prod_{u=1} ^{p-2}  \zeta^\eps_{\theta_u} \big( h^{(u)}\big) \Big)    \Big( \Otimes_{i \in B\cup\{p-1\} }   
\zeta^\eps_{m_i ^{\kappa_{p-1}},\theta_{p-1} } (\bar \phi^{(i)}_{ \theta_i - \theta_{p-1} }  )\Big)  \Big] \\
&+\sum_{B\cup B^c = \{p,\dots, P\} \atop  B^c \cap B = \emptyset }
\sum_{\eta_p \in {\mathfrak S}_{ B^c }^{\rm pairs}}
\prod_{\{i,j\} \in \eta _p\atop i <j }\Big(  \sum_{\NN_j \atop {\rm subexp}  }  \bbE_\eps [ \psi^{(j,i)} ]  \Big)    \\
&   \times
\sum_{j' \in B}  \left(  \sum_{\NN_{j'} \atop {\rm subexp} } \bbE_\eps [ \psi^{(j' ,p-1)}]  \right)
 \sum_{\NN_{B \setminus \{ j'\} } \atop {\rm subexp} }
\bbE_\eps\Big[  \Big( \prod_{u=1} ^{p-2}  \zeta^\eps_{\theta_u} \big( h^{(u)}\big) \Big)    \Big( \Otimes_{i \in B \setminus \{j' \} }   \zeta^\eps_{m_i ^{\kappa_{p-1}},\theta_{p-1} } (\bar \phi^{(i)}_{  \theta_i - \theta_{p-1} }  ) \Big) \Big]\;.  \\
\end{aligned}
\end{equation}
 In the first contribution, since  there is no new pairing, we set $\eta_{p-1} = \eta_p$ and the product form holds now on the set
 $B \cup \{p-1\} \subset \{p-1, \dots, P\}$.
For the second contribution,  we define the new set $\eta_{p-1} = \eta_p \cup \{ (p-1,j') \}$ with the additional pair.
The $\Otimes$-product at time $\theta_{p-1}$ holds on the reduced set $B \setminus \{ j' \}$.

\medskip

Thus the induction assumption \eqref{eq: induction p} is also valid at $\theta_{p-1}$ and it can be iterated  up to time~$\theta_1$:
$$
\begin{aligned}
\bbE_\eps  \Big[ & \Big( \prod_{u=1} ^{P}  \zeta^\eps_{\theta_u} \big( h^{(u)}\big) \Big)  \Big] \\
&  \sim
\sum_{B \subset \{1,\dots, P\}}
\; \sum_{\eta_1 \in {\mathfrak S}_{ B^c }^{\rm pairs}} \; 
\prod_{\{i,j\} \in \eta _1\atop i <j }\Big(  \sum_{\NN_j \atop {\rm subexp}  }  
\; \bbE_\eps [ \psi^{(j,i)} ]  \Big)  \; \sum_{\NN_B \atop {\rm subexp} }
\bbE_\eps \Big[     \Otimes_{i \in B} \Big(  \zeta^\eps_{m_i ^{\kappa_{1}},\theta_{1} } ( \bar \phi^{(i)}_{ \theta_i - \theta_{1} }  )\Big) \Big] .
\end{aligned}
$$
As the induction is applied only $P$ times, the difference remains bounded by  $  (C _P \Theta  )^{ P\cdot 2^{\kappa_P}  }\eps^{\frac1{8d}} 
 +  (C _P \Theta^{2P-1} \tau )^{1/2}$.

By \eqref{E-HM 1}  in Lemma \ref{phi-sigma-lemma},  the terms for which $B \not = \emptyset$
can be neglected. Thus the only remaining term is $B = \emptyset$ and 
$\eta_1$ is an element of ${\mathfrak S}_P^{\rm pairs}$ :
\begin{align}
\label{eq: quantitative convergence}
\bbE_\eps  \Big[    \prod_{u=1} ^{P}  \zeta^\eps_{\theta_u} \big( h^{(u)}\big)   \Big]  
\sim
\sum_{ \eta_1 \in {\mathfrak S}_P^{\rm pairs} }
\prod_{\{i,j\} \in \eta _1\atop i <j }\Big(  \sum_{\NN_j \atop {\rm subexp}  }  \bbE_\eps [ \psi^{(j,i)} ]  \Big)  .
\end{align}
Recall that~$\sim$ means that the difference is bounded by~$  (C _P \Theta  )^{ P\cdot 2^{K_P}  }\eps^{\frac1{8d}} 
 +  (C _P \Theta^{2P-1}\tau)^{1/2} $, so the factorisation estimate \eqref{eq: quantitative convergence} is quantitative and  remains valid for (slowly) diverging times.
 
 \smallskip
 
   Identifying the covariances with Corollary \ref{covariance extraction} concludes the proof of Proposition \ref{prop: appariement}. \qed




\section{Geometric estimates}
\label{geometric-sec}

In this section, we adapt previous results from \cite{BGSS2} to the present context, allowing to prove the first parts of Lemma \ref{phi-sigma-lemma} and Lemma \ref{phi-rec-lemma}: we control the cluster functions by establishing the bound on the expectation~\eqref{expectation-phi-sigma} as well as    the smallness estimate~\eqref{expectation-phi-rec} for non minimal clusters.

In this section, since we will consider only one packet $\sigma_i$, we will drop the index $i$ to lighten the notation, as well as  the time dependence in the test functions. 
We  thus consider a collection $( \bar \phi^{(j)})_{j \in \sigma}$ of single minimal cumulants  originating from single particles at times~$\theta_j$ as in \eqref{eq: cumulant induction}.
These cumulants are  aggregated on $[\theta-r \delta, \theta]$ as in \eqref{notbarphi-def} to form the cumulant $\phi^{(\sigma)}$ at time $\theta-r\delta$  supported on  forward clusters as in Definition~\ref{def forward dynamics}.

We therefore have   $|\sigma|$ blocks of cardinalities $\MM^{r\delta}  = (M_j^{r\delta})_{j \in \sigma}$ at time $\theta-r \delta$.
We denote by 
\begin{itemize}
\item $\NN^r$ the number of annihilations  in the different blocks on $\cI_\delta := [\theta- r\delta, \theta-(r-1) \delta]$;
\item $\NN^{<r}$    the number of annihilations  in the different blocks  on $\cI_\tau  := [\theta- (r-1)\delta, \theta]$;
\end{itemize}
and define 
$N^r=\sum_{j\in \sigma}  N_j^r$, $N^{<r} = \sum _{j \in \sigma} N_j^{<r} $, $M^{r\delta} = M+ N^r +N^{<r} = \sum_{j\in \sigma } M_j^{r\delta}$.

To determine the forward dynamics on $[\theta-r \delta, \Theta]$, we also need to fix  as in Definition~\ref{def forward dynamics}
\begin{itemize}
\item $\KK \in \{0, \dots,K_\gamma\}^{M^{r \delta}}$ counting for each particle the internal encounters without annihilation on $\cI_\delta$  (recall that the number of such encounters is under control by construction of $\phi^{(\sigma) }$);
\item $(\SS , \bar \SS)\in \{1,-1\}^{2(M^{r \delta}-|\sigma|)}$ prescribing  the encounters with annihilation  on $\cI_\delta \cup \cI_\tau \cup \cI_\theta$, among which we denote by~$(\SS^r , \bar \SS^r)\in \{1,-1\}^{2 N^r}$ the signs prescribing  the encounters  on $\cI_\delta$,~$(\SS^{< r} , \bar \SS^{< r})\in \{1,-1\}^{2N^{< r}}$ those prescribing  the encounters  on $\cI_\tau$ and finally~$(\SS^\theta , \bar \SS^\theta)\in \{1,-1\}^{2(M-|\sigma|)}$ the signs prescribing  the encounters on $\cI_\theta$;
\item  a partition $\varsigma$ of $\sigma$ prescribing the packets at time~$\theta- (r-1)\delta$, as well as a partition $\lambda $ of these packets in forests, prescribing the external encounters on $\cI_\delta$;
\item $\EE = (\EE_i)$ where for each packet $\varsigma_i$,  $\EE_i \in \{1,-1\}^{|\varsigma_i|-1}$ prescribing the external encounters on $\cI_\tau $.
\end{itemize}
  With this notation, plugging~(\ref{barphi-def}) into~(\ref{notbarphi-def}) provides
$$\begin{aligned}
 \phi^{(\sigma)}_{r\delta}(Z_{\M^{r\delta}})
  &=   \Big( \prod_{j \in \sigma} {M_j^{(r-1)\delta}! \over M_j^{r\delta} !} \Big) \sum _{\varsigma  \in \cP_\sigma}   \  \mu_\eps^{N^r + | \varsigma  |-1}  \sum_{\KK , \SS^r , \bar \SS^r ,\lambda \atop 
 \varsigma \hookrightarrow \lambda} \sgn (\SS^r  )  
   \varphi_{\{\lambda_1, \dots, \lambda_{|\lambda|}\}} \\
&   \qquad\qquad  \times  
\prod_{\ell =1}^{|\lambda|}\Big({\sharp}_\delta  \bigotimes_{\varsigma_i \subset \lambda_\ell} 
\bar \phi_{(r-1) \delta} ^{ (\varsigma_i)} \Big) 
\indc_{ \{Z_{\M_{\lambda_\ell}^{r\delta} }    \in \cR^{\lambda_\ell}_{{\mathbf K}_{\lambda_\ell},\SS^r_{\lambda_\ell},\bar \SS^r_{\lambda_\ell}} \} }  
 \\
 &=   \left( \prod_{j \in \sigma} {m_j! \over M_j^{r\delta} !} \right) \mu_\eps^{N^r +N^{<r} + | \sigma  |-1}  \sum _{\varsigma  \in \cP_\sigma}   \     
 \sum_{\KK , \SS^{\leq r}  , \bar \SS^{\leq r}  ,\EE, \lambda \atop 
 \varsigma \hookrightarrow \lambda}   \sgn(\EE) \sgn(\SS^{\leq r} )
   \varphi_{\{\lambda_1, \dots, \lambda_{|\lambda|}\}} \\
&   \qquad 
\times
\indc_{\{  Z_{\M^{r\delta} } \mbox{\ \tiny forward cluster associated with $(\SS^{\leq r} , \bar \SS^{\leq r}  , \varsigma , \lambda , \KK , \EE)$} \}}
   \prod_{\ell =1}^{|\lambda|}\prod_{\varsigma_i \subset \lambda_\ell} 
  \big( {\sharp}_{(r-1)\delta} \bigotimes _{j \in \varsigma_i}  \; \bar \phi^{(j)}  \big)
\end{aligned}
$$
where we wrote $ (\SS^{\leq r}, \bar \SS^{\leq r}) = ( \SS^{< r},\SS^{r}, \bar \SS^{< r}, \bar \SS^{r})$, and $ \sgn(\SS^{\leq r}) $ for the product of all the components of $\SS^{\leq r}$. 
On the other hand 
by the assumption \eqref{eq: cumulant induction} on the structure of the test functions,
one has
\begin{equation}
\bar \phi^{(j)}  (Z_{m_j})= \frac{ \mu_\eps^{m_j-1}}{m_j!} \sum_{\SS_j^\theta , \bar \SS_j^\theta}\sgn(\SS_j^\theta ) 
\indc_{Z_{m_j} \in \cR^{\rm min}_{  \SS_j^\theta, \bar \SS_j^\theta } } 
 \big ({\sharp}_{\theta_j-\theta} h^{(j)}  \big )\, ,
\end{equation}

\bigskip
 Recall that we can assume without loss of generality that $\| h^{(j)}\| _\infty \leq 1$ for all $j \in \sigma$. Since~${\mathbf 1}_{\Upsilon^\eps_N} \leq 1$, inequalities~\eqref{expectation-phi-sigma} and~\eqref{expectation-phi-rec} follow from their counterparts at equilibrium
\begin{equation}
\label{expectation-phi-sigma-eq}
 \bbE_\eps^{\rm eq}\Big[    \Big| \phi^{(\sigma) }\Big|  \Big] \leq  C_P 
 ( |\sigma| K_\gamma) ^{M^{r\delta}}   \;  (C\Theta)^{M - |\sigma|}  (C \delta)^{ N^{r}}  (C \tau)^{N^{<r}+|\sigma | - 1}  \, ,
\end{equation}
and
\begin{equation}
\label{expectation-phi-rec-eq}
\begin{aligned}
 \bbE_\eps^{\rm eq}\Big[   \Big|
 \phi^{(\sigma ), {\rm cyc}}\Big|  \Big] 
 \leq   \eps\delta  |\log \eps| (\Theta |\log \eps|) ^{ 2d+4} \; C_P (  |\sigma| K_\gamma) ^{M^{r\delta}}  (C \Theta)^{M -|\sigma|} (C \delta)^{ (N^{r}-1)_+}  (C \tau)^{(N^{ <r}+|\sigma | - 2)_+}\, ,
 \end{aligned}
\end{equation}
which  we now prove.

\bigskip
Estimating roughly the $L^\infty$ norm of the cumulant $ \varphi_{\{\lambda_1, \dots \lambda_{|\lambda|}\}}$  by  $|\lambda|! $ (note that~$|\lambda|!\leq |\sigma|! $), 
and using that 
\begin{equation} \label{eq:combinfacp}
{M^{r\delta} !  \over  \prod_{j \in \sigma} M^{r\delta}_j! }  
\leq   |\sigma|^{ M^{r\delta} } \;,
\end{equation}
we infer that 
\begin{equation}
\label{phi-geometric-est-sec7}
| \phi^{(\sigma)}(Z_{\M^{r\delta}}) | \leq  |\sigma|^{ M^{r\delta} }  \Big(\prod_{j \in \sigma}\| h^{(j)} \|_\infty  \Big)\, \frac{\mu_\eps^{M^{r\delta}-1}}{M^{r\delta} !}\,\sum_{ \bar \SS, \SS, \KK, \varsigma, \lambda, \EE} |\lambda| !   \;   \indc_{\{  Z_{\M^{r\delta} } \mbox{\ \tiny forward cluster} \}}\,,
\end{equation}
where the forward cluster in the indicator refers to the dynamics in $[\theta-r \delta, \Theta]$ constructed as described above and depends on  the whole set of global parameters $\bar \SS, \SS, \KK, \varsigma, \lambda, \EE$. Note that the cardinality of such parameters is bounded by $C_P(CK_\gamma)^{M^{r\delta}}$ for some pure constant $C$, and $C_P$ depending only on $P$.

\medskip

We now  need to describe more precisely the  support of~$\phi^{(\sigma)}$, i.e. to extract from the cluster structure some ``independent" geometric  conditions.
To show Inequality \eqref{expectation-phi-sigma-eq} on $\bbE_\eps^{\rm eq}\Big[   \Big| \phi^{(\sigma)} \Big|  \Big] $ it is enough to prove that  
the size of the support is controlled by
\begin{equation}
\label{expectation-phi-sigma bis}
\begin{aligned}
\sup_{x_{M^{r\delta}} \in \T^d} \int  \indc_{\{  Z_{\M^{r\delta} } \mbox{\ \tiny forward cluster} \}} &
  \cM^{\otimes M^{r\delta}}\left( V_{M^{r\delta}}\right)
 dX_{M^{r\delta}-1 }dV_{M^{r\delta} }  \\
& \leq C {M^{r\delta} ! \over \mu_\eps^{M^{r\delta} - 1}} \;  (C \Theta)^{ M-|\sigma|}  (C \delta)^{ N^r}
(C \tau)^{N^{<r}+|\sigma | - 1}
\end{aligned}
\end{equation}
 for some pure constant $C>0$, uniformly in $(\bar \SS, \SS, \KK, \varsigma, \lambda, \EE)$.
Indeed, assuming \eqref{expectation-phi-sigma bis}, we deduce from 
\eqref{phi-geometric-est-sec7} that 
\begin{equation}
\label{phi-geometric-est-sec7 bis}
\begin{aligned}
\bbE_\eps^{\rm eq}\Big[   \left| \phi^{(\sigma)} \right| \Big]   &
 =  \int G^{\eps, {\rm eq}}_{M^{r\delta}}
 \,  | \phi^{(\sigma)} | \, dZ_{M^{r\delta}}
 \\ &
 \leq  |\sigma|^{ M^{r\delta} }\frac{\mu_\eps^{M^{r\delta}-1}}{M^{r\delta}!}\, \prod_{j \in \sigma}\| h^{(j)} \|_\infty \, \sum_{ \bar \SS, \SS, \KK, \varsigma, \lambda, \EE} |\lambda| !   \, \int G^{\eps, {\rm eq}}_{M^{r\delta}}\,  \indc_{\{  Z_{\M^{r\delta} } \mbox{\ \tiny forward cluster} \}} \, dZ_{M^{r\delta}}
 \\
&   \leq |\sigma|^{ M^{r\delta} }\frac{\mu_\eps^{M^{r\delta}-1}}{M^{r\delta}!}\, \prod_{j \in \sigma}\| h^{(j)} \|_\infty \, \sum_{ \bar \SS, \SS, \KK, \varsigma, \lambda, \EE} |\lambda| !   \, \int  \cM^{\otimes M^{r\delta}}\,  \indc_{\{  Z_{\M^{r\delta} } \mbox{\ \tiny forward cluster} \}} \, dZ_{M^{r\delta}}
 \\
& \leq  C_P  \,|\sigma|^{ M^{r\delta} } \, {(CK_\gamma) ^{ M^{r\delta}} } (C \Theta)^{ M-|\sigma|}  (C \delta)^{ N^r  }
(C \tau)^{N^{<r}+|\sigma | - 1}\,,
\end{aligned}
\end{equation}
where the sums over the parameters $\bar \SS, \SS, \KK, \varsigma, \lambda, \EE$ have been bounded by $(CK_\gamma)^{M^{r\delta}}$ and combinatorial factors depending only on $P$.
Notice that, to compute the expectation, we have made use of the correlation functions of the 
 invariant measure \eqref{eq: initial measure}, which we recall:
$$G^{\eps, {\rm eq}}_M \left( Z_M\right):= 
\frac{\cM^{\otimes M}}{\cZ^ \eps} \,\sum_{n=0}^{\infty}\frac{\mu_\eps^n}{n!} 
\int_{ (\T ^d\times\R^d)^n} dz_{M+1}\dots dz_{M+n} \,\indc_{
{\mathcal D}^{\eps}_{M+n}}(Z_{M+n}) 
 \, \cM^{\otimes n} \;,\qquad M = 1,2,\cdots
 $$
with $\cZ^\eps$ given by \eqref{eq: partition function}. Since $\indc_{{\mathcal D}^{\eps}_{M+n}}(Z_{M+n}) \leq \indc_{{\mathcal D}^{\eps}_{M}}(Z_{M})
\indc_{{\mathcal D}^{\eps}_{n}}(z_{M+1},\dots,z_{M+n})$, these correlation functions satisfy the pointwise bound
$G^{\eps, {\rm eq}}_M \left( Z_M\right) \leq
\cM^{\otimes M}\;,$ which justifies the third line of \eqref{phi-geometric-est-sec7 bis}.
This concludes the derivation of inequality \eqref{expectation-phi-sigma-eq} and therefore of \eqref{expectation-phi-sigma}.

\bigskip

The proof of \eqref{expectation-phi-sigma bis} follows the strategy of Lemma 4.2 in \cite{BGSS2}.
We adapt it to this new framework.

 \begin{Def}[Forward tree] A \emph{forward tree} $T_\prec=  (q_i, \bar q_i)_{1\leq i \leq M^{r \delta} -1}$ is constructed by recording in increasing order of times, denoted by~$t_i$,  the   encounters  of the forward dynamics (recall Definition~{\rm\ref{def:encounter}}) which do not create any cycle (nor multiple edge). These encounters are said to be \emph{clustering}.
\end{Def}
 Note that~$q_i, \bar q_i$ are generic notations for the indexes of the two  particles involved in the~$i$-th encounter, they can of course take several times the same value.

Even though the encounters can be of different nature, they lead to similar geometric constraints  in the forward dynamics and they are coded in the same way in terms of the dual variables. 
The type of each link $(q_i, \bar q_i)$
(with or without annihilation, with or without scattering) is encoded in the set of parameters~$\varsigma, \lambda, \SS, \bar \SS,\KK,\EE$. 
Then,
 $$ \sum_{\varsigma, \lambda, \SS, \bar \SS,\KK,\EE}
\indc_{\{ Z_{\M^{r \delta}} 
\mbox{\ \tiny forward cluster} \} } 
 = 
 \sum_{\varsigma, \lambda, \SS, \bar \SS,\KK,\EE}
\; \sum_{T_\prec \in \cT^\prec_{M^{r \delta}}} 
\indc_{\{ Z_{\M^{r \delta}} \;  \in \; \cR_{T_\prec}^{{\rm comp}}  \}    } ,
$$
where $\cR_{T_\prec}^{{\rm comp}}  $ is the set of  configurations compatible with $( \varsigma, \lambda, \SS, \bar \SS,\KK,\EE,T_\prec )$,
and $\cT^\prec_{M^{r \delta}}$ stands for the set of all ordered trees with $M^{r \delta} -1$ edges.  
The above sum over ordered trees corresponds to a partition, meaning that  for any given~$Z_{\M^{r \delta}} $, at most one term is non zero.
Note, for future reference  (see Section \ref{variance-sec} below), that \eqref{phi-geometric-est-sec7} 
implies
\begin{equation}
\label{eq: phi-geometric-est-sec7 bis}
| \phi^{(\sigma)}(Z_{\M^{r\delta}}) | 
\leq C_P \;  {|\sigma|}^{M^{r\delta}}
\Big(\prod_{j \in \sigma} \| h^{(j)} \|_\infty\Big)
\frac{ \mu_\eps^{M^{r\delta}-1}}{M^{r\delta}!}\,  \sum_{ \bar \SS, \SS, \KK, \varsigma, \lambda, \EE}   \; \sum_{T_\prec \in \cT^\prec_{M^{r \delta}}} 
\indc_{\{ Z_{\M^{r \delta}} \;  \in \; \cR_{T_\prec}^{{\rm comp}}  \}    } \;.
\end{equation}

\smallskip

 We then need to integrate over the variables $Z_{M^{r \delta}}$ restricted to the set~$\cR_{T_\prec}^{{\rm comp}} $. This set is a collection of constraints which are not independent one from the other. However, exploiting the ordering of edges in $T_\prec$, we can identify a sequence of ``independent" variables (see Definition~\ref{Def seq independent} below). The basic idea is that, when we follow the dynamics forward in time, each new edge corresponds to an encounter  involving at least one {\it new} variable. A convenient way to proceed is by using as new variables the relative positions between particles realizing encounters, keeping all velocities fixed.
More precisely, given   an admissible tree $T_\prec$, let us define the relative positions at time~$\theta- r\delta$ 
 \begin{equation} \label{eq:relpos}
 \hat x_i:=x_{q_i}-x_{\bar q_i}\, .
 \end{equation}
Given the relative positions $\left(\hat x_s \right)_{s < i}$ and the velocities $V_{M^{r \delta}}$, we fix a forward flow with clustering encounters at times $t_{ 1}< \dots < t_{i-1} $. By construction, $q_i$ and $\bar q_i$ belong to two forward pseudo-trajectories that have not interacted yet. In other words,~$q_i$ and $\bar q_i$ do not belong to the same connected component in the graph~$G_{i-1} := (q_j,\bar q_j)_{1 \leq j \leq i-1}$. Inside each connected component, relative positions are fixed by the previous constraints, and one degree of freedom remains. Therefore we can vary $\hat x_i$ so that   an encounter at time $t_i $ occurs between $q_i$ and $\bar q_i$ (moving rigidly the corresponding connected components). This encounter condition defines recursively   the sets~$\cB_{T_\prec, i}   (\hat x_{1}, \dots, \hat x_{i-1}, V_{M^{r \delta}})$ prescribing the constraints on $\hat x_i$.
 \begin{Def}
 \label{Def seq independent} 
 We   say that the sets $(\cB_{T_\prec, i})_{i \leq M^{r \delta}-1}  $ are \underbar{sequentially independent} if for all $i$ the set $\cB_{T_\prec, i}$ is defined by constraints depending only on  $\hat x_{1}, \dots, \hat x_{i-1}, V_{M^{r \delta}}$.
\end{Def} 
Suppose that the time $t_i$ belongs to  the set $\cI \in \{\cI_\delta, \cI_\tau,\cI_\theta\}$.
If the particles $q_i$ and $\bar q_i$ move in straight lines, then  the measure of $\cB_{T_\prec, i}$ can be estimated by
\begin{align}
\label{eq: B prec i}
|\cB_{T_\prec, i} | \leq \frac{C}{\mu_\eps} |v^\e_{q_i}(t^+_{i-1}) - v^\e_{\bar q_i}(t^+_{i-1})| 
 \int \indc_{ t_i \in \cI } \;  \indc_{t_i \geq t_{i-1}} dt_i\,  .
\end{align}
Thus, by a Cauchy-Schwarz inequality there holds
\begin{equation}
\label{eq: borne somme qi}
\begin{aligned}
\sum_{q_i,\bar q_i} |\cB_{T_\prec, i} | 
\leq \frac{C}{\mu_\eps} \left( V_{M^{r \delta}}^2 + M^{r \delta} \right) M^{r \delta}  \int \indc_{ t_i \in \cI }  \; \indc_{t_i \geq t_{i-1}} dt_i\,.
\end{aligned}
\end{equation}
Note however that, by definition of the forward tree, particles  $q_i, \bar q_i$ may  encounter on $[t_{i-1}, t_i]$ with other particles from their respective connected component in  the graph~$G_{i-1}$. In this case the particle trajectories are piecewise affine but a bound similar to \eqref{eq: B prec i} is obtained by summing over all the portions of the trajectory
  and the upper bound \eqref{eq: borne somme qi} still holds (see Section 8.1 of \cite{BGSS3} for details).

\medskip

  At this point we proceed as in \cite{BGSS2} (the proof of Lemma 4.1 therein contains the same computation that follows, except for the time sampling condition $t_i \in \cI$ appearing in \eqref{eq: B prec i}-\eqref{eq: borne somme qi}).

 We first apply the change of variables
 $$
  X_{M^{r \delta}-1 }  \longrightarrow  \hat X_{M^{r \delta}-1} 
 $$
 and observe that, for any fixed $x_{M^{r \delta}}$, this is a map of translations with $dX_{M^{r \delta} -1}= d\hat X_{M^{r \delta} -1}$.
The constraints, imposed by the $M^{r \delta}-1$ encounters, can be evaluated one after the other following the order prescribed above. 
Hence by Fubini's theorem
\begin{align}
& \sum_{T_\prec \in \cT^{\prec }_{M^{r \delta}} }\int d\hat X_{M^{r \delta} -1} 
\prod_{i=1}^{M^{r \delta} -1} \indc_{\cB_{T_\prec, i }} \leq 
\sum_{T_\prec \in \cT^{\prec }_{M^{r \delta}} } \int d\hat x_1
 \indc_{\cB_{T_\prec, 1 }} \int d\hat x_{ 2}\,  \dots\int d\hat x_{M^{r \delta} -1} \indc_{\cB_{T_\prec, M^{r \delta} -1}}
 \nonumber \\
 & \qquad \qquad \qquad \qquad 
 \leq \left( \frac{C}{\mu_\eps}\right)^{M^{r \delta}-1}  
 \left( V_{M^{r \delta}}^2 + M^{r \delta} \right)^{M^{r \delta}-1} 
 ( M^{r \delta})^{M^{r \delta} -1}
 \int dt_{ 1}\, \dots dt_{M^{r \delta}-1}  \indc_{\rm samp},
 \label{eq: integration Bi}
\end{align}
where $\indc_{\rm samp}$ is the constraint on the encounter times respecting the sampling. 
Retaining only the information on the number of  encounters  in each time interval,
we get by integrating over these ordered times an upper bound of the form
\begin{equation}
\label{eq: time constraints 4.1}
\frac{\delta^{N^r}}{ N^r !} \;  
\frac{\tau^{N^{<r}+|\sigma| -1}}{(N^{<r} + |\sigma| -1)!} \;
\frac{\Theta^{M^{r \delta}}}{ M !} 
\leq  \frac{3^{M^{r \delta}-1}}{(M^{r \delta}-1) !} \, 
\delta^{N^r \, \tau^{N^{<r}+|\sigma| -1}} \,  \Theta^{M^{r \delta}}\; ,
\end{equation}
where we used  the inequality
\begin{equation*}
\frac{(M^{r \delta} -1) !}{N^r ! \, (N^{<r} + |\sigma| -1)! \, M !} \leq  3^{M^{r \delta}-1}.
\end{equation*}
Up to a factor $C^{M^{r \delta}}$,  the  factorial $(M^{r \delta} -1) !$ compensates the factor  $(M^{r \delta})^{M^{r \delta}}$ in \eqref{eq: integration Bi}.
Furthermore,  in any dimension, for any $R,N$ 
\begin{equation}
\label{eq: inegalite exponentielle}
\sup_{ V } \Big\{ \exp \big( - \frac18 |V|^2 \big)  \; ( |V|^2 + R)^N  \Big\} \leq C^N e^R \; N^N.
\end{equation}
After integrating the velocities with respect to the measure~$\cM^{\otimes M^{r \delta}}$, we deduce from the previous inequality that the term $\left( V_{M^{r \delta}}^2 + M^{r \delta} \right)^{M^{r \delta}}$ gives another factor of order $(M^{r \delta})^{M^{r \delta}}$ which leads, up to a factor $C^{M^{r \delta}}$, to  the term $M^{r \delta} !$ in \eqref{expectation-phi-sigma bis}.
This completes the proof of~\eqref{expectation-phi-sigma bis} and therefore of the inequality \eqref{expectation-phi-sigma-eq}.
  \qed

 \bigskip
 
 We turn now to the proof of Inequality~\eqref{expectation-phi-rec-eq}. We proceed as for \eqref{expectation-phi-sigma bis} and our purpose is to show that
\begin{equation}
\label{expectation-phi-rec bis} 
\begin{aligned}
&  \sup_{x_{M^{r\delta}} \in \T^d}\int  \indc_{\{  Z_{\M^{r\delta} } \mbox{\ \tiny non minimal forward cluster} \}}
 \cM^{\otimes M^{r\delta}}\left( V_{M^{r\delta}}\right)
dX_{M^{r\delta}-1 }dV_{M^{r\delta} } \\
& \qquad \qquad 
 \leq C { M^{r\delta} ! \over \mu_\eps^{M^{r\delta} - 1}} 
 \eps\delta  |\log \eps| (\Theta |\log \eps|) ^{2d+4}  \;  (C \Theta)^{ M - |\sigma|}  (C \delta)^{ (N^r-1)_+}  
 (C \tau)^{(N^{<r}+ | \sigma | - 2)_+} \, ,
 \end{aligned}
\end{equation}
for some pure constant $C>0$, uniformly in $(\bar \SS, \SS, \KK, \varsigma, \lambda, \EE)$.

By construction, there is at least one encounter violating the minimality, and therefore at least one clustering in $\cI_\delta$.
 The support estimate \eqref{expectation-phi-sigma bis} can be refined by using that the graph encoding all encounters has strictly more than $(M^{r\delta} - 1)$ edges (i.e.\,it strictly contains the forward tree $T_\prec$),  which means that there will be at least one cycle in this graph. This  reinforces one of the geometric  conditions on the sets $\cB_{T_\prec, i} $ (see \eqref{eq: B prec i}), leading to the following estimate
\begin{equation}
  \label{eq: integration Bi'}
\begin{aligned}
& \sum_{T_\prec \in \cT^{\prec}_{M^{r \delta}}}   
\int \cM^{\otimes M^{r \delta}}(V_{M^{r \delta}}) \,  dV_{M^{r \delta}} \int d\hat X_{M^{r \delta}-1} 
\indc_{\rm cycle}} \prod_{i=1}^{M^{r \delta}-1} \indc_{\cB_{T_\prec, i }\\
& \quad  \leq \left( \frac{C}{\mu_\eps}\right)^{M^{r \delta}-1} \eps |\log \eps| (\bbV  \Theta)^{2d+4} 
(M^{r \delta})^{2 + 2  M^{r \delta}} 
 \int_{ \theta-r\delta}^{{ \theta-(r-1)\delta } }dt_{ 1}\, \dots \int_{t_{M^{r \delta}-2} }^{\Theta} d t_{M^{r \delta}-1} 
 \,\indc_{\rm samp}\,.
\end{aligned}
\end{equation}
We refer to \cite{BGSS2} for the proof of this estimate (see Eq.\,(5.12) in \cite{BGSS2}, which is derived under  the same assumptions on the sets $\cB_{T_\prec, i }$ except for the minor difference in the time sampling condition $t_i \in \cI$).
We recall the choices~(\ref{eq: choix parametres}) for~$\bbV $ and~$ \Theta$. Then
integrating over the simplex in time represented by $\indc_{\rm samp}$ leads to \eqref{expectation-phi-rec bis},
where the contribution $\delta$ comes from the first edge of the forward tree. Since we did not track the nature of this edge,  the terms  $( N^r -1)_+$ and  $(N^{<r} +|\sigma| - 2)_+$ have been adjusted to take into account all the possibilities. 

This concludes the proof of~\eqref{expectation-phi-rec bis}, hence of~\eqref{expectation-phi-rec-eq}.  \qed


\section{Expectation and variance of $\Otimes$-products }
\label{variance-sec}

The aim of this section is to control the expectation and variance of $\Otimes$-products in order to complete the proofs of Lemma \ref{phi-sigma-lemma} and Lemma \ref{phi-rec-lemma}.

Without loss of generality, we suppose from now on that the sets $\sigma_i$ are indexed by~$i \in \{1,\dots,q\}$. 

We start by proving the estimates on  the equilibrium measures (in Paragraphs~\ref{sec: Expectation of centered Otimes-products} and~\ref{variance}), and then show in Paragraph~\ref{sec: Conditioned products}
how to conclude to Estimates~(\ref{E-HM 1}),~(\ref{variance-phi-sigma}) and~(\ref{variance-phi-rec}).

\subsection{Expectation of centered $\Otimes$-products }
\label{sec: Expectation of centered Otimes-products}

In this section, we prove the following inequality:
\begin{equation}
\label{E-HM 1-eq}  
\left|\bbE_\eps ^{\rm eq} \left[ \Otimes_{i=1}^q  \zeta^{\eps,\rm eq}_{M_{i}^{r\delta}   } 
\big( \phi^{(\sigma_i) } \big)  \right]  \right|
\leq C_q \eps  \prod_{i = 1} ^q 
  (|\sigma_i|K_\gamma) ^{M_{i}^{r\delta}}\times \Big( (C \Theta)^{M_{i} - |\sigma_i|}  (C \delta)^{ N_{i }^{r}}  (C \tau)^{N_{i}^{<r}+|\sigma_i | - 1} \Big)\;.
\end{equation}

Inequality (\ref{E-HM 1-eq}) follows from the control on the structure of the test functions~$\phi^{(\sigma_i) }$ given by Eq.\,\eqref{phi-geometric-est-sec7}. Notice that in the latter estimate, the function
on the right-hand side is invariant by translations (simultaneous of the~$M_{i}^{r\delta} $ particles in the space $\T^d$), and bounded in an~$L^1-$weighted norm (by \eqref{expectation-phi-sigma bis}). Using these two ingredients we shall now prove that
\begin{equation}
 \begin{aligned}
& \left|\bbE_\eps ^{\rm eq} \left[ \Otimes_{i=1}^q  \zeta^{\eps,\rm eq}_{M_{i}^{r\delta}   } 
\big( \phi^{(\sigma_i) } \big)  \right]  \right|
  \leq C_q \,\e\,\prod_{i=1}^q
\frac{\left( C |\sigma_i|\mu_\e\right)^{M_{i }^{r\delta} -1}}{M_i^{r\delta}!}\\
& \qquad\qquad\qquad\qquad \times
\sum_{ \bar \SS, \SS, \KK, \varsigma, \lambda, \EE} \;\sup_{x_{M_i^{r\delta}} \in \T^d}\int  \indc_{\{  Z_{\M_{i }^{r\delta} } \mbox{\ \tiny forward cluster} \}}
 \cM^{\otimes  M_{i }^{r\delta} }  dX_{M_{i }^{r\delta}-1 } dV_{M_{i }^{r\delta} }  \end{aligned}
 \label{E-HM}
\end{equation}
where the sum is taken over the collection $\left(\varsigma, \lambda, \KK, \SS,\bar\SS,  \EE\right)$ parametrising the clusters and  $\M_{i }^{r\delta}$ codes the cardinalities of the blocks in $\sigma_i$ (as in \eqref{phi-geometric-est-sec7}).
The small factor $\e$ will be obtained by a cluster expansion of the invariant measure, tracing the small correlations between different fluctuation fields. Eq.\,(\ref{E-HM 1-eq}) follows then from \eqref{E-HM} and \eqref{expectation-phi-sigma bis}.

In order to establish (\ref{E-HM}), we  first use the definition of $\Otimes$-product given by~\eqref{tensor-product-eq}
and write
$$
 \Otimes _{i=1} ^q \left(   {1\over \mu_\eps^{M_{i}^{r\delta}} }\sum \phi^{(\gs_i)}  - \bbE_\eps^{\rm eq} [\phi^{(\gs_i)}]  \right) = \sum_{A \subset \{1,\dots, q\} }  \pi^\eps_{M^{r\delta} _A}\left(\Phi_{{\mathbf M}^{r\delta}_A}\right)   \prod_{ j \in A^c} \bbE_\eps^{\rm eq}  [-\phi^{(\gs_j)}]\,,  $$
with $M^{r\delta} _A =\sum_{ j \in A}M_{j}^{r\delta}$, ${\mathbf M}^{r\delta}_A = (M_{j}^{r\delta})_{j \in A}$ and $\Phi_{{\mathbf M}^{r\delta}_A} :=\otimes _{j \in A} \phi^{(\gs_j)} $ and where $A^c$ is the complement of $A$ in $\{1,\dots,q\}$. Then,
$$\begin{aligned} 
&\bbE_\eps^{\rm eq}  \left[  \Otimes_{i=1}^q \zeta^{\eps,\rm eq}_{M_{i }^{r\delta} } 
\left(\phi^{(\sigma_i) } \right)  \right]   =  \frac{1}{\cZ^\eps} \mu_\eps^{q/2}  \sum_{A \subset \{1,\dots, q\} }   \prod_{ j \in A^c} \bbE_\eps^{\rm eq}  [-\phi^{(\gs_j)}] \\
     & \qquad \qquad  \times 
  \sum_{p\geq 0 }  \frac{\mu_\eps^{p}}{p!} 
   \int dZ_{M^{r\delta} _A}  d\bar Z_p   \,\Phi_{{\mathbf M}^{r\delta}_A} (Z_{M^{r\delta} _A}) \indc_{\cD^\eps_{{ M}^{r\delta}_A+p}} (Z_{M^{r\delta} _A} ,\bar Z_p)  \cM^{\otimes ({  M}^{r\delta}_A +p) } (V_{M^{r\delta} _A} ,  \bar V_p)\, .
    \end{aligned}
$$
We decompose $Z_{{ M}^{r\delta}_A}$ in $|A|$ subconfigurations  $(Z^{(i)}_{M_{i }^{r\delta} } )_{i\in A} $ (each one containing possibly several blocks).
We then use a cluster expansion of the exclusion~$\indc_{\cD^\eps_{{ M}^{r\delta}_A  +p}}$, representing each 
 $Z^{(i)}_{M_{i }^{r\delta} } $  by one vertex, and $(\bar z_j)_{1\leq j\leq p}$ as $p$ separate vertices. We denote by $d(y,y^*)$ the minimum relative distance (in position) between elements~$y,y^*\in S_{|A|+p} :=  \{(Z^{(i)}_{M_{i }^{r\delta} } )_{i\in A} , \bar z_1, \dots \bar z_{p} \}$. 

We  define the cumulants 
$$\gp(Z_{{ M}^{r\delta}_A} , \bar z_1, \dots \bar z_{p}) :=
\sum_{G \in \cC_{S_{|A|+p}}} \prod_{\{y,y^*\}\in E(G)}(- \indc_{d(y,y^*) \leq \eps})\,,
$$
and more generally for any subpart of $Y \subset S_{|A|+p}$
\begin{equation}
\label{cumulant-def}
\gp(Y) =
\sum_{G \in \cC_Y} \prod_{\{y,y^*\}\in E(G)}(- \indc_{d(y,y^*) \leq \eps})\,,
\end{equation}
denoting by $\cC_Y$ the connected graphs with vertices in $Y$ and by~$E(G)$ the edges of the graph~$G$.
Notice that this definition is analogous to the one used to treat the dynamical correlations in \eqref{def-phirho}, but now the exclusion is static
$$
\prod_{ y \neq y^* \atop y,y^* \in Y} \indc_{d(y,y^*) > \eps}
=  \sum_{G \in \GG_{Y}} \prod _{\{y, y^*\} \in E(G)} (- \indc_{d(y,y^*) \leq \eps} ) =\sum_{\rho \in \cP_{Y}  }
\prod_{q=1}^{{ |\rho|}} \varphi (\rho_q) \, ,
$$
 where $\varphi (\rho_q)$ is defined by  \eqref{cumulant-def}.

We then have the following cumulant expansion
$$
\begin{aligned}
\indc_{\cD^\eps_{{ M}^{r\delta}_A +p} }\left(Z_{{ M}^{r\delta}_A} ,\bar Z_p\right)   &= \left(\prod_{i \in A}  \indc_{\cD^\eps_{M_{i }^{r\delta} }} (Z^{(i)} _{M_{i }^{r\delta} } )\right)    
\left(\prod_{ y,y^*\in S_{|A|+p} \atop y\neq y^*} \indc_{ d(y,y^*) >\eps} \right)\left(Z_{{ M}^{r\delta}_A} ,\bar Z_p\right) \\
&=  \left(\prod_{i \in A}  \indc_{\cD^\eps_{M_{i }^{r\delta} }} (Z^{(i)} _{M_{i }^{r\delta} } )  \right)
\sum_{  \bar\gs_0\subset\{1,\dots,p\} }
\indc_{\cD^\eps_{|\bar\gs_0|}} (\bar Z_{\bar\gs_0})   \sum_{\eta \in \cP_{A}}
\sum_{
\substack{
\bar\gs_1,\dots,\bar\gs_{|\eta|} \\
\cup_{i=0}^{|\eta|} \bar\gs_i = \{1,\dots,p\} \\
\bar\gs_i \cap \bar\gs_{i'} = \emptyset, i \neq i'
}
}\,
\prod_{i=1}^{|\eta|}\gp(Z_{{\eta_i}},\bar Z_{\bar\gs_i}),
\end{aligned}
$$
 where $\cP_{A}$ is the set of partitions of $A$, and 
 $Z_{{\eta_i}} = \left( Z^{(j)}_{ M_{j}^{r\delta}}\right)_{j \in \eta_i}$.
Note that the~$\bar \sigma_i$ may be empty (in particular all $\bar \sigma_i$ are empty if~$| \bar\gs_0| = p$).
 Using the symmetry in the exchange of particle labels, we get, denoting~$n_i :=|\bar\gs_i|$,
$$ 
 \binom{p}{ n_0}
 \binom{p-n_0}{ n_1}   \dots  \binom{p-n_0- \dots-n_{|\eta|-1}}{n_{|\eta|}}    =  {p! \over n_0 ! \; \dots \; n_{|\eta|} !}
$$
choices for the repartition of the background particles $\bar Z_p$.

Then, using the definition of the partition function $\cZ^\eps$, we obtain
\begin{align}
  \frac{1}{\cZ^\eps} \sum_{p\geq 0 }  & \frac{\mu_\eps^{p}}{p!} \int  d\bar Z_p \, \indc_{\cD^\eps_{{ M}^{r\delta}_A +p}}  \left(Z_{{ M}^{r\delta}_A} ,\bar Z_p\right)\cM^{\otimes p } (\bar V_{p} ) \nonumber \\
 &=   \frac{1}{\cZ^\eps}\left(\prod_{i \in A}  \indc_{\cD^\eps_{M_{i }^{r\delta} }} (Z^{(i)} _{M_{i }^{r\delta} } )  \right)
 \sum_{\eta \in \cP_{A}}
\sum_{p \geq 0} 
\sum_{\substack{n_0,\dots,n_{|\eta|} \geq 0\\ \sum n_i = p}}
\left( \frac{\mu_\e^{n_0}}{n_0 !} \int \cM^{\otimes n_0}  \indc_{\cD^{\eps}_{n_0 }}\left(\bar Z_{n_0}\right) d\bar Z_{n_0}\right)  
\nonumber\\
& \nonumber
\qquad \qquad \qquad \qquad \qquad \qquad \times\prod_{i=1}^{|\eta|} \frac{\mu_\e^{n_i}}{n_i !}\int \cM^{\otimes n_i} \gp(Z_{{\eta_i}},  \bar Z_{n_i})d \bar Z_{n_i}\\
&=  \left(  \prod_{i \in A}  \indc_{\cD^\eps_{M_{i }^{r\delta} }} (Z^{(i)} _{M_{i }^{r\delta} } )  \right) 
 \sum_{\eta \in \cP_{A}}
\prod_{\ell =1}^{|\eta|}   \left( \sum_{n_\ell \geq 0} \frac{\mu_\e^{n_\ell}}{n_\ell !}\int \cM^{\otimes n_\ell} \gp(Z_{{\eta_\ell}},  \bar Z_{n_\ell})d \bar Z_{n_\ell}\right) \, .
\label{eq: cluster expansion}
\end{align}
By Fubini's equality, we finally get that
$$\begin{aligned} 
   &  \bbE_\eps^{\rm eq}  \left[  \Otimes_{i=1}^q \zeta^{\eps,\rm eq}_{M_{i }^{r\delta} } 
\left(\phi^{(\sigma_i) } \right)  \right] 
      =  \mu_\eps ^{q/2} \sum_{A\subset \{1, \dots, q\}}  \left( \prod_{ j \in A^c} \bbE_\eps^{\rm eq}  [-\phi^{(\gs_j)}] \right)    \\
      &\ \ \ \ \ \ \ \ \ 
      \times \sum_{\eta \in \cP_{A}}   \prod_{\ell = 1} ^{|\eta|}  \left[  \sum_{n_\ell \geq 0} \frac{\mu_\e^{n_\ell}}{n_\ell !}  \int \cM^{\otimes  \left( M^{r\delta}_{\eta_\ell}+n_\ell\right)}  \gp(Z_{{\eta_\ell}},  \bar Z_{{n_\ell}})   
     \left(\prod _{j \in \eta_\ell} \phi^{(\gs_j)}\indc_{\cD^\eps_{M_{j}^{r\delta}}}\left( Z^{(j)}_{ M_{j}^{r\delta}}\right)\right)  \,d \bar Z_{n_\ell} dZ_{ \eta_\ell} \right]\, .
       \end{aligned}
$$

By definition, if  one part $\eta_\ell$ is a singleton, say $\{j\}$, we find that the corresponding factor of the product is
(using again Eq.\,\eqref{eq: cluster expansion} with $A = \{j\}$)
 \begin{equation}
 \label{eq:singleton}
 \sum_{n_\ell \geq 0} \frac{\mu_\e^{n_\ell }}{n_\ell  !}\int \cM^{\otimes \left( M_{j}^{r\delta}+n_\ell\right) } \gp(Z_{M_{j}^{r\delta}},  \bar Z_{n_\ell })  \,\phi^{(\gs_j)}\indc_{\cD^\eps_{M_{j}^{r\delta}}}\left( Z_{M_{j}^{r\delta}}\right)\,d \bar Z_{n_\ell } dZ_{M_{j}^{r\delta}} = \bbE_\eps^{\rm eq} \big[  \phi^{(\gs_j)}\big] \,. 
 \end{equation}We will therefore  split any partition $\eta$ of $A$ in a union of singletons $\{j\}$ for $j \in A\setminus B$, and a partition $\tilde \eta$ of  $B$ with no singleton.
In particular, we have that $\tilde \eta$ has a  number of parts~$|\tilde\eta|  \leq \frac12  |B| $. Thus absorbing the sum over singletons  as in \eqref{eq:partnosing}, we get that 
\begin{equation}
\begin{aligned} 
&     \bbE_\eps^{\rm eq}  \left[  \Otimes_{i=1}^q \zeta^{\eps,\rm eq}_{M_{i }^{r\delta} } 
\left(\phi^{(\sigma_i) } \right)  \right]     =  \mu_\eps^{q/2} \sum_{B\subset \{1, \dots, q\}} \sum_{A \subset B^c} (-1)^{|B^c|-|A|} \left( \prod_{ j \in B^c} \bbE_\eps ^{\rm eq}  [ \phi^{(\gs_j)}] \right) \\
   &\qquad	\times \sum_{ \eta \in   ( \cP_{B })^* }   \prod_{\ell = 1} ^{|\eta|}  \left[ \sum_{n_\ell \geq 0} \frac{\mu_\e^{n_\ell}}{n_\ell !}  \int \cM^{\otimes  \left(M^{r\delta}_ {\eta_\ell}+n_\ell\right)}  \gp(Z_{{\eta_\ell}},  \bar Z_{{n_\ell}})  \left( \prod _{j \in \eta_\ell} \phi^{(\gs_j)}\indc_{\cD^\eps_{M_{j}^{r\delta}}}\left( Z^{(j)}_{M_{j}^{r\delta}}\right)\right)  \,d \bar Z_{n_\ell} dZ_{\eta_\ell} \right]\\
      & =   \mu_\eps^{q/2}  \sum_{ \eta \in   \left( \cP_{\{1,\dots,q\}}\right)^* }   \prod_{\ell = 1} ^{|\eta|}  \left[ \sum_{n_\ell \geq 0} \frac{\mu_\e^{n_\ell}}{n_\ell !}  \int \cM^{\otimes  \left(M^{r\delta} _{\eta_\ell}+n_\ell\right)}  \gp(Z_{{\eta_\ell}},  \bar Z_{{n_\ell}})   \left(\prod _{j \in \eta_\ell} \phi^{(\gs_j)}\indc_{\cD^\eps_{M_{j}^{r\delta}}}\left( Z^{(j)}_{M_{j}^{r\delta}}\right) \right) d \bar Z_{n_\ell} dZ_{\eta_\ell} \right]
\end{aligned}
\label{eq:abssinglexp}
\end{equation}
where $( \cP_A)^*$ stands for the partitions without singletons of a set $A$.

Recall that the cumulants defined by (\ref{cumulant-def}) can be controlled by  the tree inequality (see e.g.\,\cite{Pe67,PU09}) 
\begin{equation}
 \label{eq:treeineq}
 \left| \gp(Y)  \right| \leq \sum _{T\in \cT_{Y} } \prod_{\{y,y^*\} \in E(T)}  \indc_{ d(y,y^*) \leq \eps}\,,
 \end{equation}
 where $\cT_{Y}$ is the set of minimally connected graphs (trees) with  vertices in $Y$.
 Thus inside each connected component $\eta_\ell$,  a tree connects the $|\eta_\ell|$ 
 vertices $Z^{(j)} _{M_{j}^{r\delta}} $  and the $n_\ell$ background particles (where each edge corresponds to the distance being smaller than $\eps$).

For a given tree $T$, let $d_1,\dots, d_{|\eta_\ell|+n_\ell}$ be the degrees of the graph (number of edges per vertex).  Integrating with respect to $\bar Z_{n_\ell},Z_{\eta_\ell}$ leads to
\begin{equation}
\begin{aligned}
&\int \cM^{\otimes  \left(M^{r\delta} _{\eta_\ell}+n_\ell\right)}  \prod_{\{y,y^*\} \in E(T)}  \indc_{ d(y,y^*) \leq \eps}   \left(\prod _{j \in \eta_\ell}\left| \phi^{(\gs_j)}\right|\indc_{\cD^\eps_{M_{j}^{r\delta}}}\left( Z^{(j)}_{M_{j}^{r\delta}}\right)\right)  \,d \bar Z_{n_\ell} dZ_{\eta_\ell} \\ 
& \quad  \leq  
C_q\,
(C' \eps^d)^{|\eta_\ell|+n_\ell- 1}    
\prod_{j\in \eta_\ell} (M^{r\delta}_{j})^{d_j} \\
&\qquad \times
\prod_{ j\in \eta_\ell} \frac{\left( C |\sigma_j|\mu_\e\right)^{ M_{j}^{r\delta} -1}}{ M_{j}^{r\delta} !}
\sum_{ \bar \SS, \SS, \KK, \varsigma, \lambda, \EE}
\;\sup_{x_{M_j^{r\delta}} \in \T^d}
\int  \indc_{\{  Z_{\M_{j}^{r\delta}} \mbox{\ \tiny forward cluster} \}}
 \cM^{\otimes  M_{j}^{r\delta}}  dX_{M_{j}^{r\delta}-1}dV_{M_{j}^{r\delta}}\;,
\end{aligned}
\label{eq: facteur L1}
\end{equation}
where  the constant $C'>0$ depends only on $d$.

To justify \eqref{eq: facteur L1}, we first notice that for each $j$ in  $\eta_\ell$, the subconfiguration $X^{(j)}_{M_{j}^{r\delta}}$ covers a volume of order $M_{j}^{r\delta} \eps^d$. 
Thus overlapping two such configurations indexed by $j, j'$ leads to a factor 
$M^{r\delta}_j M^{r\delta}_{j'} \eps^d$,  and overlapping the subconfiguration $j$ and a single particle to a factor 
$M^{r\delta}_j \eps^d$,  while overlapping two particles leads to a simple factor $\e^d$.
Therefore, overall each edge of the tree brings a factor $\e^d$, and
each subconfiguration  $X^{(j)}_{M_{j}^{r\delta}}$ brings a factor $M^{r\delta}_j$ per edge attached
to the vertex $j$ of the tree.
Furthermore the integral in the last line is a consequence of \eqref{phi-geometric-est-sec7} and of the translation invariance of the indicator functions of forward clusters.
Indeed the tree $T$ encoding the static overlaps imposes a geometrical constraint only on the position of a single particle in each 
$X^{(j)}_{M_{j}^{r\delta}}$, say $x^{(j)}_{M_{j}^{r\delta}}$. Therefore by Fubini, we can first fix the variables $\left(\hat  X^{(j)}_{M_{j}^{r\delta}-1},V^{(j)}_{M_{j}^{r\delta}}\right)$ defined by (\ref{eq:relpos}) in such a way  that the dynamical constraints are satisfied,  integrate  the variables $\left(\bar X_{n_\ell},\left(x^{(j)}_{M_{j}^{r\delta}}\right)_j\right)$ according to the tree structure, and then integrate with respect to $\left(\hat  X^{(j)}_{M_{j}^{r\delta}-1},V^{(j)}_{M_{j}^{r\delta}}\right)$ for all $j$. This leads to \eqref{eq: facteur L1}.

{\color{black}
There are $\left( n-2\right)! / \prod_j \left( d_j-1\right)!$
 trees of size $n$ with specified vertex degrees (see e.g.\,Lemma 2.4.1 in \cite{BGSS3}), so that 
 summing \eqref{eq: facteur L1} over all trees leads to the combinatorial factor
 $$ ( | \eta_\ell| + n_\ell -2)! \sum_{d_1,\cdots, d_{|\eta_\ell|+n_\ell} \geq 1}
\left(\prod_{j \in \eta_\ell} \frac{(M^{r\delta}_{j})^{d_j}}{{\left( d_j-1\right)!}} \right)
\left( \prod_{j \notin \eta_\ell} \frac{1}{{\left( d_j-1\right)!}} \right) 
\leq |\eta_\ell|! n_\ell! 2^{|\eta_\ell|+n_\ell}
\left(\prod_{j \in \eta_\ell}M^{r\delta}_{j}e^{M^{r\delta}_{j}}\right)e^{n_\ell}\;.
  $$
Thus, enlarging the constants $C_q, C, C'$ from line to line and recalling that 
$M^{r \delta} = \sum_{i =1}^q M^{r \delta}_i$, we deduce that 
\begin{equation}
\label{eq: facteur L1'''}
\begin{aligned} 
&  \left| \bbE_\eps ^{\rm eq}  \left[  \Otimes_{i=1}^q \zeta^{\eps,\rm eq}_{M_{i }^{r\delta} } 
 \left(\phi^{(\sigma_i) } \right)  \right] \right|
    \  \leq   C_q\, C'^{M^{r\delta}}\mu_\eps^{q/2}    \,\sum_{ \eta \in   \left( \cP_{\{1,\dots,q\}}\right)^* }\,  \prod_{\ell = 1} ^{|\eta|} 
    (C' \eps^d)^{|\eta_\ell|- 1}     
 \sum_{n_\ell \geq 0}  (C' \mu_\e   \eps^d)^{n_\ell} \\
&  \qquad \quad                \;\;\times \prod_{i=1}^q
\frac{\left( C |\sigma_i| \mu_\e\right)^{M_{i }^{r\delta} -1}}{M_{i }^{r\delta} !}
 \sum_{ \bar \SS, \SS, \KK, \varsigma, \lambda, \EE}\sup_{x_{M_i^{r\delta}} \in \T^d}\int \indc_{\{  Z_{M_{i }^{r\delta} } \mbox{\ \tiny forward cluster} \}}
 \cM^{\otimes  M_{i }^{r\delta} }  dX_{M_{i}^{r\delta}-1}dV_{M_{i}^{r\delta}}\\ 
&\quad      
\leq  C_q \, C'^{M^{r\delta}} \big(  \mu_\e \eps^d \big)^{q/2} \\
&  \qquad \quad                \;\;\times
\prod_{i=1}^q\frac{\left( C |\sigma_i| \mu_\e\right)^{M_{i }^{r\delta} -1}}{M_{i }^{r\delta} !}
\sum_{ \bar \SS, \SS, \KK, \varsigma, \lambda, \EE} \sup_{x_{M_i^{r\delta}} \in \T^d}\int  \indc_{\{  Z_{M_{i }^{r\delta} } \mbox{\ \tiny forward cluster} \}}
 \cM^{\otimes  M_{i }^{r\delta} }  dX_{M_{i}^{r\delta}-1}dV_{M_{i}^{r\delta}} \\
 &   \quad     \leq C_q \,\, C'^{M^{r\delta}}\e\\&  \qquad \quad                \;\;\times\prod_{i=1}^q
\frac{\left( C |\sigma_i| \mu_\e \right)^{M_{i }^{r\delta} -1}}{M_{i }^{r\delta} !}
\sum_{ \bar \SS, \SS, \KK, \varsigma, \lambda, \EE}\sup_{x_{M_i^{r\delta}} \in \T^d} \int  \indc_{\{  Z_{M_{i }^{r\delta} } \mbox{\ \tiny forward cluster} \}}
 \cM^{\otimes  M_{i }^{r\delta} }  dX_{M_{i}^{r\delta}-1}dV_{M_{i}^{r\delta}}\;,
     \end{aligned}
\end{equation}
where in the second inequality we used that~$\eps$ is small to sum the series
and that for $|\eta|  \leq q/2$, $q \geq 2$
\[
 \mu_\eps^{q/2} \prod_{\ell = 1} ^{|\eta|}  \big( \eps^d \big)^{|\eta_\ell|- 1} 
 =   \mu_\eps^{q/2} \big( \eps^d \big)^{q-|\eta| } 
 \leq  \big( \mu_\eps \eps^d \big)^{q/2}\;.
\]
Equation \eqref{E-HM} is proved.
\qed
}

\subsection{Variance of   $\Otimes$-products }
\label{variance}

We  aim at proving the bound~\eqref{variance-phi-sigma}, let us compute
\begin{equation}
\label{eq: var prel}
\begin{aligned}
\bbE_\eps^{\rm eq}\Big[ \Big( \Otimes_{i=1}^q \zeta^\eps_{M_{i}^{r\delta}} 
\big(  \phi^{(\sigma_i) } \big) \Big)^2 \Big] & = 
\bbE_\eps^{\rm eq}  \left[ \mu_\eps ^{q }  \left(\Otimes _{i=1} ^q  \left(   {1\over \mu_\eps^{M_{i}^{r\delta}} }\sum \phi^{(\sigma_i)}  - \bbE_\eps [\phi^{(\sigma_i)}]  \right) \right)^2\right] \\
&    =  \sum_{A\subset \{1, \dots, q\} \atop A' \subset \{1',\dots, q'\}}  
\prod_{ j \in A^c  \cup \left(A'\right)^c } \bbE_\eps  [-\phi^{(\sigma_j)}]
 \times   \sum_{\ell = 0} ^{\min ({ M}^{r\delta}_A, M^{r\delta}_ {A'})}  \cE_{ A, A', \ell} 
\end{aligned}
\end{equation}
where the joint expectation with $\ell$ repeated indices is denoted by 
$$
\begin{aligned} 
\cE_{ A, A', \ell} := 
     &   {1\over \mu_\eps^{{ M}^{r\delta}_A+M^{r\delta}_{A'}-q}} \bbE^{\rm eq}_\eps \left[ \sum _ {\big| ( i_1, \dots , i_{{ M}^{r\delta}_A} ) \cap ( i'_1, \dots, i'_{M^{r\delta} _{A'}} )\big| = \ell  } 
 \Phi_{{\mathbf M}^{r\delta}_A} \big({\mathbf z}^{\eps}_{i_1}, \dots , {\mathbf z}^{\eps}_{i_{{ M}^{r\delta}_A} } \big)  \Phi_{{\mathbf M}^{r\delta}_{A'}} \big({\mathbf z}^{\eps}_{i'_1}, \dots , {\mathbf z}^{\eps}_{i'_{M^{r\delta} _{A'}}} \big)\right]  \\
\end{aligned}
$$
with notations introduced above, which we recall again:
${ M}^{r\delta}_A =\sum_{ j \in A} M_{j}^{r\delta}$, ${\mathbf M}^{r\delta}_A = (M_{j}^{r\delta})_{j \in A}$, $\Phi_{{\mathbf M}^{r\delta}_A}  = \bigotimes_{j \in A} \phi^{(\sigma_j)}$ {	\color{black} and
${ M}^{r\delta}_{A'} =\sum_{ j \in A'} M_{j}^{r\delta}$, ${\mathbf M}^{r\delta}_{A'} = (M_{j}^{r\delta})_{j \in A'}$,
$\Phi_{{\mathbf M}^{r\delta}_{A'}}  = \bigotimes_{j \in A'} \phi^{(\sigma_j)}$.
Denoting by $\Lambda$ and $\Lambda'$ the subsets of indices selecting the $\ell$ contracted variables},  we get 
\begin{equation}
\label{eq: E A A'}
\begin{aligned}
&\cE_{ A, A', \ell}  = 
 \frac{\mu_\eps^{q- \ell}}{\cZ^\eps} 
 \sum_{{{\Lambda \subset \{1, \dots, { M}^{r\delta}_A\} } \atop {\Lambda' \subset \{1, \dots, M^{r\delta} _{A'} \} }} \atop { |\Lambda|= |\Lambda'| = \ell }} 
 \; \sum_{\chi _\ell: \Lambda \mapsto \Lambda'}
\;  \sum_{p\geq 0 }  \frac{\mu_\eps^{p}}{p!}
 \int dZ_{{ M}^{r\delta}_A} dZ'_{M^{r\delta} _{A'}} d\bar Z_p  \delta_{\chi_\ell} (Z_{{ M}^{r\delta}_A}, Z'_{M^{r\delta} _{A'}})  \\
  & \quad\quad\quad\quad\quad\quad\quad\quad\quad  \times    \Phi_{{\mathbf M}^{r\delta}_{A}} (Z_{M^{r\delta} _{A}})\Phi_{{\mathbf M}^{r\delta}_{A'}} (Z'_{M^{r\delta} _{A'}})  \indc_{\cD^\eps_{{ M}^{r\delta}_A+M^{r\delta} _{A'}-\ell +p}}  \cM^{\otimes ({ M}^{r\delta}_A+M^{r\delta} _{A'}-\ell +p) } ,
\end{aligned}
\end{equation}
where the injective map $\chi_\ell : \Lambda \mapsto \Lambda'$ encodes the repetition of the indices in $Z_{{ M}^{r\delta}_A}, Z_{M^{r\delta} _{A'}}'$
$$
 \delta_{\chi_\ell} = \prod _{j \in \Lambda} \delta_{ z_j - z'_{\chi_\ell(j)}}\,.
$$
A factor $\mu_\eps^{-\ell}$ is gained from these repetitions.

\medskip
\noindent
\underline{Step 1}. 
A graph  structure with {\color{black} ${ M}^{r\delta}_A+M^{r\delta} _{A'}-\ell$} vertices, depicted in Figure \ref{fig-eta}, can be extracted from the constraints 
$\Phi_{{\mathbf M}^{r\delta}_{A}}, \Phi_{{\mathbf M}^{r\delta}_{A'}}, \chi_\ell$ in \eqref{eq: E A A'} :
\begin{itemize}
\item  the dynamical constraints corresponding to $(Z^{(j)}_{M_{j}^{r\delta}})_{j\in A}$ and $(Z^{(j)}_{M_{j}^{r\delta}})_{j\in A'}$,
coded by the functions $\phi^{(\sigma_j)}$ {\color{black} according to \eqref{phi-geometric-est-sec7}}, lead to vertices forming $|A|+|A'|$ connected orange packets;
\item   the constraint $\chi_\ell$ from the $\ell$ repetitions in the variables is represented  by green {\color{black} lines (contractions) in Figure \ref{fig-eta}. The vertices linked by green lines correspond in fact to the same repeated variable, and are therefore identified.}
\end{itemize}
We consider a partition $\eta$ of $A \cup A'$ into $s$ connected components $\eta_1, \dots, \eta_s$ (represented in blue in Figure~\ref{fig-eta}):
in each component~$\eta_i$, all orange packets are connected by green lines. Denote by $\ell_i$ the number of green lines in the component $\eta_i$. By definition
$$\ell_i \geq |\eta_i|- 1, \qquad \sum_{i=1}^s \ell_i = \ell\,,$$
where $|\eta_i|$ is the number of orange packets in $\eta_i$.
We denote by $\delta_{\chi_{\ell_i}}$ the identification of particles restricted to $\eta_i$, so that
 $$
 \delta_{\chi_\ell} = \prod_{i=1}^s \delta_{\chi_{\ell_i}}\;,
 $$
 and recall that
a green line can only join a packet $j \in A$ and a packet $j \in A'$.

 \begin{figure}[h]\centering
\includegraphics[width=3.5in]{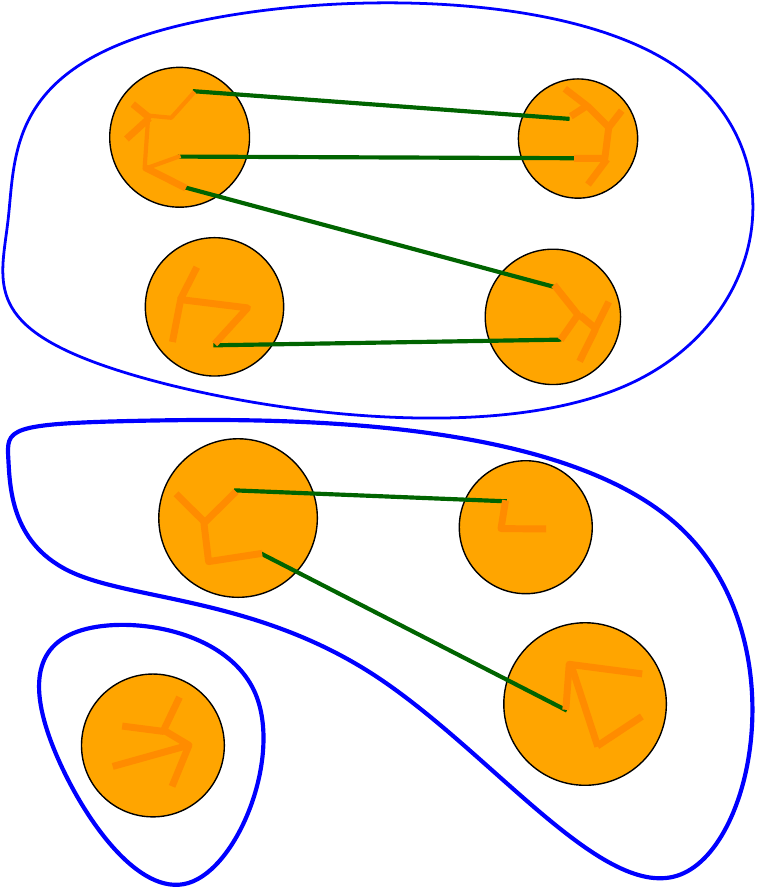} 
\caption{\small 
A partition $\eta$ with $|A|= |A'| =4$ and $s=3$. The first component is $\eta_1=\{1,2,1',2'\}$, the second component is $\eta_2=\{3, 3', 4'\}$ and the third component $\eta_3 = \{4\}$. The number of green lines is $\ell_1 =4$, $\ell_2=2$ and $\ell_3=0$.}
 \label{fig-eta}
\end{figure}

%

Recall that   $Z_{\eta_i} = Z^{(\eta_i)}_{M^{r\delta}_{\eta_i}}$ is  the collection of particles in the connected component $\eta_i$, where $$M^{r\delta}_{\eta_i} = \sum_{j \in A \cap \eta_i}M^{r\delta}_j + \sum_{j \in A' \cap \eta_i}M^{r\delta}_{j} \;.$$
 We should keep in mind now that, as $\ell_i$ particles of $A'$ are identified with particles in $A$, then the total number of particles is actually $M^{r\delta}_{\eta_i}- \ell_i$.

\medskip
\noindent
\underline{Step 2}.
We need now to define new {\color{black} forward} tree graphs to be associated with each component $\eta_i$.
We denote by~$T_{j}$ for~$j\in A $ ($j\in A'$),  the orange forward  tree describing the cluster structure of~$\phi^{(\sigma_j)}$  used in   estimate \eqref{eq: phi-geometric-est-sec7 bis}, coding the geometric constraints on the configuration $Z^{(j)}_{M_{j}^{r\delta}}$ associated with the forward dynamics in terms of minimally connected graphs (we drop here the index $\prec$, but remember that the graphs are equipped with an ordering of edges). We recall that, in such forward dynamics, each configuration of $M_j^{r\delta}$ particles is partitioned in blocks, and that the cardinalities of such blocks are coded in the notation $\M_{j}^{r\delta}$ in \eqref{eq: phi-geometric-est-sec7 bis}; the component $\eta_i$ inherits then a partition in blocks $\M^{r\delta}_{\eta_i}$.
In each connected component $\eta_i$, we extract a minimally connected graph~$T_{\eta_i}$ on the set of vertices  $Z^{(\eta_i)}_{M^{r\delta}_{\eta_i}}$ (equipped with an ordering of edges) by means of the following procedure (see also Figure \ref{fig-T-eta}). 
 We collect first all edges coming from~$T_{j}$ for~$j\in  A \cap \eta_i$ 
 (note that this cannot produce any cycle by definition). This will form the skeleton of the graph and will be denoted  by~$T_{\eta_i}^{1A}$. Then we look in turn at the edges in the remaining orange  forward tree graphs~$T_{j}$ with $j\in A' \cap \eta_i$. Following the ordering, we keep only  edges that do not produce cycles
 after identification of vertices linked by green lines (note that this peeling is unique).
 \begin{figure}[h]\centering
\includegraphics[width=3in]{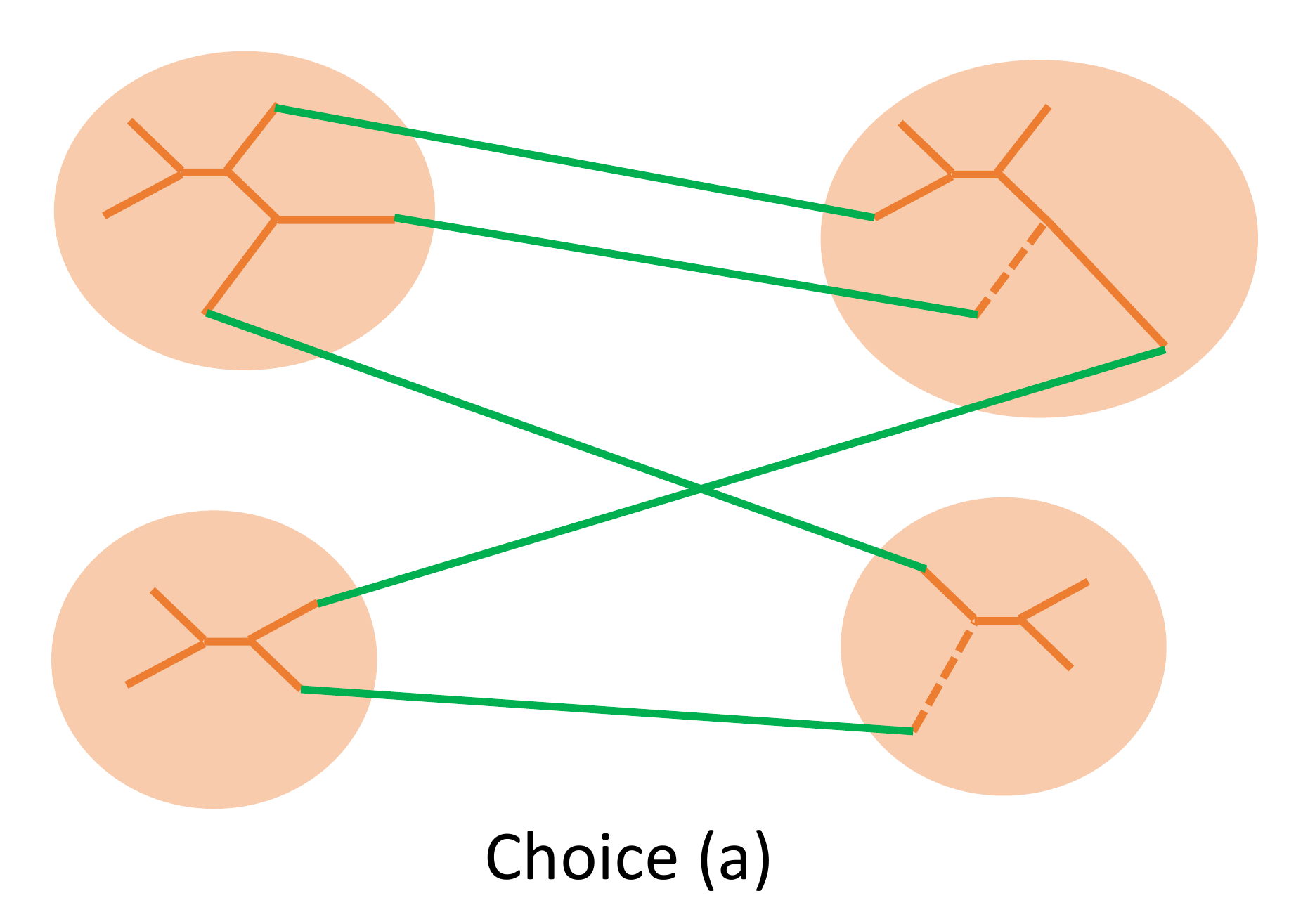} \includegraphics[width=3in]{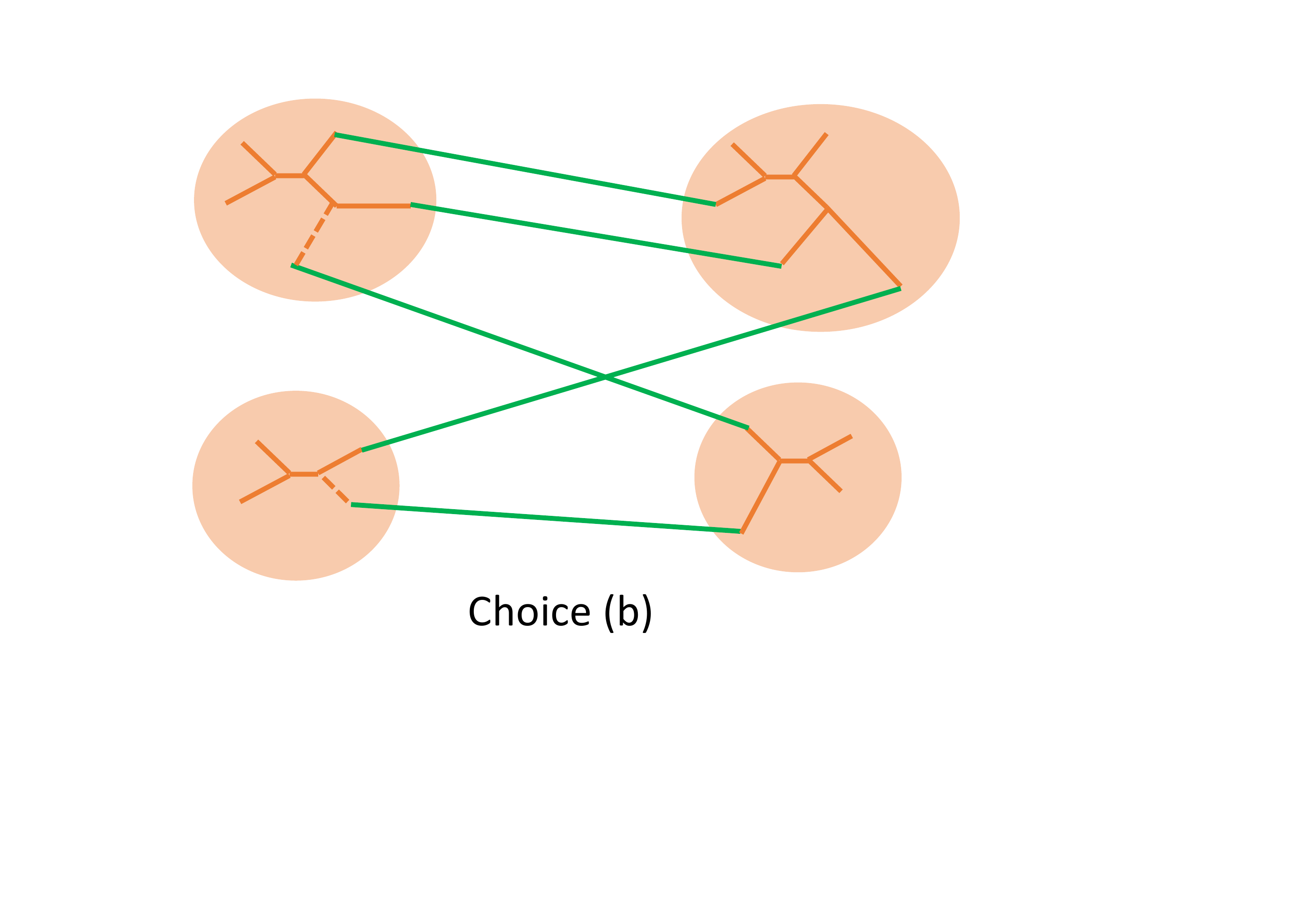} 
\caption{\small 
Two sets $A$ (on the left in both cases (a) and (b)) and $A'$ (on the right in both cases (a) and (b)) are connected by green lines representing identification of particles.
(a)~The skeleton is the set of all orange edges in $\eta_i \cap A$, and   the minimally connected graph~$T_{\eta_i}^{A}$ is obtained by discarding   the orange edges in $A'$  which create cycles in the graph (dotted orange edges). 
(b)~The skeleton  is the set of all orange edges in $\eta_i \cap A'$, and the minimally connected graph~$T_{\eta_i}^{A'}$ is obtained by discarding  the  orange edges in $A$  which create cycles in the graph (dotted orange edges). 
}
 \label{fig-T-eta}
\end{figure}
We end up with a forward  tree~$T_{\eta_i}^{A}$ on the set of vertices  $Z^{(\eta_i)}_{M^{r\delta}_{\eta_i}}$ encoding some of the dynamical constraints of the orange forward trees, which will produce small factors.  We stress that by construction, the admissible tree graphs $T_{\eta_i}^{A}$ depend on $\Lambda,\Lambda',\chi_\ell$ and on the arbitrary choice of constructing the skeleton over $A\cap \eta_i$. The superscript $A$ in $T_{\eta_i}^{A}$ is a shortened notation reminding us of this dependence. 

Thus in \eqref{eq: E A A'}, for each component $\eta_i$ we have a product of test functions controlled 
{\color{black} by the following estimate, which extends \eqref{eq: phi-geometric-est-sec7 bis} to the case of products with repeated variables}:
\begin{equation}
\begin{aligned}
& \mu_\eps^{-\ell_i} \delta_{\chi_{\ell_i}} \big| \prod_{j \in \eta_i}  \phi^{(\sigma_j)} \big| \left( Z_{\M^{r\delta}_{\eta_i}}\right)\\
&\qquad\leq  
C_{q} \, \left(\prod_{j \in \eta_i} |\sigma_j|^{M_j^{r\delta}}\right) |\eta_i|^{M_{\eta_i}^{r\delta}}
\,{\mu_\eps^{M^{r\delta}_{\eta_i}-|\eta_i|-\ell_i} \over  M^{r\delta}_{\eta_i} !  }\; 
\, \delta_{\chi_{\ell_i}}\,
\sum_{\varsigma, \;\lambda,\; \SS,\;\bar  \SS,\;\KK,\;\EE\atop \varsigma', \lambda', \SS',\bar  \SS',\KK',\EE'} \,   
\sum_{ T_{\eta_i}^A} 
\indc_{\{ Z_{\M^{r\delta}_{\eta_i}} \;  \in \; \cR_{T^A_{\eta_i}}^{\rm comp}  \}    } 
\end{aligned}
\label{proof cov clu}
\end{equation}
where $\left( \varsigma, \;\lambda,\; \SS,\;\bar  \SS,\;\KK,\;\EE\right)$ and
$\left(\varsigma', \lambda', \SS',\bar  \SS',\KK',\EE' \right)$ are the whole collections of variables
which are necessary to 
parametrise the orange clusters in $A$ and in $A'$ respectively. Here, $\cR_{T^A_{\eta_i}}^{\rm comp}$ is the corresponding set of compatible configurations for a given ordered tree graph on~$M^{ r\delta}_{\eta_i}-\ell_i$ vertices. Therefore for each orange edge in $T^A_{\eta_i}$, there exist two particles which will be dynamically constrained by encounters, according to the specified forward dynamics (and respecting the time sampling).

\medskip
\noindent
\underline{Step 3}. To control the background particles $\bar Z_p$ in \eqref{eq: E A A'}, we use  a cluster expansion of the exclusion $\indc_{\cD^\eps_{{ M}^{r\delta}_A+M^{r\delta}_{A'}-\ell+p}}$ as in \eqref{eq: cluster expansion}.
We   consider now  $\left( {\color{black} Z^{(\eta_i)}_{M^{r\delta}_{\eta_i}}} \right)_{1\leq i \leq s} $   as $s$ blocks represented each by one vertex, and $(\bar z_j)_{1\leq j\leq p}$ as $p$ separate vertices. 
We then have 
\begin{equation}
\label{eq: var step3}
\begin{aligned}
  \frac{1}{\cZ^\eps}\sum_{p\geq 0 } & \frac{\mu_\eps^{p}}{p!} \int  d\bar Z_p  \indc_{\cD^\eps_{{ M}^{r\delta}_A + M^{r\delta}_{A'} -\ell +p}} 
  (Z_{{ M}^{r\delta}_A}, Z'_{M^{r\delta}_{A'}},\bar Z_p) \cM^{\otimes p} (\bar V_{p} ) \\
 &=   \prod_{i = 1} ^s \indc_{\cD^\eps_{M^{r\delta}_{\eta_i}-\ell_i}} \left( Z^{(\eta_i)}_{M^{r\delta}_{\eta_i}} \right) 
\sum_{\omega \in \cP_s}
\prod_{u=1}^{|\omega|} \left( \sum_{n_u \geq 0} \frac{\mu_\e^{n_u}}{n_u !}
\int \cM^{\otimes n_u} \gp( Z_{\omega_u},  \bar Z_{n_u})d \bar Z_{n_u}\right)\;, \\
\end{aligned}
\end{equation}
where  $Z_{{\omega_u}} = \left( Z^{(\eta_i)}_{M^{r\delta}_{\eta_i}}\right)_{i \in \omega_u}$.

\begin{figure}[h]\centering
\includegraphics[width=4in]{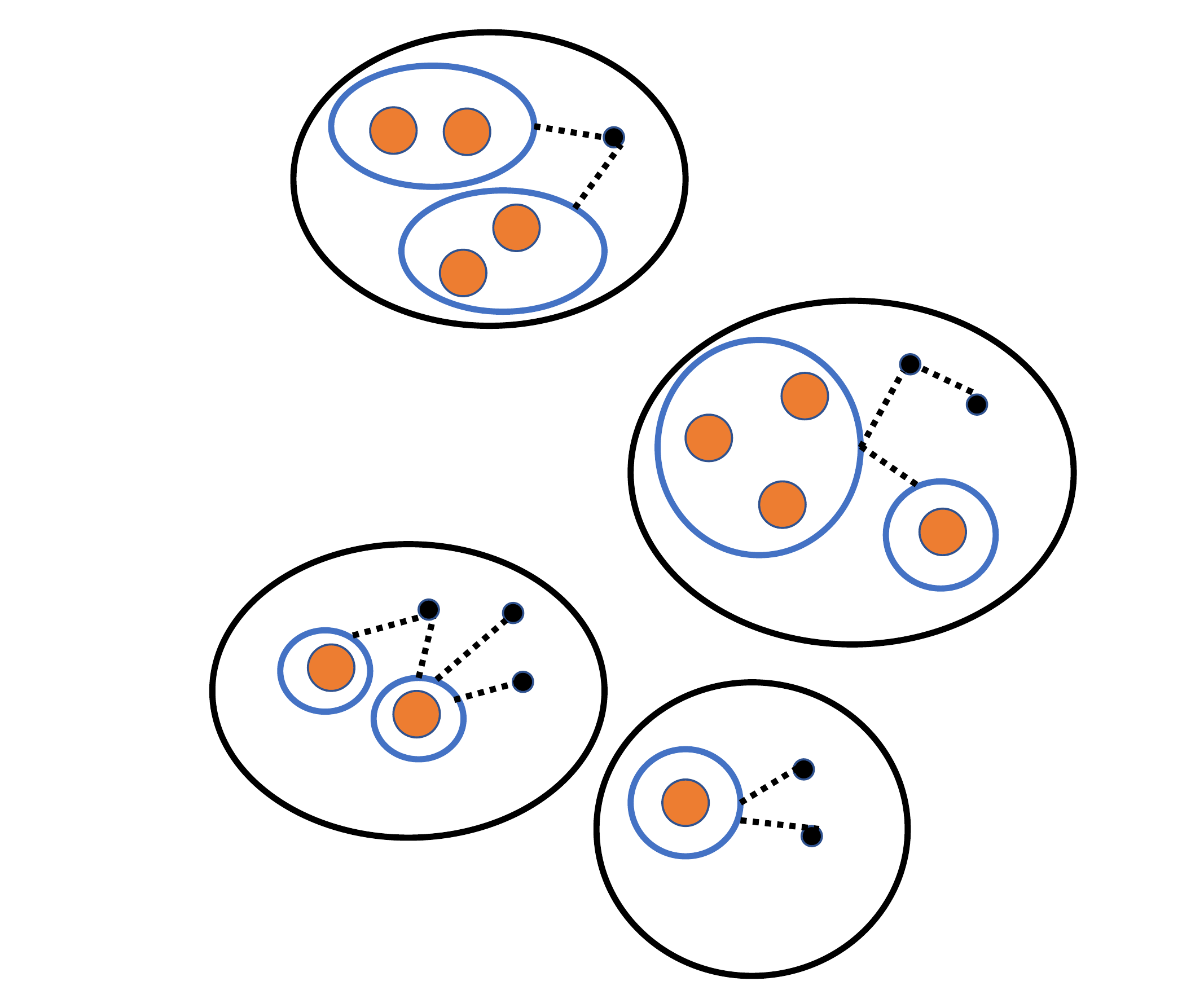} 
\caption{\small 
The $s$ components $\eta_1,\dots ,\eta_s$ (represented in blue) are grouped in {\color{black} $|\omega|$} parts (represented in black) according to the partition $\omega$, and each of these parts $\omega_u$  is provided with an arbitrary number $n_u$ of background particles (black dots). In each part, because of the tree inequality, all vertices are connected by a tree (represented by the dotted black lines).}
 \label{fig-sigma}
\end{figure}

\medskip
\noindent
\underline{Step 4}. By Fubini, we finally get  from \eqref{eq: var prel}, \eqref{eq: E A A'} and \eqref{eq: var step3} that
$$
\begin{aligned} 
   & \bbE_\eps ^{\rm eq} \left[ \mu_\eps ^{q }  \left(\Otimes _{j=1} ^q  \left(   {1\over \mu_\eps^{M^{r\delta}_j} }\sum \phi^{(\sigma_j)}  - \bbE_\eps [\phi^{(\sigma_j)}]  \right) \right)^2\right]  
    =  
    \mu_\eps ^q\sum_{A\subset \{1, \dots, q\} \atop A' \subset \{1',\dots, q'\}}  
    \prod_{ j \in A^c  \cup \left(A'\right)^c  } \bbE_\eps  [-\phi^{(\gs_j)}]  \\
& \qquad \times   
\sum_{ \omega \in    \cP_{A \cup A' }}  \prod_{u = 1} ^{|\omega|}  \left[  \sum_{n_u \geq 0} \frac{\mu_\e^{n_u}}{n_u !} 
\sum_{ \eta \in   \cP_{\omega_u } }
\sum_{\ell_i \geq |\eta_i| - 1 \atop 1 \leq i \leq |\eta|} {\color{black} \sum_{\Lambda_i,\Lambda'_i,\chi_{\ell_i} \atop 1 \leq i \leq |\eta|} }
  \right.\\
    & \qquad 
   \times \left.  \int \cM^{\otimes (M^{r\delta}_{\omega_u}   - \sum_{i=1}^{|\eta|}\ell_i+ n_u)}  \gp(Z_{\eta_1}, Z_{\eta_2},\dots  ,\bar Z_{n_u})  
 \prod_{i=1}^{|\eta|} \Big( \indc_{\cD^\eps_{M^{r\delta}_{\eta_i}-\ell_i}}   \mu_\eps^{-\ell_i} \delta_{\chi_{\ell_i}} 
\prod_{j \in \eta_i }  \phi^{(\sigma_j)} dZ_{\eta_i}\Big)  d \bar Z_{n_u} 
\right] \;. 
\end{aligned}
$$
The set $\omega_u$ corresponds to {\color{black} $M^{r\delta}_{\omega_u}-\sum_{i=1}^{|\eta|}\ell_i$} particles, to which $n_u$ background particles are added, as depicted in Figure \ref{fig-sigma}. Here we denote abusively by $\eta=\{\eta_1,\dots,\eta_{|\eta|}\}$ the generic partition of one $\omega_u$.
There are $ |\eta|$ components in the  partition $\eta$ and, in each component~$\eta_i$, {\color{black} we recall that $\ell_i \geq | \eta_i|- 1$ denotes the number of green edges and $\delta_{\chi_{\ell_i}}$ the identification of  the $\ell_i$ particles in $\Lambda'_i$ with the $\ell_i$ particles in $\Lambda_i$.}

{\color{black}
Using~(\ref{eq:singleton}) and proceeding as in~\eqref{eq:abssinglexp}, we split any partition $\omega$ of $A \cup A'$ in a union of singletons $\{j\}$ for $j \in (A\setminus B)\cup (A' \setminus B')$, and a partition $\tilde \omega$ of  $B\cup B'$ with no singleton (and at most $\frac12 ( |B| +|B'|)$ parts). Compared with the previous situation, we cannot absorb the sum over singletons due to the defect of centering, and we have (noticing that $\ell_j=0$ for singletons)
 \begin{equation}
\begin{aligned} 
& \bbE_\eps^{\rm eq}  \left[ \mu_\eps ^{q }  
\left(\Otimes _{j=1} ^q  \left(   {1\over \mu_\eps^{M^{r\delta}_j} }\sum \phi^{(\sigma_j)}  - \bbE_\eps [\phi^{(\sigma_j)}]  \right) \right)^2\right] \\
& =  \mu_\eps^q\sum_{B\subset \{1, \dots, q\} \atop B' \subset \{1',\dots, q'\}} 
 \left( \prod_{ j \in ^c B \cup ^c B'} \Big (\bbE_\eps^{\rm eq}  [  \phi^{(\sigma_j)}]  - \bbE_\eps  [  \phi^{(\sigma_j)}] \Big)\right)  \sum_{   \omega \in    (\cP_{B \cup B'})^*}  \prod_{u = 1} ^{|   \omega|}  \left[  \sum_{ \eta \in   \cP _{|\omega_u| } }\sum_{\ell_i \geq |\eta_i| - 1 \atop 1 \leq i \leq |\eta|} \sum_{\Lambda_i,\Lambda'_i,\chi_{\ell_i} \atop 1 \leq i \leq  |\eta|} \right.\\
  &\qquad \times  \sum_{n_u \geq 0} \frac{\mu_\e^{n_u}}{n_u !}    \int \cM^{\otimes (M^{r\delta}_{\omega_u}   - \sum_{i=1}^{|\eta|}\ell_i+ n_u)} \gp(Z_{\eta_1}, \dots, Z_{\eta_{|\eta|}},  \bar Z_{n_u})  \\ &\qquad  \times\left. \prod_{i=1}^{|\eta|} \Big( \indc_{\cD^\eps_{M^{r\delta}_{\eta_i}-\ell_i}}   \mu_\eps^{-\ell_i} \delta_{\chi_{\ell_i}} 
\prod_{j \in \eta_i }  \phi^{(\sigma_j)} dZ_{\eta_i}\Big)  d \bar Z_{n_u}\right]
\;.
 \end{aligned}
 \label{main step computation}
\end{equation}

{\color{black}

\medskip
To estimate from above the latter formula,  some of the   constraints on  the clustering structure can be forgotten. 
Indeed we know from Step 2 that, to each component $\eta_i$ and each~$\chi_{\ell_i}$, we can associate a minimally connected graph
{$T_{\eta_i}^B$},  encoding dynamical constraints associated  with orange edges: see Eq.\,\eqref{proof cov clu} and Figure \ref{fig-T-eta}.
The next and final step will be to integrate these dynamical constraints. At this stage, the assumptions from Lemma  
\ref{phi-sigma-lemma} on the cumulant structure will become relevant to describe  precisely  the set $\cR_{T_{\eta_i}^B}^{\rm comp}$ including the time sampling.

\medskip
First of all, we proceed by estimating the integral over the background particles as already done in Section \ref{sec: Expectation of centered Otimes-products}. For each $\omega_u$ and $ \eta \in   \cP_{\omega_u}$, the functions $\varphi$ can be controlled by the tree inequality \eqref{eq:treeineq}, this time applied over the vertices $\left(Z_{\eta_i}\right)_{1 \leq i \leq |\eta|}$ (considered as subconfigurations) and the $n_u$ background particles. Using \eqref{proof cov clu} and the translation invariance, we obtain 
(as in \eqref{eq: facteur L1}-\eqref{eq: facteur L1'''})
that the term in the last two  lines  in \eqref{main step computation} is bounded in absolute value by
 \begin{equation}
\begin{aligned} 
 & C_P C_P^{M^{r\delta}_{\eta}}  (C \eps^d)^{|\eta|- 1}   
 \prod_{i=1}^{|\eta|}\,{\mu_\eps^{M^{r\delta}_{\eta_i}-|\eta_i|-\ell_i} \over   M^{r\delta}_{\eta_i}!  }\\
 & \qquad
\sum_{\varsigma, \;\lambda,\; \SS,\;\bar  \SS,\;\KK,\;\EE\atop \varsigma', \lambda', \SS',\bar  \SS',\KK',\EE'}
 \sum_{T_{\eta_i}^B}
\sup\int \indc_{\cD^\eps_{M^{r\delta}_{\eta_i}-\ell_i} }\left( Z_{\eta_i}\right) 
\cM^{\otimes (M^{r\delta}_{\eta_i} - \ell_i)}
\indc_{\{ Z_{\M^{r\delta}_{\eta_i}} \;  \in \; \cR_{T_{\eta_i}^B}^{\rm comp}  \}    }
\delta_{\chi_{\ell_i}}  dZ_{\eta_i}\;.
\end{aligned}
  \label{main step computation'}
\end{equation}
 As in \eqref{eq: facteur L1},  \eqref{proof cov clu}, the first sum is taken over the whole collections  parametrising the forward dynamics  (and the sums over such parameters can be bounded by $C_P^{M^{r\delta}_{\eta}}$ and combinatorial factors depending only on $q\leq P$); moreover the supremum is taken over one single positional variable and, for brevity, $Z'_{\eta_i}$ is the configuration $Z_{\eta_i}$ deprived of such variable.

The remaining integral is now estimated in a similar way as in the proof of \eqref{expectation-phi-sigma bis} above. We may follow the same strategy devised in Section \ref{geometric-sec}, ordering the orange edges in time in a way to respect the sampling, and identifying a sequence of independent degrees of freedom which can be progressively estimated; see \eqref{eq:relpos}-\eqref{eq: inegalite exponentielle}. However, particles in the skeleton play a special role, as explained in what follows.

Recall that, by Step 2,   the tree $T_{\eta_i}^B$ is constructed asymmetrically 
so that the union of the skeletons $\bigcup_i T_{\eta_i}^{1B}$ records all the dynamical constraints in $( \phi^{(\sigma_j)} )_{ j \in B}$ (Figures \ref{fig-eta}-\ref{fig-T-eta}).
For these edges, we proceed exactly as in Section \ref{geometric-sec} and recover a bound of the form \eqref{eq: integration Bi}-\eqref{eq: time constraints 4.1} taking into account the time sampling. Instead, the orange edges which are outside the skeleton are estimated more crudely, discarding the dynamical constraints associated with the sampling. This leads to the estimate
\begin{equation}
\begin{aligned} 
& \sum_{T_{\eta_i}^B}\sup\int \indc_{\cD^\eps_{M^{r\delta}_{\eta_i}-\ell_i}} \left( Z_{\eta_i}\right) 
\cM^{\otimes (M^{r\delta}_{\eta_i}-\ell_i)}
\indc_{\{ Z_{\M^{r\delta}_{\eta_i}} \;  \in \; \cR_{T_{\eta_i}^B}^{\rm comp}  \}    }
\delta_{\chi_{\ell_i}}  dZ_{\eta_i} \leq C \left( \frac{1}{\mu_\eps}\right)^{M^{ r\delta}_{\eta_i}-\ell_i -1}  \\
& \qquad \times
\prod_{j \in \eta_i\cap B}
 \left(  M^{r \delta}_{j} \right)^{2(M^{r \delta}_{j}-1)} 
 \frac{3^{M^{ r\delta}_{j}-1}}{\left(M^{ r\delta}_{j} -1\right) !} \, 
 (C_P \delta)^{N^r_{\sigma_j}} \, 
  (C_P\tau)^{N_{\sigma_j}^{<r}+|\sigma_j| -1}
   \, (C_P \Theta)^{M_{j}-|\sigma_j|}\\
   & \qquad \times
 \left( M^{r \delta}_{\eta_i\cap {B'}}\right)^{2(M^{r \delta}_{\eta_i\cap {B'}} -\ell_i)}
\frac{(C_P \Theta)^{M^{r \delta}_{\eta_i\cap {B'}}-\ell_i}}{\left( M^{r \delta}_{\eta_i\cap {B'}}-\ell_i \right) !}
 \end{aligned}
   \label{main step computation''}
\end{equation}
Note that one factor $\left(M^{r \delta}_{j} \right)^{M^{r \delta}_{j}-1} $ is compensated by the factorial at the denominator in the second line, and the same can be said for one factor $ \left( M^{r \delta}_{\eta_i\cap {B'}}\right)^{M^{r \delta}_{\eta_i\cap {B'}} -\ell_i}$ in the third line. Hence, enlarging the constants, we get
\begin{equation}
\begin{aligned} 
& \sum_{T_{\eta_i}^B} \sup \int \indc_{\cD^\eps_{M^{r\delta}_{\eta_i}-\ell_i}} \left( Z_{\eta_i}\right) 
\cM^{\otimes (M^{r\delta}_{\eta_i}-\ell_i)}
\indc_{\{ Z_{\M^{r\delta}_{\eta_i}} \;  \in \; \cR_{T_{\eta_i}^B}^{\rm comp}  \}    }
\delta_{\chi_{\ell_i}}  dZ_{\eta_i}  \leq C \left( \frac{1}{\mu_\eps}\right)^{M^{ r\delta}_{\eta_i}-\ell_i -1}  \\
& \qquad \times
 \left(  M^{r \delta}_{\eta_i} \right)^{M^{r \delta}_{\eta_i}} 
 \left( M^{r \delta}_{\eta_i\cap {B'}}\right)^{-\ell_i}
 \left(\prod_{j \in \eta_i\cap B}
  (C_P\delta)^{N^r_{\sigma_j}} \, 
  (C_P\tau)^{N_{\sigma_j}^{<r}+|\sigma_j| -1}
   \,  (C_P\Theta)^{M_{j}-|\sigma_j|}\right)(C_P\Theta)^{M^{r \delta}_{\eta_i\cap {B'}}-\ell_i}
 \end{aligned}
   \label{main step computation''''}
\end{equation}
where the factor $ \left(  M^{r \delta}_{\eta_i} \right)^{M^{r \delta}_{\eta_i}} $ compensates, up to geometric terms,
the factorial in  \eqref{main step computation'}.
On the other hand, the number of possible contractions at $\ell_i$ fixed is
\begin{equation}
\begin{aligned} 
\sum_{\Lambda_i,\Lambda'_i,\chi_{\ell_i} \atop 1 \leq i \leq |\eta|} 1 
= \binom{M^{r\delta}_{\eta_i \cap B} }{\ell_i}
\binom{M^{r\delta}_{\eta_i\cap {B'}}}{\ell_i} \ell_i! \leq 2^{M^{r\delta}_{\eta_i\cap B}}
 \left(M^{r\delta}_{\eta_i\cap B'}\right)^{\ell_i}
\end{aligned}
   \label{main step computation'''''}
\end{equation}
which compensates the factor  $\left( M^{r \delta}_{\eta_i\cap {B'}}\right)^{-\ell_i}$ in \eqref{main step computation''''}.
Therefore by \eqref{main step computation}, \eqref{main step computation'}, \eqref{main step computation''''} and \eqref{main step computation'''''}
we deduce that (recalling $\sum_i |\eta_i| = |B|+|B'|$)
\begin{equation}
\begin{aligned} 
& \bbE_\eps^{\rm eq} \left[ \mu_\eps ^{q }  
\left(\Otimes _{j=1} ^q  \left(   {1\over \mu_\eps^{M^{r\delta}_j} }\sum \phi^{(\sigma_j)}  - \bbE_\eps [\phi^{(\sigma_j)}]  \right) \right)^2\right] \\
&\qquad \leq C_P \mu_\eps^q\sum_{B\subset \{1, \dots, q\}} \sum_{B' \subset \{1',\dots, q'\}} 
\mu_\eps^{- |B|-|B'|} \left( \prod_{ j \in ^c B \cup ^c B'}\left|  \bbE_\eps^{\rm eq}  [ {\mathbf 1}_{^c\Upsilon^\eps_\cN}\phi^{(\sigma_j)}]\right|  \right) 
\sum_{  \omega \in   ( \cP_{  B\cup B'} )^*}  \prod_{u = 1} ^{ |\omega|}  
\sum_{ \eta \in   \cP_{\omega_u} } \\
 &  \qquad\qquad \times   \,
\left( C\e^d \mu_\e\right)^{|\eta|-1} \mu_\e
  \left(\prod_{j \in \eta^B_i}   
  (C_P\delta)^{N^r_{\sigma_j}} \, 
  (C_P\tau)^{N_{\sigma_j}^{<r}+|\sigma_j| -1}
   \,  (C_P\Theta)^{M_{j}-|\sigma_j|}\right)(C_P\Theta)^{M^{r \delta}_{\eta_i\cap {B'}}}\\
    \end{aligned}
    \label{eq:conclusion8}
\end{equation}
for some constant $C_P$ as in the statement of Lemma \ref{phi-sigma-lemma} and some pure  constant $C$.
As $\eta_i\cap B $ can be replaced by $\eta_i\cap B' $ by symmetry, we also deduce that
\begin{equation}
\begin{aligned} 
& \bbE_\eps^{\rm eq} \left[ \mu_\eps ^{q }  
\left(\Otimes _{j=1} ^q  \left(   {1\over \mu_\eps^{M^{r\delta}_j} }\sum \phi^{(\sigma_j)}  - \bbE_\eps [\phi^{(\sigma_j)}]  \right) \right)^2\right] \\
&\qquad \leq C_P \mu_\eps^q\sum_{B\subset \{1, \dots, q\}} \sum_{B' \subset \{1',\dots, q'\}} 
\mu_\eps^{- |B|-|B'|} \left( \prod_{ j \in ^c B \cup ^c B'}\left|  \bbE_\eps^{\rm eq}  [ {\mathbf 1}_{^c\Upsilon^\eps_\cN}\phi^{(\sigma_j)}]\right|  \right) \sum_{  \omega \in    \cP_{  B\cup B'}^*}  
\prod_{u = 1} ^{ |\omega|}  
\sum_{ \eta \in   \cP_{\omega_u} } \\
 &  \qquad\qquad \times   \,
\left( C'\e^d \mu_\e\right)^{|\eta|-1} \mu_\e
  \left(\prod_{j \in \eta_i} 
  (C_P\delta)^{N^r_{\sigma_j}} \, 
  (C_P\tau)^{N_{\sigma_j}^{<r}+|\sigma_j| -1}
   \,  (C_P\Theta)^{2M_{j}+N_{\sigma_j}^{<r}+N^r_{\sigma_j}-|\sigma_j|}\right)^{1/2}\;.
    \end{aligned}
    \label{eq:conclusion8+MG}
\end{equation}

Recall that $C\e^d \mu_\e = C\e$, so that we obtain a rough upper bound
\begin{equation} 
\begin{aligned}
&\prod_{u = 1} ^{ |\omega|}  
\sum_{ \eta \in   \cP_{\omega_u} } 
\left( C'\e^d \mu_\e\right)^{|\eta|-1} \mu_\e
  \left(\prod_{j \in \eta_i} 
  (C_P\delta)^{N^r_{\sigma_j}} \, 
  (C_P\tau)^{N_{\sigma_j}^{<r}+|\sigma_j| -1}
   \,  (C_P\Theta)^{2M_{j}+N_{\sigma_j}^{<r}+N^r_{\sigma_j}-|\sigma_j|}\right)^{1/2} \\
   &\quad  \leq  \mu_\eps^{|\omega|} \left( \prod_{j \in B\cup B' }  (C_P\delta)^{N^r_{\sigma_j }} \, 
  (C_P\tau)^{N_{\sigma_j}^{<r}+|\sigma_j| -1}
   \,  (C_P\Theta)^{2M_{j}+N_{\sigma_j}^{<r}+N^r_{\sigma_j}-|\sigma_j|}\right)^{1/2}\,.
   \end{aligned}
   \label{eq:conclusion8+MG'}
\end{equation}

We are left with the cost of the conditioning in the singletons
\begin{equation}
 \bbE_\eps^{\rm eq}  [ {\mathbf 1}_{^c\Upsilon^\eps_\cN}\phi^{(\sigma_j)}]
 \equiv \bbE^{\rm eq}_\eps [{\mathbf 1}_{ ^c \Upsilon^\eps_\cN}\,  \pi^\eps _{M_{\sigma_j} ^{r\delta}}(\phi^{(\sigma_j) })]
\end{equation}
which, recalling that
$
 \pi^\eps _{M_{\sigma_j} ^{r\delta}}(\phi^{(\sigma_j) })=
\bbE^{\rm eq}_\eps\Big[ \pi^\eps _{M_{\sigma_j} ^{r\delta}}(\phi^{(\sigma_j) })  \Big] +
 \mu_\eps^{-1/2} \zeta^{\eps,\rm eq}_{M_{\sigma_j} ^{r\delta} } 
 \big(\phi ^{ (\sigma_j )  } \big)$,
is bounded as follows:
\begin{equation}
\label{eq:estcondphisj}
\begin{aligned}
& \left|\bbE^{\rm eq}_\eps [{\mathbf 1}_{ ^c \Upsilon^\eps_\cN}\,  \pi^\eps _{M_{\sigma_j}^{r\delta}}(\phi^{(\sigma_j) })] \right|
  = 
\left|\bbP^{\rm eq}_\eps\Big[  {^c \Upsilon^\eps_\cN}   \Big] 
 \bbE^{\rm eq}_\eps\Big[\phi^{(\sigma_j) }  \Big]  +  \mu_\eps^{-1/2}
\bbE_\eps^{\rm eq}\Big[ {\mathbf 1}_{^c \Upsilon^\eps_\cN}  \;  \zeta^{\eps,\rm eq}_{M_{\sigma_j} ^{r\delta} } 
 \big(\phi ^{ (\sigma_j )  }\big)  \Big] \,  \right| \\
 & \qquad\leq  \bbP^{\rm eq}_\eps\Big[  {^c \Upsilon^\eps_\cN}   \Big]  \bbE^{\rm eq}_\eps\Big[\left|\phi^{(\sigma_j) }\right|  \Big]
 +  \mu_\eps^{-1/2} \bbP^{\rm eq}_\eps\Big[  {^c \Upsilon^\eps_\cN}   \Big]^{1/2}
 \bbE_\eps^{\rm eq}\Big[ 
 \left(
 \zeta^{\eps,\rm eq}_{M_{\sigma_j} ^{r\delta} }\big(\phi^{ (\sigma_j )}\big)
 \right)^2   
 \Big]^{1/2} \\
 & \qquad \leq C_P  
 \Big[  \Theta \e^d 
  (C_P\Theta)^{M_j - |\sigma_j|}  (C_P \delta)^{ N_{\sigma_j }^{r}}  (C_P \tau)^{N_{\sigma_j}^{<r}+|\sigma_j | - 1} \\
 & \qquad \qquad + \mu_\eps^{-1/2}\left(\Theta\e^d\right)^{1/2}
 \Big( (C_P \Theta)^{ 2M_{j}  + N_{\sigma_j }^r + N_{\sigma_j }^{<r} - |\sigma_j|  }  (C _P\delta)^{ N_{\sigma_j}^{r }}  (C _P\tau)^{N_{\sigma_j }^{<r}+|\sigma_j | - 1} \Big)^{1/2}\Big]\;.
\end{aligned}
\end{equation}
In the last step we used \eqref{conditioning-est}, together with \eqref{expectation-phi-sigma-eq} to bound the first term, and the estimate \eqref{variance-phi-sigma} at equilibrium in the case of one single factor to bound the second term (this estimate has been proved in \cite{BGSS2} and follows also from the previous computation). Notice that the second term is dominant for $\mu_\e$ large (and $\Theta>1$). We will actually only keep the rough estimate
\begin{equation}
\label{eq: addition eps}
\left|\bbE^{\rm eq}_\eps [{\mathbf 1}_{ ^c \Upsilon^\eps_\cN}\,  \pi^\eps _{M_{\sigma_j}^{r\delta}}(\phi^{(\sigma_j) })] \right|\leq  \mu_\eps^{-1/2}
 \Big( (\Theta \eps^d) (C_P \Theta)^{ 2M_{j}  + N_{\sigma_j }^r + N_{\sigma_j }^{<r} - |\sigma_j|  }  (C _P\delta)^{ N_{\sigma_j}^{r }}  (C _P\tau)^{N_{\sigma_j }^{<r}+|\sigma_j | - 1} \Big)^{1/2} \;.
\end{equation}
When inserting this into \eqref{eq:conclusion8+MG}-\eqref{eq:conclusion8+MG'}, we obtain the following power counting
\begin{equation}
\label{eq: power rough counting}
\mu_\eps^q \mu_\eps^{- |B|-|B'|} \mu_\e^{|\omega|} \mu_\eps^{-(|^c B| + |^c B'|)/2}
=  \mu_\eps^{-(|B| + |B'|)/2}\mu_\e^{|\omega|} \leq 1
\end{equation}
because the partition in $\omega$ has no singleton.
Notice  from \eqref{eq: addition eps} that each contribution in $^c B, ^c B'$ has an additional 
factor $\eps^{d/2}$ so that the leading order terms in the power counting \eqref{eq: power rough counting}
are associated with $|B| = |B'| = q$ and with partitions $\omega$ which are pairing of the sets, as expected from the Gaussian asymptotics.

In conclusion, we arrive to
\begin{equation}
\label{eq:concsec82}
\begin{aligned} 
& \bbE_\eps^{\rm eq} \left[ \mu_\eps ^{q }  
\left(\Otimes _{j=1} ^q  \left(   {1\over \mu_\eps^{M^{
r\delta}_j} }\sum \phi^{(\sigma_j)}  - \bbE_\eps [\phi^{(\sigma_j)}]  \right) \right)^2\right] \\
 & \qquad \leq C_P  \prod_{j=1}^q 
 \left( 
  (C_P\Theta)^{2M_{j}+N_{\sigma_j}^{<r}+N^r_{\sigma_j}-|\sigma_j|}
 (C_P\delta)^{N^r_{\sigma_j}} (C_P\tau)^{N_{\sigma_j}^{<r}+|\sigma_j| -1} \right)\;.
    \end{aligned}
\end{equation}

\medskip

A similar proof leads to the estimate
\begin{equation}
\label{variance-phi-recSec8}
\begin{aligned}
&\bbE_\eps^{\rm eq}\left[ \mu_\eps ^{q }  \left(
\Otimes _{j=1 \atop j\neq i} ^q  \left(   {1\over \mu_\eps^{M^{r\delta}_j} }\sum \phi^{(\sigma_j)}  - \bbE_\eps [\phi^{(\sigma_j)}]\right)
\Otimes \left(   {1\over \mu_\eps^{M^{r\delta}_i} }\sum \phi^{(\sigma_i), {\rm cyc}}  - \bbE_\eps [\phi^{(\sigma_i), {\rm cyc}}]\right)
\right)^2\right]
 \\
& \qquad \leq C_P \eps\delta  | \log \eps | (\Theta |\log \eps |)^{2d+4}   (C_P \Theta)^{ 2M_{i}+N_{\sigma_i}^{<r}+N^r_{\sigma_i}-|\sigma_i|}  (C_P \delta)^{( N_{\sigma_i  }^{r}-1)_+}  (C_P \tau)^{(N_{\sigma_i }^{<r}+|\sigma_i  | - 2)_+}\\
&\qquad\qquad \times \prod_{j \neq i}    (C_P\Theta)^{2M_{j}+N_{\sigma_j}^{<r}+N^r_{\sigma_j}-|\sigma_j|}
 (C_P\delta)^{N^r_{\sigma_j}} (C_P\tau)^{N_{\sigma_j}^{<r}+|\sigma_j| -1}  \, .
\end{aligned}
\end{equation}
To obtain \eqref{variance-phi-recSec8}, we repeat the above argument leading to \eqref{eq:concsec82}. In what follows, we only discuss the main differences. In the derivation of the product bound \eqref{proof cov clu}, we used the elementary estimates  \eqref{eq:combinfacp}, \eqref{eq: phi-geometric-est-sec7 bis} where the cluster structure is given by minimally connected graphs. 
In the case of \eqref{variance-phi-recSec8}, one factor of type $\phi^{(\sigma_i), {\rm cyc}}$ is present which satisfies the different estimate
$$
| \phi^{(\sigma_i), {\rm cyc}}(Z_{\M_i^{r\delta}}) | 
\leq C_P \;  {|\sigma_i|}^{M_i^{r\delta}}
\frac{ \mu_\eps^{M_i^{r\delta}-1}}{M_i^{r\delta}!}\,  \sum_{ \bar \SS, \SS, \KK, \varsigma, \lambda, \EE}   \; \sum_{T_\prec \in \cT^\prec_{M_i^{r \delta}}} 
\indc_{\{ Z_{\M_i^{r \delta}} \;  \in \; \cR_{T_\prec}^{{\rm comp, rec}}  \}    } \;.
$$
Here the set $\cR_{T_\prec}^{{\rm comp, rec}}$ is defined as the set $\cR_{T_\prec}^{{\rm comp}}$, with the additional constraint that the graph encoding all encounters in the forward dynamics should contain at least one edge on $\cI_\delta$ and at least one cycle. The construction in step 2 proceeds then as before, but the set $\cR^{{\rm comp}}$ in \eqref{proof cov clu} is replaced by $\cR^{{\rm comp, rec}}$ if the factor $\phi^{(\sigma_i), {\rm cyc}}$ belongs to the skeleton ($i \in A$). This leads to a formula as \eqref{main step computation} where, depending on $B,B'$, we distinguish several possibilities:
\begin{itemize}
\item $i$ belongs to $B$ and $B'$. The estimates \eqref{main step computation''}-\eqref{main step computation''''} are then improved by applying \eqref{eq: integration Bi'} (instead of \eqref{eq: integration Bi}), which uses the reinforced geometric condition on the cycle to bring an additional small factor $\e\delta |\log\e| (\Theta |\log \eps|) ^{2d+4} $. One gets then a contribution as in the right hand side of \eqref{eq:conclusion8+MG} with such an additional smallness.
\item $i$ belongs to $B$ and $^c B'$ (or viceversa). Similarly, \eqref{eq:conclusion8+MG}  is modified by a small factor $(\e\delta |\log\e| (\Theta |\log \eps|) ^{2d+4})^{1/2}$. An even  smaller  factor  $(\eps ^{2d-1} \Theta \e\delta |\log\e| (\Theta |\log \eps|) ^{2d+4})^{1/2}$ is produced by the estimate of  $\bbE_\eps^{\rm eq}  [ {\mathbf 1}_{^c\Upsilon^\eps_\cN}\phi^{(\sigma_i), {\rm cyc}}]$ (performed as in \eqref{eq:estcondphisj}), thanks to \eqref{expectation-phi-rec-eq} and \eqref{variance-phi-rec} in the case of one single factor.
\item $i$ belongs to $^c B$ and $^c B'$. Then we have two factors $\bbE_\eps^{\rm eq}  [ {\mathbf 1}_{^c\Upsilon^\eps_\cN}\phi^{(\sigma_i), {\rm cyc}}]$ estimated as previously.
\end{itemize}
In all the cases we end up with a gain $\e\delta |\log\e| (\Theta |\log \eps|) ^{2d+4} $, which proves \eqref{variance-phi-recSec8}.

\subsection{Conclusion of the proofs}
\label{sec: Conditioned products}

In this section, we shall derive \eqref{E-HM 1}, \eqref{variance-phi-sigma} and
\eqref{variance-phi-rec} from the analogous results obtained above under  the equilibrium measure. 
Finally we will prove Proposition \ref{Proposition - estimates on g0}.

\bigskip
{\it Proof of \eqref{E-HM 1}.} 
Recalling~\eqref{eq: def zeta m diff} there holds
$$
\begin{aligned} 
 \left|\bbE_\eps  \left[  \Otimes_{i=1}^q \zeta^{\eps}_{M_i ^{r\delta} } \left(\phi^{(\sigma_i) } \right)  \right]\right|
 & =  
 \left|\bbE_\eps  \left[  \Otimes_{i=1}^q 
\Big( 
 \zeta^{\eps,\rm eq}_{M_i ^{r\delta} } \left(\phi^{(\sigma_i) } \right) 
 +  \sqrt{\mu_\eps } \,\bbE^{\rm eq}_\eps [{\mathbf 1}_{ ^c \Upsilon^\eps_\cN}\,  \pi^\eps _{M_i^{r\delta}}(\phi^{(\sigma_i) })]
\Big) 
  \right]\right|\\
& \leq \sum_{A \subset \{1,\dots, q\} } 
  \left(
  \left|
\bbE^{\rm eq}_\eps  \left[ \Otimes_{i \in A} \zeta^{\eps,\rm eq}_{M_i ^{r\delta} } \left(\phi^{(\sigma_i) } \right) \right]\right|
+ \left|\bbE^{\rm eq}_\eps  \left[ {\mathbf 1}_{ ^c \Upsilon^\eps_\cN}\Otimes_{i \in A} \zeta^{\eps,\rm eq}_{M_i ^{r\delta} } \left(\phi^{(\sigma_i) } \right) \right]\right|
\right)\\
& \qquad\qquad\qquad \times 
 \prod_{ j \in A^c} \sqrt{\mu_\eps } \,\left|\bbE^{\rm eq}_\eps [{\mathbf 1}_{ ^c \Upsilon^\eps_\cN}\,  \pi^\eps _{M_j ^{r\delta}}(\phi^{(\sigma_j) })]\right|\;.
 \end{aligned}
$$
The second line can be bounded by \eqref{eq:estcondphisj}, while the first term in the first line is bounded by \eqref{E-HM 1-eq}. Finally the second term in the first line is bounded by
\begin{eqnarray*}
& \bbE^{\rm eq}_\eps  \left[{\mathbf 1}_{ ^c \Upsilon^\eps_\cN}\Otimes_{i \in A} \zeta^{\eps,\rm eq}_{M_i ^{r\delta} } \left(\phi^{(\sigma_i) } \right) \right]  \leq  \bbP^{\rm eq}_\eps\Big[ {^c \Upsilon^\eps_\cN} \Big]^{1/2}
\bbE_\eps^{\rm eq}\left[\left( \Otimes_{i \in A} \zeta^{\eps,\rm eq}_{M_i ^{r\delta}  } 
\big(\phi ^{ (\sigma_i )  }   \big) \right)^2 \right]^{1/2}\\
&\qquad \leq \left( \Theta \eps^d\right)^{1/2} 
C_P  \prod_{j \in A}
 \left(  
  (C_P\Theta)^{2M_{j}+N_{\sigma_j}^{<r}+N^r_{\sigma_j}-|\sigma_j|}
 (C_P\delta)^{N^r_{\sigma_j}} (C_P\tau)^{N_{\sigma_j}^{<r}+|\sigma_j| -1} \right)^{1/2}
\end{eqnarray*}
where we used \eqref{conditioning-est} and the analogue of \eqref{eq:concsec82} in the simpler case of centered fluctuations. Using that $\left( \Theta \eps^d\right)^{1/2}  \ll \e$ for $d>2$ we obtain that
\begin{equation}
\left|\bbE_\eps \left[ \Otimes_{i=1}^q  \zeta^{\eps}_{M_{i}^{r\delta}   } 
\big( \phi^{(\sigma_i) } \big)  \right]  \right|
\leq C_q \eps 
\prod_{i=1}^q
 \left( 
  (C_P\Theta)^{2M_{i}+N_{\sigma_i}^{<r}+N^r_{\sigma_i}-|\sigma_i|}
 (C_P\delta)^{N^r_{\sigma_i}} (C_P\tau)^{N_{\sigma_i}^{<r}+|\sigma_i| -1} \right)^{1/2}\;,
\end{equation}
which concludes the proof. \qed

\bigskip
{\it Proof of \eqref{variance-phi-sigma} and \eqref{variance-phi-rec}.} $ $ 
Both estimates  follow immediately as 
$$
\bbE_\eps\Big[ \Big( \Otimes_{i=1}^q \zeta^\eps_{M_{i}^{r\delta}} 
\big(  \phi^{(\sigma_i) } \big) \Big)^2 \Big] 
\leq 
\bbE_\eps^{\rm eq}\Big[ \Big( \Otimes_{i=1}^q \zeta^\eps_{M_{i}^{r\delta}} 
\big(  \phi^{(\sigma_i) } \big) \Big)^2 \Big]\;.
$$
\qed

\bigskip
{\it Proof of Proposition \rm\ref{Proposition - estimates on g0}.} $ $ 
Proceeding as before,
\begin{eqnarray*}
&& \left| \bbE_\eps \Big( \big(\zeta^\eps (h)\big) ^p \Big)\right|
= \left| \bbE_\eps \Big( \left(\zeta^{\eps, \rm eq} (h)
+ \sqrt{\mu_\eps } \,\bbE^{\rm eq}_\eps [{\mathbf 1}_{^c \Upsilon^\eps_\cN}\,  \pi^\eps(h)]
\right) ^p \Big)\right|\\
&&\qquad  \leq \sum_{k=0}^p \binom{p}{k} \left(
 \left|\bbE_\eps^{\rm eq} \Big( \big(\zeta^{\eps,\rm eq} (h)\big) ^k \Big)\right|
 +  \left|\bbE_\eps^{\rm eq} \Big( {\mathbf 1}_{^c \Upsilon^\eps_\cN}\big(\zeta^{\eps,\rm eq} (h)\big) ^k \Big)\right|\right)
 \mu_{\eps}^{\frac{p-k}{2}}
 \left|\bbE^{\rm eq}_\eps [{\mathbf 1}_{^c \Upsilon^\eps_\cN}\,  \pi^\eps(h)]\right|^{p-k}\\
 && \qquad \leq\sum_{k=0}^p \binom{p}{k} \left( 
 C_k \| h \|_\infty^k + \left( \Theta \eps^d\right)^{1/2} \sqrt{C_{2k}}\| h \|_\infty^k
 \right)\\ &&
\qquad \qquad \times \mu_{\eps}^{\frac{p-k}{2}}
 \left(
 \left( \Theta \eps^d\right)  \frac{\bbE_\eps^{\rm eq}\left[\cN \right]}{\mu_\eps}\| h \|_\infty
 + \mu_\eps^{-1/2} \left( \Theta \eps^d\right)^{1/2}\sqrt{C_2}\| h \|_\infty
 \right)^{p-k} \\ && \qquad
 \leq C_p  \| h \|_\infty^p
\end{eqnarray*}
for some constant $C_p >0$, where in the second inequality we used \eqref{eq: moment ordre 2,4 eq}
(derived in Proposition A.1 from \cite{BGSS2}) and \eqref{conditioning-est}. This proves \eqref{eq: moment ordre 2,4}.

To prove \eqref{eq:IPeq-IP}, we write
\begin{eqnarray*}
\bbE_\eps \left[ \prod_{p=1} ^P \zeta^{\eps}_{\theta_p} ( h^{(p)}) \right]
= \sum_{A \subset \{1,\cdots,P\}}\bbE_\eps \left[ \prod_{p \in A} \zeta^{\eps, \rm eq}_{\theta_p} ( h^{(p)}) \right]
\mu_\eps^{|A^c|/2}\left( \prod_{p \in A^c}\bbE^{\rm eq}_\eps [{\mathbf 1}_{^c \Upsilon^\eps_\cN}\,  \pi^\eps(h^{(p)})]\right)\;,
\end{eqnarray*}
from which we get
\begin{align*}
\left| I_P^{\eps,\rm eq} - I_P^{\eps} \right| 
&= \left| \bbE^{\rm eq}_\eps \left[ \prod_{p=1} ^P \zeta^{\eps,\rm eq}_{\theta_p} ( h^{(p)}) \right]
- \bbE_\eps \left[ \prod_{p=1} ^P \zeta^{\eps}_{\theta_p} ( h^{(p)}) \right]
\right| \\
& \leq 
\left| \bbE^{\rm eq}_\eps \left[ {\mathbf 1}_{^c \Upsilon^\eps_\cN}\prod_{p=1} ^P \zeta^{\eps,\rm eq}_{\theta_p} ( h^{(p)}) \right] \right|\\
& \qquad \qquad 
+ \sum_{A \subset \{1,\cdots,P\} \atop A^c \neq \emptyset} \left| 
\bbE_\eps \left[ \prod_{p \in A} \zeta^{\eps, \rm eq}_{\theta_p} ( h^{(p)}) \right]
\mu_\eps^{|A^c|/2}\left( \prod_{p \in A^c}\bbE^{\rm eq}_\eps [{\mathbf 1}_{^c \Upsilon^\eps_\cN}\,  \pi^\eps(h^{(p)})]\right)
\right|\;.
\end{align*}
Using once again \eqref{conditioning-est} and \eqref{eq:estcondphisj}, together with
H\"{o}lder's inequality to bound the moments of fluctuation fields, 
one deduces the estimate
$$ \left| I_P^{\eps,\rm eq} - I_P^{\eps} \right| \leq 
C_P \left(\Theta\e^{d}\right)^{1/2}\;. $$
This concludes the proof of Proposition \ref{Proposition - estimates on g0}. \qed

\end{document}